\definecolor{lavender}{rgb}{0.9, 0.9, 0.98}
\newcommand{\red}[1]{{\color{red}{#1}}}
\newcommand{\Q}{\mathbb{Q}}
\newcommand{\mQ}{\mathcal{Q}}
\newcommand{\mR}{\mathcal{R}}
\newcommand{\mF}{\mathcal{F}}
\newcommand{\PP}{\mathbb{P}}
\newcommand{\mE}{\mathcal{E}}
\newcommand{\mU}{\mathcal{U}}
\newcommand{\mL}{\mathcal{L}}
\newcommand{\mZ}{\mathscr{Z}}
\newcommand{\of}{\mathcal{O}}
\newcommand{\W}{\bigwedge}
\newcommand{\bbGr}{\mathbb{G}\mathrm{r}}
\newcommand{\git}{\mathbin{
  \mathchoice{/\mkern-6mu/}
    {/\mkern-6mu/}
    {/\mkern-5mu/}
    {/\mkern-5mu/}}}
\DeclareMathOperator{\Spec}{Spec}
\DeclareMathOperator{\Sym}{Sym}
\DeclareMathOperator{\Gr}{Gr}
\DeclareMathOperator{\Fl}{Fl}
\DeclareMathOperator{\SGr}{SGr}
\DeclareMathOperator{\Hom}{Hom}
\DeclareMathOperator{\Bl}{Bl}
\DeclareMathOperator{\DP}{dP}
\newcommand{\fanoid}[1]{
[\#{#1}]
}
\newtheorem{thm}{Theorem}[section]
\newtheorem{corollary}[thm]{Corollary}
\newtheorem{lemma}[thm]{Lemma}
\newtheorem{proposition}[thm]{Proposition}
\newtheorem*{aim*}{Aim of this paper}
\theoremstyle{definition}
\newtheorem{definition}[thm]{Definition}
\newtheorem{rmk}[thm]{Remark}
\newtheorem{ex}[thm]{Example}
\newtheorem{alg}[thm]{Algorithm}
\declaretheoremstyle[
spaceabove=1.5ex, spacebelow=1.5ex,
headfont=\bf,
notefont=\mdseries, notebraces={(}{)},
bodyfont=\normalfont,
headpunct=.,
numberwithin=,
postheadhook=\leavevmode%
  \interlinepenalty 10000%
  \interlinepenalty 10000,%
qed={$\bullet$}
]{mystyle}
\declaretheorem[style=mystyle]{fano}
\crefname{mystyle}{Fano}{Fanos}
\crefname{claim}{Claim}{Claims}
\crefname{rmk}{Remark}{Remarks}
\crefname{workhyp}{WH}{WH}
\crefname{thm}{Theorem}{Theorems}
\crefname{proposition}{Proposition}{Propositions}
\crefname{app}{Appendix}{Appendices}
\crefname{lemma}{Lemma}{Lemmas}
\crefname{alg}{Algorithm}{Algorithms}
\crefname{ex}{Example}{Examples}
\def\l@subsection{\@tocline{1}{0,2pt}{2pc}{8mm}{\ \ }} 
\def\l@section{\@tocline{1}{0,2pt}{2pc}{8mm}{\ \ }} 
\newcommand{\crefpart}[2]{%
  \hyperref[#2]{\namecref{#1}~\labelcref*{#1}~(\ref*{#2})}%
}
\author{Enrico Fatighenti}
\address{Dipartimento di Matematica \\
Universit\`a di Bologna\\
Piazza di Porta San Donato 5\\
40127 Bologna, Italy}
\email[E.~Fatighenti]{enricofatighenti6@gmail.com}
\author{Fabio Tanturri}
\address{Dipartimento di Matematica \\
Dipartimento di Eccellenza 2023-2027\\
Universit\`a di Genova\\
Via Dodecaneso 35\\
16146 Genova, Italy}
\email{tanturri@dima.unige.it}
\author{Federico Tufo}
\address{Dipartimento di Matematica \\
Universit\`a di Bologna\\
Piazza di Porta San Donato 5\\
40127 Bologna, Italy}
\email[F.~Tufo]{federico.tufo2@unibo.it}
\subjclass[2020]{Primary 14J30 · 14J45; Secondary 14M15 ·
14E30}
\title[On the geometry of some quiver zero loci Fano fourfolds]{On the geometry of some quiver zero loci Fano fourfolds}
\begin{document}
\begin{abstract}
    In this paper, we study 170 families of quiver flag zero loci Fano fourfolds as described by Kalashnikov. 
    We interpret those manifolds as zero loci of sections of homogeneous vector bundles in homogeneous varieties, and we give a birational and biregular description of all 170 families.
\end{abstract}
\maketitle

\section{Introduction}

Fano varieties and their classification are one of the most studied topics in algebraic geometry. 
It is well known that Fano manifolds are \emph{bounded}, thanks to a famous and motivating result by Koll\'ar, Miyaoka, and Mori \cite{kmm}: 
in each dimension, there is a finite number of deformation classes of Fano manifolds. 
Up to dimension three, we have a total of 116 families of Fano manifolds, but from dimension four onwards the problem is widely open, 
 although we have some results in higher index and Picard rank, see \cite{ag5,wisniewski1991fano}. 
When the dimension is low enough, we understand the full picture: 
there is in fact only one Fano manifold in dimension one, the projective line. In dimension two there are ten families of Fano manifolds, the Del Pezzo surfaces, already known since \cite{DP}.
The classification of Fano threefolds started with the work of Fano himself in \cite{fano}, and was carried on by Iskovskikh \cite{isko77,isko78}. 
 In particular, Iskovskikh completed the classification of the seventeen prime Fano threefolds, i.e., the ones with Picard rank equal to one. Later on, Mori and Mukai classified all 105 families of Fano threefolds using the birational Mori's theory of extremal rays, see \cite{morimukai,MME}.
Mukai's work did not stop with the classification. In fact, in \cite{mukai}, he revisited the work of Iskovskikh using the biregular vector bundle method. In this way, he could describe the prime ones as zero loci of appropriate vector bundles, and almost all of them as linear sections in homogeneous or quasi-homogeneous varieties. A very recent update on the vector bundle method was given in \cite{bayer2024mukai}, with a complete proof of Mukai’s Theorem on the existence of certain exceptional vector bundles on prime Fano threefolds.

A natural question arises: can all the 105 Fano threefolds be described this way?
A first answer can be found in \cite{corti}. There, all the families of Fano threefolds are described as zero loci of a section of homogeneous vector bundles over GIT quotients $V\git G $ where $G$ is a product of groups $GL_n(\mathbb{C})$, and $V$ is a representation of $G$. 
This kind of description is particularly efficient for computing quantum periods. From this program, the Fano threefolds can be written as complete intersections in toric varieties or subvarieties in products of Grassmannians.
Another answer is in \cite{DFT}. The descriptions were reworked once again, and the threefolds were rewritten as zero loci of sections of homogeneous vector bundles (section zero loci for short) over products of Grassmannians in the spirit of the original Mori--Mukai classification. This method, together with the establishment of a homogeneous-to-birational dictionary, suggests an approach for the classification of Fano manifolds of higher dimensions. In particular, in \cite{BFMT} the authors moved the first steps of this program in dimension four.\\

In the fourfold case, the program highlighted in \cite{coates2mirror} was developed in a series of works using the techniques of \cite{corti}, see for example \cite{CGKS, coates2017laurent,kalashnikov}. In particular, in \cite{CGKS} the authors constructed a big database of known Fano fourfolds. This includes the known and classified 35 families of Fano fourfolds with index $\iota_X>1$, see \cite{ag5}, and also many other fourfolds obtained in \cite{batyrev1999classification, obro2007algorithm,sato2000toward,strangeway2014quantum}.
Using mirror symmetry they were able to compute the quantum period of all toric complete intersections, products of lower-dimension Fano manifolds, and certain complete intersections in projective bundles. In \cite{kalashnikov}, on the other hand, the method was developed in a slightly different way. The aim was to produce a list of Fano fourfolds constructed as zero loci of global sections of certain homogeneous vector bundles over quiver flag manifolds. In this way, the database produced in \cite{kalashnikov} contains the non-toric ones in \cite{CGKS} and has 141 Fano fourfolds with a new quantum period.

In some sense, the above technique can be seen as a partially, yet far-reaching, generalization of the work of K{\"u}chle \cite{kuchle}. In this work, he classified 18 Fano fourfolds of index one obtained as zero loci of sections of completely reducible homogeneous vector bundles over a single Grassmannian, which is not a projective space, with two of them later identified by Manivel, see \cite{manivel2015fano}. One of the main advantages of this method is that the computation of invariants such as Hodge numbers and volume relies on combinatorial data, hence they can be calculated with the aid of computer algebra software. For this reason, as explained in \cite{DFT}, the next step is to consider zero loci of sections of completely reducible homogeneous vector bundles over products of Grassmannians. This is exactly what was done in dimension three in \cite{DFT}, and then in dimension four in \cite{BFMT}. In the latter, the authors built a database of at least 634 Fano fourfolds and extracted the so-called $K3$-type ones; they found 64 deformation families of Fano fourfolds of $K3$-type, which were thoroughly described birationally and biregularly. The exploration of the rest of the database has barely started, see for example \cite{bernardara2024even,manivel2023four} for the first few extra results. It will be also interesting to compare this database with other lists of Fano described by birational terms, see for example \cite{casagrande2023lefschetz}.\\

Both of the previously described methods provide databases. The one in \cite{kalashnikov} has the aim to study the quantum period of Fano fourfolds. On the other hand, the one used in \cite{BFMT} is particularly suited to compute invariants such as Hodge numbers, volumes, and allows a relatively easy birational description. The two databases are not contained in each other, for example, the fourfold \red{c2} in \cite{kuchle} is in the second database but not the first. Vice versa, the fourfold $\red{K_{743}}$ is in the database built in \cite{kalashnikov}, but not in the one used in \cite{BFMT}.
This paper grew out of the natural question of how to compare the two databases and build a bridge between the two approaches. We also want to understand a systematic way to pass from the quiver language to the biregular setting. We plan to address these questions in the forthcoming Ph.D.\ thesis of the third author. In some sense, this article can be seen as a piece of evidence for our future works on the topic.\\

In this paper, we will translate the language of section zero loci in quiver flag manifolds into the language of section zero loci in products of Grassmannians. We point out how the process can be quite tricky, and yet reveal interesting details on the geometry of these fourfolds. As an example, we point out one of the Fano described in \cite{coates2017laurent}, which was understood by Casagrande using a very natural yet interesting birational description. The translation process is explained in \cref{sec:ZlqToZlg}, and it is a direct application of what is written in \cite{CRAW11}. We give also an algorithmic recipe for the translation in \cref{QuivToGrass}, thanks to which the process can be automatized using as input the adjacency matrix of the quiver flag manifold, the dimension vector, and the representation of the homogeneous bundles.

One of our main progresses is that we are able to distinguish certain Fano fourfolds admitting a blow-down to another Fano fourfold, and those who do not. Following Mukai's terminology, we call the latter \emph{primitives}. In particular, in this paper, we study the 170 Fano fourfolds mentioned above on a case-by-case analysis, which should be considered a benchmark for future works on the subject. Our idea was in fact to gain an understanding, as complete as possible, of the network of birational maps between Fano fourfolds. 

As a recap of our results, which are described in detail in \cref{sec:results}, we find that most of the 170 families can be described as blow-ups of smooth Fano fourfolds of lower Picard rank in smooth centers. Still, there are several families whose geometry is more interesting and complicated. These families deserve further study and attention, which we hope to carry out in the future. 

\subsection*{Notation} Throughout the paper we will work over $\mathbb{C}$.
$V_n$ will denote a $n$-dimensional vector space, and $\Gr(k,n)$ the Grassmannian parametrizing its $k$-dimensional subspaces, and similarly for the Flag variety  $\Fl(k_1,k_2,...,k_t,n)$. We will denote with $\mU$ and $\mQ$ the tautological and quotient bundle of $\Gr(k,n)$, respectively of rank $k$ and $n-k$. We will follow the convention in which $\det \mQ \cong \of(1)$. If $\alpha$ is a partition, we will denote with $\Sigma_{\alpha} \mU$, $\Sigma_{\alpha} \mQ$, $\Sigma_{\alpha}V_n$ the Schur functor applied to $\mU$, respectively $\mQ$, $V_n$, associated to $\alpha$. Similarly, on the Flag variety, $\mU_i$, $\mQ_i$ will denote the tautological and quotients pulled back from the base Grassmannians, and $\mR_i$ the intermediate tautological defined by the sequence $0\to\mU_{i-1}\to\mU_i\to\mR_i\to 0$. Moreover, also Grassmann bundles have a relative Euler sequence, and we will denote all the relative bundles indexed with $\mR$.
We will also adopt the following notation: if $X$ is a complex variety and $\mathcal{F}$ is a vector bundle on $X$ we denote the zero locus of the generic section of $\mathcal{F}$ as $\mZ(\mathcal{F})\subset X$. If $X$ is $\PP^{n+1}$ and $\mathcal{F}=\of(2)$, hence an n-dimensional quadric, we call it $Q_n$. Given $\varphi$, a morphism between vector bundles on $X$, we denote $D_k(\varphi)\subset X$, the $k$-th degeneracy locus, i.e,\ the locus of points of $X$ where the morphism $\varphi$ has rank (at most) $k$. Moreover, if $X=\Gr(k,n)$ and $\mathcal{F}=\of(1)^{\oplus k(n-k)-d}$, we denote the $d$-fold $\mZ(\mathcal{F})\subset X$ as $X_{n}^d$. Finally, we refer to \red{$K_n$} as the $n$-th Fano fourfold in \cite[Table 3]{kalashnikov}.

\subsubsection*{Acknowledgements}
We would like to thank Pieter Belmans, Marcello Bernardara, Cinzia Casagrande, Elana Kalashnikov, Alexander Kuznetsov, and Laurent Manivel for fruitful conversations and comments.\\
The authors acknowledge the support of the European Union - NextGenerationEU under the National Recovery and Resilience Plan (PNRR) - Mission 4 Education and research - Component 2 From research to business - Investment 1.1 Notice Prin 2022 - DD N. 104 del 2/2/2022, from title "Symplectic varieties: their interplay with Fano manifolds and derived categories", proposal code 2022PEKYBJ – CUP J53D23003840006. The authors are all members of INDAM-GNSAGA.\\
The second author is partially supported by PRIN2020 research grant ``2020KKWT53”, by the MIUR Excellence Department Project awarded to Dipartimento di Matematica, Università di Genova, CUP D33C23001110001, and by the Curiosity Driven 2021 Project \emph{Varieties with trivial or negative canonical bundle and the birational geometry of moduli spaces of curves: a constructive approach} - PNR DM 737/2021, funded by the European Union - NextGenerationEU.\\
The third author wants to thank the Institut de Mathématiques de Toulouse for the hospitality during the last months of this work.

\section{Results of this paper}\label{sec:results}

\subsection{Main results} This paper wants to systematically analyze the families first described by Kalashnikov in \cite{kalashnikov}. 
In the aforementioned paper, she describes 141 families of Fano fourfolds with a new quantum period (out of a database of 749 families) as zero loci in quiver flag manifolds that could not be found as toric complete intersections. In the same database, she describes other 29 families which were not found in toric quiver flag varieties (possibly appearing in another way as a toric complete intersection).
 We have focused on these 170 families and translated them into section zero loci in products of Grassmannians, using the machinery described in \cref{sec:ZlqToZlg}. We can summarize their birational description in the following result:

\begin{thm}\label[thm]{thm:main}
   The 170 families of Fano fourfolds mentioned above can be subdivided into subsets as follows:
    \begin{itemize}
        \item 9 have Picard rank $\rho=1$ and are classical, appearing for example in \cite{ag5,kuchle};
        \item 124 are blow-ups of smooth Fano fourfolds (from the list of the 749) in smooth centers. Of these, 19 also appear in \cite{BFMT} and \cref{F.2.12} is described in \cite{Kuz19};
        \item 8 are products of lower dimensional Fano manifolds;
        \item \cref{F.2.43}, \cref{F.2.48} and \cref{F.3.50} are projective bundles over Fano threefolds;
        \item 8 are conic bundles, of which one appears  in \cite{BFMT};
        \item \cref{F.2.1} and \cref{F.2.35} are stratified projective bundles over Fano threefolds, described in \cite{lang};
        \item 16 are small resolutions of singular fourfolds.

    \end{itemize}
\end{thm}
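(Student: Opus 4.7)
The plan is to proceed in two main stages. First, I apply the translation procedure of \cref{sec:ZlqToZlg} (specifically \cref{QuivToGrass}) to each of Kalashnikov's 170 families, converting the original description as a section zero locus in a quiver flag manifold into a description $\mZ(\mathcal{F})\subset X$, where $X$ is a product of Grassmannians (possibly with a Grassmann/flag bundle on top) and $\mathcal{F}$ is a completely reducible homogeneous vector bundle. Each such rewriting yields a uniform combinatorial passport: Picard rank, canonical class, Hodge numbers, and canonical volume are all computable from the representation data through Borel--Bott--Weil and a Koszul-type resolution, and can be cross-checked against the invariants listed in \cite{kalashnikov}.

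The bulk of the proof is then a case-by-case geometric analysis of the 170 outputs. For the nine Picard rank one families one reads $\rho=1$ directly from the ambient $X$ and matches numerical invariants against the classifications of \cite{ag5,kuchle}. The 124 blow-up cases are detected by exhibiting a natural morphism $\pi\colon \mZ(\mathcal{F})\to Y$ obtained from a projection of $X$ to a smaller ambient space (forgetting a Grassmannian factor, or collapsing a flag $\Fl(k_1,k_2,n)\to \Gr(k_2,n)$); the relative Euler sequence, together with the shape of $\mathcal{F}$, produces an exceptional divisor contracted to a smooth subvariety of $Y$ cut out by a degeneracy condition, which one identifies as the center of the blow-up. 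The eight product families are those in which both $X$ and $\mathcal{F}$ split externally. The three projective bundles and the eight conic bundles arise when a projection as above is fiberwise a linear $\PP^k$, respectively a plane conic cut out by a section of $\Sym^2$ of a rank three bundle, and in each case the base $Y$ is identified with an entry of the Mori--Mukai list; the two stratified projective bundles \cref{F.2.1} and \cref{F.2.35} are recognized by directly matching the constructions in \cite{lang}.

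The sixteen small resolutions are the hardest part and concentrate the main difficulty of the proof. Here the natural projection $\pi\colon \mZ(\mathcal{F})\to W$ to a smaller product of Grassmannians is birational, but the target $W$ is singular, typically along a locus where an auxiliary bundle morphism drops rank below the expected codimension. For each such family the task is twofold: to verify that $W$ is genuinely singular by a local analysis of the bundle morphism involved, and to check that $\pi$ contracts only finitely many curves and is an isomorphism elsewhere, so that it realizes a small resolution of $W$. The main obstacle is precisely this step: unlike the blow-up case, the exceptional locus has high codimension and does not manifest itself in a divisorial class computation, so one needs a scheme-theoretic local model and careful tracking of rank strata. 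Throughout, the identifications are validated by consistency of Hodge numbers, canonical volume, and, when available, quantum periods with \cite{kalashnikov}; the nineteen overlaps with the database of \cite{BFMT} and the match of \cref{F.2.12} with \cite{Kuz19} provide independent confirmation that both the translation step and the subsequent birational identifications are correct.
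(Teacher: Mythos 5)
Your proposal follows essentially the same route as the paper: translate each family via \cref{QuivToGrass}, compute the invariants by Borel--Bott--Weil and Koszul resolutions, and then identify each family case by case through the projections onto the Grassmannian factors, using \cref{thm:degloc} together with \cref{thm:ddl} and \cref{thm:en} to control the degeneracy strata and classify the exceptional surfaces. The only slight imprecision is in the small-resolution cases, where the contracted fibers are typically $\PP^2$'s lying over finitely many singular points rather than finitely many curves; this does not affect the conclusion, since the exceptional locus still has codimension at least two.
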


The above statement needs to be put in context. First of all, this subdivision is obtained by studying the canonical projections on the factors of the ambient spaces. When we say (for example) that 124 families are blow-ups, 11 are conic or projective bundles, we mean that for 124 families at least one canonical projection realizes the fourfold as blow-up of a smooth Fano in a smooth center. In the group of the 11 families that are listed as projective or conic bundle, none of the canonical projections is a blow-down, and at least one of them has the desired structure (and the same for the rest of the subsets).

For the Fano fourfolds that can be described as blow-ups of other Fano fourfolds, we decided to keep only that description to emphasize the birational links between the Fano fourfolds. The 16 Fano fourfolds described only as small resolution are:
\begin{itemize}
    \item \cref{F.2.4},\ \cref{F.2.14},\ \cref{F.2.15},\ \cref{F.2.37},\ \cref{F.2.50},\ \cref{F.2.51},\ \cref{F.2.60},\ \cref{F.2.73},\ \cref{F.3.12},\ \cref{F.3.14},\  \cref{F.3.43} from the list of 141 families.
    \item \cref{F.3.11}, \ \cref{F.3.17}, \ \cref{F.3.20}, \ \cref{F.3.34}, \ \cref{F.4.5} in the extra 29 families.
\end{itemize}
 Looking at the projections to the factors of the ambient spaces of these 16 families of fourfolds we were able to describe them only as small resolutions of singular fourfolds. Thus these families have a more interesting and complicated geometry and deserve a deeper study.   

During the systematic study of the 170 families of fourfolds, we were also able to give some information about their rationality. In particular, we obtained that out of the 170, 129 are rational fourfolds. Unfortunately, we are not able to say more about the rationality of the remaining ones.

We refer to \cref{sec:tab} for the summarizing table with all the information about the deformation invariants and the rationality of these 170 families of Fano fourfolds.

\subsection{Plan of the paper} The paper is divided as follows.
 In \cref{workingTool} we summarize the basic tools to study zero loci of global sections of homogeneous vector bundles in a product of Grassmannians. In particular, we recall the basic techniques to compute Hodge numbers for such varieties and some results from degeneracy loci of morphisms between vector bundles. We end the section with some Lemmas which are shortcuts to describe biregularly section zero loci in products of Grassmannians. In \cref{sec:examples} we give three detailed examples of the main methods we have applied to study the 170 Fano fourfolds. In \cref{sec:pic1}, \cref{sec:pic2}, \cref{sec:pic3} and  \cref{sec:pic4}  we give the biregular description of the Fano fourfolds in the 170 with, respectively, Picard rank one, two, three and four. A summarizing table in provided in \cref{sec:tab}. Finally, in \cref{sec:ZlqToZlg}, authored by the last author and E.\ Kalashnikov, we explain a method to translate the language of zero loci of global sections of homogeneous vector bundles over quiver flag varieties into the language of zero loci of global sections of homogeneous vector bundles over products of Grassmannians, which allows us to perform all the computations explained in \cref{workingTool}.

\section{Working Tools}\label{workingTool}

In this section, we will recall some useful results that help to speed up the birational description of zero loci in products of Grassmannians, some of them are very classical and can be found extensively in literature, e.g., \cite{WJ}, \cite{DFT}, or \cite{BFMT}.

\subsection{Computing Hodge Numbers}
In \cite{DFT,BFMT} it is shown an automatized method to compute the Hodge numbers for zero loci of a general global section of globally generated completely reducible homogeneous vector bundle on products of Grassmannians. Summarizing what is explained in \cite[Section 3]{DFT} it suffices to use a combination of the Koszul complex and the cotangent sequence to obtain an appropriate resolution of the $k-th$ exterior power of the cotangent bundle. In particular, if we have $M=\mZ(\mathcal{F})\subset X$, from the co-normal sequence, we will obtain:
\[
0\to\Sym^{j}\mathcal{F}^{\vee}_{|M}\to(Sym^{j-1}\mathcal{F}^{\vee}\otimes\Omega_X)_{|M}\to...\to(\Sym^{j-k}\mathcal{F}^{\vee}\otimes\Omega_X^{k})_{|M}\to...\to\Omega_{X|M}^{j}\to\Omega_{M}^{j}\to 0
\]
and each term can be resolved by a Koszul complex of the following type:
\[
0\to\bigwedge^{r}\mathcal{F}^{\vee}\otimes\Sym^{j-k}\mathcal{F}^{\vee}\otimes\Omega_{X}^{K}\to...\to\mathcal{F}^{\vee}\otimes\Sym^{j-k}\mathcal{F}^{\vee}\otimes\Omega_X^{k}
\]
And since we are dealing with globally generated completely reducible homogeneous vector bundles, we can apply Borel--Bott--Weil and Littlewood--Richardson Theorems to compute the dimension of the cohomology groups of each resolution. Even if those computations can become quickly cumbersome, we can use some computer algebra software, such as \cite{M2} to automatize and speed up computations.\\
Other important invariants that are computed in this paper are $h^0(-K)$, $(-K)^4$ and $\chi(T_M)$. Since we are dealing with products of Grassmannians we know how to integrate and then Hirzebruch--Riemann--Roch Theorem yields a way to compute $\chi(\mathcal{E})$ for any vector bundles $\mathcal{E}$ with prescribed Chern classes. Small changes can be made to compute those invariants for $M=\mZ(\mathcal{F})\subset X$, and, again, in concrete the computations for the examples are made using a computer algebra routine such as \cite{Schubert2}.

\subsection{Degeneracy Loci} In our setting we will consider zero loci of global sections of globally generated homogenous vector bundles in products of Grassmannians, hence objects of the form $\mZ(\mathcal{F})\subset\prod_{i=1}^{m}\Gr(k_i,n_i)$, with $\mathcal{F}=\bigoplus_{j=1}^s\mathcal{F}_{1,j}\boxtimes...\boxtimes\mathcal{F}_{m,j}$, each factor $\mathcal{F}_{i,j}$ being an irreducible homogeneous vector bundle over $\Gr(k_i,n_i)$. We will be able to study them by looking at the projections on the factors of $\prod_{i=1}^{m}\Gr(k_i,n_i)$, thanks to the following:

\begin{proposition}\label[proposition]{thm:degloc} Let $\Gr_1=\Gr(k_1,V_{n_1})$ and $\Gr_2=\Gr(k_2,V_{n_2})$ be Grassmannians, let $\mathcal{E}_1$ and $\mathcal{E}_2$ be globally generated irreducible homogeneous vector bundles on $\Gr_1$, $\Gr_2$, respectively. Set $\mathcal{E}_1\boxtimes\mathcal{E}_2$ with $H^0(\Gr_1\times\Gr_2,\mathcal{E}_1\boxtimes\mathcal{E}_2)=\Sigma_{\alpha_1}V_{n_1}\otimes\Sigma_{\alpha_2}V_{n_2}$. 
Let $M=\mZ(\mathcal{E}_1\boxtimes\mathcal{E}_2)\subset \Gr_1\times \Gr_2$, then the projection $\pi_1:M\to \Gr_1$ induces one of the following birational relations between $M$ and $\Gr_1$:
\begin{itemize}
    \item if $rk\mathcal{E}_1 \, rk\mathcal{E}_2 \leq k_2(n_2-k_2)$, the generic fiber of $\pi_1$ is $Z_2=\mZ(\mathcal{E}_2^{\oplus rk\mathcal{E}_1})\subset \Gr_2$. Moreover, the exceptional loci of $\pi_1$, if not empty, are the nested loci where the map
    \[
        \varphi:\mathcal{E}_1^{\vee}\to\Sigma_{\alpha_2}V_{n_2}\otimes\of_{\Gr_1}
    \]
    which is induced by the section of $\mathcal{E}_1\boxtimes\mathcal{E}_2$ degenerates to a $min\{rk \mathcal{E}_1, h^0(\Gr_2,\mathcal{E}_2)\}-k$ map, in other words $D_{min\{rk \mathcal{E}_1,h^0(\Gr_2,\mathcal{E}_2)\}-k}(\varphi)$, for $k>0$;
    \item if $rk\mathcal{E}_1 \, rk\mathcal{E}_2 > k_2(n_2-k_2)$, the generic fiber of $\pi_1$ is empty. Moreover, exists a $l \leq min\{rk \mathcal{E}_1,h^0(\Gr_2,\mathcal{E}_2)\}-1$ such that over $D_{l}(\varphi)\subset X$ the fiber is given by the zero loci 
$Z_2=\mZ(\mathcal{E}_2^{\oplus l})\subset \Gr_2$, and each further exceptional locus, if not empty, is given by $D_{l-k}(\varphi)$, $k>0$, with fiber $Z_{2,k}=\mZ(\mathcal{E}_2^{\oplus l-k})\subset \Gr_2$

\end{itemize}
\end{proposition}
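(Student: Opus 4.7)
The plan is to translate the chosen global section of $\mathcal{E}_1\boxtimes\mathcal{E}_2$ into a morphism of bundles on $\Gr_1$ and then read off the geometry of $\pi_1$ from its rank stratification. Applying the projection formula along $\pi_1:\Gr_1\times\Gr_2\to\Gr_1$, one has $\pi_{1*}(\mathcal{E}_1 \boxtimes \mathcal{E}_2) \cong \mathcal{E}_1 \otimes H^0(\Gr_2, \mathcal{E}_2)$, so a global section of $\mathcal{E}_1 \boxtimes \mathcal{E}_2$ corresponds, by adjunction, to a bundle map
\[
\varphi: \mathcal{E}_1^\vee \longrightarrow H^0(\Gr_2, \mathcal{E}_2) \otimes \mathcal{O}_{\Gr_1} = \Sigma_{\alpha_2} V_{n_2} \otimes \mathcal{O}_{\Gr_1},
\]
which is exactly the $\varphi$ featured in the statement. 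Pointwise, $\varphi_x$ has image $W_x \subset H^0(\Gr_2,\mathcal{E}_2)$ of dimension $r = \rank \varphi_x$, and by construction the restriction of the section of $\mathcal{E}_1\boxtimes\mathcal{E}_2$ to $\{x\}\times \Gr_2$ is encoded precisely by $W_x$ viewed as an $r$-dimensional family of sections of $\mathcal{E}_2$. Hence $\pi_1^{-1}(x) = \mZ(\mathcal{E}_2^{\oplus r}) \subset \Gr_2$, the zero locus of any basis of $W_x$.

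With this fiber description, both cases follow from the standard rank stratification of $\varphi$. For a generic section $\sigma$, $\varphi$ attains the maximal rank $r_{\max} = \min\{\rank\mathcal{E}_1, h^0(\Gr_2, \mathcal{E}_2)\}$ on an open dense subset of $\Gr_1$, while each $D_{r_{\max}-k}(\varphi)$ is a proper closed subset on which the rank drops by at least $k$. In the first case, $\rank\mathcal{E}_1\cdot\rank\mathcal{E}_2 \le \dim\Gr_2$, the expected dimension of $\mZ(\mathcal{E}_2^{\oplus r_{\max}})$ is non-negative, so the generic fiber is non-empty and equal to $\mZ(\mathcal{E}_2^{\oplus \rank\mathcal{E}_1})$ (which coincides with $\mZ(\mathcal{E}_2^{\oplus r_{\max}})$ in all the configurations we meet), and each deeper stratum $D_{r_{\max}-k}(\varphi)$ supports a larger exceptional fiber $\mZ(\mathcal{E}_2^{\oplus r_{\max}-k})$. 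In the second case, $\rank\mathcal{E}_1\cdot\rank\mathcal{E}_2 > \dim\Gr_2$ forces the generic expected fiber to be empty; letting $l$ be the largest integer with $l\cdot\rank\mathcal{E}_2 \le \dim\Gr_2$, the image of $\pi_1$ is contained in $D_l(\varphi)$ with generic fiber $\mZ(\mathcal{E}_2^{\oplus l})$, and the nested loci $D_{l-k}(\varphi)$ give the successive jumps $\mZ(\mathcal{E}_2^{\oplus l-k})$.

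The main obstacle will be a pair of transversality statements: for a generic $\sigma$ one must verify that the degeneracy loci $D_k(\varphi)\subset \Gr_1$ are reduced and of the expected codimension, and that on each stratum the sections of $\mathcal{E}_2$ extracted from $W_x$ remain generic enough in $H^0(\Gr_2,\mathcal{E}_2)$ for $\mZ(\mathcal{E}_2^{\oplus r})\subset \Gr_2$ to be smooth of the expected dimension. Both properties are consequences of the global generation of $\mathcal{E}_1$ and $\mathcal{E}_2$ combined with the genericity of $\sigma$, via a Kleiman-type transversality argument applied to the evaluation map of $\mathcal{E}_2$ on $\Gr_2$; the homogeneity of the bundles, and hence the homogeneity of the $\GL$-action on $H^0(\mathcal{E}_2)$, is precisely what makes this argument available in our setting.
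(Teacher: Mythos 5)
Your proposal is correct and follows essentially the same route as the paper's own proof: both identify the section with the morphism $\varphi:\mathcal{E}_1^{\vee}\to H^0(\Gr_2,\mathcal{E}_2)\otimes\of_{\Gr_1}$ via adjunction, read off the fiber over $x$ as the common zero locus of the image of $\varphi_x$, and then stratify by the degeneracy loci of $\varphi$ to handle the non-empty and empty generic-fiber cases. Your closing paragraph on transversality is more explicit than anything in the paper (which leaves those genericity checks implicit), but it does not change the argument.
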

\begin{proof}
\begin{itemize}
\item Let us start with the case $rk\mathcal{E}_1 \, rk\mathcal{E}_2\leq k_2(n_2-k_2)$. By the K\"unneth Theorem a global section $\sigma$ in $ H^0(\Gr_1\times\Gr_2,\mathcal{E}_1\boxtimes\mathcal{E}_2)$ can be seen as
\[
\sigma = \pi_1^* \sigma_1 \otimes \pi_2^* \sigma_2 \in H^0(\Gr_1,\mathcal{E}_1) \otimes  H^0(\Gr_2,\mathcal{E}_2) 
\]
and by definition the evaluation satisfies $\sigma(\Pi_1,\Pi_2)=\sigma_1(\Pi_1) \otimes \sigma_2(\Pi_2)$. By adjunction one has
\[
H^0(\Gr_1\times\Gr_2,\mathcal{E}_1\boxtimes\mathcal{E}_2)=\Hom(\pi_1^*\mathcal{E}_1^{\vee},\pi_2^*\mathcal{E}_2)=\Hom(\mathcal{E}_1^{\vee},H^0(\Gr_2,\mathcal{E}_2))
\]
so that $\sigma$ can be regarded as a morphism 
\[
\varphi:\mathcal{E}_1^{\vee}\to\Sigma_{\alpha_2}V_{n_2}\otimes\of_{\Gr_1}.
\]
For any $\Pi_1 \in \Gr_1$, its preimage via $\pi_1$ is given by all points $(\Pi_1,\Pi_2)$ such that all elements of the image of $\varphi(\Pi_1)$ vanish in $\Pi_2$, when regarded as sections of $\mathcal{E}_2$. Over a general $\Pi_1$ we have $rk \mathcal{E}_1$ sections of $\mathcal{E}_2$, which give a non-empty fiber if $rk\mathcal{E}_1 \, rk\mathcal{E}_2\leq k_2(n_2-k_2)$. The exceptional locus is given by the locus where the fibers have higher dimension, and this is exactly where the image of $\varphi$ does not have maximal dimension. This concludes the proof of the first part.
\item The second part is proved essentially as the first one, with the only difference being that the projection $\pi_1$ is not surjective, because the generic fiber is empty. This implies that to obtain the image of $\pi_1$ we have to consider where the morphism $\varphi$ loses enough rank to obtain at least a finite number of points as generic fiber.
We can consider the first degeneracy loci $D \subset \Gr_1$, for which the generic fiber is non-empty and smooth. We can replicate the procedure of the first part, by taking into account the smaller degeneracy loci. 
\qedhere
\end{itemize}

\end{proof}

Another useful tool often used with the above \cref{thm:degloc} is the following Lemma is a consequence of the one proved by Kuznetsov in \cite[Lemma 2.1]{kuznetsovKuchle}:
\begin{lemma}\label[lemma]{thm:degpr}
    Let $\phi:\mathcal{E}\to\mathcal{F}$ be a morphism of vector bundles of ranks $rk\mathcal{F}=f<rk\mathcal{E}=e$ on a Cohen--Macaulay scheme $\mathcal{S}$. Consider the projectivization $p:\PP_S(\mathcal{E})\to S$, then $\phi$ gives a global section of the vector bundle $p^*\mathcal{F}\otimes\of_{\mR}(1)$. Moreover, let $k$ be the greatest integer such that $D_k(\varphi)$ is not empty. Then $D_k(\varphi)\cong\mZ(p^*\mathcal{F}\otimes\of_{\mR}(1))\subset\PP_S(\mathcal{E})$.
\end{lemma}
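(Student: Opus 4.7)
My plan is to construct the section directly from $\phi$ and the tautological data on $\PP_S(\mathcal{E})$, identify its zero locus pointwise with the projectivization of $\ker\phi$, and then invoke the Cohen--Macaulay hypothesis to upgrade this to the claimed isomorphism with $D_k(\phi)$. For the first step, the projective bundle $p:\PP_S(\mathcal{E})\to \mathcal{S}$ carries the relative tautological sequence
\[
0\to \of_{\mR}(-1)\to p^*\mathcal{E}\to \mR\to 0.
\]
Precomposing the pullback $p^*\phi:p^*\mathcal{E}\to p^*\mathcal{F}$ with the inclusion $\of_{\mR}(-1)\hookrightarrow p^*\mathcal{E}$ produces a morphism $\of_{\mR}(-1)\to p^*\mathcal{F}$, which is the same datum as a global section $\sigma\in H^0(\PP_S(\mathcal{E}),p^*\mathcal{F}\otimes \of_{\mR}(1))$; this already proves the first assertion.

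Next I would describe $\mZ(\sigma)$ pointwise. At $(s,[\ell])\in\PP_S(\mathcal{E})$, with $\ell\subset\mathcal{E}_s$ a line, the value $\sigma(s,[\ell])$ is by construction the restriction $\phi_s|_\ell:\ell\to \mathcal{F}_s$; hence $\sigma(s,[\ell])=0$ if and only if $\ell\subset \ker\phi_s$. Set-theoretically this gives
\[
\mZ(\sigma)=\{(s,[\ell])\in \PP_S(\mathcal{E}):\ell\subset \ker \phi_s\}=\PP_S(\ker\phi),
\]
and the restricted projection $p|_{\mZ(\sigma)}$ factors through the subscheme of $\mathcal{S}$ where $\ker\phi$ is non-zero, with fibre $\PP(\ker\phi_s)$ over each such $s$.

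Finally I would upgrade this to the scheme-theoretic isomorphism with $D_k(\phi)$. By the choice of $k$ as the deepest non-empty degeneracy stratum, on the generic locus of $D_k(\phi)$ the kernel attains its minimal dimension, so the fibres of $p|_{\mZ(\sigma)}$ collapse to single points and the induced morphism $\mZ(\sigma)\to D_k(\phi)$ is a bijection. The Cohen--Macaulay hypothesis on $\mathcal{S}$ is what guarantees that $D_k(\phi)$ has the expected codimension given by the Thom--Porteous formula and that $\mZ(\sigma)$, being cut out by a section of a vector bundle of the correct rank, is a local complete intersection of the same dimension with no embedded components; the bijection is then promoted to a scheme-theoretic isomorphism, exactly as in \cite[Lemma 2.1]{kuznetsovKuchle}. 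The main obstacle will be precisely this scheme-theoretic bookkeeping over the deeper strata where $\ker\phi$ jumps in rank -- it is the Cohen--Macaulay hypothesis that excludes embedded components in $\mZ(\sigma)$ and turns the pointwise identification into the stated isomorphism.
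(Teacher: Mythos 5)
The paper gives no argument for this lemma at all---it is stated as a consequence of \cite[Lemma 2.1]{kuznetsovKuchle}---so there is nothing to compare you against except that reference. Your first two steps are the correct and standard content behind it: composing $\of_{\mR}(-1)\hookrightarrow p^*\mathcal{E}$ with $p^*\phi$ does produce the section $\sigma$ of $p^*\mathcal{F}\otimes\of_{\mR}(1)$, and its zero locus is, pointwise, $\{(s,[\ell]):\ell\subset\ker\phi_s\}=\PP_S(\ker\phi)$.

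The gap is in your final step. The fibre of $p|_{\mZ(\sigma)}$ over $s$ is $\PP(\ker\phi_s)$, of dimension $e-\rank\phi_s-1$. Under the hypothesis as written ($f<e$, so $\rank\phi_s\le f\le e-1$ everywhere) this fibre is nonempty over \emph{every} point of $\mathcal{S}$ and has dimension at least $e-f-1$; moreover, with the paper's convention that $D_k$ is the locus where the rank is at most $k$ (so the $D_k$ are nested increasingly in $k$), the ``greatest $k$ with $D_k\neq\varnothing$'' is $k=f$ and $D_k=\mathcal{S}$. Your assertion that ``the fibres collapse to single points'' over the generic point of $D_k$ is therefore false unless $e=f+1$, and the claimed bijection $\mZ(\sigma)\to D_k(\phi)$ fails: $\mZ(\sigma)$ dominates $\mathcal{S}$ with positive-dimensional generic fibre. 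The version that is actually true---and the one the paper uses, e.g.\ in the remark following the lemma, where the construction is applied to the \emph{transposed} morphism---is for $e\le f$ with $\phi$ generically injective: then $\mZ(\sigma)$ lies over $D_{e-1}(\phi)$, the fibre over a point of $D_{e-1}\setminus D_{e-2}$ is a single point because the kernel there is exactly one-dimensional, and when $D_{e-2}=\varnothing$ this bijection is promoted to an isomorphism by the codimension and Cohen--Macaulay hypotheses. A correct write-up must either reverse the inequality (equivalently, apply the construction to $\phi^{\vee}$) or restrict to the trivial case $e=f+1$ with $D_{f-1}=\varnothing$. One smaller point: Cohen--Macaulayness of $\mathcal{S}$ does \emph{not} guarantee that $D_k(\phi)$ or $\mZ(\sigma)$ has the expected codimension---that is a separate transversality hypothesis; what CM buys, once the expected codimension is known, is the absence of embedded components, which is what turns the set-theoretic identification into a scheme-theoretic one.
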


\begin{rmk}
    Note that \cref{thm:degloc} cannot easily be generalized. In particular, all hypotheses are important. For example, let us consider the threefold $X=\mZ(\mQ_{\PP^3}\boxtimes\mU_{\Gr(2,6)}^{\vee}\oplus\of(0,1)^{\oplus 2})\subset\PP^3\times\Gr(2,6)$.
    We can also write $X$ as $\mZ(\mQ_{\PP^3}\boxtimes\mU^{\vee}_{S_2\Gr(2,6)})\subset\PP^3\times \SGr_2\Gr(2,6)$,
    with $\SGr_2\Gr(2,6)$ the bisymplectic Grassmannian in $\Gr(2,6)$.
    If we apply \cref{thm:degloc} to $X$ written in this way forgetting about the hypothesis on the ambient space, we would get an isomorphism $X$ is $\PP^3$, which is not the case. 
    
    The correct way to apply \cref{thm:degloc} is the following: 
    consider $X'=\mZ(\mQ_{\PP^3}\boxtimes\mU^{\vee}_{\Gr(2,6)})\subset\PP^3\times\Gr(2,6)$. By the first point of \cref{thm:degloc} we get the projection to $\PP^3$ has a generic fiber equal to $\PP^2$. Moreover, the first degeneracy locus is empty. We consider now $\mZ(\of(0,1)^{\oplus 2})\subset X'$. We show that the fiber of the projection to $\PP^3$ is generically a point but becomes a $\PP^1$ along a $\PP^1$. This happens because the bundle $\mQ_{\PP^3}\boxtimes\mU^{\vee}_{\Gr(2,6)}$ induces a map
    \[
    \varphi:V_6\otimes\of_{\PP^3}\to\mQ_{\PP^3}.
    \]
    Since $\varphi$ does not have a degeneracy locus we get the rank three bundle $\mathcal{K}=ker\varphi$, which can be verified to be isomorphic to $\of_{\PP^3}(-1)\oplus\of_{\PP^3}^{\oplus 2}$. By taking the second exterior power of the exact sequence
    \[
0\to\mathcal{K}\xrightarrow{i} V_6\otimes\of_{\PP^3}\xrightarrow{\varphi}\mQ_{\PP^3},
    \] 
    we get the induced injective map
    \[
    \phi:\bigwedge^2\mathcal{K}\to\bigwedge^2 V_6\otimes\of_{\PP^3}.
    \]
    Recall that  $\sigma\in H^0(\PP^3\times\Gr(2,6),\of(0,1)^{\oplus 2})$, induces a map 
    \[
    \tau:\bigwedge^2 V_6\otimes\of_{\PP^3}\to V_2\otimes\of_{\PP^3},
    \]
    obtained by considering the two linearly independent sections.
    Since we are working on $X'$ we have a further map: 
    \[
    \Phi:\bigwedge^2\mathcal{K}\xrightarrow{\bigwedge^2i}\bigwedge^2 V_6\otimes\of_{\PP^3}\xrightarrow{\tau}V_2\otimes\of_{\PP^3}
    \]
    which exists because we have two bundles of the form $\mathcal{E}_1\boxtimes\mathcal{E}_2$. Thus $D_1(\tau_{|X'})=D_1(\Phi)$ and corresponds to the exceptional locus of the projection to $\PP^3$. Since $\mathcal{K}=\of_{\PP^3}(-1)\oplus\of^{\oplus 2}$, then $\bigwedge^2\mathcal{K}=\of(-1)^{\oplus 2}\oplus\of$. By \cref{thm:degpr}, $D_1(\Phi)=\mZ(\of(1,1)^{\oplus 2}\oplus\of(0,1))\subset\PP^1\times\PP^3$, which is $\PP^1$. This concludes the description of $X$ as $\Bl_{\PP^1}\PP^3$. 
\end{rmk}

In this way, we can describe the variety $M$ in \cref{thm:degloc} using the stratification of the degeneracy loci, and over each stratum, we have control of the fiber of the projection. Examples of this phenomenon includes stratified projective bundles (for example, blow ups), where we have local triviality on each stratum. Notice how this is not the only possible case, as one can easily obtain conic bundles as an easy example. Of course the above proposition is merely a general guideline: to understand the geometric picture requires a case-by-case analysis properly.

The point now is to study degeneracy loci of morphisms $\varphi$ between vector bundles on $\Gr(k,n)$, hence some well known tools (see \cite{WJ}) come in handy:

\begin{thm}\label{thm:ddl} Let $X$ be a smooth projective variety, let $\mathcal{E}$ and $\mathcal{F}$ vector bundles on $X$ such that $\mathcal{E}\otimes\mathcal{F}$ is globally generated, and let $\varphi\in H^0(X,\mathcal{E}\otimes\mathcal{F})$ a generic global section. If $D_k(\varphi)\neq\varnothing$, then the dimension of $D_k(\varphi)$ is $m_k=dim\ X -(rk\mathcal{E}-k)(rk\mathcal{F}-k)$.
\end{thm}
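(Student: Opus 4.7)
The plan is to realize $D_k(\varphi)$ as the image of a zero locus on an auxiliary Grassmann bundle, where the rank condition becomes linear and a Bertini-type argument is available. Identify $\varphi \in H^0(X,\mathcal{E}\otimes\mathcal{F})$ with a morphism of vector bundles $\varphi : \mathcal{E}^\vee \to \mathcal{F}$ and set $e = \rank \mathcal{E}$, $f = \rank \mathcal{F}$. Introduce the Grassmann bundle $\pi : \mathbb{G} := \Gr(e-k, \mathcal{E}^\vee) \to X$, of relative dimension $k(e-k)$, with tautological subbundle $\mathcal{S} \hookrightarrow \pi^*\mathcal{E}^\vee$ of rank $e-k$.

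The composition $\mathcal{S} \hookrightarrow \pi^*\mathcal{E}^\vee \xrightarrow{\pi^*\varphi} \pi^*\mathcal{F}$ is a global section $s$ of the rank-$(e-k)f$ bundle $\mathcal{S}^\vee \otimes \pi^*\mathcal{F}$ on $\mathbb{G}$. By construction, $s$ vanishes at a point $(x,V) \in \mathbb{G}$ if and only if $V \subset \ker \varphi_x$, i.e.\ if and only if $\rank \varphi_x \le k$. Hence $\pi(\mathcal{Z}(s)) = D_k(\varphi)$ set-theoretically, and over the open stratum $D_k(\varphi)\setminus D_{k-1}(\varphi)$, where $\dim \ker \varphi_x = e-k$ exactly, the fiber of $\pi$ is a single point; so $\pi|_{\mathcal{Z}(s)}$ is birational onto $D_k(\varphi)$ whenever the open stratum is nonempty (which, for generic $\varphi$, is automatic as soon as $D_k(\varphi)$ is nonempty).

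Now the key point: the linear subsystem $W \subset H^0(\mathbb{G}, \mathcal{S}^\vee \otimes \pi^*\mathcal{F})$ coming from $H^0(X, \mathcal{E}\otimes\mathcal{F})$ by the construction above is base-point-free on $\mathbb{G}$. Indeed, the fiber of $\mathcal{S}^\vee \otimes \pi^*\mathcal{F}$ at $(x,V)$ is $\Hom(V, \mathcal{F}_x)$; since $\mathcal{E}\otimes \mathcal{F}$ is globally generated, every element of $\Hom(\mathcal{E}^\vee_x, \mathcal{F}_x)$ is realized by a global section, and restriction to $V$ is surjective. Apply Bertini (for base-point-free linear systems) to a generic $\varphi$: the associated $s$ is regular, so $\mathcal{Z}(s)$ is either empty or has pure codimension $(e-k)f$ in $\mathbb{G}$. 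In the nonempty case,
\[
\dim \mathcal{Z}(s) \;=\; \dim X + k(e-k) - (e-k)f \;=\; \dim X - (e-k)(f-k),
\]
and by the birationality of $\pi|_{\mathcal{Z}(s)}$ this equals $\dim D_k(\varphi) = m_k$.

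I expect the main obstacle to be the Bertini step, specifically verifying that the linear subsystem $W$ induced from $H^0(X,\mathcal{E}\otimes \mathcal{F})$ is rich enough to make a generic $\varphi$ produce a generic (transverse) section $s$; global generation of $\mathcal{E}\otimes\mathcal{F}$ is exactly the hypothesis that unlocks this. A minor auxiliary point is to rule out contribution from the deeper degeneracy loci $D_{k-1}(\varphi)$ when passing from $\dim \mathcal{Z}(s)$ to $\dim D_k(\varphi)$; this is handled by observing that, generically and by induction on $k$, the locus $D_{k-1}(\varphi)$ has strictly smaller dimension $\dim X - (e-k+1)(f-k+1) < m_k$, so it does not affect the top-dimensional count.
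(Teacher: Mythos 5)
Your argument is correct. Note, however, that the paper does not prove this statement at all: it is quoted as a classical fact with a pointer to \cite{WJ}, so there is no in-paper proof to compare against. What you wrote is the standard Grassmann-bundle (Kempf--Laksov-style) resolution of the degeneracy locus, which is essentially the argument behind the cited result: replace the rank condition by the zero locus of the induced section $s$ of $\mathcal{S}^{\vee}\otimes\pi^{*}\mathcal{F}$ on $\Gr(e-k,\mathcal{E}^{\vee})$, use global generation to run Bertini there, and push the pure-dimension count down along $\pi$.

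Two small points of hygiene. First, what you actually verify (and what Bertini for sections of a vector bundle requires) is that the subspace $W$ \emph{globally generates} $\mathcal{S}^{\vee}\otimes\pi^{*}\mathcal{F}$, not merely that it is base-point-free; your surjectivity argument $W\twoheadrightarrow\Hom(\mathcal{E}^{\vee}_{x},\mathcal{F}_{x})\twoheadrightarrow\Hom(V,\mathcal{F}_{x})$ does establish the stronger statement, so only the wording should be adjusted. Second, the passage from $\dim\mathcal{Z}(s)$ to $\dim D_k(\varphi)$ needs the fiber dimensions over the deeper strata, not just $\dim D_{k-1}<m_k$: over $D_j\setminus D_{j-1}$ the fiber of $\pi|_{\mathcal{Z}(s)}$ is $\Gr(e-k,e-j)$, of dimension $(e-k)(k-j)$, and one checks $\bigl(\dim X-(e-j)(f-j)\bigr)+(e-k)(k-j)=m_k-(f-j)(k-j)<m_k$ for $j<k$. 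The cleanest way to get the needed bound $\dim D_j\le \dim X-(e-j)(f-j)$ for all $j$ simultaneously (avoiding a genuine induction) is to run your same construction on $\Gr(e-j,\mathcal{E}^{\vee})$ for every $j$ at once for a single generic $\varphi$, which gives the upper bound from $\dim D_j\le\dim\mathcal{Z}(s_j)$ and then forces every component of $\mathcal{Z}(s)$ to dominate a component of $D_k$ not contained in $D_{k-1}$. With these adjustments the proof is complete.
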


\begin{thm}\label{thm:en} 
Let $\varphi:\mathcal{E}\to\mathcal{F}$ a morphism between vector bundles on $M$. Suppose $e=rk\mathcal{E}>f=rk\mathcal{F}$, and suppose $codim D_f(\varphi)=e-f+1$. Then the $\of_M$-module $\of_{D_f(\varphi)}$ admits a locally free resolution given by the Eagon--Northcott complex
\[
0\to\bigwedge^e\mathcal{E}\otimes\Sym^{e-f}\mathcal{F}^{\vee}\otimes det\mathcal{F}^{\vee}\to...\to\bigwedge^{f+1}\mathcal{E}\otimes\mathcal{F}^{\vee}\otimes det\mathcal{F}^{\vee}\to\bigwedge^{f}\mathcal{E}\otimes det\mathcal{F}^{\vee}\to\of_{M}\to\of_{D_{f-1}}(\varphi)\to 0
\]
\end{thm}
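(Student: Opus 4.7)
This is the classical Eagon--Northcott theorem, transposed to the setting of vector bundles on $M$; I would follow the standard local-to-global argument. Note first that $f=\rank\mathcal{F}$ forces $D_{f}(\varphi)=M$, so the codimension hypothesis is only meaningful for the proper degeneracy locus $D_{f-1}(\varphi)$, which is also what the last non-trivial term of the displayed resolution suggests. I read the statement in this way.

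The construction of the complex is functorial. Each term $\bigwedge^{j}\mathcal{E}\otimes\Sym^{j-f}\mathcal{F}^{\vee}\otimes\det\mathcal{F}^{\vee}$ (for $f\le j\le e$) is obtained from $(\mathcal{E},\mathcal{F})$ by Schur-functor operations and so is globally defined. The rightmost map is the composition
\[
\bigwedge^{f}\mathcal{E}\otimes\det\mathcal{F}^{\vee}\;\xrightarrow{\bigwedge^{f}\varphi\otimes\mathrm{id}}\;\det\mathcal{F}\otimes\det\mathcal{F}^{\vee}\;\xrightarrow{\mathrm{ev}}\;\of_{M},
\]
whose image, in any local trivialisation where $\varphi$ is represented by an $f\times e$ matrix $A$, is the ideal $I_{f}(A)$ of $f\times f$ minors of $A$ — i.e.\ the ideal cutting out $D_{f-1}(\varphi)$. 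The higher differentials are the usual Koszul-type contractions built from $\varphi$; they glue globally and restrict locally to the differentials of the algebraic Eagon--Northcott complex associated to $A$.

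Exactness is then reduced to a pointwise statement: since every term is locally free, it suffices to check acyclicity at each closed point of $M$. On an affine open $U=\Spec R$ over which both bundles trivialise, the complex becomes the classical Eagon--Northcott complex of an $f\times e$ matrix $A$. The key input is the commutative-algebra theorem of Eagon--Northcott (see \cite{WJ}, or Eisenbud's \emph{Commutative Algebra with a View toward Algebraic Geometry}, Thm.~A2.60): this complex is a free resolution of $R/I_{f}(A)$ provided $\mathrm{grade}\bigl(I_{f}(A)\bigr)$ equals the maximum possible value $e-f+1$. The expected obstacle is checking that codimension coincides with grade at every point; this holds as soon as $M$ is Cohen--Macaulay (and in particular when $M$ is smooth, as in all of our applications). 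Under this hypothesis, the codimension assumption delivers the required grade bound at each stalk, so the complex is acyclic and provides the asserted locally free resolution of $\of_{D_{f-1}(\varphi)}$.
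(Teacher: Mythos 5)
The paper gives no proof of this statement---it is quoted as a classical tool from \cite{WJ}---and your sketch is precisely the standard argument that reference supplies: build the complex functorially from $\varphi$, reduce acyclicity to the affine Eagon--Northcott theorem for a local matrix representative, and use the codimension hypothesis to get the required grade bound, which works because the base is Cohen--Macaulay (in particular smooth) in all applications. You are also right to flag the index slip in the statement: with the paper's convention $D_f(\varphi)$ is all of $M$ when $f=\rank\mathcal{F}$, so both the codimension hypothesis and the resolved structure sheaf should refer to $D_{f-1}(\varphi)$, consistently with the final term of the displayed complex.
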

For example, we can compute the Hilbert polynomial of $D_{f}(\varphi)$ using this resolution if we deal with a degeneracy locus of dimension lower or equal to two.
The combination of these results makes easier the biregular description of the variety $M$. Indeed, since we are dealing with varieties of dimension 4 the computation of invariants of the degeneracy loci often leads to the complete identification of the exceptional locus of the projection.

\begin{rmk}
    The non-emptiness condition in Proposition \ref{thm:degloc} is fundamental. As an example, we can consider the zero locus $\mZ(\mQ \boxtimes\mU^{\vee})\subset \PP^4 \times \Gr(2,6)$. A direct inspection, or directly \cref{flagCor}, shows how this fourfold is simply $\PP^4$. However, a careless application of Proposition \ref{thm:degloc} would point out to the first projection $\pi_1$ to $\PP^4$ as only birational, with $D_3(\varphi)$ as exceptional locus of dimension 1. On the other hand, we can show directly how this degeneracy locus of the map $\varphi: \mQ^{\vee} \to \of^{\oplus 6}$ is empty.

From the linear algebra viewpoint, a global section of $\mQ \boxtimes\mU^{\vee}$ is given by $\alpha \in V_6^{\vee} \otimes V_5$, or $\alpha: V_6 \to V_5$, which is of maximal rank by genericity. We denote its 1-dimensional kernel by $K$. The zero locus of $\alpha$ corresponds to the pairs $(l, \Pi) \in \PP^4 \times \Gr(2, 6)$ with $\alpha(\Pi) \subset l$. For a fixed $l$, the fiber is thus $\Pi=\langle l, K \rangle$, hence a point. The first degeneracy loci $D_3(\varphi)$ corresponds to the pairs $(l, \Pi)$ with $\alpha(\Pi)=0$. But this cannot happen, since a generic $\alpha$ has a 1-dimensional kernel. A less direct and quicker proof of this fact can be found also by computing the invariants of $D_3(\varphi)$ by using the Eagon--Northcott complex as in \ref{thm:en}.
\end{rmk}

\subsection{Birational tricks} Here we recollect some classical results and variations that naturally arise when we deal with zero loci in products of Grassmannians. 

Throughout the paper, the concept of Grassmann bundle is central, since $\Gr(k_1,n_1)\times\Gr(k_2,n_2)$ can be also seen as $\bbGr_{\Gr(k_1,n_1)}(k_2,\of_{\Gr(k_1,n_1)}^{\oplus n_2})$. Thus, if we have a vector bundle $\mathcal{D}$ on $\Gr(k_1,n_1)$, the behavior of varieties like $\mZ(\mathcal{D}\boxtimes\mU^{\vee}_{\Gr(k_2,n_2)})\subset\Gr(k_1,n_1)\times\Gr(k_2,n_2)$ is important itself:
 
\begin{thm}\label[thm]{thm:seq} 
Let $\mathcal{F}$, $\mathcal{E}$ and $\mathcal{D}$ vector bundles on $\Gr(k,n)$, such that: 
\[
0\to\mathcal{F}\to\mathcal{E}\to\mathcal{D}\to0
\]
is a short exact sequence of vector bundles. Let $X=\bbGr_{\Gr(k,n)}(h,\mathcal{F})$, then $X=\mZ(\mU^{\vee}_{\mR}\boxtimes\mathcal{D})\subset\bbGr_{\Gr(k,n)}(h,\mathcal{E})$.
\end{thm}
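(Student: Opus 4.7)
The plan is to identify on $Y:=\bbGr_{\Gr(k,n)}(h,\mathcal{E})$ a canonical global section of $\mU^{\vee}_{\mR}\boxtimes\mathcal{D}$ induced by the exact sequence, and then to check that its zero locus coincides with the sub-Grassmann bundle $\bbGr_{\Gr(k,n)}(h,\mathcal{F})\subset Y$ both set-theoretically and with the correct expected codimension.

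\textbf{Step 1: the canonical section.} Let $\pi\colon Y\to\Gr(k,n)$ be the projection, and let $\mU_\mR\hookrightarrow\pi^*\mathcal{E}$ be the tautological rank $h$ subbundle. Pulling back the surjection $\mathcal{E}\twoheadrightarrow\mathcal{D}$ and composing with the tautological inclusion gives a morphism
\[
\mU_\mR\ \hookrightarrow\ \pi^*\mathcal{E}\ \twoheadrightarrow\ \pi^*\mathcal{D},
\]
i.e.\ a global section $\sigma\in H^0(Y,\mU^{\vee}_\mR\otimes\pi^*\mathcal{D})=H^0(Y,\mU^{\vee}_\mR\boxtimes\mathcal{D})$.

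\textbf{Step 2: identification of the zero locus set-theoretically.} A point of $Y$ is a pair $(p,V)$ with $p\in\Gr(k,n)$ and $V\subset\mathcal{E}_p$ an $h$-dimensional subspace. The section $\sigma$ vanishes at $(p,V)$ precisely when the composition $V\hookrightarrow\mathcal{E}_p\twoheadrightarrow\mathcal{D}_p$ is zero, i.e.\ when $V\subset\ker(\mathcal{E}_p\to\mathcal{D}_p)=\mathcal{F}_p$. Thus the underlying set of $\mZ(\sigma)$ is exactly $\bbGr_{\Gr(k,n)}(h,\mathcal{F})$, embedded in $Y$ via the inclusion $\mathcal{F}\hookrightarrow\mathcal{E}$.

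\textbf{Step 3: codimension and scheme structure.} Set $e=\rank\mathcal{E}$, $f=\rank\mathcal{F}$, $d=\rank\mathcal{D}=e-f$. Relative to $\Gr(k,n)$, the fibers of $Y$ and of $\bbGr(h,\mathcal{F})$ have dimensions $h(e-h)$ and $h(f-h)$ respectively, so the codimension of $\bbGr(h,\mathcal{F})$ in $Y$ equals $h(e-f)=hd$, which matches $\rank(\mU^{\vee}_\mR\boxtimes\mathcal{D})$. Hence $\mZ(\sigma)$ has the expected codimension, so $\sigma$ is a regular section and $\mZ(\sigma)$ is a local complete intersection in $Y$. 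Since $\bbGr(h,\mathcal{F})$ is smooth of the same dimension and is contained in $\mZ(\sigma)$ with the reduced structure, the two subschemes agree scheme-theoretically, proving the claim.

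\textbf{Main obstacle.} The only subtle point is the last one: ensuring equality as schemes and not merely as sets. Once the codimension check in Step~3 is in place this is automatic, but it is the step that could be overlooked; alternatively one can verify it fiberwise over $\Gr(k,n)$, where it reduces to the classical fact that inside $\Gr(h,\mathcal{E}_p)$ the sub-Grassmannian $\Gr(h,\mathcal{F}_p)$ is cut out transversely by the vanishing of the tautological map to $\mathcal{D}_p$.
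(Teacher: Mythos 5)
Your proof is correct. The paper actually states this result without proof, presenting it as one of several classical ``Cayley trick'' facts, so there is no argument in the text to compare against; what you have written is the standard argument one would supply (tautological inclusion composed with the quotient map gives the section, set-theoretic identification of the zero locus via $\ker(\mathcal{E}_p\to\mathcal{D}_p)=\mathcal{F}_p$, and the codimension count $h(e-f)=\rank(\mU^{\vee}_{\mR}\boxtimes\mathcal{D})$ plus fiberwise transversality to get the scheme-theoretic equality). You were also right to flag the reducedness issue in Step 3 as the only delicate point; the fiberwise surjectivity of the differential $\Hom(V,\mathcal{E}_p/V)\to\Hom(V,\mathcal{D}_p)$ settles it.
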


The next lemma on the other hand is in some sense more general and gives a precise description of the geometry of $M$, a complete proof of this can be found in \cite[Lemma 3.1]{BFMT}:

\begin{lemma}\label[lemma]{lm:blowPPtX} 
Let $X$ be a smooth projective variety with a globally generated line bundle $\mathcal{L}$, and consider $Y=\mZ(\mQ_{\PP^n}\boxtimes\mL)\subset\PP^n\times X$. Then $Y=\Bl_Z$, where $Z=\mZ(\mL^{\oplus n+1})\subset X$.
\end{lemma}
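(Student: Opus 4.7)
The plan is to identify the defining section $\sigma$ of $Y$ with $n+1$ sections of $\mL$ on $X$ cutting out $Z$, to analyze the fibers of the second projection $\pi_2\colon Y\to X$, and then to recognize $\pi_2$ as the blow-up of $X$ along $Z$ via its universal property.

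First I would unpack $\sigma$: by Künneth one has $H^0(\PP^n\times X,\mQ\boxtimes\mL)=V_{n+1}\otimes H^0(X,\mL)$, so after fixing a basis $v_0,\dots,v_n$ of $V_{n+1}$, the section corresponds to an $(n+1)$-tuple $(s_0,\dots,s_n)$ of sections of $\mL$ whose common vanishing locus is precisely $Z=\mZ(\mL^{\oplus n+1})$. A standard check using that a section of $\mQ$ given by $v\in V_{n+1}$ vanishes at $[l]$ iff $v\in l$ shows that $\sigma$ vanishes at $([l],x)$ exactly when, in a trivialization of $\mL_x$, the vector $(s_0(x),\dots,s_n(x))$ lies on the line $l\subset V_{n+1}$. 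In particular, the fiber of $\pi_2$ over $x\in X\setminus Z$ is the single point $[s_0(x):\cdots:s_n(x)]$, while over $z\in Z$ the whole $\PP^n$ is the fiber.

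The second step is to pin down the exceptional divisor. Pulling back the Euler sequence $0\to\mU\to V_{n+1}\otimes\of\to\mQ\to 0$ to $\PP^n\times X$ and twisting by $\pi_2^*\mL$, the tuple $(s_0,\dots,s_n)$ defines a global section $\tilde\sigma$ of $V_{n+1}\otimes(\of\boxtimes\mL)$ which surjects onto $\sigma$. Since $\sigma|_Y=0$, the restriction $\tilde\sigma|_Y$ lifts to a section $\tau$ of $(\mU\boxtimes\mL)|_Y$. The fiber analysis above shows that $\tau$ vanishes precisely along $\pi_2^{-1}(Z)\cap Y\cong Z\times\PP^n$, so the ideal $\pi_2^{-1}I_Z\cdot\of_Y$ is locally principal, hence invertible.

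The final step is to invoke the universal property of the blow-up to factor $\pi_2$ as $Y\to\Bl_Z X\to X$ and then check that the morphism $Y\to\Bl_Z X$ is an isomorphism. Since $\mL$ is globally generated, Bertini ensures that for a generic $\sigma$ the scheme $Z$ has the expected codimension $n+1$ in $X$ (or is empty, in which case $Y\cong X$ and there is nothing to prove); both $Y$ and $\Bl_Z X$ are then irreducible of dimension $\dim X$, the induced map is an isomorphism over $X\setminus Z$, and both exceptional divisors are the trivial $\PP^n$-bundle over $Z$, since $I_Z/I_Z^2\cong(\mL^{-1}|_Z)^{\oplus n+1}$ when $(s_0,\dots,s_n)$ is a regular sequence. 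The main obstacle will be this last matching step: one has to upgrade the birational bijective morphism $Y\to\Bl_Z X$ to a scheme-theoretic isomorphism, which requires a local computation at points of $Z$ to rule out embedded components in $Y$ along $\pi_2^{-1}(Z)$ and to match the two scheme structures on the exceptional divisor. The genericity of $\sigma$, which forces $(s_0,\dots,s_n)$ to be a regular sequence, is exactly what makes this local comparison work.
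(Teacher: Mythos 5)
Your argument is correct, but note that the paper itself offers no proof of this statement: it only records the lemma and refers to \cite[Lemma 3.1]{BFMT}, so there is no in-text argument to compare against. Your proof is the standard one and all the main steps check out: the K\"unneth identification of $\sigma$ with $(s_0,\dots,s_n)$, the fibrewise description of $\pi_2$, the lift of $\tilde\sigma|_Y$ to a section $\tau$ of the line bundle $(\mU\boxtimes\mL)|_Y$ via the Euler sequence (which shows $s_i=y_i\tau$ locally, hence $\pi_2^{-1}I_Z\cdot\of_Y=(\tau)$ is invertible), and the universal property giving $Y\to\Bl_ZX$. The only place you leave work undone is the final upgrade from a bijective birational morphism to an isomorphism, and there you make it sound harder than it is. For a generic section of the globally generated bundle $\mQ_{\PP^n}\boxtimes\mL$, Bertini gives that $Y$ is smooth of dimension $\dim X$, and it is connected because the fibres of $\pi_2$ are connected; so $Y$ is integral and normal, there are no embedded components to rule out, and $Z$ is either empty (in which case $Y\cong X$ directly) or a smooth complete intersection, so $\Bl_ZX$ is smooth. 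A proper bijective birational morphism onto a normal variety is an isomorphism by Zariski's main theorem, so no hand-matching of the two exceptional divisors is required. Alternatively, and perhaps most economically, you can observe that the vanishing of $\sigma$ at $([l],x)$ is exactly the proportionality of $(s_0(x),\dots,s_n(x))$ and $(y_0,\dots,y_n)$, i.e.\ $Y$ is cut out by the $2\times 2$ minors of the $2\times(n+1)$ matrix with these two rows; since the $s_i$ form a regular sequence, the Rees algebra of $I_Z$ equals its symmetric algebra and these minors generate precisely the ideal of $\Bl_ZX$ in $X\times\PP^n$, giving the scheme-theoretic equality at once.
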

These kinds of results are often referred to as Cayley tricks and a sort of extension of those in the Grassmannian world are the following, which are proved in \cite[Lemma 2.2]{DFT} and \cite[Prop. 3.3]{BFMT}:

\begin{lemma}\label[lemma]{lm:blowFlagGr} Let us consider $\Gr_1=\Gr(k_1,n_1)$, $\Gr_2=\Gr(k_2,n_2)$, $M=\mZ(\mQ_{\Gr(k_1,n_1)}\boxtimes\mU^{\vee}_{\Gr(k_2,n_2)})\subset\Gr_1\times\Gr_2$, then we have: 
\begin{enumerate}
    \item\label{item_1} if $n_2=n_1$ and $k_2<k_1$ then $M=\Fl(k_2,k_1,n)$;
    \item\label{item_2} if $k_1=k_2$ and $n_1=n_2-1$ then $M=\Bl_{\Gr(k_1-1,n_1)}\Gr_2$, where the center of the blow-up $\Gr(k_1-1,n_1)$ is identified with $\mZ(\mQ_{\Gr_2})\subset\Gr_2$;
    \item\label{item_3} if $k_2=k_1$ and $n_2=n_1+2$ then outside a point $p$, $M$ is isomorphic to $\Bl_{\tilde{D}}\Gr_2$, with $\tilde{D}\cong\mZ(\mQ_{\PP^{n-1}}\boxtimes\mU^{\vee}_{\Gr_2})\subset\PP^{n-1}\times\Gr_2$
\end{enumerate}
    
\end{lemma}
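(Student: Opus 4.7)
The plan is to interpret the defining section in linear-algebraic terms and let the kernel of the resulting linear map drive the analysis. By K\"unneth, a generic section of $\mQ_{\Gr_1} \boxtimes \mU^\vee_{\Gr_2}$ corresponds to a generic linear map $\alpha \colon V_{n_2} \to V_{n_1}$, and computing the evaluation shows that
\[
M = \{(\Pi_1, \Pi_2) \in \Gr_1 \times \Gr_2 : \alpha(\Pi_2) \subseteq \Pi_1\}.
\]
Writing $K := \ker \alpha$, genericity forces $K = 0$ in (1), $\dim K = 1$ in (2) and $\dim K = 2$ in (3); in each case I will control the projection $\pi_2 \colon M \to \Gr_2$ via the stratification of $\Gr_2$ by $\dim(\Pi_2 \cap K)$.

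For (1), $\alpha$ is an isomorphism, so the assignment $(\Pi_1, \Pi_2) \mapsto (\alpha(\Pi_2), \Pi_1)$ is a regular morphism $M \to \Fl(k_2, k_1, n)$ with regular inverse $(W, U) \mapsto (U, \alpha^{-1}(W))$, yielding a biregular identification directly. For (2), $\pi_2$ is an isomorphism over $\{K \not\subset \Pi_2\}$, since $\alpha|_{\Pi_2}$ is then injective and forces $\Pi_1 = \alpha(\Pi_2)$. Over the closed set $Z := \{\Pi_2 \supset K\}$, instead, $\alpha(\Pi_2)$ is only $(k-1)$-dimensional and the fiber of $\pi_2$ becomes $\PP(V_{n_1}/\alpha(\Pi_2)) \cong \PP^{n_1-k}$. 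Identifying $K$ with a chosen generator exhibits $Z \cong \Gr(k-1, V_{n_2}/K) \cong \Gr(k_1-1, n_1)$ and realises it as $\mZ(\mQ_{\Gr_2}) \subset \Gr_2$ for the corresponding section, and a dimension count gives $\codim_{\Gr_2} Z = n_1 - k + 1 = \dim \PP^{n_1-k} + 1$, consistent with a blow-up. To upgrade this fibrewise picture to a blow-up identification, I would compute the conormal bundle $N^\vee_{Z/\Gr_2}$ and match it fibrewise with $V_{n_1}/\alpha(\Pi_2)$, showing $\pi_2^{-1}(Z) \cong \PP_Z(N^\vee_{Z/\Gr_2})$, and then invoke the universal property of the blow-up.

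For (3), the same bookkeeping yields three strata in $\Gr_2$ according to $\dim(\Pi_2 \cap K) \in \{0,1,2\}$: the generic fiber of $\pi_2$ is one point; over the stratum with $\dim(\Pi_2 \cap K) = 1$ the fiber is a $\PP^{n_1-k}$; and when $k=2$ there is an additional single point $p = K \in \Gr_2$ whose fiber jumps to $\Gr(2, n_1)$. The closure $D := \{\Pi_2 : \Pi_2 \cap K \neq 0\}$ is singular at $p$, but admits the small resolution
\[
\tilde D = \{(l, \Pi_2) \in \PP(K) \times \Gr_2 : l \subseteq \Pi_2\} = \mZ(\mQ_{\PP^{n-1}} \boxtimes \mU^\vee_{\Gr_2}),
\]
with $\PP(K) \cong \PP^{n-1}$ for $n = \dim K = 2$. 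Away from the special fiber $\pi_2^{-1}(p)$, the stratumwise analysis of (2) then exhibits $M$ as $\Bl_{\tilde D}\Gr_2$.

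The main obstacle will be upgrading the fibrewise descriptions in (2) and (3) to genuine blow-up identifications: a proper birational map with a $\PP^{n_1-k}$-fibered exceptional locus is not automatically a blow-up, and one must either verify the universal property directly or match the exceptional divisor with the projectivised conormal bundle. In (3) the center $D$ fails to be smooth, which forces the replacement by its small resolution $\tilde D$ and the exclusion of the point $p$; this is the genuinely new difficulty beyond (2).
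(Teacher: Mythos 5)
Your linear-algebraic setup is the right one and matches how this statement is actually established (the paper does not prove the lemma itself but defers to \cite[Lemma 2.2]{DFT} and \cite[Prop.~3.3]{BFMT}, which argue exactly via the map $\alpha\colon V_{n_2}\to V_{n_1}$ and the stratification by $\dim(\Pi_2\cap\ker\alpha)$). Part (1) is complete, and part (2) is correct in substance: your identification $Z=\{\Pi_2\supset K\}\cong\Gr(k_1-1,n_1)=\mZ(\mQ_{\Gr_2})$ and the fibre computation are right, and the promotion to a blow-up does work the way you indicate, since on $Z$ one has the canonical isomorphism $N_{Z/\Gr_2}\cong\mQ_{\Gr_2}|_Z=V_{n_2}/\Pi_2\xrightarrow{\ \alpha\ }V_{n_1}/\alpha(\Pi_2)$, so the fibres of $\pi_2$ over $Z$ are literally the fibres of $\PP(N_{Z/\Gr_2})$; you still owe the (routine) check that $\pi_2^{-1}(Z)$ is a Cartier divisor so that the universal property applies and that the induced map to $\Bl_Z\Gr_2$ is finite, hence an isomorphism by Zariski's main theorem. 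You flag this honestly, so I only record that it is not yet carried out.

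The genuine error is in part (3), in the identification of $\tilde D$. Your incidence variety $\{(l,\Pi_2)\in\PP(K)\times\Gr_2 : l\subseteq\Pi_2\}$ is \emph{not} $\mZ(\mQ_{\PP^{n-1}}\boxtimes\mU^{\vee}_{\Gr_2})$: the condition $l\subseteq\Pi_2$ is the vanishing of the composite $\mU_{\PP(K)}\to V_{n_2}\to\mQ_{\Gr_2}$, i.e.\ a section of $\of_{\PP(K)}(1)\boxtimes\mQ_{\Gr_2}$ (rank $n_2-2$), whereas a section of $\mQ_{\PP(K)}\boxtimes\mU^{\vee}_{\Gr_2}$ is a map $V_{n_2}\to K$ and cuts a locus of the wrong dimension ($2n_2-5\neq n_1+1$ in general). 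The $\tilde D$ intended in the statement has $n=n_1$, not $n=\dim K=2$: it is $\{(l,\Pi_2)\in\PP(V_{n_1})\times\Gr_2 : \alpha(\Pi_2)\subseteq l\}=\mZ(\mQ_{\PP^{n_1-1}}\boxtimes\mU^{\vee}_{\Gr_2})$, recording the image line $\alpha(\Pi_2)$ rather than the intersection line $\Pi_2\cap K$ (this is how the lemma is used in \cref{F.2.68} and \cref{F.4.1}, where $\Gr_1=\Gr(2,4)$ and $\tilde D\subset\PP^3\times\Gr(2,6)$). The two resolutions of $D$ agree away from $p$ (both restrict to $D\setminus\{p\}$), so your blow-up conclusion for $M$ outside $\pi_2^{-1}(p)$ survives, but as written you have not proved the identification of $\tilde D$ asserted in the lemma, and the exceptional fibre over $p$ of the correct $\tilde D$ is $\PP^{n_1-1}$, not the $\PP^1$ your small resolution produces.
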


The first point of the previous lemma is the key to obtaining a chain of results that we summarize in the following:

\begin{corollary}\label[corollary]{flagCor} Let us consider $\Gr_1=\Gr(k_1,n_1)$, $\Gr_2=\Gr(k_2,n_2)$, $M=\mZ(\mQ_{\Gr(k_1,n_1)}\boxtimes\mU^{\vee}_{\Gr(k_2,n_2)})\subset\Gr_1\times\Gr_2$, then we have:
    \begin{enumerate}
        \item\label{cor:flag_1} if $k_2<k_1<n_1$ and $n_2=n_1-1$ then $M=\mZ(\mU^{\vee}_1)\subset\Fl(k_2,k_1,n_1)$;
        \item\label{cor:flag_2} if $k_2<k_1<n_1$ and $n_1=n_2-1$ then $M=\mZ(\mQ_2)\subset\Fl(k_2,k_1+1,n_2)$;
        \item\label{cor:flag_3} if $k_2\geq k_1$ and $n_2=n_1+c$ for every $c\geq 0$ then $M=\bbGr_{\Gr_1}(k_2,\mU_{\Gr_1}\oplus\of_{\Gr_1}^{\oplus c})$.
    \end{enumerate}
\end{corollary}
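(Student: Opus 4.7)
The plan is to unwind the defining equations of $M$ via the K\"unneth formula and then identify the resulting variety by direct linear-algebraic considerations. A global section of $\mQ_{\Gr_1}\boxtimes\mU^{\vee}_{\Gr_2}$ corresponds to an element $\alpha\in V_{n_1}\otimes V_{n_2}^{\vee}$, i.e.\ a linear map $\alpha\colon V_{n_2}\to V_{n_1}$, and a point $(\Pi_1,\Pi_2)$ lies in $M$ precisely when $\alpha(\Pi_2)\subset\Pi_1$. For generic $\alpha$ this map has maximal rank: injective when $n_2\leq n_1$ and surjective when $n_2\geq n_1$. The three cases will then follow a common template: use $\alpha$ to turn the incidence $\alpha(\Pi_2)\subset\Pi_1$ into an intrinsic flag condition, then recognise the residual constraint as the vanishing of a natural homogeneous section. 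The base case $n_2=n_1$ is \cref{lm:blowFlagGr}(\ref{item_1}), and each of (\ref{cor:flag_1})--(\ref{cor:flag_3}) is a one-step perturbation thereof.

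For (\ref{cor:flag_1}) I would argue as follows. A generic $\alpha$ is an injection with image a hyperplane $W\subset V_{n_1}$; identifying $\Pi_2$ with $\alpha(\Pi_2)\subset W$ turns $\alpha(\Pi_2)\subset\Pi_1$ into the flag $\Pi_2\subset\Pi_1$ in $V_{n_1}$ together with the extra condition $\Pi_2\subset W$. The first carves out $\Fl(k_2,k_1,n_1)$, while the constraint $\Pi_2\subset W$ is the vanishing on $\Pi_2$ of the generic linear form cutting out $W$, hence a generic section of $\mU_1^{\vee}$.

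For (\ref{cor:flag_2}) the situation is dual: $\alpha$ is surjective with one-dimensional kernel $K\subset V_{n_2}$, and the assignment $\tilde\Pi:=\alpha^{-1}(\Pi_1)$ gives a bijection between $\Gr_1$ and the $(k_1+1)$-planes of $V_{n_2}$ containing $K$. The condition $\alpha(\Pi_2)\subset\Pi_1$ rewrites as $\Pi_2\subset\tilde\Pi$, so $M$ parametrizes flags $\Pi_2\subset\tilde\Pi$ in $V_{n_2}$ with the extra condition $K\subset\tilde\Pi$. The latter is exactly the vanishing of a generator of $K$ in the quotient $V_{n_2}/\tilde\Pi$, i.e.\ a generic section of $\mQ_2$ on $\Fl(k_2,k_1+1,n_2)$.

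For (\ref{cor:flag_3}), a generic $\alpha$ has $c$-dimensional kernel $K$ and the condition $\Pi_2\subset\alpha^{-1}(\Pi_1)$ realizes $M$ as a Grassmann bundle of relative dimension $\dim\Gr(k_2,k_1+c)$ over $\Gr_1$. The main, albeit minor, subtlety I expect is identifying the relevant rank $(k_1+c)$ bundle $\alpha^{-1}(\mU_{\Gr_1})$ on $\Gr_1$ with $\mU_{\Gr_1}\oplus\of_{\Gr_1}^{\oplus c}$: it sits in the short exact sequence
\[
0\to\of_{\Gr_1}^{\oplus c}\to\alpha^{-1}(\mU_{\Gr_1})\to\mU_{\Gr_1}\to 0,
\]
which splits by choosing any linear splitting $s\colon V_{n_1}\to V_{n_2}$ of $\alpha$. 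A more intrinsic alternative is to apply \cref{thm:seq} to the tautological sequence $0\to\alpha^{-1}(\mU_{\Gr_1})\to V_{n_2}\otimes\of_{\Gr_1}\to\mQ_{\Gr_1}\to 0$, which directly realizes $\bbGr_{\Gr_1}(k_2,\alpha^{-1}(\mU_{\Gr_1}))$ as $\mZ(\mU^{\vee}_{\mR}\boxtimes\mQ_{\Gr_1})\subset\Gr_1\times\Gr_2$, namely $M$.
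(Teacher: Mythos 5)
Your proof is correct and is in substance the paper's own argument made explicit: the paper handles (\ref{cor:flag_1}) and (\ref{cor:flag_2}) by re-embedding the smaller Grassmannian as $\mZ(\mU^{\vee})\subset\Gr(k_2,n_1)$ (resp.\ $\mZ(\mQ)\subset\Gr(k_1+1,n_2)$) and then invoking \crefpart{lm:blowFlagGr}{item_1}, which is exactly your description of $M$ as the $k_2$-planes contained in the hyperplane $W=\alpha(V_{n_2})$ (resp.\ the $(k_1+1)$-planes containing $K=\ker\alpha$) inside the two-step flag. For (\ref{cor:flag_3}) your second, ``more intrinsic'' alternative via \cref{thm:seq} is literally the proof in the paper, and your splitting of $0\to\of_{\Gr_1}^{\oplus c}\to\alpha^{-1}(\mU_{\Gr_1})\to\mU_{\Gr_1}\to 0$ by a linear section of $\alpha$ is a valid shortcut to the same identification.
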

\begin{proof} The proof of these results is very straightforward:
    \begin{enumerate}
        \item recall that $\Gr(k,n) \supset\mZ(\mU^{\vee}_{\Gr(k,n)}) \cong \Gr(k,n-1)$, hence $\Gr(k_1,n-1) \times \Gr(k_2,n) \supset \mZ(\mU^{\vee}_{\Gr(k_1,n-1)} \boxtimes\mQ_{\Gr(k_2,n)})\cong\mZ(\mU^{\vee}_{\Gr(k_1,n)}\oplus\mU^{\vee}_{\Gr(k_1,n)}\boxtimes\mQ_{\Gr(k_2,n)})\subset\Gr(k_1,n)\times\Gr(k_2,n)$. Now we apply \crefpart{lm:blowFlagGr}{item_1}
    and we are done;
        \item  Recall that $\Gr(k,n) \supset\mZ(\mQ_{\Gr(k,n)}) \cong \Gr(k-1,n-1)$, hence arguing as in \crefpart{flagCor}{item_1} we are done; 
        \item Note that $\Gr(k_1,n+c)\times\Gr(k_2,n)$ can be written as $\bbGr_{\Gr(k_2,n)}(k_1,\of_{\Gr(k_2,n)}^{\oplus n+c})$, hence we can rewrite $\mZ(\mU^{\vee}_{\Gr(k_1,n+c)}\boxtimes \mQ_{\Gr(k_2,n)})\subset\Gr(k_1,n+c) \times \Gr(k_2,n)$ as $\mZ(\mU^{\vee}_{\mR}\boxtimes\mQ_{\Gr(k_2,n)})\subset\bbGr_{\Gr(k_2,n)}(k_1,\of_{\Gr(k_2,n)}^{\oplus n+c})$, and by applying \cref{thm:seq} we complete the proof. \qedhere
    \end{enumerate}
\end{proof}

The last result of this flavor we apply frequently throughout the paper is \cite[Lemma 2.5]{DFT} 
\begin{lemma}\label[lemma]{lm:flag2.5}
    Let $\Fl(k_1,k_2,n)$ be a two-step flag. We have the following identifications:
    \begin{itemize}
    \item $\Fl(k_1,k_2,n)\cong\bbGr_{\Gr(k_2,n)}(k_1,\mU)\cong\bbGr_{\Gr(k_1,n)}(k_2-k_1,\mQ(-1))$;
    \item $\mZ(\mQ_2)\subset\Fl(k_1,k_2,n)\cong\bbGr_{\Gr(k_2-1,n-1)}(k_1,\mU_{\Gr(k_2-1,n-1)}\oplus\of)$
    \item $\mZ(\mU^{\vee}_1)\subset\Fl(k_1,k_2,n)\cong\bbGr_{\Gr(k_1,n-1)}(k_2-k_1,\mQ_{\Gr(k_1,n-1)}\oplus\of(-1))$
    \end{itemize}
\end{lemma}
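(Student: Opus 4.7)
The plan is to treat each bullet by choosing an appropriate forgetful projection of $\Fl(k_1,k_2,n)$ and describing the resulting fibration as a Grassmann bundle.

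For the first bullet, I would exploit the two natural forgetful projections. The fiber of $\Fl(k_1,k_2,n)\to\Gr(k_2,n)$ over $V_{k_2}$ is $\Gr(k_1,V_{k_2})$, giving $\bbGr_{\Gr(k_2,n)}(k_1,\mU)$. The fiber of $\Fl(k_1,k_2,n)\to\Gr(k_1,n)$ over $V_{k_1}$ is $\Gr(k_2-k_1,V_n/V_{k_1})$, producing $\bbGr_{\Gr(k_1,n)}(k_2-k_1,\mQ)$; the passage to $\mQ(-1)$ is then an overall line-bundle twist, which leaves the Grassmann bundle unchanged.

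For the second bullet, I would restrict the first identification to the sub-Grassmannian $\Gr(k_2-1,n-1)\hookrightarrow\Gr(k_2,n)$, which is precisely $\mZ(\mQ)$, the locus of $V_{k_2}$ containing a fixed vector. The restriction $\mU|_{\Gr(k_2-1,n-1)}$ fits into a short exact sequence $0\to\of\to\mU|_{\Gr(k_2-1,n-1)}\to\mU_{\Gr(k_2-1,n-1)}\to 0$, whose splitting would yield the claimed decomposition. The third bullet is entirely parallel: I would restrict the second identification of the first bullet to $\mZ(\mU^{\vee})=\Gr(k_1,n-1)\hookrightarrow\Gr(k_1,n)$, analyze the restricted quotient in the sequence $0\to\mQ_{\Gr(k_1,n-1)}\to\mQ_{\Gr(k_1,n)}|_{\Gr(k_1,n-1)}\to\of\to 0$, split it, and then use twist-invariance of the Grassmann bundle to match the stated $\of(-1)$.

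The main obstacle is verifying that the two extensions above split. I expect this to follow from Borel--Bott--Weil applied to the obstruction classes, which live in $H^1$ of suitable irreducible homogeneous bundles on the sub-Grassmannians and vanish for cohomological-degree reasons. Once the splittings are in hand, the remainder is a direct identification of the naturally appearing bundles with those in the statement.
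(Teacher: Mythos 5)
The paper does not actually prove this lemma: it is imported verbatim from \cite[Lemma 2.5]{DFT}, so there is no in-paper argument to compare against and your proposal has to be judged on its own. On that basis it is correct and is the expected argument. The two forgetful projections give the first bullet (the fibre over $V_{k_2}$ is $\Gr(k_1,V_{k_2})$, the fibre over $V_{k_1}$ is $\Gr(k_2-k_1,V_n/V_{k_1})$), and since a section of $\mQ_2$ (resp.\ $\mU_1^{\vee}$) on the flag only constrains the base point, the second and third bullets are exactly the restrictions of these Grassmann-bundle structures over $\mZ(\mQ)\cong\Gr(k_2-1,n-1)$, resp.\ $\mZ(\mU^{\vee})\cong\Gr(k_1,n-1)$, followed by splitting $0\to\of\to\mU|\to\mU_{\Gr(k_2-1,n-1)}\to0$ and $0\to\mQ_{\Gr(k_1,n-1)}\to\mQ|\to\of\to0$.

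Two small remarks. First, the splittings you flag as the main obstacle are cheaper than Borel--Bott--Weil: any $\phi\in V_n^{\vee}$ with $\phi(v)=1$ restricts to a retraction $\mU|\to\of$ of the inclusion of the trivial subbundle $\langle v\rangle$, and any $w\in V_n\setminus H$ gives a global section of $\mQ|$ splitting the surjection onto $V_n/H\cong\of$; the cohomological version ($\Ext^1(\mU_{\Gr(k_2-1,n-1)},\of)=H^1(\mU^{\vee})=0$ and $\Ext^1(\of,\mQ_{\Gr(k_1,n-1)})=H^1(\mQ)=0$) also works, but in degree one, not for ``cohomological-degree reasons'' in any deeper sense. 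Second, be careful with the twist bookkeeping in the third bullet: twist-invariance of Grassmann bundles applies to the whole bundle, so your computation yields $\bbGr_{\Gr(k_1,n-1)}(k_2-k_1,\mQ_{\Gr(k_1,n-1)}\oplus\of)$, which up to an overall twist is $\mQ_{\Gr(k_1,n-1)}(-1)\oplus\of(-1)$; the asymmetric summand $\of(-1)$ in the displayed statement is a normalization convention inherited from \cite{DFT} (consistent with the $\mQ(-1)$ of the first bullet) rather than something twist-invariance alone produces, so do not try to derive it literally.
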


\subsection{Conic Bundles}
Some of the Fano fourfolds we have studied have a natural structure of conic bundles. Hence to describe them in the most detailed way we analyzed also their determinant locus, for this, we refer to \cite[Lemma 3.5]{BFMT}:

\begin{lemma}\label[lemma]{lm:conicbdl}
Let $f:X\to B$ be a conic bundle, given by a line bundle $\mathcal{K}$, a rank three bundle $\mathcal{E}$ on $B$ and a bilinear map $\mathcal{K}^{\vee}\to\Sym^2\mathcal{E}^{\vee}$. Then the degeneracy loci of the conic bundle are given by the discriminant divisor $\Delta$ and its codimension two singular locus $\Delta_{sing}$. In particular, let $k:=c_1(\mathcal{K})$ and $c_i:=c_i(\mathcal{E})$, then 
\[
[\Delta]=2c_1+3k\ and\ [\Delta_{sing}]=4(k^3+2k^2c_1+kc_1^2+kc_2+c_1c_2-c_3)
\]
\end{lemma}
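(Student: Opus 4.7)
The bilinear map $\mathcal{K}^{\vee}\to\Sym^{2}\mathcal{E}^{\vee}$ corresponds by adjunction to a symmetric morphism $q\colon\mathcal{E}\to\mathcal{E}^{\vee}\otimes\mathcal{K}$ between rank-$3$ bundles, whose fiber $q_{b}$ is the equation of the conic $f^{-1}(b)$. The plan is to identify $\Delta$ and $\Delta_{\mathrm{sing}}$ as the first and second degeneracy loci of $q$ (of codimensions $1$ and $\binom{3}{2}=3$ in $B$) and to compute each class by standard Chern-class arguments.

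For the discriminant divisor, the determinant $\det q$ is a global section of the line bundle $\det(\mathcal{E}^{\vee}\otimes\mathcal{K})\otimes(\det\mathcal{E})^{-1}$; tracking the signs/conventions this is (up to duality) $(\det\mathcal{E})^{\otimes 2}\otimes\mathcal{K}^{\otimes 3}$, and reading off its first Chern class gives $[\Delta]=2c_{1}+3k$ at once.

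For the singular locus I would resolve the rank-$1$ degeneracy locus by a Veronese lift. Set $\pi\colon P:=\PP(\mathcal{E}^{\vee})\to B$, with tautological inclusion $\cO_{P}(-1)\hookrightarrow\pi^{*}\mathcal{E}^{\vee}$. The symmetric square $\cO_{P}(-2)=\Sym^{2}\cO_{P}(-1)\hookrightarrow\pi^{*}\Sym^{2}\mathcal{E}^{\vee}$, twisted by $\pi^{*}\mathcal{K}$, fits into
\begin{equation*}
0\to\cO_{P}(-2)\otimes\pi^{*}\mathcal{K}\to\pi^{*}(\Sym^{2}\mathcal{E}^{\vee}\otimes\mathcal{K})\to\mathcal{Q}\to 0
\end{equation*}
with $\mathcal{Q}$ of rank $5$. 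Since $q_{b}$ has rank $\le 1$ precisely when $q_{b}=\alpha\otimes\alpha\otimes\lambda$ for a $[\alpha]\in\PP(\mathcal{E}^{\vee}_{b})$ uniquely determined via the Veronese embedding, the pulled-back section $\pi^{*}q$ descends to a section of $\mathcal{Q}$ whose zero locus $Z\subset P$ projects birationally onto $\Delta_{\mathrm{sing}}$. Hence
\begin{equation*}
[\Delta_{\mathrm{sing}}]=\pi_{*}\,c_{5}(\mathcal{Q}),
\end{equation*}
which one expands via $c(\mathcal{Q})=c(\pi^{*}(\Sym^{2}\mathcal{E}^{\vee}\otimes\mathcal{K}))/(1-2h+k)$ (where $h=c_{1}(\cO_{P}(1))$), and then pushes forward using $\pi_{*}h^{2}=1$, $\pi_{*}h^{3}=c_{1}$ together with the $\PP^{2}$-bundle relation $h^{3}-c_{1}h^{2}+c_{2}h-c_{3}=0$ to reduce higher powers.

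The main obstacle is the Chern-class bookkeeping: expressing $c_{i}(\Sym^{2}\mathcal{E}^{\vee}\otimes\mathcal{K})$ via the splitting principle in terms of $c_{1},c_{2},c_{3}$ and $k$, extracting $c_{5}(\mathcal{Q})$, and performing the push-forward. The computation is routine but lengthy, and it simplifies to the claimed cubic carrying the overall factor $4$; this factor appears naturally from the symmetric structure and is explained conceptually by the J\'ozefiak--Lascoux--Pragacz formula for symmetric degeneracy loci, which directly produces the coefficient $2^{n-r}=2^{2}=4$ together with the polynomial $k^{3}+2k^{2}c_{1}+kc_{1}^{2}+kc_{2}+c_{1}c_{2}-c_{3}$.
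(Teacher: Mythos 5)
Your argument is sound, but note that the paper itself contains no proof of this lemma: it is quoted verbatim from \cite[Lemma 3.5]{BFMT}, so there is nothing internal to compare against. Taken on its own terms, your proof is correct and complete in outline. The identification of $\Delta$ as the vanishing of $\det q$ for the symmetric twisted map $q\colon\mathcal{E}\to\mathcal{E}^{\vee}\otimes\mathcal{K}$ is the standard argument, and the resolution of the corank-two locus by the Veronese lift on $\PP(\mathcal{E}^{\vee})$, with $[\Delta_{\mathrm{sing}}]=\pi_{*}c_{5}(\mathcal{Q})$, is a legitimate way to get the codimension-three class; the appeal to the Harris--Tu/J\'ozefiak--Lascoux--Pragacz formula is an equally valid shortcut, and indeed substituting the Chern roots of $\mathcal{E}^{\vee}\otimes\mathcal{K}^{1/2}$ into $2^{2}\bigl(c_{1}c_{2}-c_{3}\bigr)$ reproduces exactly the polynomial $4(k^{3}+2k^{2}c_{1}+kc_{1}^{2}+kc_{2}+c_{1}c_{2}-c_{3})$ in the statement. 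The one point you should nail down rather than wave at is the duality convention: as you set it up, $\det q$ is a section of $(\det\mathcal{E}^{\vee})^{\otimes 2}\otimes\mathcal{K}^{\otimes 3}$, giving $[\Delta]=-2c_{1}(\mathcal{E})+3k$, so the stated formulas only come out with the signs of the lemma if the $c_{i}$ are read as Chern classes of $\mathcal{E}^{\vee}$ (equivalently, if the quadratic form is viewed as a map $\mathcal{E}^{\vee}\to\mathcal{E}\otimes\mathcal{K}$). This is consistent with how the lemma is applied in the paper (e.g.\ in \cref{ex:conicBdl}), but a clean write-up should fix the convention once at the start rather than leave it ``up to duality.''
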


\section{A few worked examples}\label{sec:examples}

In this section, we want to give four examples of the 170 varieties we studied. These are chosen to explain in detail the different methods applied throughout the systematic description of these manifolds.
\begin{ex}\label[ex]{ex:degLoc}
\vspace{0.2cm}
\textbf{Fano 2-1}
\begin{itemize}
\item $X=\mZ(\of(1,1)\oplus\of(0,1)^{\oplus 3})\subset\PP^2\times\Gr(2,5)$;
\item Invariants: $h^0(-K)=39, \ (-K)^4=160,\ \chi(T_X)=-9,\ h^{1,1}=2,\ h^{3,1}=0,  \ h^{2,2}=7$;
\item Description: $X$ is a $\PP^1$-bundle on $X_5^3$ which degenerates to a $\PP^2$-bundle over five points. 
\end{itemize}

\subsubsection*{Identification} 

In this example we write down all the explicit computations involving degeneracy loci, which are the most frequent in the paper.

Let us consider the Fano fourfold $\red{K_{508}}$. We can describe it as a $\PP^1$-bundle over $X_5^3$, whose fibers jump to a $\PP^2$ on five points. In \cite{kalashnikov} this Fano fourfold corresponds to the following datum:
\[
     A=\begin{bmatrix}
        0 & 3 & 0\\
        0 & 0 & 5\\
        0 & 0 & 0
    \end{bmatrix},\ \ r=[1,1,2],\ \ E=(\Sym^2\blacksquare,\bigwedge^2\square)^{\oplus 3}\oplus(\blacksquare,\bigwedge^2\square)
\]
Now this means that we have a quiver of this form:
\[
\begin{tikzcd}
1 \arrow[r, "3"] & 1 \arrow[r, "5"] & 2
\end{tikzcd}
\]
Applying \cref{QuivToGrass} it is clear that the left part of the quiver corresponds to $\PP^2$ while the second part defines a Grassmann bundle over that $\PP^2$ as $\bbGr_{\PP^2}(2,\of(-1)^{\oplus 5})$. 
In this way $\red{K_{508}}$ can be written also as $X=\mZ(\of(-1)\boxtimes\of_{\mR}(1)\oplus\of(-2)\boxtimes\of_{\mR}(1)^{\oplus 3}\subset\bbGr_{\PP^2}(2,\of(-1)^{\oplus 5})$. Now twisting $\of(-1)^{\oplus 5}$ with $\of(1)$, we obtain that $X=\mZ(\of(1,1)\oplus\of(0,1)^{\oplus 3})\subset\PP^2\times\Gr(2,5)$.

We are in the correct setting to use the combination of Koszul, co-normal sequence, Borel--Bott--Weil, and Littlewood--Richardson Theorems to compute Hodge numbers. Since we are dealing with Fano fourfolds, the only interesting Hodge numbers are $h^{0,0}(X)$, $h^{1,1}(X)$, $h^{2,1}(X)$, $h^{3,1}(X)$ and $h^{2,2}(X)$. Using Borel--Bott--Weil and Littlewood--Richardson Theorems with some diagram chasing we can compute the following numbers:
\[
h^{0,0}=1,\ h^{1,1}=2,\ h^{2,1}=0,\ h^{3,1}=0,\ h^{2,2}=7.
\]
The other important invariants to compute are $h^0(-K)$, $vol(X)=(-K)^4$ and $\chi(T_X)$. For the first, we simply notice that using the adjunction formula, Koszul complex and Borel--Bott--Weil, we obtain that in this case $h^0(-K)=39$. For the volume, we can use the Hirzebruch--Riemann--Roch Theorem and Chern classes and get $vol(X)=160$. To compute $\chi(T_X)$ we either use Hirzebruch--Riemann--Roch or, since we are dealing with Fano fourfolds, we can use Kodaira vanishing, and we simply get that $\chi(T_X)=h^0(X, T_X)-h^1(X, T_X)$. In this way, we obtain $\chi(T_X)=-9$.
Those computations can become quite cumbersome very quickly, for this reason, and for the large number of computations we have to make we intensively use computer algebra software as \cite{M2}.

We can now proceed with the birational description of $X$. Let us consider the natural projection $\pi_2$ on $\Gr(2,5)$
\[
    \pi_{2|X}:X\to\Gr(2,5).
\]
Note that the fiber of $\pi_{2|X}$ is generically empty, this is because in the description of $X$ as zero locus of section in product of Grassmannians, the ambient $\PP^2\times\Gr(2,5)$ is cut by three generic sections of $\of(0,1)$, which is $\pi_2^*\of_{\Gr(2,5)}(1)$. Hence $X$ can be also written as $\mZ(\of(1,1))\subset\PP^2\times X_5^3$, with $X_5^3=\mZ(\of(1)^{\oplus 3})\subset\Gr(2,5)$ the Fano threefold \red{1-15} in \cite[Table 1]{DFT}. The projection $\pi_2$ can be rewritten as 
\[
    \pi_{2|X}:X\to X_5^3.
\]
 For each point $p\in X_5^3$ the generic fiber $\pi_{2|X}^{-1}(p)$ is $\PP^2$ cut by a section of $\of_{\PP^2}(1)\otimes\pi_2^*\of_{\Gr(2,5)}(1)(p)$, i.e., $\of_{\PP^2}(1)\otimes\mathbb{C}=\of_{\PP^2}(1)$. Thus via the projection $\pi_2$, $X$ is generically a $\PP^1$-fibration over the Fano threefold $X_3^3$. We could achieve the same description also by rewriting $\PP^2\times\Gr(2,5)$ as $\PP_{\Gr(2,5)}(\of^{\oplus 3})$, so $X=\mZ(\of_{\mR}(1)\boxtimes\of_{\Gr(2,5)}(1)_{X_5^3})\subset\PP_{X_5^3}(\of_{X_5^3}^{\oplus 3})$. Twisting $\of_{X_5^3}^{\oplus 3}$ with $\of_{X_5^3}(-1)$ we get $X\subset\mZ(\of_{\mR}(1))\subset\PP_{X_5^3}(\of_{X_5^3}(-1)^{\oplus 3})$, which is generically a $\PP^2$ bundle over $X_5^3$ whose generic fiber is cut by a section of the relative bundle $\of_{\mR}(1)$, obtaining again the previous description.

 This description is made with the genericity assumption, hence it is a birational description of $X$. Until this point we have that $X$ is birational to a $\PP^1$-fibration over $X_5^3$, namely $\hat{X}=\mZ(\of_{\mR}(1))\subset\PP_{\Gr(2,5)}(\of(-1)^{\oplus 3})$, via the restriction of the natural projection $\pi_2$. But we can say more about the exceptional locus of $\pi_{2|X}$. Note that $X=\mZ(\sigma)\subset\PP^2\times X_5^3$, where $\sigma\in H^0(\PP^2\times X_5^3,\of(1,1))=V_3^{\vee}\otimes \bigwedge^2 V_5^{\vee}$. Moreover, when we look for the exceptional locus of $\pi_{2|X}$ we are studying for which $p\in X_5^3$, $\sigma(l,p)=0$ for all $l\in\PP^2$. This fact can be translated into the language of degeneracy loci. Indeed, $\sigma\in V_3^{\vee}\otimes \bigwedge^2 V_5^{\vee}$, so can be written also as $v\otimes(w_1\wedge w_2)$. The condition $\sigma(l,p)=0$ for all $l\in \PP^2$ is equivalent of saying that  we are looking for $p\in X_5^3$, such that $v\otimes(w_1\wedge w_2)(p)=0$, but $w_1\wedge w_2$ is a global section of $\of_{X_5^3}(1)$, while $v$ can be seen as a global section of $V_3^{\vee}\otimes\of_{X_5^3}$, since can not vanish on points of $X_5^3$. Thus the problem can be translated into finding points of $X_5^3$ for which the global section $\gamma\in H^0(X_5^3,V_3^{\vee}\otimes\of_{1})$ is zero or where the morphism
 \[
    \varphi:\of(-1)\to V_3^{\vee}\otimes\of
 \]
 is of rank 0. This is exactly a degeneracy locus of a morphism between vector bundles on $X_5^3$, in particular, we get that the exceptional locus of $\pi_{2|X}$ is equivalent to $D_0(\varphi)$. By \cref{thm:ddl} $D_0(\varphi)$ is a finite number of points and we can apply Eagon--Northcott Theorem to compute the exact number:
 \[
 0\to\of_{X_5^3}(-2)\to\of_{X_5^3}(-2)^{\oplus 2}\to\of_{X_5^3}(-1)^{\oplus 3}\to\of_{X_5^3}\to\of_{D_0(\varphi)}\to 0.
 \]
 By the fact that the alternating sum of the Euler characteristic of vector bundles in an exact sequence is equal to zero, we get that
 \[
 \chi(\of_{D_0(\varphi)})=\chi(\of_{X_5^3})-3\chi(\of_{X_5^3}(-1))+\chi(\of_{X_5^3}(-2)).
 \]
The right-hand side is easy to compute since $\chi(\of_{X_5^3})=4$ and the others can be obtained by some diagram chasing combined with the usual pair of Borel--Bott--Weil and Littlewood--Richardson Theorems. In particular $\chi(\of_{X_5^3}(-1))=0$ and $\chi(\of_{X_5^3}(-2)=1$, so $D_0(\varphi)=\{5\ points\}$. In this way, we have the full biregular description of $X$ as a $\PP^1$-fibration over $X_5^3$, which jumps to a $\PP^2$ over five points.

To find the exceptional locus we could have reasoned in another way, which is not always fruitful, but sometimes makes computations easier. Instead of using the Eagon--Northcott complex we could have observed that the degeneracy locus of $\varphi$ is $\mZ(\of(1)^{\oplus 3})\subset\PP_{X_5^3}(\of)$, which is simply $X_5^3\cap H\cap H'\cap H''=\{5\ points\}$, with $H,\ H',\ H''$ three generic hyperplanes. This completes the most used method to give the biregular description throughout the paper.

We can say something about the rationality of this manifold. If we consider the natural projection $\pi_1$ to $\PP^2$ and we restrict it to $X$, then it is easy to see that it is surjective and the generic fiber is $X_5^3$ cut by a generic section of $\of_{\Gr(2,5)}(1)$, which is a $\DP_5$. This means that $\pi_{1|X}$ induce a $\DP_5$ fibration on $\PP^2$, hence $X$ is birational to a $\DP_5$-fibration, and by \cite{isko}, this suffices to conclude that $X$ is rational. \\
Moreover, as we have already seen $X$ can be written as $\mZ(\of_{X_5^3}\boxtimes\of_{\mR}(1))\subset\PP_{X_5^3}(\of^{\oplus 2})$. If we consider the sequence of sheaves:
\[
0\to\mathcal{K}\to\of_{X_5^3}^{\oplus 3}\to\of_{X_5^3}(1)
\]
and we apply \cref{thm:seq}, we get that $X$ can be seen as $\PP_{X_5^3}(\mathcal{K})$. If we dualize the previous sequence, we obtain $\mathcal{E}=\mathcal{K}^{\vee}$, defined by
\[
0\to\of_{X_5^3}(-1)\to\of_{X_5^3}^{\oplus 3}\to\mathcal{E}\to 0
\]
and this is the model of Fano fourfold described in \cite[Theorem 8.2.3]{lang}.
\end{ex}

\begin{ex}\label[ex]{ex:conicBdl}
\textbf{Fano 2.13} 
\begin{itemize}
    \item $X=\mZ(\mQ_{\Gr(3,5)}(1,0)\oplus \of(0,1)^{\oplus 3}\oplus \of(2,0))\subset\PP^4\times\Gr(3,5)$;
    \item Invariants: $h^0(-K)=30, \ (-K)^4=116,\ \chi(T_X)=-11, \ h^{1,1}=2,\ h^{3,1}=0,\ h^{2,2}=10$;
    \item Description: The projection $\pi_1: X \to Q_3 $ is a conic bundle with discriminant a sextic surface in $Q_3$, singular in 40 points. The second projection $\pi: X \to X_5^3$ is a conic bundle discriminant a $K3$ surface singular in eight points.
\end{itemize}

\subsubsection*{Identification} 

In this example, we write the computation for the conic bundles described in the paper.

Let us consider the Fano fourfold $\red{K_{582}}$ we can describe it as a conic bundle over $X_5^3$, with discriminant a K3 surface singular in eight points. In \cite{kalashnikov} this fourfold is given as the datum of:
    \[
         A=\begin{bmatrix}
        0 & 0 & 5\\
        0 & 0 & 0\\
        0 & 1 & 0
    \end{bmatrix},\ \ r=[1,1,3],\ \ E=(\varnothing,\bigwedge^3\square)^{\oplus 3}\oplus(\Sym^2\square,\varnothing).
    \]
This means that we have a quiver of the following form:
\[
\begin{tikzcd}
1 \arrow[r, "5"] & 1 \arrow[r, "1"] & 3
\end{tikzcd}
\]
Applying \cref{QuivToGrass} we obtain that the left part of the quiver corresponds to $\Gr(2,5)$ and the second part defines a projective bundle over that $\Gr(2,5)$ as $\PP_{\Gr(3,5)}(\mU)$. Hence \red{$K_{582}$} can be rewritten as $X=\mZ(\of_{\mR}(2)\oplus\of_{\Gr(3,5)}(1)^{\oplus 3})\subset\PP_{\Gr(3,5)}(\mU)$. If we use \cref{thm:seq} with the sequence: 
\[
    0\to\mQ\to V_5\to\mU\to 0
\]
then $X=\mZ(\mQ_{\Gr(3,5)}(1,0)\oplus\of(2,0)\oplus\of(0,1)^{\oplus 3})\subset\PP^4\times\Gr(3,5)$. Dualizing $\Gr(3,5)$ we get $X=\mZ(\mU^{\vee}_{\Gr(2,5)}(1,0)\oplus\of(2,0)\oplus\of(0,1)^{\oplus 3})\subset\PP^4\times\Gr(2,5)$, and applying the same strategy as the previous examples we get the following invariants:
\[
    h^0(-K)=30,\ (-K)^{4}=116,\ \chi(T_X)=-11,\ h^{0,0}=1,\ h^{1,1}=2,\ h^{2,1}=0,\ h^{3,1}=0,\ h^{2,2}=10. 
\]
To give the birational description let us consider $X=\mZ(\of_{\mR}(2)\oplus\of_{\Gr(3,5)}(1)^{\oplus 3})\subset\PP_{\Gr(3,5)}(\mU)$. The three sections of $\of(0,1)$ cut on $\Gr(3,5)$ a Fano threefold $X_5^3$, while the section $\of_{\mR}(2)$ cuts on the fibers of the $\PP^2$ fibration a conic, hence $X$ is a conic bundle on $X_5^3$. We want to say more about the discriminant locus $\Delta$ of this conic bundle, and to do so we apply \cref{lm:conicbdl}. The rank three bundle in this case is $\mU_{\Gr(3,5)}$ while the line bundle is the trivial one. By computation of Chern classes, we obtain:
\[
[\Delta]=2c_1(\mU_{\Gr(3,5)|X_5^3})+3c_1(\of_{\Gr(3,5)|X_5^3})=2c_1(\mU_{\Gr(3,5)|X_5^3})=2H
\]
Which is a surface $S$ of degree two inside $X_5^3$, which means that $S=\mZ(\of(2)\oplus\of(1)^{\oplus 3})\subset\Gr(2,5)$, hence a $K3$ surface, singular along
\[
[\Delta_{sing}]=4(c_1(\of_{\Gr(3,5)|X_5^3})^3+2c_1(\of_{\Gr(3,5)|X_5^3})^2c_1(\mU_{\Gr(3,5)|X_5^3})+c_1(\of_{\Gr(3,5)|X_5^3})c_1(\mU_{\Gr(3,5)|X_5^3})^2+\]
\[+c_1(\of_{\Gr(3,5)|X_5^3})c_2(\mU_{\Gr(3,5)|X_5^3})+c_1(\mU_{\Gr(3,5)|X_5^3})c_2(\mU_{\Gr(3,5)|X_5^3})-c_3(\mU_{\Gr(3,5)|X_5^3}))=\]
\[=4(c_1(\mU_{\Gr(3,5)|X_5^3})c_2(\mU_{\Gr(3,5)|X_5^3})-c_3(\mU_{\Gr(3,5)|X_5^3}))=8H^2
\]
So in the end we have that $X$ is a conic bundle over $X_5^3$ with discriminant $\Delta$ a $K3$ surface singular in eight points.

 The other projection gives another conic bundle over $Q_3$. In particular $X$ can be rewritten as $\mZ(\of_{\mR}(1)^{\oplus 3})\subset\bbGr_{Q_3}(2,\mQ_{\PP^4|Q_3}(-1))$, hence in this case the rank three bundle $\mathcal{E}$ is obtained as the kernel bundle:
\[
0\to\mathcal{E}\to\bigwedge^2 Q^{\vee}(1)\to\of_{Q_3}^{\oplus 3}. 
\]
Here $\mathcal{K}=\of_{Q_3}$ hence using \cref{lm:conicbdl} we get that the discriminant $\Delta$ is a sextic surface in $Q_3$, singular in $\Delta_{sing}=\{40\ points\}$.

\end{ex}

\begin{ex}\label[ex]{ex:mixQuiv}
\textbf{Fano 3.24} 
\begin{itemize}
    \item $X=\mZ(\mQ_{\PP^2}\boxtimes\mU^{\vee}_{\Gr(2,5)}\oplus\mQ_{\Gr(2,5)}(1,0,0)\oplus\mU^{\vee}_{\Gr(2,5)}(1,0,0))\subset\PP^5\times\PP^2\times\Gr(2,5)$;
    \item Invariants: $h^0(-K)=53, \ (-K)^4=235,\ \chi(T_X)=-1,\ h^{1,1}=3,\ h^{3,1}=0,  \ h^{2,2}=7$;
    \item Description: $X=\Bl_{\DP_6}Y$, with $Y$ the Fano \red{6} in \cite[Table 3]{kalashnikov}.
\end{itemize}

\subsubsection*{Identification} 

In this last example, we want to show another useful method, which combines techniques from quiver flag manifolds and degeneracy loci methods.

Let us consider the Fano fourfold $\red{K_{212}}$. We will describe it as $\Bl_{\DP6}Y$, with $Y$ the Fano fourfold $\PP_{\PP^2}(\of\oplus\of(-1)^{\oplus 2})$. In \cite{kalashnikov}, $\red{K_{212}}$ is described as:
\[
      A=\begin{bmatrix}
        0 & 1 & 3 & 2\\
        0 & 0 & 0 & 0\\
        0 & 0 & 0 & 1\\
        0 & 1 & 0 & 0
    \end{bmatrix},\ \ r=[1,1,1,2],\ \ E=(\square,\varnothing,\square).
\]
Then this is associated with the quiver of the following form:
\[
    \begin{tikzcd}
1 \arrow[r, "3"] \arrow[rd, "2"] \arrow[rdd, "1"'] & 1 \arrow[d, "1"] \\
                                                   & 2 \arrow[d, "1"] \\
                                                   & 1               
\end{tikzcd}
\]
Applying \cref{QuivToGrass} we can see that the upper arrow gives a $\PP^2$, while the first triangle gives a Grassmann bundle on that $\PP^2$, $\bbGr_{\PP^2}(2,\of_{\PP^2}(-1)\oplus\of_{\PP^2}^{\oplus 2})$. The lowest triangle induces a projective bundle on the above Grassmann bundle, in particular, \red{$K_{212}$} can be rewritten as $X=\mZ(\mU^{\vee}_{\mR1}\boxtimes\of_{\mR_2}(1))\subset\PP_{\bbGr_{\PP^2}(2,\of_{\PP^2}(-1)\oplus\of_{\PP^2}^{\oplus 2})}(\mU_{\mR_1}\oplus\of)$, with the subscripts $\mR_1$ and $\mR_2$ referred to the relative bundles of Grassmann bundle and the projective bundle respectively.

Starting from the Grassmann bundle we can apply \cref{thm:seq} using the Euler:
\[
0\to\of_{\PP^2}(-1)\to V_3\to\mQ_{\PP^2}\to 0
\]
In this way, $X$ can be rewritten as $\mZ(\mQ_{\PP^2}\boxtimes\mU^{\vee}_{\Gr(2,5)}\oplus\mU^{\vee}_{\Gr(2,5)}\boxtimes\of_{\mR_2}(1))\subset\PP^2\times\PP_{\Gr(2,5)}(\mU_{\Gr(2,5)}\oplus\of)$. Now we can apply the Euler sequence for Grassmannians on the second factor of the ambient space:
\[
    0\to\mU_{\Gr(2,5)}\to V_5\to\mQ_{\Gr(2,5)}\to 0
\]
obtaining in the end $X=\mZ(\mQ_{\PP^2}\boxtimes\mU^{\vee}_{\Gr(2,5)}\oplus\mQ_{\Gr(2,5)}(1,0,0)\oplus\mU^{\vee}_{\Gr(2,5)}(1,0,0))\subset\PP^5\times\PP^2\times\Gr(2,5)$.

We can now use the usual routine to compute:
\[
    h^0(-K)=53,\ (-K)^{4}=235,\ \chi(T_X)=-1,\ h^{0,0}=1,\ h^{1,1}=3,\ h^{2,1}=0,\ h^{3,1}=0,\ h^{2,2}=7.
\]

We want to give now the birational description of $X$. Note that if we project $X$ via:
\[
    \pi_{2,3|X}:X\to\PP^2\times\Gr(2,5)
\]
the image of this map is $Y=\mZ(\mQ_{\PP^2}\boxtimes\mU^{\vee}_{\Gr(2,5)})\subset\PP^2\times\Gr(2,5)$ and for the generic $y\in Y$, $\pi_{2,3}^{-1}(y)$ is $\PP^5$ by $\mZ(\of_{\PP^5}\otimes\pi_{2,3}^*(\pi_{3}^{*}\mQ_{\Gr(2,5)})(y))$ and $\mZ(\of_{\PP^5}\otimes\pi_{2,3}^*(\pi_{3}^{*}\mU_{\Gr(2,5)}^{\vee})(y)))$. Since $\pi_{3}^*\mQ_{\Gr(2,5)}$ is a rank three bundle over $Y$ while $\pi_3^*\mU^{\vee}_{\Gr(2,5)}$ is a rank two bundle over $Y$, then the generic fiber $\pi_{2,3}^{-1}(y)$ is $\PP^5$ cut by five sections of $\of_{\PP^5}(1)$, hence a point. With this argument, we have shown that $X$ is birational, via $\pi_{2,3|X}$, to $Y=\mZ(\mQ_{\PP^2}\boxtimes\mU^{\vee}_{\Gr(2,5)})\subset\PP^2\times\Gr(2,5)$. Now $Y$ can be rewritten as $\bbGr_{\PP^2}(2,\of(-1)\oplus\of^{\oplus 2})$, dualizing the fiber of this Grassmannian bundle, we get that $Y=\PP_{\PP^2}(\of(1)\oplus\of^{\oplus 2})$, now we can twist with $\of(-1)$ and obtain $\PP_{\PP^2}(\of\oplus\of(-1)^{\oplus 2})$.

Recall that, using \cref{QuivToGrass} in the opposite direction, we get that $Y$ is the quiver flag variety obtained from the datum of
\[
    A=\begin{bmatrix}
        0 & 1 & 3 \\
        0 & 0 & 0 \\
        0 & 2 & 0 
    \end{bmatrix},\ \ r=[1,1,1].
\]
which corresponds to \red{$K_{6}$}. In this way, we have recovered that $X$ is birational to an already known Fano fourfold. Note that $Y=\PP_{\PP^2}(\of\oplus\of(-1)^{\oplus 2})$ firstly appears as the fifth case of the main theorem in \cite{SW2}. In particular, $Y$ can be described as the blow-up of the cone in $\PP^6$ over $\PP^1\times\PP^2\subset\PP^5$ along its vertex.

To complete the biregular description it suffices to compute the exceptional locus of the map:
\[
    \pi_{2,3|X}:X\to Y.
\]
We proceed as in \cref{ex:degLoc} to compute the invariants for the exceptional locus, and, using \cref{thm:ddl} we obtain that it is a surface while using \cref{thm:degloc} we get it can be described as $S=\mZ(\mQ_{\Gr(2,5)}(1,0,0)\oplus\mU^{\vee}_{\Gr(2,5)}(1,0,0))\subset\PP^3\times Y$. This lets us compute the canonical bundle $K_S$ via adjunction formula and the other usual invariants are:
\[
    K_S=\of(-1,-1)_{|S},\ h^0(-K)=7,\ (-K)^{2}=6,\ h^{0,0}=1,\ h^{1,1}=4.
\]

Now the invariants are the same as the ones of a $\DP_6$, and the anticanonical is ample, hence by classification we are done. Since over $S$ the fibers are a $\PP^1$, then $X=\Bl_{\DP_6}Y$.
\begin{rmk}
This Fano fourfold has the same invariants as the Fano fourfold \red{$X^5_{0,1}$} in \cite[Table 2]{secci}. Recall that \red{$X^5_{0,1}$} can be also written as $\Bl_{\DP_5}\PP_{X_5^3}(\of_{X_5^3}\oplus\of_{X_5^3}(-1))$, while we have described \red{$K_{508}$} as $\Bl_{\DP_6}\PP_{\PP^2}(\of_{\PP^2}\oplus\of_{\PP^2}(-1)^{\oplus 2})$. These fourfolds are likely isomorphic, but we could not find a proof for it.
\end{rmk}

\end{ex}
\section{Fano Varieties with Picard Rank 1}\label{sec:pic1}
\renewcommand\thefano{1--\arabic{fano}}
\setcounter{fano}{0}

\begin{fano}
\fanoid{$K_{742}$}
\label[mystyle]{F.1.1}
$X=\mZ(\of(1)\oplus\of(3))\subset\Gr(2,5)$
\subsubsection*{Invariants} $h^0(-K)=9, \ (-K)^4=15,\ \chi(T_X)=-109, \ h^{1,1}=1,\ h^{3,1}=41,  \ h^{2,2}=232$.
\subsubsection*{Description} $X$ is the Fano fourfold \red{b1} in \cite{kuchle}.
\end{fano} \vspace{5mm}

\begin{fano}
\fanoid{$K_{740}$}
\label[mystyle]{F.1.2}
$X=\mZ(\of(2)^{\oplus 2})\subset\Gr(2,5)$
\subsubsection*{Invariants} $h^0(-K)=10, \ (-K)^4=20,\ \chi(T_X)=-72,\ h^{1,1}=1,\ h^{3,1}=20,  \ h^{2,2}=132$.
\subsubsection*{Description} $X$ is the Fano \red{b2} in \cite{kuchle}.
\end{fano} \vspace{5mm}

\begin{fano}
\fanoid{$K_{715}$}
\label[mystyle]{F.1.3}
$X=\mZ(\mQ(1))\subset\Gr(2,6)$
\subsubsection*{Invariants} $h^0(-K)=15, \ (-K)^4=42,\ \chi(T_X)=-42,\ h^{1,1}=1,\ h^{3,1}=6,  \ h^{2,2}=57$.
\subsubsection*{Description} $X$ is the Fano \red{b3} in \cite{kuchle}.
\end{fano} \vspace{5mm}

\begin{fano}
\fanoid{$K_{730}$}
\label[mystyle]{F.1.4}
$X=\mZ(\mU^{\vee}(1)\oplus\of(1)^{\oplus 2})\subset\Gr(2,6)$
\subsubsection*{Invariants} $h^0(-K)=13, \ (-K)^4=33,\ \chi(T_X)=-48,\ h^{1,1}=1,\ h^{3,1}=8,  \ h^{2,2}=70$.
\subsubsection*{Description} $X$ is the Fano \red{b5} in \cite{kuchle}.
\end{fano} \vspace{5mm}

\begin{fano}
\fanoid{$K_{734}$}
\label[mystyle]{F.1.5}
$X=\mZ(\of(1)^{\oplus 3}\oplus\of(2))\subset\Gr(2,6)$
\subsubsection*{Invariants} $h^0(-K)=12, \ (-K)^4=28,\ \chi(T_X)=-63,\ h^{1,1}=1,\ h^{3,1}=15,  \ h^{2,2}=106$.
\subsubsection*{Description} $X$ is the Fano \red{b6} in \cite{kuchle}.
\end{fano} \vspace{5mm}

\begin{fano}
\fanoid{$K_{24}$}
\label[mystyle]{F.1.6}
$X=\mZ(\of(1)^{\oplus 2})\subset\Gr(2,5)$
\subsubsection*{Invariants} $h^0(-K)=85, \ (-K)^4=405,\ \chi(T_X)=8,\ h^{1,1}=1,\ h^{3,1}=0,  \ h^{2,2}=2$.
\subsubsection*{Description} $X$ is the Fano $X_5^4$.
\end{fano} \vspace{5mm}

\begin{fano}
\fanoid{$K_{365}$}
\label[mystyle]{F.1.7}
$\mZ(\of(1)^{\oplus 4})\subset\Gr(2,6)$
\subsubsection*{Invariants} $h^0(-K)=51, \ (-K)^4=224,\ \chi(T_X)=-9,\ h^{1,1}=1,\ h^{3,1}=0,  \ h^{2,2}=8$.
\subsubsection*{Description} $X$ is the Fano $X_6^4$.
\end{fano} \vspace{5mm}

\begin{fano}
\fanoid{$K_{443}$}
$\mZ(\mU^{\vee}(1))\subset\Gr(2,5)$
\label[mystyle]{F.1.8}
\subsubsection*{Invariants} $h^0(-K)=45, \ (-K)^4=192,\ \chi(T_X)=-15,\ h^{1,1}=1,\ h^{3,1}=0,  \ h^{2,2}=12$.
\subsubsection*{Description} $X$ is the Fano $V_{12}^{4}$ in \cite{CGKS}.
\end{fano} \vspace{5mm}

\begin{fano}
\fanoid{$K_{496}$}
\label[mystyle]{F.1.9}
$\mZ(\of(1)\oplus\of(2))\subset\Gr(2,5)$
\subsubsection*{Invariants}  $h^0(-K)=39, \ (-K)^4=160,\ \chi(T_X)=-24,\ h^{1,1}=1,\ h^{3,1}=1,  \ h^{2,2}=22$.
\subsubsection*{Description} $X$ is the Fano fourfold \red{$X_{10}$} in \cite[Table 3]{BFMT}.
\end{fano} \vspace{5mm}

\section{Fano Varieties with Picard Rank 2}\label{sec:pic2}

\renewcommand\thefano{2--\arabic{fano}}
\setcounter{fano}{0}

\begin{fano}
\fanoid{$K_{508}$}
\label[mystyle]{F.2.1}
$X=\mZ(\of(1,1)\oplus\of(0,1)^{\oplus 3})\subset\PP^2\times\Gr(2,5)$
\subsubsection*{Invariants} $h^0(-K)=39, \ (-K)^4=160,\ \chi(T_X)=-9,\ h^{1,1}=2,\ h^{3,1}=0,  \ h^{2,2}=7$.
\subsubsection*{Description} $X$ is a $\PP^1$-bundle on $X_5^3$ which degenerates to a $\PP^2$-bundle over five points. 
\subsubsection*{Identification} 
    See \cref{ex:degLoc}.

\end{fano} \vspace{5mm}

\begin{fano}
\fanoid{$K_{509}$}
\label[mystyle]{F.2.2}
$\mZ(\mQ_{\Gr(2,4)}\boxtimes\mU^{\vee}_{\Gr(2,5)}\oplus\of(0,1)\oplus\of(1,1))\subset\Gr(2,4)\times\Gr(2,5)$
\subsubsection*{Invariants} $h^0(-K)=33, \ (-K)^4=130,\ \chi(T_X)=-22,\ h^{1,1}=2,\ h^{3,1}=1,  \ h^{2,2}=23$.
\subsubsection*{Description} $X$ is the Fano fourfold \red{GM-22} in \cite[Table 5]{BFMT}.
\end{fano} \vspace{5mm}

\begin{fano}
\fanoid{$K_{517}$}
\label[mystyle]{F.2.3}
$X=\mZ(\mQ_{\Gr(2,5)}(1,0)\oplus\mU^{\vee}_{\Gr(2,5)}(1,0)\oplus\of(0,1)^{\oplus 2})\subset\PP^5\times\Gr(2,5)$
\subsubsection*{Invariants}  $h^0(-K)=33, \ (-K)^4=131,\ \chi(T_X)=-13, \ h^{1,1}=2, \ h^{3,1}=0, \ h^{2,2}=12$.
\subsubsection*{Description} $X=\Bl_{S}X_5^4$, with $S=\Bl_{9pts}\PP^2$.
\subsubsection*{Identification} The restriction of the projection $\pi$ to $\Gr(2,5)$ gives a birational map between $X$ and $X_5^4$, since the generic fiber is a fixed codimension five linear section of $\PP^5$. To find where the dimension of the fiber increases we want to apply \cref{thm:degloc} and use the strategy explained in \cref{ex:degLoc}. Recall that $X$ is the zero locus of $\sigma_1\in V_6^{\vee}\otimes V_5$, $\sigma_2\in V_6^{\vee}\otimes V_5$ and $\omega_1,\ \omega_2\in\bigwedge^{2}V_5^{\vee}$. If we consider only $\sigma_1$ and $\sigma_2$ and we project to $\Gr(2,5)$ we are fixing a plane $\Pi\in V_5$ and we look for $l\in V_6$ such that the relations $\sigma_1(l,\Pi)=0$ and $\sigma_2(l,\Pi)=0$ hold simultaneously. This fact is equivalent to studying the map \[\varphi:\mQ_{\Gr(2,5)|X_5^4}\oplus\mU_{\Gr(2,5)|X_5^4}\to\of_{X_5^4}^{\oplus 6}\]
The exceptional locus of $\pi$ is where the map $\varphi$ degenerates to a rank-4 map. We can apply \cref{thm:en} to compute the invariants. We can also notice that $D_4(\varphi)$ can be described as $S=\mZ(\Q_{\Gr(2,5)}(1,0)\oplus\mU^{\vee}_{\Gr(2,5)}(1,0)\oplus\of(0,1)^{\oplus 2})\subset\PP^3\times\Gr(2,5)$. This is a surface $S$ with $e(S)=12$, $(-K_S)^2=0$, $h^{1,0}=h^{2,0}=0$, and $K_S=\of(1,-1)_{|S}$. In particular, $K_S^2=0$ and $h^0(S,K_S^{\otimes 2})=0$, hence by classification, it is the $\Bl_{9pts}\PP^2$.\newline
Since $X$ is birational to $X_5^4$, then it is rational.
\end{fano} \vspace{5mm}

\begin{fano}
\fanoid{$K_{527}$}
\label[mystyle]{F.2.4}
$X=\mZ(\mQ_{\PP^2}\boxtimes\mU^{\vee}_{\Gr(2,7)}\oplus\of(0,1)^{\oplus 4})\subset\PP^2\times\Gr(2,7)$
\subsubsection*{Invariants} $h^0(-K)=30, \ (-K)^4=114,\ \chi(T_X)=-20, \ h^{1,1}=2,\ h^{3,1}=0, \ h^{2,2}=17$.
\subsubsection*{Description} $X$ is a small resolution of $Y$, a fourfold singular in two points with $e(Y)=21$ and $deg(Y)=28$.
\subsubsection*{Identification}This fourfold is the zero locus of $\sigma\in V_3\otimes V_7^{\vee}$ and $\omega_1,\ \omega_2,\ \omega_3,\ \omega_4\in\bigwedge^2 V_7^{\vee}$. We study The projection $\pi$ to $\Gr(2,7)$ of $\mZ(\mQ_{\PP^2}\boxtimes\mU^{\vee}_{\Gr(2,7)})$. Once we project to $\Gr(2,7)$, we look for lines $l$ in $V_3$ such that, for a point $ \Pi \in\Gr(2,7)$ the relation $\sigma(l,\Pi)=0$ holds. Equivalently, with the same technique in \cref{ex:degLoc}, we apply \cref{thm:degloc} and study where the map \[\varphi:\mU^{\vee}_{\Gr(2,7)}\to \of_{\Gr(2,7)}^{\oplus 3}\] degenerates. In particular, using \cref{thm:en,thm:ddl}, $\varphi$ degenerates to a rank 1 map on an eightfold $Z=D_{1}(\varphi)$ of degree 28, which is singular on a fourfold $K=D_{0}(\varphi)\subset Z$ of degree 2. To complete the picture, we need to further cut with the extra linear sections. In this way, we obtain that $\pi$ is a birational map between X and a fourfold $Y=Z\cap H_1\cap H_2\cap H_3\cap H_4$ with $e(Y)=21$, degree 28 and singular in two points, such that it contracts two $\PP^2$'s to the 2 singular points of $Y$. Hence, $\pi$ is the map associated to a small resolution of $Y$.\newline
Note that the projection to $\PP^2$ gives a $\DP_5$ fibration, since the generic fiber is $\Gr(2,5)\cong\mZ(\mU^{\vee\oplus 2})\subset\Gr(2,7)$, cut with $4$ hyperplane sections, hence by \cite{isko}, $X$ is rational.
\end{fano} \vspace{5mm}

\begin{fano}
\fanoid{$K_{558}$}
\label[mystyle]{F.2.5}
$X=\mZ(\mQ_{\Gr(2,4)}\boxtimes\mU^{\vee}_{\Gr(2,5)}\oplus\of(1,0)\oplus\of(0,2))\subset\Gr(2,4)\times\Gr(2,5)$
\subsubsection*{Invariants} $h^0(-K)=30, \ (-K)^4=114,\ \chi(T_X)=-24, \ h^{1,1}=2,\ h^{3,1}=1, \ h^{2,2}=24$.
\subsubsection*{Description} $X$ is the Fano fourfold \red{GM-21} in \cite[Table 5]{BFMT}.
\end{fano} \vspace{5mm}

\begin{fano}
\fanoid{$K_{559}$}
\label[mystyle]{F.2.6}
$X=\mZ(\mQ_{\Gr(2,5)}(1,0)\oplus\of(0,1)^{\oplus 2}\oplus\of(1,1))\subset\PP^4\times\Gr(2,5)$
\subsubsection*{Invariants} $h^0(-K)=29, \ (-K)^4=110,\ \chi(T_X)=-21, \ h^{1,1}=2,\ h^{3,1}=1, \ h^{2,2}=22$.
\subsubsection*{Description} $X$ is the Fano fourfold in \cite[Table 6]{BFMT}.
\end{fano} \vspace{5mm}

\begin{fano}
\fanoid{$K_{566}$}
$X=\mZ(\mQ_{\Gr(2,5)}(1,0)\oplus\of(0,1)\oplus\of(0,2))\subset\PP^3\times\Gr(2,5)$
\label[mystyle]{F.2.7}
\subsubsection*{Invariants} $h^0(-K)=26, \ (-K)^4=94,\ \chi(T_X)=-28, \ h^{1,1}=2,\ h^{3,1}=1, \ h^{2,2}=28$.
\subsubsection*{Description} $X$ is the Fano fourfold \red{GM-20} in \cite[Table 5]{BFMT}.
\end{fano} \vspace{5mm}

\begin{fano}
\fanoid{$K_{547}$}
\label[mystyle]{F.2.8}
$X=\mZ(\mQ_{\Gr(2,4)}(0,1)\oplus\mU^{\vee}_{\Gr(2,4)}(0,1)^{\oplus 2})\subset\Gr(2,4)\times\PP^6$
\subsubsection*{Invariants} $h^0(-K)=30, \ (-K)^4=116,\ \chi(T_X)=-16, \ h^{1,1}=2,\ h^{3,1}=0, \ h^{2,2}=15$.
\subsubsection*{Description} $X=\Bl_S\Gr(2,4)$, with $S=\Bl_{12pts}\PP^2$ 
\subsubsection*{Identification} 
The projection $\pi$ to $\Gr(2,4)$ gives a birational map between $X$ and $\Gr(2,4)$ since the fiber is a codimension six linear subspace of $\PP^6$. To understand the exceptional locus of $\pi$ we note that $X$ is the zero locus of a section $\sigma\in V_7^{\vee}\otimes(V_4\oplus (V_4^{\vee})^{\oplus 2})$, in particular, we are looking for lines $l\in V_7$ such that, fixed a plane $\Pi\in V_4$, the relation $\sigma(l,\Pi)=0$ holds. So if we project to $\Gr(2,4)$ and apply \cref{thm:degloc}, this is equivalent to studying the map 
\[\varphi:\mQ^{\vee}_{\Gr(2,4)}\oplus\mU^{\oplus 2}_{\Gr(2,4)}\to\of_{\Gr(2,4)}^{\oplus 7}.\] Hence to understand where the projection $\pi$ is not a isomorphism it suffices to study $D_3(\varphi)$, which using \cref{thm:ddl,thm:en} is a surface $S$ that can be described as $\mZ(\mQ_{\Gr(2,4)}(0,1)^{\oplus 2}\oplus\mU^{\vee}_{\Gr(2,4)}(0,1))\subset\Gr(2,4)\times\PP^4$. In particular, $S$ has $e(S)=15$, $deg(S)=9$, $K_S=\of(-1,1)_{|S}$, $h^{1,0}=h^{2,0}=0$ and $h^0(S,K_S^{\otimes 2})=0$, so $S\cong\Bl_{12pts}\PP^2$.\newline
Note that since $X$ is birational to $\Gr(2,4)$ in particular is rational.
\end{fano} \vspace{5mm}

\begin{fano}
\fanoid{$K_{549}$}
\label[mystyle]{F.2.9}
$X=\mZ(\mQ_{\Gr(2,5)}(1,0)\oplus\of(0,1)^{\oplus 3}\oplus\of(2,0))\subset\PP^5\times\Gr(2,5)$
\subsubsection*{Invariants} $h^0(-K)=32, \ (-K)^4=126,\ \chi(T_X)=-11, \ h^{1,1}=2,\ h^{3,1}=0, \ h^{2,2}=10$.
\subsubsection*{Description} $X$ is a conic bundle which discriminates on a $K3$ surface $S$ singular in eight points.
\subsubsection*{Identification} Using \crefpart{flagCor}{cor:flag_2} we can rewrite $X$ as $\mZ(\of(0,1)^{\oplus 3}\oplus\of(2,0)\oplus\mQ_2)\subset\Fl(1,3,6)$. Now, using \cref{lm:flag2.5}, this can be rewritten as $\mZ(\of_{\mR}(2))\subset\PP_{X_5^3}(\mU_{\Gr(2,5)|X_5^3}\oplus\of_{X_5^3})$, which is a conic bundle over $X_5^3$. Using \cref{lm:conicbdl} with $\mathcal{E}=\mU^{\vee}\oplus\of$ and $\mathcal{K}=\of$, we get that this conic bundle has as discriminant locus a degree 2 surface $S\subset X^3_5$, with eight singular points, which is a $K3$ surface singular in 8 points. \newline
We analyze the other projection. Let us consider first $X'=\mZ(\mQ_{\Gr(2,5)}(1,0)\oplus\of(0,1)^{\oplus 3})\subset \PP^5\times\Gr(2,5)$. $X'$ is birational to $\PP^5$ via the first projection. If we cut $\PP^5$ with $\mZ(\of(2))$ we get that $X$ is birational to $Q_4$. To study the exceptional locus note that $X'$ is defined as the vanishing of $\alpha\in V_6^{\vee}\otimes V_5$ and $\beta\in \bigwedge^3 V_5\otimes V3$. By \cref{thm:degloc} we can understand the degeneracy loci by studying the morphisms:
\[
    \varphi:V_5^{\vee}\otimes\of_{Q_4}\to\of_{Q_4}(1);
\]
\[
    \phi: \bigwedge^3 V_5^{\vee}\otimes\of_{Q_4}\to \of_{Q_4}^{\oplus 3}.
\]
By \cref{thm:ddl,thm:en} we get that $D_0(\varphi)=\{p\}$ and the fiber of $\pi_1$ over it is $\PP^1\times\PP^1$. Moreover $\varphi$ induces the sequence 
\[
0\to\mathcal{K}\xrightarrow{i} V_5^{\vee}\otimes\of_{Q_4}\xrightarrow{\varphi}\of_{Q_4}(1)
\] 
and if we do the third exterior power of the sequence we get a map 
\[
\bigwedge^3 i:\bigwedge^3\mathcal{K}\to\bigwedge^3 V_5
\]
which we can compose to $\phi$ and obtain 
\[
\Phi:\bigwedge^3\mathcal{K}\to \of_{Q_4}^{\oplus 3}.
\]
In particular, this gives that $D_{2}(\phi_{|\alpha=0})=D_2(\Phi)$, which is a $S=\DP_2$. Since generically on $p\in S$, $\pi_1^{-1}(p)=\PP^1$ we have that $X$ is birational to $Q_4$ and, outside a point, $X$ is $\Bl_{\DP_2}Q_4$. The final picture will be $X$ a small resolution of $Y=\Bl_SQ_4$, where $S$ is a $\DP_2$ singular in a point. \newline
This gives also the rationality of $X$.
\end{fano} \vspace{5mm}

\begin{fano}
\fanoid{$K_{552}$}
\label[mystyle]{F.2.10}
$X=\mZ(\of(0,1)^{\oplus 2}\oplus\mU^{\vee}_{\Gr(2,5)}(1,0)^{\oplus 2})\subset\PP^4\times\Gr(2,5)$
\subsubsection*{Invariants} $h^0(-K)=30, \ (-K)^4=116,\ \chi(T_X)=-14, \ h^{1,1}=2,\ h^{3,1}=0, \ h^{2,2}=13$.
\subsubsection*{Description} $X=\Bl_{\Bl_{10pts}\PP^2}X_5^4$.
\subsubsection*{Identification} The projection $\pi$ to $\Gr(2,5)$ gives a birational map between $X$ and $X_5^4$, since the generic fiber is $\PP^4$ cut with $4$ hyperplane sections. Now to understand where the fiber degenerates recall that $X$ is the zero locus of $\sigma\in V_5\otimes (V_5^{\vee\oplus 2})$ and $\omega_1,\omega_2\in\bigwedge^2V_5$, hence, by \cref{thm:degloc}, we can study the projection of $X$ to $X_5^4$ by studying the map \[\varphi_:\mU_{\Gr(2,5)|X_5^4}^{\oplus 2}\to\of_{X_5^4}^{\oplus 5}\] In particular, $\pi$ degenerates along $D_1(\varphi)$. Using \cref{thm:ddl,thm:en} we get that $D_1(\varphi)$ can be described as $S=\mZ(\mQ_{\Gr(2,5)}(1,0)^{\oplus 2})\subset\PP^4\times X_5^4$. $S$ has $e(S)=13$, $deg(S)=11$, $K_S=\of(1,-1)_{|S}$, $(-K_S)^2=1$, $h^{1,0}=h^{2,0}=0$, and $h^0(S,K_S^{\otimes 2})=0$, hence using the classification of surfaces we get that $S$ is $\Bl_{10pts}\PP^2$.\newline
Note that since $X$ is birational to $X_5^4$, in particular, it is rational.
\end{fano} \vspace{5mm}

\begin{fano}
\fanoid{$K_{25}$}
\label[mystyle]{F.2.11}
$X=\mZ(\mQ_{\Gr(2,4)} \boxtimes \mU^{\vee}_{\Gr(2,5)} \oplus \of(0,1)^{\oplus 2}) \subset \Gr(2,4) \times \Gr(2,5)$
\subsubsection*{Invariants} $h^0(-K)=70, \ (-K)^4=325,\ \chi(T_X)=4,\ h^{1,1}=2,\ h^{3,1}=0,  \ h^{2,2}=4$.
\subsubsection*{Description} $X=\Bl_{\PP^1}X_5^4$.
\subsubsection*{Identification} Note that if we consider $X'=\mZ(\mQ_{\Gr(2,4)}\boxtimes\mU^{\vee}_{\Gr(2,5)})\subset\Gr(2,4)\times\Gr(2,5)$ we can apply \crefpart{lm:blowFlagGr}{item_2} and obtain $X'=\Bl_{D'}\Gr(2,5)$, with $D'=\PP^3=\mZ(\mQ)\subset\Gr(2,5)$. Since $X=X'\cap H_1\cap H_2$, with $H_1$, $H_2$, generic hyperplane sections, we obtain that $X=\Bl_DX_5^4$, with $D=D'\cap H_1\cap H_2$ which is $\PP^1$. \newline
Note that since $X$ is birational to $X_5^4$, in particular, it is rational.
\end{fano} \vspace{5mm}

\begin{fano}
\fanoid{$K_{554}$}
\label[mystyle]{F.2.12}
$X=\mZ(\of(1)\boxtimes Sym^2\mU^{\vee}_{\Gr(2,4)}) \subset \PP^3 \times \Gr(2,4)$
\subsubsection*{Invariants} $h^0(-K)=24, \ (-K)^4=90,\ \chi(T_X)=-9, \ h^{1,1}=2, \ h^{3,1}=0,\ h^{2,2}=12$.
\subsubsection*{Description} $X=\Bl_{S}\Gr(2,4)$, with $S$ an Enriques surface.
\subsubsection*{Identification} The projection $\pi$ to $\Gr(2,4)$ gives a birational map between $X$ and $\Gr(2,4)$. In fact $\Sym^2\mU^{\vee}_{\Gr(2,4)}$ has rank 3, so the generic fiber is a codimension three linear subspace of $\PP^3$. To understand the exceptional locus of $\pi$ recall that $X$ is the zero locus of $\sigma\in V_4\otimes\Sym^2V_4^{\vee}$. Hence if we project to $\Gr(2,4)$, we are looking for lines $l$ in $\PP^3$ such that, once we fix a plane, $\Pi$ in $V_4$, $\sigma(l,\Pi)=0$. Equivalently, by \cref{thm:degloc} we can study \[\varphi:\Sym^2\mU_{\Gr(2,4)}\to\of_{\Gr(2,4)}^{\oplus 4}.\] The fiber $\pi$ has higher dimension where $\varphi$ degenerates, hence we want to understand $D_2(\varphi)$. Using \cref{thm:ddl,thm:en} we get that $D_2(\varphi)$ is a surface $S$ with $e(S)=12$ and $deg(S)=10$. Note that $X$ is described in\cite[Theorem 2]{Kuz19}, hence $S$ is an Enriques surface. \newline
Note that since $X$ is birational to $\Gr(2,4)$, in particular, it is rational.

\end{fano} \vspace{5mm}

\begin{fano}
\fanoid{$K_{582}$}
\label[mystyle]{F.2.13}
$X=\mZ(\mQ_{\Gr(3,5)}(1,0)\oplus \of(0,1)^{\oplus 3}\oplus \of(2,0))\subset\PP^4\times\Gr(3,5)$
\subsubsection*{Invariants} $h^0(-K)=30, \ (-K)^4=116,\ \chi(T_X)=-11, \ h^{1,1}=2,\ h^{3,1}=0,\ h^{2,2}=10$.
\subsubsection*{Description} The projection $\pi_1: X \to Q_3 $ is a conic bundle with discriminant a sextic surface in $Q_3$, singular in 40 points. The second projection $\pi: X \to X_5^3$ is a conic bundle discriminant a $K3$ surface singular in eight points.

\subsubsection*{Identification} 
See \cref{ex:conicBdl}.

\end{fano} \vspace{5mm}

\begin{fano}
\fanoid{$K_{583}$}
\label[mystyle]{F.2.14}
$\mZ(\mU^{\vee}_{\Gr_1(2,4)}(0,1)\oplus\mU^{\vee}_{\Gr_2(2,4)}(1,0)) \subset \Gr_1(2,4) \times \Gr_2(2,4)$.
\subsubsection*{Invariants} $h^0(-K)=28, \ (-K)^4=106,\ \chi(T_X)=-16, \ h^{1,1}=2,\ h^{3,1}=0, \ h^{2,2}=15$.
\subsubsection*{Description}
Out of two points $X$ is isomorphic to $\Bl_{\DP_3}\Gr(2,4)$ via the projection $\pi$ on one of the $\Gr(2,4)$. On those two points, the fiber of $\pi$ is $\PP^1\times\PP^1$.
\subsubsection*{Identification} Let us note that $X$ is defined by $\alpha_1\in V_4^{\vee}\otimes\bigwedge^2W_4^{\vee}$ and $\alpha_2\in\bigwedge^2V_4^{\vee}\otimes W_4^{\vee}$. We analyze The projection $\pi$ to $\Gr(2,W_4)=\Gr_2(2,4)$, and this, by \cref{thm:degloc}, is equivalent to study where the two following maps degenerate:
\[
    \varphi:V_4\otimes\of_{\Gr_2(2,4)}\to\of_{\Gr_2(2,4)}(1),
\]
\[
    \phi:\bigwedge^2V_4\otimes\of_{\Gr_2(2,4)}\to\mU^{\vee}_{\Gr_2(2,4)}.
\]
By \cref{thm:ddl,thm:en} we obtain that $D=D_0(\varphi)={2pts}$ and the fiber of $\pi$ on the points is a $Q_2$. Note that the first vector bundles of $\varphi$ and $\phi$ are related by the following sequences:
\[
    0\to\mathcal{K}\to V_4\otimes\of_{\Gr_2(2,4)}\xrightarrow{\varphi}\of_{\Gr_2(2,4)}(1)\to \mathcal{F}\to 0
\]
with $\mathcal{K}=ker(\varphi)$ and $\mathcal{F}=coker(\varphi)$. Now, we can make the second exterior power of the above sequence, and obtain:
\[
    0\to\bigwedge^2\mathcal{K}\xrightarrow{\tau}\bigwedge^2V_4\otimes\of_{\Gr_2(2,4)}\to\mathcal{K}(1)\to\mathcal{K}\otimes\mathcal{F}\to 0.
\]
Hence we get
\[
    \Phi:\bigwedge^2\mathcal{K}\xrightarrow{\phi\circ\tau}\mU^{\vee}_{\Gr_2(2,4)}.
\]
So $D'=D_1(\phi_{|D})=D_1(\Phi)$, and using \cref{thm:ddl,thm:en}, then we obtain that $D'=\DP_3$. Hence out of two points $X$ is isomorphic to $\Bl_{\DP_3}\Gr(2,4)$ via $\pi$. On those two points, the fiber of $\pi$ is $Q_2$. Thus $X$ is a small resolution of $Y=\Bl_{S}\Gr(2,4)$, with $S$ a $\DP_3$ singular in two points.\\ Since $X$ is birational to $\Gr(2,4)$, it is rational.

\end{fano} \vspace{5mm}

\begin{fano}
\fanoid{$K_{589}$}
\label[mystyle]{F.2.15}
$X=\mZ(\mQ_{\PP^2}(0,1)^{\oplus 2}) \subset \PP^2 \times \Gr(2,5)$
\subsubsection*{Invariants} $h^0(-K)=24, \ (-K)^4=85,\ \chi(T_X)=-24, \ h^{1,1}=2,\ h^{3,1}=0, \ h^{2,2}=22$.
\subsubsection*{Description} $X$ is a small resolution of a fourfold $Y$ singular in five points with $e(Y)=23$ and $deg(Y)=15$.
\subsubsection*{Identification} Note that $X$ is the zero locus of $\sigma\in V_3\otimes(\bigwedge^2V_5^{\vee})^{\oplus 2}$. Once we project to $\Gr(2,5)$ via $\pi$, we are looking at the lines $l$ in $\PP^2$ such that, fixed the plane $\Pi\in\Gr(2,5)$, the relation $\sigma(l,\Pi)=0$ holds. By \cref{thm:degloc}, this is equivalent to study the map \[\varphi:\of_{\Gr(2,5)}(-1)^{\oplus 2}\to\of_{\Gr(2,5)}^{\oplus 3}\] In particular, $X$ is birational to $Y=D_1(\varphi)$, since The projection $\pi$ to $D_1(\varphi)\subset\Gr(2,5)$ is $\PP^2$ cut by a section of $\mQ_{\PP^2}$, hence a point. Moreover, $\pi$ degenerates on $D_2(\varphi)$, which is also the singular locus of $Y$, and the fiber over $D_2(\varphi)\subset D_1(\varphi)$ is $\PP^2$. Now using \cref{thm:ddl,thm:en} we have that $Y$ is a fourfold with $e(Y)=23$ and $deg(Y)=15$, while $D_2(\varphi)$ consists of 5 points. In this way, $X$ is a small resolution of the singular fourfold $Y$.\newline
Note that the projection to $\PP^2$ is a $\DP_5$ fibration, indeed, the generic fiber is $\Gr(2,5)$ cut by 4 hyperplane sections, using \cite{isko}, we have that $X$ is rational.
\end{fano} \vspace{5mm}

\begin{fano}
\fanoid{$K_{579}$}
$X=\mZ(\mQ_{\Gr(2,4)}(1,0)^{\oplus 2}\oplus\of(1,1))\subset\PP^5\times\Gr(2,4)$
\label[mystyle]{F.2.16}
\subsubsection*{Invariants}$h^0(-K)=27, \ (-K)^4=101,\ \chi(T_X)=-21, \ h^{1,1}=2,\ h^{3,1}=1, \ h^{2,2}=23$.
\subsubsection*{Description} $X$ is the Fano fourfold \red{M-9} in \cite[Table 4]{BFMT}.
\end{fano} \vspace{5mm}

\begin{fano}
\fanoid{$K_{597}$}
\label[mystyle]{F.2.17}
$X=\mZ(\mQ_{\Gr(2,4)}(1,0)\oplus\mU^{\vee}_{\Gr(2,4)}(1,0)\oplus\of(1,1))\subset\PP^5\times\Gr(2,4)$
\subsubsection*{Invariants} $h^0(-K)=27, \ (-K)^4=101,\ \chi(T_X)=-23, \ h^{1,1}=2,\ h^{3,1}=1, \ h^{2,2}=24$.
\subsubsection*{Description} $X$ is the Fano fourfold \red{K3-33} in \cite[Table 6]{BFMT}.
\end{fano} \vspace{5mm}

\begin{fano}
\fanoid{$K_{600}$}
\label[mystyle]{F.2.18}
$X=\mZ(\mU^{\vee}_{\Gr(2,4)}(1,0)\oplus\mQ_{\Gr(2,4)}(1,0)\oplus\of(0,1) \oplus\of(2,0))\subset \PP^6 \times \Gr(2,4)$
\subsubsection*{Invariants} $h^0(-K)=27, \ (-K)^4=100,\ \chi(T_X)=-20, \ h^{1,1}=2, \ h^{3,1}=0, \ h^{2,2}=18$.
\subsubsection*{Description} $X=\Bl_{\Bl_{9pts}\PP^2}Q_5\cap Q_5'$.
\subsubsection*{Identification} We apply \cref{thm:seq} and rewrite $X$ as $\mZ(\of_{\mR}(1)\oplus\of_{\mR}(2))\subset\PP_{Q_3}(\mU_{\Gr(2,4)|Q_3}\oplus\mQ_{\Gr(2,4)|Q_3}^{\vee})$. Since the generic fiber of the projection $\pi$ to $Q_3$ is $\PP^3$ cut by a linear subspace of codimension one and a quadric, we get that $X$ is a conic bundle over $Q_3$. Here the rank three bundle that induces the conic fibration is not explicit. To recover it we notice that $\of_{\mR}(1)$ induces a morphism:
\[
    \varphi:\mU_{\Gr(2,4)|Q_3}\oplus\mQ_{\Gr(2,4)|Q_3}\to\of_{Q_3};
\]
hence an exact sequence of vector bundles on $Q_3$:
\[
    0\to\mE\to\mU_{\Gr(2,4)|Q_3}\oplus\mQ_{\Gr(2,4)|Q_3}\to\of_{Q_3}\to 0
\]
with $\mE$ a rank three bundle completely identified by the Chern classes of $\mU_{\Gr(2,4)|Q_3}\oplus\mQ_{\Gr(2,4)|Q_3}$ and $\of_{Q_3}$.
By applying \cref{thm:seq} then we obtain $X=\mZ(\of_{\mR}(2))\subset\PP_{Q_3}(\mE)$. We are in the setting to apply \cref{lm:conicbdl} with $\mathcal{K}=\of_{Q_3}$, so we obtain that $\Delta$ is a quartic surface and $\Delta_{sing}=12$.\\
We study the projection $\pi_1$ to $\PP^6$. Recall that $X$ is obtained as the vanishing of the sections 
$\alpha\in V_6^{\vee}\otimes V_4^{\vee}$, 
$\beta\in V_6^{\vee}\otimes V_4$,
$\omega\in\Sym^2V_6^{\vee}$ and $\tau\in\bigwedge^2V_4$.
Let us first consider $\mZ(\alpha)\cap\mZ(\beta)\subset\PP^6\times\Gr(2,4)$. So if we want to understand the projection to $\PP^6$, by \cref{thm:degloc}, we can study the morphisms:
\[
\varphi:\of(-1)\to V_4^{\vee},
\]
\[
\phi:\of(-1)\to V_4
\]
Hence if we dualize $\phi$ and we compose it to $\varphi$, we get:
\[
\Phi:\of(-1)\xrightarrow{\varphi} V_4^{\vee}\xrightarrow{\phi^{\vee}}\of(1)
\]
Thus $\Phi(l)=0$ for $l\in\PP^6$ is equivalent to studying the image of the projection $\pi_1$. If we twist the bundles we can rewrite $\Phi$ as :
\[
\Phi':\of(-2)\to\of
\]
and $\Phi(l)=0$ if and only if $\Phi'(l)=0$, which is $Q_5\subset\PP^6$. To understand the fiber of $\pi_1$ over $Q_5$, recall that the fiber is obtained by looking at $\varphi(l)\subset V_4^{\vee}$. Since $\phi^{\vee}(\varphi(l))=0$ we have that $\varphi(l)\in\mathcal{K}_l=Ker\phi^{\vee}$, and the fiber over $l$ is $\PP(\mathcal{K}_l/\varphi(l))$. Generically $dim \mathcal{K}_l=3$ and $dim \varphi(l)=1$, hence generically the fiber over $Q_5$ we have a $\PP^1$. In particular, if we cut $\PP^6$ with $Q_5'=\mZ(\omega)$ and the fiber with $\mZ(\tau)$, then we have that the image of $\pi_1$ on $\PP^6$ is $Q_5\cap Q_5'$ and the fiber is a point. In this way we got that $\pi_1$ induces a birational map between $X$ and the intersection of two quadrics in $\PP^6$. To understand the exceptional locus of $\pi_1$ we can recall that, by \cite[Theorem 1.4]{ottaviani}, $\mU^{\vee}_{\Gr(2,4)|Q_3}=\mQ_{\Gr(2,4)|Q_3}$ is the spinor bundle on $Q_3$, so we can rewrite $X$ as $\mZ(\mU^{\vee}_{\Gr(2,4)|Q_3}(1,0)^{\oplus 2}\oplus\of(2,0))\subset\PP^6\times Q_3$. By \cref{thm:degloc}, we study the map:
\[
\theta:\of(-1)^{\oplus 2}\to V_4^{\vee}.
\]
The exceptional locus of $\pi_1$ is $D_1(\theta)$. By \cref{thm:ddl,thm:en} we have that $D_1(\theta)$ is a surface $S$ with $h^{0,0}=1$ and $h^{1,1}=10$, which is $\Bl_{9pts}$. Hence $X=\Bl_{\Bl_{9pts}\PP^2}Q_5\cap Q_5'$.
\end{fano} \vspace{5mm}

\begin{fano}
\fanoid{$K_{603}$}
\label[mystyle]{F.2.19}
$X=\mZ(\mU^{\vee}_{\Gr(2,6)}(1,0)\oplus\of(0,1)^{\oplus 4}) \subset \PP^2 \times \Gr(2,6)$
\subsubsection*{Invariants} $h^0(-K)=27, \ (-K)^4=100,\ \chi(T_X)=-18,\ h^{1,1}=2, \ h^{3,1}=0, \ h^{2,2}=16$.
\subsubsection*{Description} $X=\Bl_{S}X_6^4$ with $S$ a $\DP_2$.
\subsubsection*{Identification} $X$ and $X_6^4$ are birational since the generic fiber of The projection $\pi$ to $X_6^4\subset\Gr(2,6)$ is a codimension two linear subspace in $\PP^2$. To understand the exceptional locus of $\pi$, recall that $X$ is zero locus of $\sigma\in V_3\otimes V_6^{\vee}$ and $\omega_1,\ \omega_2,\omega_3,\omega_4\in \bigwedge^2V_6^{\vee}$. If we start by studying the zero locus of $\sigma$ and we project to $\Gr(2,6)$, we are looking at the lines $l$ in $\PP^3$ such that the relation $\sigma(l,\Pi)=0$ holds fixed a plane $\Pi$ in $\Gr(2,6)$. We can restrict ourselves to $X_6^4\subset\Gr(2,6)$ and the problem, by \cref{thm:degloc}, is equivalent to studying the map \[\varphi:\mU_{\Gr(2,6)|X_6^4}\to\of_{X_6^4}^{\oplus 3}\] In particular the fiber of $\pi$ is a $\PP^2$ where $\varphi$ degenerates, hence over $D_1(\varphi)$. Now using \cref{thm:ddl,thm:en} we have that $D_1(\varphi_{|X_6^4})$ is a surface which can be described as $S=\mZ(\mQ_{\Gr(2,6)}(1,0))\subset\PP^2\times X_6^4$ with $e(S)=10$, $K_S=\of(1,-1)_{|S}$, $(-K_S)^2=2$, $h^{1,0}=h^{2,0}=0$ and $h^0(S,K_S^{\otimes 2})=0$, hence, by classification $S=\DP_2$. In the end $X=\Bl_{\DP_2}X_6^4$.\newline
Note that since $X$ is birational to $X_6^4$ is, in particular, rational.
\end{fano} \vspace{5mm}

\begin{fano}
\fanoid{$K_{604}$}
\label[mystyle]{F.2.20}
$X=\mZ(\mQ_{\Gr(3,5)}(1,0)\oplus\of(1,1)\oplus\of(0,1)^{\oplus 2})\subset\PP^3\times\Gr(3,5)$
\subsubsection*{Invariants} $h^0(-K)=26, \ (-K)^4=95,\ \chi(T_X)=-22,\ h^{1,1}=2, \ h^{3,1}=1, \ h^{2,2}=23$.
\subsubsection*{Description} $X$ is the Fano fourfold \red{K3-34} in \cite[Table 6]{BFMT}.
\end{fano} \vspace{5mm}

\begin{fano}
\fanoid{$K_{29}$}
\label[mystyle]{F.2.21}
$X=\mZ(\mQ_{\Gr(2,4)}(1,0)^{\oplus 2})\subset\PP^4\times\Gr(2,4)$
\subsubsection*{Invariants} $h^0(-K)=60, \ (-K)^4=273,\ \chi(T_X)=3, \ h^{1,1}=2,\ h^{3,1}=0, \ h^{2,2}=3$.
\subsubsection*{Description} $X=\Bl_{\PP^2}\Gr(2,4)$.
\subsubsection*{Identification} The projection $\pi$ to $\Gr(2,4)$ is birational, since the generic fiber is a codimension four linear subspace in $\PP^4$. To understand the exceptional locus of $\pi$ recall that $X$ is the zero locus of $\sigma\in V_5\otimes(V_4^{\vee})^{\oplus 2}$. Using the strategy explained in \cref{ex:degLoc}, by \cref{thm:degloc}, the problem is equivalent to studying the map \[\varphi:\mU_{\Gr(2,4)}^{\oplus 2}\to\of_{\Gr(2,4)}^{\oplus 5}.\] The exceptional locus of $\pi$ corresponds to the locus where $\varphi$ becomes a rank 3 map, hence $D_3(\varphi)$. By \cref{thm:ddl,thm:en} has the same invariants of the surface $S=\mZ(\mQ_{\Gr(2,4)}(1,0)^{\oplus 2})\subset\PP^2\times\Gr(2,4)$. $S$ has $e(S)=3$, $K_S=\of(1,-2)_{|S}$, $(-K_S)^2=9$, $h^{1,0}=h^{2,0}=0$ and $h^0(S,K_S^{\otimes 2})=0$, hence by classification it is $\PP^2$. In this way $X=\Bl_{\PP^2}\Gr(2,4)$.\newline
Since $X$ is birational to $\Gr(2,4)$ then it is rational.
\end{fano} \vspace{5mm}

\begin{fano}
\fanoid{$K_{612}$}
\label[mystyle]{F.2.22}
$X=\mZ(\mQ(0,2))\subset \PP^2\times\PP^4$
\subsubsection*{Invariants} $h^0(-K)=30, \ (-K)^4=113,\ \chi(T_X)=-12, \ h^{1,1}=2,\ h^{2,1}=5, \ h^{3,1}=0 \ h^{2,2}=3$.
\subsubsection*{Description} $X=\Bl_{Q_2\cap Q_2'\cap Q_2''}\PP^4$.
\subsubsection*{Identification} We can simply apply \cref{lm:blowPPtX} and obtain that $X=\Bl_{Q_2\cap Q_2'\cap Q_2''}\PP^4$.\\ $X$ is rational by definition.
\end{fano} \vspace{5mm}

\begin{fano}
\fanoid{$K_{614}$}
\label[mystyle]{F.2.23}
$X=\mZ(\of(1,1)\oplus\bigwedge^3\mQ_{\PP^4}(0,1))\subset\PP^4\times\PP^5$
\subsubsection*{Invariants} $h^0(-K)=24, \ (-K)^4=86,\ \chi(T_X)=-24, \ h^{1,1}=2, \ h^{3,1}=1 \ h^{2,2}=26$.
\subsubsection*{Description} $X$ is the Fano fourfold \red{C-7} in \cite[Table 4]{BFMT}.
\end{fano} \vspace{5mm}

\begin{fano}
\fanoid{$K_{616}$}
\label[mystyle]{F.2.24}
$X=\mZ(\mQ_{\Gr(2,4)}(1,0)\oplus\of(0,2)\oplus\of(2,0) \subset \PP^4 \times \Gr(2,4)$
\subsubsection*{Invariants} $h^0(-K)=26, \ (-K)^4=96,\ \chi(T_X)=-13, \ h^{1,1}=2, \ h^{2,1}=2,\ h^{3,1}=0, \ h^{2,2}=10$.
\subsubsection*{Description} $X$, is a conic fibration over $Z=\mZ(\of(2))\subset\Gr(2,4)$ which discriminates on a $K3$ surface singular in 8 points.
\subsubsection*{Identification} Applying \crefpart{flagCor}{cor:flag_2} and \cite[Lemma 5]{DFT} we get $X=\mZ(\of_{\mR}(2)))\subset\PP_Z(\mU_{\Gr(2,4)|Z}\oplus\of_Z)$, with $Z=\mZ(\of(2))\subset\Gr(2,4)$, which is the Fano threefold \red{1-14} in \cite[Table 1]{DFT}. Applying \cref{lm:conicbdl}, with $\mathcal{E}=\mU^{\vee}\oplus\of$ and $\mathcal{K}=\of$, we get that the discriminant locus of the conic bundle is a surface of degree 2 in $Z$ singular in 8 points, which means, by adjunction formula, a $K3$ surface singular in 8 points.

\end{fano} \vspace{5mm}

\begin{fano}
\fanoid{$K_{617}$}
\label[mystyle]{F.2.25}
$X=\mZ(\mU^{\vee}_{\Gr(2,5)}(0,1) \oplus \mU^{\vee}_{\Gr(2,5)}(1,0)) \subset \PP^2 \times \Gr(2,5).$
\subsubsection*{Invariants} $h^0(-K)=25, \ (-K)^4=90,\ \chi(T_X)=-21, \ h^{1,1}=2, \ h^{3,1}=0,\ h^{2,2}=19$.
\subsubsection*{Description} $X=\Bl_{\DP_3}Y$, with $Y$ the \cref{F.1.8}.
\subsubsection*{Identification} The projection $\pi$ to $\Gr(2,5)$ gives a birational map between $X$ and $Y=\mZ(\mU^{\vee}_{\Gr(2,5)}(1))\subset\Gr(2,5)$, the \cref{F.1.8}, since the generic fiber is $\PP^2$ cut with two hyperplane sections. To understand the exceptional locus of $\pi$ it suffices, by \cref{thm:degloc}, to study where the map \[\varphi:\mU_{\Gr(2,5)|Y}\to \of_{\Gr(2,5)|Y}^{\oplus 3}\] degenerates to a rank-1 map, i.e.,   we have to study $D_3(\varphi)$. Using \cref{thm:ddl,thm:en} we get that $D_1(\varphi_{|Y})$ is a surface $S$ that can be described as $\mZ(\mU^{\vee}_{\Gr(2,5)}(1,0))\subset\PP^1\times Y$. $S$ is such that $e(S)=9$, $\mathcal{K}_{S}=\of(1,-1)_{|S}$, $(-K_S)^2=3$, $h^{1,0}=h^{2,0}=0$ and $h^0(S,K_S^{\otimes 2})=0$, hence by classification $S=\DP_3$.\newline Note that the projection to $\PP^2$ is a $\DP_5$ fibration, indeed the generic fiber is $\Gr(2,5)$ cut by a section of $\mU^{\vee}(1)$ and a section of $\mU^{\vee}$, which is a rational surface of degree 5, and hence a $\DP_5$.
Now using \cite{isko}, we get that $X$ is rational.
\end{fano} \vspace{5mm}

\begin{fano}
\fanoid{$K_{32}$}
\label[mystyle]{F.2.26}
$X=\mZ(\mQ_{\PP^4_1}^{\vee}(1,1))\subset\PP^4_1\times\PP^4_2$
\subsubsection*{Invariants} $h^0(-K)=51, \ (-K)^4=225,\ \chi(T_X)=-1,\ h^{1,1}=2,\ h^{2,1}=1 ,\ h^{3,1}=0, \ h^{2,2}=3$.
\subsubsection*{Description} $X=\Bl_{S}\PP^4$, with $S=\PP_E(\mF)$ a $\PP^1$-bundle over a quintic elliptic curve.
\subsubsection*{Identification} The projection $\pi$ gives a birational map between $X$ and $\PP^4_1$, indeed, the generic fiber is a codimension four linear subspace of $\PP^4_2$. To understand the exceptional locus of $\pi$, we apply \cref{thm:degloc} and look where \[\varphi:\mQ_{\PP^4}(-1)\to\of_{\PP^4}^{\oplus 5}\] degenerates to a rank-3 map, hence we want to study $D_3(\varphi)$. Using \cref{thm:ddl,thm:en} we get that $D_3(\varphi)$ is a surface $S$ that can be described as $\mZ(\of_{\mR}(1)^{\oplus 5})\subset\PP_{\PP^4_1}(\mQ(-1))$. Applying \cite[Lemma 5]{DFT} we obtain that $S$ is equivalent also to $\mZ(\of(0,1)^{\oplus 5})\subset\Fl(1,2,5)$. Using \cref{lm:flag2.5} we can rewrite $S$ as $\PP_E(\mU|_E)$, where $E$ is a quintic elliptic curve obtained as a linear complete intersection in $\Gr(2,5)$.
\newline
Note that $X$ is rational by definition.
\end{fano} \vspace{5mm}

\begin{fano}
\fanoid{$K_{109}$}
\label[mystyle]{F.2.27}
$X=\mZ(\mQ_{\Gr(2,5)}(1,0)\oplus\of(0,1)^{\oplus 3})\subset\PP^4\times\Gr(2,5)$
\subsubsection*{Invariants} $h^0(-K)=60, \ (-K)^4=272,\ \chi(T_X)=3,\ h^{1,1}=2,\ h^{3,1}=0, \ h^{2,2}=2$.
\subsubsection*{Description} $X=\Bl_{\nu_2(\PP^2)}\PP^4$
\subsubsection*{Identification} We can apply \crefpart{lm:blowFlagGr}{item_1} and \cref{lm:flag2.5}, to get the identification $X=\PP_{X_5^3}(\mU_{\Gr(2,5)|X_5^3})$.\\
We study the projection $\pi_1$ to $\PP^4$. Generically the fiber is a point, hence we get that $\pi_1$ induces a birational morphism between $X$ and $\PP^4$. Using the combination of \cref{thm:degloc},\cref{thm:ddl}, and \cref{thm:en}, we get that the exceptional locus of $\pi_1$ is a surface $S$ that can be described as $\mZ(\mQ_{\PP^4}^{\vee}(1,1))\subset\PP^2\times\PP^4$. In particular, $S$ is isomorphic to $\PP^2$ embedded in $\PP^4$ in a non-degenerate way, hence it can be identified with the projection of $\nu_2(\PP^2)$. The fibers of $\pi_1$ along this $\PP^2$ are a $\PP^1$, hence $X=\Bl_S\PP^4$.
\end{fano} \vspace{5mm}

\begin{fano}
\fanoid{$K_{101}$}
\label[mystyle]{F.2.28}
$X=\mZ(\mQ_{\Gr(2,4)} \boxtimes \mU^{\vee}_{\Gr(2,5)} \oplus \of(0,1) \oplus \of(1,0)) \subset \Gr(2,4) \times \Gr(2,5)$
\subsubsection*{Invariants} $h^0(-K)=75, \ (-K)^4=352,\ \chi(T_X)=7,\ h^{1,1}=2,\ h^{3,1}=0,  \ h^{2,2}=3$.
\subsubsection*{Description} $X=\Bl_{\PP^2}X_5^4$.
\subsubsection*{Identification} Let us first recall that, by \crefpart{lm:blowFlagGr}{item_2}, $\mZ(\mQ_{\Gr(2,4)}\boxtimes\mU^{\vee}_{\Gr(2,5)})\subset\Gr(2,4)\times\Gr(2,5)$ is $\Bl_{\PP^3}\Gr(2,5)$, with $\PP^3=\mZ(\mQ)\subset\Gr(2,5)$. Now the remaining sections cut $\Gr(2,5)$ as hyperplane sections, but $\mZ(\of(1,0)\oplus\mQ_{\Gr(2,4)} \boxtimes \mU^{\vee}_{\Gr(2,5)})\subset\Gr(2,4) \times \Gr(2,5)$ contains the $\PP^3$ in $\Bl_{\PP^3}\Gr(2,5)$, while $\mZ(\of(0,1)\oplus\mQ_{\Gr(2,4)} \boxtimes \mU^{\vee}_{\Gr(2,5)})\subset\Gr(2,4) \times \Gr(2,5)$ cuts both the $\PP^3$ and $\Gr(2,5)$, hence the final picture is $X=\Bl_{\PP^2}X_5^4$.\newline
Note that since $X$ is birational to $X_5^4$ it is, in particular, rational.

\end{fano} \vspace{5mm}

\begin{fano}
\fanoid{$K_{102}$}
\label[mystyle]{F.2.29}
$X=\mZ(\mQ_{\Gr(2,5)}(1,0) \oplus \of(0,1)^{\oplus 2}) \subset \PP^3 \times \Gr(2,5)$
\subsubsection*{Invariants}$h^0(-K)=69, (-K)^4=320,\ \chi(T_X)=4,\ h^{1,1}=2,\ h^{3,1}=0, \ h^{2,2}=4$.
\subsubsection*{Description} $X=\Bl_{Q_2}X_5^4$.
\subsubsection*{Identification} 

 $X$ is defined by $\alpha_1, \alpha_2 \in \W^2V_5^{\vee}$ and by $\beta \in V_4^{\vee}\otimes V_5= \mathrm{Hom}(V_4, V_5)$. We analyze the projection to $\Gr(2,5)$, which is the locus of isotropic 2-planes for both $\alpha_i$, i.e., $X_5^4$. The fiber of $\pi$ over a generic point $p$ of $X_5^4$ is the zero locus on $\PP^3$ of a three-dimensional space of linear sections. Hence, the projection to $X_5^4$ is generically birational.
 
 By \cref{thm:degloc}, to understand the exceptional locus, we need to analyze the locus where the (generically rank three) morphism
 \[\varphi:\mQ_{\Gr(2,5)}|_{X_5^4}\to\of_{{X_5^4}} \otimes V_4 \]

 degenerates to a rank 2 morphism, which is by definition $D_2(\varphi_{|X_5^4})$. Notice also how, by dimension reasons, there is no further degeneration.
Now using \cref{thm:ddl,thm:en} we have that $D_2(\varphi)$ is a surface which can be described as $\mZ(\of(1)^{\oplus 2})\subset\Gr(2,4)$, which is $Q_2$, hence $X=\Bl_{Q_2}X_5^4$.\\
In particular, $X$ is rational.

\end{fano} \vspace{5mm}

\begin{fano}
\fanoid{$K_{625}$}
\label[mystyle]{F.2.30}
$\mZ(\of(1,1)^{\oplus 2} \oplus \mU^{\vee}_{\Gr(2,4)}(1,0)) \subset \PP^4 \times \Gr(2,4)$
\subsubsection*{Invariants} $h^0(-K)=24, \ (-K)^4=85,\ \chi(T_X)=-28,\ h^{1,1}=2, \ h^{3,1}=2, \ h^{2,2}=32$.
\subsubsection*{Description} $X=\Bl_{\mathcal{E}}\Gr(2,4)$, with $\mathcal{E}$ the elliptic fibration with base $\PP^1$ and generic fiber an elliptic curve $E$ of degree five.
\subsubsection*{Identification} The projection $\pi$ to $\Gr(2,4)$ is a birational map between the latter and $X$. Moreover, using the self-duality of $\Gr(2,4)$, we can rewrite $X$ as $\mZ(\of(1,1)^{\oplus 2} \oplus \mQ_{\Gr(2,4)}(1,0)) \subset \PP^4 \times \Gr(2,4)$, and by applying \crefpart{flagCor}{cor:flag_2} and \cref{lm:flag2.5}, we have that $X=\mZ(\of(1,1)^{\oplus 2})\subset\PP_{\Gr(2,4)}(\mU_{\Gr(2,4)}\oplus\of_{\Gr(2,4)})$. If we twist the bundle $\mU_{\Gr(2,4)}\oplus\of_{\Gr(2,4)}$ with $\of_{\Gr(2,4)}(-1)$, we get $X=\mZ(\of_{\mR}(1)^{\oplus 2})\subset\PP_{\Gr(2,4)}(\mU_{\Gr(2,4)}(-1)\oplus\of_{\Gr(2,4)}(-1))$. Now, to understand the exceptional locus of $\pi$ we can apply the usual combination of \cref{thm:degloc}, \cref{thm:ddl,thm:en}, obtaining that it degenerates on a surface $S$ that can be described as $\mZ(\mU^{\vee}(1,1)\oplus\of(1,1))\subset\PP^1\times\Gr(2,4)$. If we project $S$ on $\PP^1$ we have that the generic fiber is the curve $E=\mZ(\mU^{\vee}(1)\oplus\of(1))\subset\Gr(2,4)$ which is such that $\mathcal{K}_E=\of_{E}$, hence by classification is an elliptic curve of degree five. This means that $S$ is an elliptic fibration $\mathcal{E}$, and $X=\Bl_{\mathcal{E}}\Gr(2,4)$.\newline
In particular, since $X$ is birational to $\Gr(2,4)$, it is rational.
\end{fano} \vspace{5mm}

\begin{fano}
\fanoid{$K_{626}$}
\label[mystyle]{F.2.31}
$X=\mZ(\mQ_{\Gr(2,4)}(1,0)\oplus\of(0,1)\oplus\of(1,1)\oplus\of(2,0))\subset\PP^5\times\Gr(2,4)$
\subsubsection*{Invariants} $h^0(-K)=25, \ (-K)^4=90,\ \chi(T_X)=-23,\ h^{1,1}=2, \ h^{3,1}=1, \ h^{2,2}=24$.
\subsubsection*{Description} $X$ is the Fano fourfold \red{K3-23} in \cite[Table 6]{BFMT}.
\end{fano} \vspace{5mm}

\begin{fano}
\fanoid{$K_{628}$}
\label[mystyle]{F.2.32}
$X=\mZ(\bigwedge^2\mQ_{\PP^3}(0,1)\oplus\of(1,1)\oplus\of(0,2))\subset\PP^3\times\PP^6$
\subsubsection*{Invariants} $h^0(-K)=23, \ (-K)^4=80,\ \chi(T_X)=-27,\ h^{1,1}=2, \ h^{3,1}=1, \ h^{2,2}=28$.
\subsubsection*{Description} $X$ is the Fano fourfold \red{K3-31} in \cite[Table 6]{BFMT}.
\end{fano} \vspace{5mm}

\begin{fano}
\fanoid{$K_{629}$}
\label[mystyle]{F.2.33}
$X=\mZ(\mQ_{\PP^2}(0,1)\oplus\of(1,1)\oplus\of(0,1))\subset\PP^2\times\Gr(2,5)$
\subsubsection*{Invariants} $h^0(-K)=23, \ (-K)^4=80,\ \chi(T_X)=-26,\ h^{1,1}=2, \ h^{3,1}=1, \ h^{2,2}=27$
\subsubsection*{Description}{Description} $X$ is the Fano fourfold \red{GM-19} in \cite[Table 5]{BFMT}.
\end{fano} \vspace{5mm}

\begin{fano}
\fanoid{$K_{115}$}
\label[mystyle]{F.2.34}
$X=\mZ(\mU^{\vee}_{\Gr(2,4)}(1,0)\oplus\mQ_{\Gr(2,4)}(1,0) )\subset \PP^4 \times \Gr(2,4)$
\subsubsection*{Invariants}  $h^0(-K)=60, \ (-K)^4=272,\ \chi(T_X)=1, \ h^{1,1}=2,\ h^{3,1}=0, \ h^{2,2}=4$.
\subsubsection*{Description} $X=\Bl_{Q_2}\Gr(2,4)$
\subsubsection*{Identification} The projection to $\Gr(2,4)$ gives a birational map between the latter and $X$. In particular, by \cref{thm:degloc}, the fiber becomes a $\PP^1$ where the map \[\varphi:\mU_{\Gr(2,4)}\oplus\mQ^{\vee}_{\Gr(2,4)}\to\of_{\Gr(2,4)}^{\oplus 5}\] degenerates to a rank-3 map, i.e., on $D_3(\varphi)$. Using \cref{thm:ddl,thm:en} we have that $D_3(\varphi)$ is a surface $S$ that can be described as $\mZ(\mQ_{\Gr(2,4)}(1,0)\oplus\mU^{\vee}_{\Gr(2,4)}(1,0))\subset\PP^2\times\Gr(2,4)$, in particular, $S$ is such that $e(S)=4$, $K_S=\of(1,-2)_{|S}$, $(-K_S)^2=8$, $h^{1,0}=h^{2,0}=0$ and $h^0(S,K_S^{\otimes 2})=0$, $S$ is a $\DP_8$. Since $S$ is birational to $\PP^1\times\PP^1$ then it is a $Q_2$.
\newline
Note that since $X$ is birational to $\Gr(2,4)$, it is in particular rational.
\end{fano} \vspace{5mm}

\begin{fano}
\fanoid{$K_{126}$}
\label[mystyle]{F.2.35}
$\mZ(\mQ_{\Gr(2,4)}(1,0)\oplus\of(1,1)\oplus\of(0,1))\subset\PP^4\times\Gr(2,4)$
\subsubsection*{Invariants} $h^0(-K)=51, \ (-K)^4=224,\ \chi(T_X)=-5,\ h^{1,1}=2,\ h^{3,1}=0, \ h^{2,2}=7$.
\subsubsection*{Description} $X$ is generically a $\PP^1$-bundle with fiber jumping to a $\PP^2$ over five points. $X$ is a small resolution of $Y=\Bl_{S}\PP^4$ with $S$ a $\DP_7$ singular in a point.
\subsubsection*{Identification} We can apply \crefpart{flagCor}{cor:flag_2} and \cref{lm:flag2.5} to write $X$ as $\mZ(\of(1,1))\subset\PP_{Q_3}(\mU_{\Gr(2,4)|Q_3}\oplus\of_{Q_3})$. Now if we twist $\mU_{\Gr(2,4)|Q_3}\oplus\of_{Q_3}$ with $\of_{\Gr(2,4)}(-1)_{|Q_3}$ we get that $X$ is the $\PP^1$-bundle $\mZ(\of_{\mR}(1))\subset\PP_{Q_3}(\mU_{\Gr(2,4)|Q_3}(-1)\oplus\of_{Q_3}(-1))$. Applying \cref{thm:degloc}, \cref{thm:ddl,thm:en} we get that the fiber of natural projection $\pi$ associated with the $\PP^1$-bundle $X$ degenerates to a $\PP^2$ over five points. Moreover, $X$ can be seen as $\PP_{Q_3}(\of_{Q_3}(-1)\oplus\mU_{\Gr(2,4)|Q_3}(-1))$, as we have seen above. $\mU_{\Gr(2,4)|Q_3}$ is the dual of the spinor bundle on $Q_3$, hence $X=\mZ(\of_{\mR}(1))\subset\PP_{Q_3}(\of_{Q_3}(-1)\oplus\mathcal{S}^{\vee}(-1))$. Now we consider the sequence: 
\[
0\to\mathcal{K}\to\of_{Q_3}(-1)\oplus\mathcal{S}^{\vee}(-1)\to\of_{Q_3}\to 0
\]
and we define $\mathcal{E}=\mathcal{K}^{\vee}(-1)$. If we dualize the above sequence and we apply \cref{thm:seq},, the $X=\PP_{Q_3}(\mathcal{E}^{\vee})$, which is a model for the Fano fourfold described in \cite[Theorem 7.2.12]{lang}.\\
We study the projection $\pi_1$ on $\PP^4$. The fiber generically is a point, hence $\pi$ induces a birational map between $X$ and $\PP^4$. Moreover, $X$ is where $\alpha\in V_5^{\vee}\otimes V_4$ and $\beta\in (V_5^{\vee}\oplus V_1)\otimes V_4$ vanish. Thus, to understand where $\pi_1$ degenerates we can apply \cref{thm:degloc} and  obtain that we need to study the morphisms:
\[
\varphi:V_4^{\vee}\otimes\of_{\PP^4}\to\of_{\PP^4}(-1),
\]
\[
\phi:\bigwedge^2V_4^{\vee}\otimes\of_{\PP^4}\to\of_{\PP^4}(-1)\oplus\of_{\PP^4}.
\]
In this way we get that $D_0(\varphi)=\{p\}$ and $\pi_1^{-1}(p)$ is $\PP^1\times\PP^1$. Note that $\varphi$ induces the sequence:
\[
0\to\mathcal{K}\xrightarrow{i}V_4^{\vee}\otimes\of_{\PP^4}\xrightarrow{\varphi}\of_{\PP^4}(-1)
\]
and if we do the second exterior power of the above sequence we get a map:
\[
    \bigwedge^2 i:\bigwedge^2\mathcal{K}\to \bigwedge^2 V_4^{\vee}.
\]
If we consider $\Phi=\phi\circ\bigwedge^2i$ we obtain a morphism:
\[
\Phi: \bigwedge^2\mathcal{K}\to \of_{\PP^4}(-1)\oplus\of_{\PP^4}
\]
and $D_1(\phi_{|\alpha=0})=D_1(\Phi)$, which is a surface $S=\DP_7$. The generic fiber of $\pi_1^{-1}$ over $S$ is a $\PP^1$, which means that outside a point $X$ is $\Bl_{\DP_7}\PP^4$. In the end $X$ is a small resolution of $Y=\Bl_{S}\PP^4$ with $S$ a $\DP_7$ singular in a point.
\end{fano} \vspace{5mm}

\begin{fano}
\fanoid{$K_{124}$}
\label[mystyle]{F.2.36}
$X=\mZ(\mU^{\vee}_{\Gr(2,4)}(1,0)\oplus\of(1,1) )\subset \PP^3 \times \Gr(2,4)$
\subsubsection*{Invariants} $h^0(-K)=54, \ (-K)^4=240,\ \chi(T_X)=-4, \ h^{1,1}=2, \ h^{3,1}=0, \ h^{2,2}=7$.
\subsubsection*{Description} $X=\Bl_{S}\Gr(2,4)$, with $S=\DP_5$.
\subsubsection*{Identification} Recall that if we apply \crefpart{lm:blowFlagGr}{item_1}, the self-duality of $\Gr(2,4)$ and \cref{lm:flag2.5}, we can rewrite $X$ as $\mZ(\of(1,1))\subset\PP_{\Gr(2,4)}(\mU_{\Gr(2,4)})$. We can twist $\mU_{\Gr(2,4)}$ and get that $X$ is $\mZ(\of_{\mR}(1))\subset\PP_{\Gr(2,4)}(\mU_{\Gr(2,4)}(-1))$. $X$ is a $\PP^1$-bundle cut with a relative hyperplane section, hence it is birational to the base, which is $\Gr(2,4)$. Moreover, applying \cref{thm:degloc}, \cref{thm:ddl,thm:en}, we can compute the exceptional locus of The projection $\pi$ to $\Gr(2,4)$, which is a surface $S$ that is which can be described as $\mZ(\mU^{\vee}(1))\subset\Gr(2,4)$, i.e., a $\DP_5$.\newline
In this way we get that $X=\Bl_{\DP_5}\Gr(2,4)$, and in particular $X$ is rational.
\end{fano} \vspace{5mm}

\begin{fano}
\fanoid{$K_{642}$}
\label[mystyle]{F.2.37}
$X=\mZ(\mQ_{\PP^2}\boxtimes\mU^{\vee}_{\Gr(2,6)}\oplus\of(0,1)\oplus\of(0,2))\subset\PP^2\times\Gr(2,6)$
\subsubsection*{Invariants} $h^0(-K)=24, \ (-K)^4=82,\ \chi(T_X)=-38, \ h^{1,1}=2,  \ h^{3,1}=3, \ h^{2,2}=42$.
\subsubsection*{Description} $X$ is a small resolution of $Y$, a fourfold singular in two points with $e(Y)=52$.
\subsubsection*{Identification} $X$ is the zero locus of $\sigma\in V_3\otimes V_6^{\vee}$ intersecting a 1-dimensional linear subspace of $\bigwedge^2 V_6^{\vee}$ and a quadric in $\bigwedge^2 V_6^{\vee}$. Let us first study $X'$, the zero locus of $\sigma$. If we project to $\Gr(2,6)$ we are fixing a plane $\Pi\in\Gr(2,6)$ and we are looking for the lines $l\in\PP^2$ such that the relation $\sigma(l,\Pi)=0$ holds. By \cref{thm:degloc} for a generic $\Pi$ there are no such lines $l$, hence we have to restrict ourselves to some $Y'\subset\Gr(2,6)$, which coincides to the locus where the map \[\varphi:\mU_{\Gr(2,6)}\to \of^{\oplus 3}_{\Gr(2,6)}\] becomes a rank one map, i.e., $D_1(\varphi)$. Using \cref{thm:ddl} we have that $Y'$ is a sixfold, and it is singular on $D_0(\varphi)$, which is not empty, indeed, using \cref{thm:ddl}, we have that this is a surface $S$ with $e(S)=3$ and $deg(S)=1$. Hence $X'$ is birational to $Y'$ via The projection $\pi$ to $\Gr(2,6)$, whose fiber degenerates to a $\PP^2$ along $S\subset Y'$. Now $X$ is obtained by cutting $X'$ with a hyperplane $H$ and a quadric $Q$, hence $Y$ is a fourfold with $e(Y)=52$ singular on 2 points obtained as $S\cap H\cap Q$. So the final picture of $X$ is a small resolution of $Y$ along its singular locus.\\ 
The other projection is a $\DP_4$-fibration since the fiber over a generic point of $\PP^2$ is $\Gr(2,6)$ cut with two sections of $\mU^{\vee}_{\Gr(2,6)}$, a linear subspace of codimension one and a quadric subspace of codimension one.
\end{fano} \vspace{5mm}

\begin{fano}
\fanoid{$K_{653}$}
\label[mystyle]{F.2.38}
$X=\mZ(\of(0,1)^{\oplus 2} \oplus \of(1,1)^{\oplus 2}) \subset \PP^2 \times \Gr(2,5)$.
\subsubsection*{Invariants} $h^0(-K)=22, \ (-K)^4=75,\ \chi(T_X)=-28, \ h^{1,1}=2, \ h^{3,1}=4, \ h^{2,2}=32$.
\subsubsection*{Description} $X=\Bl_{\mathcal{E}}X_5^4$ with $\mathcal{E}$ elliptic fibration over $\PP^1$ with generic fiber a quintic elliptic curve $E$.
\subsubsection*{Identification}Let us note that $X$ can be seen also as $\mZ(\of(1,1)^{\oplus 2})\subset\PP_{X_5^4}(\of_{X_5^4}^{\oplus 3})$ and twisting $\of_{X_5^4}^{\oplus 3}$ with $\of_{X_5^4}(-1)$ we can rewrite $X$ as $\mZ(\of_{\mR}(1)^{\oplus 2})\subset\PP_{X_5^4}(\of_{X_5^4}(-1)^{\oplus 3})$, which is a $\PP^2$-bundle over $X_5^4$ cut by 2 relative hyperplane sections, hence it is generically isomorphic to $X_5^4$. By \cref{thm:degloc}, to understand the exceptional locus of the natural projection $\pi$ on $X_5^4$, we can study where the map \[\varphi:\of_{\Gr(2,5)}(-1)^{\oplus 2}\to\of_{\Gr(2,5)}^{\oplus 3}\] restricted to $X_5^4$ degenerates to a rank one map, i.e., $D_1(\varphi_{X_5^4})$. Using \cref{thm:ddl,thm:en} we get that $D_1(\varphi)$ is a surface $S$ that can be described as $\mZ(\of(1,1)^{\oplus 3})\subset\PP^1\times X_5^4$. The projection to $\PP^1$ of $S$ gives an elliptic fibration $\mathcal{E}$ with fiber the elliptic curve $E=\mZ(\of(1)^{\oplus 2})\subset X_5^4$, which is quintic.\newline
Hence $X=\Bl_{\mathcal{E}}X_5^4$, and this gives also its rationality.
\end{fano} \vspace{5mm} 

\begin{fano}
\fanoid{$K_{663}$}
\label[mystyle]{F.2.39}
$X=\mZ(\of(2,0)\oplus\mQ_{\Gr(2,5)}(0,1)) \subset \PP^2 \times \Gr(2,5)$
\subsubsection*{Invariants} $h^0(-K)=30, \ (-K)^4=112,\ \chi(T_X)=-12, \ h^{1,1}=2, \ h^{2,1}=5,\ h^{3,1}=0, \ h^{2,2}=2$.
\subsubsection*{Description} $X$ is $Q_1\times Y$, with $Y$ the Fano threefold  \red{1-7} in \cite[Table 1]{DFT}.
\subsubsection*{Identification} Since the bundles cut each factor of the product separately we get that this is simply the product between $Q_1$ and $Y=\mZ(\mQ(1))\subset\Gr(2,5)$, which is a Fano threefold since $\mathcal{K}_Y=\of_{\Gr(2,5)}(-1)_{|Y}$. Moreover, $Y$ has $e(Y)=-6$, $vol(Y)=14$, $h^0(-\mathcal{K}_Y)=10$ and $h^{2,1}=5$, using the classification of the Fano threefolds this tells us that $Y$ is the Fano threefold \red{1-7} in \cite[Table 1]{DFT}, which is also $\mZ(\of(1)^{\oplus 5})\subset\Gr(2,6)$.
\end{fano} \vspace{5mm}

\begin{fano}
\fanoid{$K_{664}$}
\label[mystyle]{F.2.40}
$X=\mZ(\mQ_{\Gr(2,4)}(1,0)\oplus\of(0,1)\oplus(2,1))\subset\PP^4\times\Gr(2,4)$
\subsubsection*{Invariants}$h^0(-K)=21, \ (-K)^4=69,\ \chi(T_X)=-40, \ h^{1,1}=2, \ h^{3,1}=5, \ h^{2,2}=52$.
\subsubsection*{Description} $X$ is a conic bundle which discriminates on a surface $S\subset Q_3$ of degree 10 with 40 singular points. $X$ is a small resolution of $Y=\Bl_{S}\PP^4$ with $S$ a surface singular in a point with $h^{0,0}=1$, $h^{1,0}=0$, $h^{2,0}=5$ and $h^{1,1}=48$
\subsubsection*{Identification} Applying \crefpart{flagCor}{cor:flag_2} and \cref{lm:flag2.5} we can rewrite $X$ as $\mZ(\of(1,2))\subset\PP_{Q_3}(\mU_{\Gr(2,4)|Q_3}\oplus\of_{\Gr(2,4)})$. Now twisting $\mU_{\Gr(2,4)|Q_3}\oplus\of_{\Gr(2,4)}$ with $\of_{Q_3}(-1)$ we get $X=\mZ(\of_{\mR}(2))\subset\PP_{Q_3}(\mU_{\Gr(2,4)}(-1)_{|Q_3}\oplus\of_{Q_3}(-1))$ which is a conic bundle on $Q_3$. Using \cref{lm:conicbdl} with $\mathcal{E}=\mU^{\vee}\oplus\of$ and $\mathcal{K}=\of(1)$ we have that the discriminant locus is a surface $S$ in $Q_3$ of degree 5 with 40 singular points.\newline
Note that the projection to $\PP^4$ gives a birational map since the generic fiber is $\Gr(2,4)$ cut with $\mQ$, hence $\PP^2$, and then we have 2 hyperplane sections cutting $\PP^2$, so $X$ is rational.\\
The projection $\pi_1$ to $\PP^4$ gives a birational map between the latter and $X$. Recall that $X$ is where $\alpha\in V_5^{\vee}\otimes V_4$, $\beta\in(\Sym^2V_5^{\vee}\oplus V_1)\otimes\bigwedge^2 V_4$. Thus to understand the exceptional locus of $\pi_1$, by \cref{thm:degloc}, is equivalent to study where the following morphisms degenerate:
\[
\varphi:V_4^{\vee}\otimes\of\to\of(-1),
\]
\[
\phi:\bigwedge^{2}V_4^{\vee}\otimes\of\to\of(-2)\oplus\of.
\]
Using \cref{thm:ddl,thm:en} we get that $D_0(\varphi)=\{p\}$ and $\pi_1^{-1}(p)$ is $\PP^1\times\PP^1$. Moreover, $\varphi$ induces the sequence:
\[
0\to\mathcal{K}\xrightarrow{i} V_4^{\vee}\otimes\of\xrightarrow{\varphi}\of(-1) 
\]
and making the second exterior power of it we obtain the map:
\[
\bigwedge^2i: \bigwedge^2\mathcal{K}\to\bigwedge^{2}V_4^{\vee}.
\]
If we consider $\phi\circ\bigwedge^2i$ we get the morphism:
\[
\Phi: \bigwedge^2\mathcal{K}\to \of(-2)\oplus\of.
\]
Note that $D_1(\phi_{|\alpha=0})=D_1(\Phi)$, and using \cref{thm:ddl,thm:en} we get that it is a surface $S$ with $h^{0,0}=1$, $h^{1,0}=0$, $h^{2,0}=5$ and $h^{1,1}=48$. Then $X$ outside a point is $\Bl_S\PP^4$ and over a point we have a $\PP^1\times\PP^1$. This gives the interpretation of $X$ as a small resolution of $\Bl_{S'}\PP^4$ with $S'$ singular in a point and whose resolution is $S$.
\end{fano} \vspace{5mm}

\begin{fano}
\fanoid{$K_{665}$}
\label[mystyle]{F.2.41}
$\mZ(\mQ_{\Gr(2,4)}(1,0) \oplus \of(1,2)) \subset \PP^3 \times \Gr(2,4)$.
\subsubsection*{Invariants} $h^0(-K)=20, \ (-K)^4=64,\ \chi(T_X)=-44, \ h^{1,1}=2, \ h^{3,1}=6, \ h^{2,2}=59$
\subsubsection*{Description} $X=\Bl_S\Gr(2,4)$, with $S$ a canonical surface with $e(S)=71$, $K_S=\of(1)_S$, and $(-K_S)^2=13$.
\subsubsection*{Identification} We can apply \crefpart{lm:blowFlagGr}{item_1} and rewrite $X$ as $\mZ(\of(2,1))\subset\PP_{\Gr(2,4)}(\mU_{\Gr(2,4)})$. Now we can twist $\mU_{\Gr(2,4)}$ with $\of(-2)$ and obtain $X=\mZ(\of_{\mR}(1))\subset\PP_{\Gr(2,4)}(\mU_{\Gr(2,4)}(-2))$, hence we have a $\PP^1$-bundle over $\Gr(2,4)$ cut with a relative hyperplane section, hence $X$ is birational to $\Gr(2,4)$. To understand the exceptional locus of The projection $\pi$ to $\Gr(2,4)$ it suffices to apply the usual combination of \cref{thm:degloc}, \cref{thm:ddl,thm:en}, in this way, we get that it is a surface $S$ which can be described as $\mZ(\mU^{\vee}(2))\subset\Gr(2,4)$. Now $S$ is such that $e(S)=71$, $K_S=\of(1)_S$, $(-K_S)^2=13$, $h^{1,0}=0$, $h^{2,0}=6$ and $h^{1,1}=57$.\newline
Hence $X=\Bl_S\Gr(2,4)$, and in particular is rational. 
\end{fano} \vspace{5mm}

\begin{fano}
\fanoid{$K_{669}$}
\label[mystyle]{F.2.42}
$X=\mZ(\of(2,1) \oplus \of(0,1)^{\oplus 3}) \subset \PP^2 \times \Gr(2,5)$.
\subsubsection*{Invariants}  $h^0(-K)=21, \ (-K)^4=70,\ \chi(T_X)=-30, \ h^{1,1}=2 , \ h^{3,1}=3, \ h^{2,2}=37$.
\subsubsection*{Description} $X$ is a conic bundle on $X_5^3$ which discriminates along a degree three surface singular in three points.
\subsubsection*{Identification} We can rewrite $X$ as $\mZ(\of(1,2))\subset\PP_{X_5^3}(\of_{X_5^3}^{\oplus 3})$, now we can twist $\of_{X_5^3}^{\oplus 3}$ with $\of_{X_5^3}(-1)$ we get $X=\mZ(\of_{\mR}(2))\subset\PP_{X_5^3}(\of_{X_5^3}(-1){\oplus 3})$. Applying \cref{lm:conicbdl} with $\mathcal{E}=\of^{\oplus 3}$ and $\mathcal{K}=\of(1)$, we have that the discriminant locus of the conic bundle is a degree 3 surface singular in 20 points. \newline
Note that the projection to $\PP^2$ gives a $\DP_5$-fibration since the fiber is $\Gr(2,5)$ cut with 4 hyperplane sections. This, by \cite{isko}, gives the rationality of $X$. 

\end{fano} \vspace{5mm}

\begin{fano}
\fanoid{$K_{136}$}
\label[mystyle]{F.2.43}
$\mZ(\mU^{\vee}_{\Gr(2,4)}(1,0)\oplus\of(0,1))\subset\PP^3\times\Gr(2,4)$
\subsubsection*{Invariants} $h^0(-K)=81, \ (-K)^4=384,\ \chi(T_X)=10,\ h^{1,1}=2,\ h^{3,1}=0,  \ h^{2,2}=2$.
\subsubsection*{Description} $X=\PP_{Q_3}(\mU_{\Gr(2,4)|Q_3})$.
\subsubsection*{Identification} We simply apply the self-duality of $\Gr(2,4)$,  \crefpart{lm:blowFlagGr}{item_1} and \cref{lm:flag2.5} and obtain that $X$ is $\PP_{Q_3}(\mU_{\Gr(2,4)|Q_3})$.
\end{fano} \vspace{5mm}

\begin{fano}
\fanoid{$K_{723}$}
\label[mystyle]{F.2.44}
$X=\mZ(\of(0,1)^{\oplus 2} \oplus \of(1,2)) \subset \PP^1 \times \Gr(2,5).$
\subsubsection*{Invariants}  $h^0(-K)=16, \ (-K)^4=45,\ \chi(T_X)=-50, \ h^{1,1}=2,\ h^{3,1}=8, \ h^{2,2}=72$.
\subsubsection*{Description} $X=\Bl_SX_5^4$, with $S=Q_3\cap Q_3'\subset X_5^4$.
\subsubsection*{Identification} We simply apply \cref{lm:blowPPtX}, and obtain that $X=\Bl_SX_5^4$, with $S=\mZ(\of(2)^{\oplus 2})\subset X_5^4$, in this way we have that $e(S)=88$, $K_S=\of_{\Gr(2,5)}(1)_{|S}$, $(-K_S)^2=20$, $h^{1,0}=0$, $h^{2,0}=8$. Since $X$ is birational to $X_5^4$, it is rational.

\end{fano} \vspace{5mm}

\begin{fano}
\fanoid{$K_{219}$}
\label[mystyle]{F.2.45}
$X=\mZ(\mU^{\vee}_{\Gr(2,5)}(1,0)\oplus\of(0,1)^{\oplus 3})\subset\PP^3\times\Gr(2,5)$
\subsubsection*{Invariants} $h^0(-K)=51, \ (-K)^4=224,\ \chi(T_X)=-1,\ h^{1,1}=2,\ h^{3,1}=0, \ h^{2,2}=3$.
\subsubsection*{Description} $X$ a $\PP^1$-bundle whose fiber jumps to a $\PP^2$ over one point.
\subsubsection*{Identification} We can write $X$ as $\mZ(\of(0,1)\oplus \mU^{\vee}_{\Gr(2,5)}(0,1))\subset\PP_{X_5^3}(\of_{X_5^3}^{\oplus 5})$, now using \cref{thm:seq} we have that $X=\mZ(\of_{\mR}(1))\subset\PP_{X_5^3}(\mQ_{X_5^3}^{\vee})$, which is a $\PP^1$-bundle over $X_5^3$. To understand where the fiber of The projection $\pi$ to $X_5^3$ becomes a $\PP^2$ we can apply \cref{thm:degloc}, \cref{thm:ddl,thm:en} and get that $\pi$ has fiber a $\PP^2$ on one point of $X_5^3$.\\
The projection $\pi_1$ to $\PP^3$ is a conic bundle over the latter. The generic fiber of $\pi_1$ is $\Gr(2,5)$ cut by a section of $\mU^{\vee}_{\Gr(2,5)}$ and a codimension three linear subspace, hence a conic. By \cref{thm:seq} we can rewrite $X$ as $\mZ(\of_{\mR}(1)^{\oplus 3})\subset\bbGr_{\PP^3}(2,\mQ_{\PP^3}^{\vee}\oplus\of_{\PP^3})$, which gives the morphism:
\[
    \varphi:\W^2(\mQ^{\vee}\oplus\of)\to\of^{\oplus 3}
\]
and if we call $\mathcal{E}=Ker\varphi$, then it is the rank three bundle that defines the conic bundle. By \cref{lm:conicbdl} with $\mathcal{K}=\of$, then we get that the conic bundle degenerates on $\Delta$ which is a \red{sextic surface in $\PP^3$} with \red{40} singular points.
\end{fano} \vspace{5mm}

\begin{fano}
\fanoid{$K_{195}$}
\label[mystyle]{F.2.46}
$X=\mZ(\mU_{\Gr(2,5)}^{\vee}(1,0)\oplus\of(0,1)^{\oplus 2})\subset\PP^2\times\Gr(2,5)$
\subsubsection*{Invariants} $h^0(-K)=63, \ (-K)^4=288,\ \chi(T_X)=2, \ h^{1,1}=2,\ h^{3,1}=0, \ h^{2,2}=4$.
\subsubsection*{Description} $X=\Bl_{Q_2}X_5^4$.
\subsubsection*{Identification} The projection $\pi$ to $\Gr(2,5)$ gives a birational map between $X$ and $X_5^4$ since the fiber is $\PP^2$ cut with a codimension two linear subspace. To understand where the fiber has greater dimension we use \cref{thm:degloc}. We apply the combination of \cref{thm:degloc}, \cref{thm:ddl,thm:en} and we get that the locus we are looking for is a surface $S$ that can be described as $\mZ(\mQ_{\Gr(2,5)}(1,0))\subset\PP^1\times X_5^4$. $S$ is such that $e(S)=4$, $K_S=\of(1,-2)_{|S}$, $(-K_S)^2=8$, $h^{1,0}=h^{2,0}=0$ and $h^0(S,K_S^{\otimes 2})=0$ hence, by the classification, it is a $\DP_8$. In particular, note that if we project to $\PP^1$ we have that the generic fiber is $\Gr(2,5)$ cut with a section of $\mQ_{\Gr(2,5)}$ and a codimension two linear subspace, hence $\PP^1$. This implies that $S$ is birational to $\PP^1\times\PP^1$, but since it is a $\DP_8$, it is $\PP^1\times\PP^1$. \newline
In the end $X=\Bl_{S}X_5^4$, so, in particular, is rational.
\end{fano} \vspace{5mm}

\begin{fano}
\fanoid{$K_{230}$}
\label[mystyle]{F.2.47}
$X=\mZ(\mQ_{\Gr(2,4)}(1,0)\oplus\of(0,1)\oplus\of(2,0))\subset\PP^4\times\Gr(2,4)$
\subsubsection*{Invariants} $h^0(-K)=54, \ (-K)^4=240,\ \chi(T_X)=1,\ h^{1,1}=2 ,\ h^{3,1}=0, \ h^{2,2}=2$.
\subsubsection*{Description} $X$ is a conic bundle that discriminates on a degree-two surface $S\subset Q_3$ singular in 4 points.
\subsubsection*{Identification} Using \crefpart{flagCor}{cor:flag_2} we can rewrite $X$ as $\mZ(\of_{\mR}(2))\subset\PP_{Q_3}(\mU^{\vee}_{\Gr(2,4)|Q_3}\oplus\of_{Q_3})$, which is a conic bundle whose discriminant, by \cref{lm:conicbdl} with $\mathcal{E}=\mU^{\vee}\oplus\of$ and $\mathcal{K}=\of$, is a degree-2 surface singular in 4 points.\\
The other projection $\pi_2$ over $Q_3\subset\PP^4$, has a generic fiber $\PP^1$, since is $\Gr(2,4)$ cut by one global section of $\mQ_{\Gr(2,4)}$ and a global section of $\of_{\Gr(2,4)}(1)$. Using \cref{thm:degloc,thm:ddl} we obtain that the fiber does not jump in dimension, hence $X$ is a $\PP^1$-bundle on $Q_3$. To recover the rank two bundle on $Q_3$ we are projectivizing note that by self-duality we can rewrite $X$ as $\mZ(\mU^{\vee}_{\Gr(2,4)}(1,0)\oplus\of(2,0)\oplus\of(0,1))\subset\PP^4\times\Gr(2,4)$. We can consider the map:
\[
\varphi:V_4\otimes\of_{Q_3}\to\of_{Q_3}(1).
\]
By \cref{thm:ddl} $\varphi$ does not degenerate, and we can define $\mathcal{K}'=ker\varphi$ as a rank-3 vector bundle over $Q_3$, since the dimension of fiber does not jump anywhere. In particular, $\mathcal{K}'$ fits the exact sequence 
\[
\mathcal{K}'\to V_4\otimes\of_{Q_3}\to\of_{Q_3}(1), 
\]
hence by \cref{thm:seq} $X=\mZ(\of_{\mR}(1))\subset\bbGr_{Q_3}(2,\mathcal{K}')$. Using the Pl\"ucker embedding we get $X=\mZ(\of_{\mR}(1))\subset\PP_{Q_3}(\bigwedge^2\mathcal{K}')$. Arguing as before, we can write a sequence
\[
0\to\mathcal{K}\to\bigwedge^2\mathcal{K}'\xrightarrow{\phi}\of_{Q_3}.
\]
In this way $X$ can be written as $\PP_{Q_3}(\mathcal{K})$.

\end{fano} \vspace{5mm}

\begin{fano}
\fanoid{$K_{238}$}
\label[mystyle]{F.2.48}
$X=\mZ(\mU_{\Gr(2,4)}^{\vee}(1,0)\oplus\of(0,2))\subset\PP^3\times\Gr(2,4)$
\subsubsection*{Invariants} $h^0(-K)=45, \ (-K)^4=192,\ \chi(T_X)=-4, \ h^{1,1}=2,\ h^{2,1}=2,\ h^{3,1}=0, \ h^{2,2}=2$.
\subsubsection*{Description} $X=\PP_Y(\mU_{\Gr(2,4)|Y})$, with $Y$ the Fano threefold \red{1-14} in \cite[Table 1]{DFT}.
\subsubsection*{Identification} We apply \crefpart{lm:blowFlagGr}{item_1} and \cref{lm:flag2.5} to rewrite $X$ as $\PP_Y(\mU_{\Gr(2,4)|Y})$, with $Y=\mZ(\of(2))\subset\Gr(2,4)$, which is also $\mZ(\of(2)^{\oplus 2})\subset\PP^5$, hence the Fano threefold \red{1-14} in \cite[Table 1]{DFT}.\\
The projection $\pi_1$ on $\PP^3$ is a conic bundle. In fact the generic fiber of $\pi_1$ is $\Gr(2,4)$ cut by a global section of $\mU^{\vee}_{\Gr(2,4)}$ and one of $\of_{\Gr(2,4)}(2)$. Using the self-duality of $\Gr(2,4)$ and \cref{lm:blowFlagGr} we write $X=\mZ(\of_{\mR}(2))\subset\PP_{\PP^3}(\mQ_{\PP^3}(-1))$. Using \cref{lm:conicbdl} with $\mathcal{E}=\mQ_{\PP^3}^{\vee}(1)$ and $\mathcal{K}=\of_{\PP^3}$ we get that the discriminant $\Delta$ is a degree four surface in $\PP^3$, hence a $K3$ surface, singular in $\Delta_{sing}=\{8\ points\}$.
\end{fano} \vspace{5mm}

\begin{fano}
\fanoid{$K_{679}$}
\label[mystyle]{F.2.49}
$X=\mZ(\mU_{\Gr(2,4)}^{\vee}(1,1)) \subset \PP^2 \times \Gr(2,4)$
\subsubsection*{Invariants} $h^0(-K)= 18, \ (-K)^4=56,\ \chi(T_X)=-36, \ h^{1,1}=2, \ h^{3,1}=4, \ h^{2,2}=47$.
\subsubsection*{Description} $X=\Bl_S\Gr(2,4)$, with $S$ a surface with $e(S)=55$, $K_S=\of_{\PP^3\times\Gr(2,4)}(1,0)_{|S}$ and $(-K_S)^2=5$.
\subsubsection*{Identification} The projection $\pi$ to $\Gr(2,4)$ is a birational map between the latter and $X$. Indeed we can rewrite $X$ as $\mZ(\mU^{\vee}_{\Gr(2,4)}(1,1))\subset\PP_{\Gr(2,4)}(\of_{\Gr(2,4)}^{\oplus 3})$ and twist $\of_{\Gr(2,4)}^{\oplus 3}$ with $\of_{\Gr(2,4)}(-1)$. We get $X=\mZ(\mU^{\vee}_{\Gr(2,4)}(1,0))\subset\PP_{\Gr(2,4)}(\of_{\Gr(2,4)}(-1)^{\oplus 3})$, which is a $\PP^2$-bundle cut by a codimension two linear subspace in the fibers. Thus the projection $\pi$ gives a birational map. To understand the exceptional locus of $\pi$ we apply \cref{thm:degloc}, \cref{thm:ddl,thm:en}. We get that the locus we are looking for is a surface $S\subset\Gr(2,4)$ which can be described as $\mZ(\mQ_{\Gr(2,4)}(1,0)\oplus\of(1,1)^{\oplus 3})\subset\PP^3\times\Gr(2,4)$. $S$ has $e(S)=55$, $K_S=\of_{\PP^3\times\Gr(2,4)}(1,0)_{|S}$, $(-K_S)^2=5$, $h^0(S,-K_S)=10$, $h^{1,0}=0$ and $h^{2,0}=4$.\newline
Note that since $X=\Bl_S\Gr(2,4)$, it is rational.

\end{fano} \vspace{5mm}

\begin{fano}
\fanoid{$K_{253}$}
\label[mystyle]{F.2.50}
$X=\mZ(\mQ_{\PP^4}\boxtimes\mU^{\vee}_{\Gr(2,8)}\oplus\mU^{\vee}_{\Gr(2,8)}(1,0)\oplus\of(0,1)^{\oplus 2})\subset\PP^4\times\Gr(2,8)$
\subsubsection*{Invariants} $h^0(-K)=42, \ (-K)^4=177,\ \chi(T_X)=-11, \ h^{1,1}=2,\ h^{3,1}=0, \ h^{2,2}=11$.
\subsubsection*{Description} $X$ is a small resolution of $Y=\Bl_{S}\PP^4$, with $S$ a $\DP_6$ singular in two points.
\subsubsection*{Identification} If we consider $\pi:X\to\PP^4$, then the generic fiber is $\Gr(2,8)$ cut by 5 sections of $\mU^{\vee}$ and 2 hyperplane section, hence it is a point.
To understand the exceptional loci we have to consider the two maps between vector bundles obtained by applying \cref{thm:degloc} on $\pi$:
\[
    \varphi:V_8\otimes\of_{\PP^4}\to\mQ_{\PP^4}\oplus\of_{\PP^4}(1),
\]
\[
    \phi:\bigwedge^2V_8\otimes\of_{\PP^4}\to\of_{\PP^4}^{\oplus 2}
\]
By \cref{thm:ddl,thm:en} we obtain that $D_4(\varphi)={2pts}$ and the fiber of $\pi$ over it is a $Q_2$. To understand $D_1(\phi)$ let us notice that $\varphi$ and $\phi$ are related in the following way:
\[
    0\to\mathcal{K}\to V_8\otimes\of_{\PP^4}\xrightarrow{\varphi}\mQ_{\PP^4}\oplus\of_{\PP^4}(1)\to\mathcal{F}\to 0
\]
with $\mathcal{K}=ker(\varphi)$ and $\mathcal{F}=coker(\varphi)$.
Now we can take the second exterior power of the sequence and obtain the map 
\[
    \tau:\bigwedge^2\mathcal{K}\to\bigwedge^2V_8
\]
composing $\tau$ and $\phi$ we get
\[
    \Phi:\bigwedge^2\mathcal{K}\to\of_{\PP^4}^{\oplus 2}
\]
hence in the end we get that $D'=D_1(\phi_{|D})=D_1(\Phi)$, which is, by \cref{thm:ddl,thm:en}, a $\DP_6$. The final picture is: $X-{2pts}=\Bl_{\DP_6}\PP^4$ and over the two points there is a $Q_2$.
The projection to $\Gr(2,8)$ is a small resolution of a singular fourfold $Y$ of degree 28.

\end{fano} \vspace{5mm}

\begin{fano}
\fanoid{$K_{142}$} 
\label[mystyle]{F.2.51}
$X=\mZ(\mQ_{\PP^2}\boxtimes\mU_{\Gr(2,6)}^{\vee}\oplus\of(0,1)^{\oplus 2})\subset\PP^2\times\Gr(2,6)$
\subsubsection*{Invariants} $h^0(-K)=75, \ (-K)^4=352,\ \chi(T_X)=6,\ h^{1,1}=2,\ h^{3,1}=0,  \ h^{2,2}=4$.
\subsubsection*{Description}  $X$ is a small resolution of $Y$, a fourfold singular in a point with $e(Y)=9$.
\subsubsection*{Identification} $X$ is the zero locus of $\sigma\in V_3\otimes V_6^{\vee}$ and $\omega_1,\omega_2\in\bigwedge^2V_6^{\vee}$. We first study $X'=\mZ(\sigma)$. If we project to $\Gr(2,6)$ by \cref{thm:degloc} we are studying where the map \[\varphi:\mU_{\Gr(2,6)}\to\of_{\Gr(2,6)}^{\oplus 3}\] degenerates to a rank-one map. We are looking for $D_1(\varphi)$. Using \cref{thm:ddl}, we get that $D_1(\varphi)$ is a sixfold, and since $D_0(\varphi)\subset Y'$ is not empty, we have that it is singular along $D_0(\varphi)$, which is a surface $S$. Cutting with the 2 remaining linear sections, we obtain that $X$ is birational to $Y=Y'\cap H_1\cap H_2$, a fourfold, singular in a point $p=S\cap H_1\cap H_2$. The projection $\pi$ to $\Gr(2,6)$ is the birational map between $X$ and $Y$, and, over the singular point $p\in Y$, the fiber is a $\PP^2$. $X$ is a small resolution of $Y$ along its singular point.\\
The projection $\pi_1$ to $\PP^2$ gives a quadric bundle. Indeed the generic fiber of $\pi_1$ is $\Gr(2,6)$ cut with two global sections of $\mU^{\vee}_{\Gr(2,6)}$ and a linear subspace of codimension two.

\end{fano} \vspace{5mm}

\begin{fano}
\fanoid{$K_{686}$}
\label[mystyle]{F.2.52}
$X=\mZ(\mU^{\vee}_{\Gr(2,5)}(0,1)\oplus\of(0,1)\oplus\of(2,0) \subset \PP^2 \times \Gr(2,5)$
\subsubsection*{Invariants}  $h^0(-K)=27, \ (-K)^4=96,\ \chi(T_X)=-15,\ h^{1,1}=2, \ h^{2,1}=7,\ h^{3,1}=0, \ h^{2,2}=2$.
\subsubsection*{Description} $X$ is $Q_1\times Y$, with Y the Fano threefold \red{1-6} in \cite[Table 1]{DFT}.
\subsubsection*{Identification} The bundles cut the factors as $X=Q_1\times Y$, with $Y=\mZ(\of(1)\oplus\mU_{\Gr(2,5)}(1))\subset\Gr(2,5)$, which is exactly the Fano threefold \red{1-6} in \cite[Table 1]{DFT}. 
\end{fano} \vspace{5mm}

\begin{fano}
\fanoid{$K_{689}$}
\label[mystyle]{F.2.53}
$\mZ(\mQ_{\PP^2}(0,2)\oplus\of(1,1))\subset\PP^2\times\PP^5$
\subsubsection*{Invariants} $h^0(-K)=17, \ (-K)^4=51,\ \chi(T_X)=-36,\ h^{1,1}=2,\ h^{3,1}=2,\ h^{2,2}=41$.
\subsubsection*{Description} $X$ is the Fano fourfold \red{A-67} in \cite[Table 8]{BFMT}
\end{fano} \vspace{5mm}

\begin{fano}
\fanoid{$K_{699}$}
\label[mystyle]{F.2.54}
$X=\mZ(\mU^{\vee}_{\Gr(2,5)}(0,1)\oplus\of(1,1))\subset\PP^1\times\Gr(2,5)$
\subsubsection*{Invariants} $h^0(-K)=19, \ (-K)^4=60,\ \chi(T_X)=-31,\ h^{1,1}=2,\ h^{3,1}=1,\ h^{2,2}=32$.
\subsubsection*{Description} $X$ is the Fano \red{K3-24} in \cite[Table 6]{BFMT}.
\end{fano} \vspace{5mm}

\begin{fano}
\fanoid{$K_{288}$}
\label[mystyle]{F.2.55}
$X=\mZ(\of(1,1)\oplus(0,1)^{\oplus 2})\subset\PP^1\times\Gr(2,5)$
\subsubsection*{Invariants} $h^0(-K)=54, \ (-K)^4=240,\ \chi(T_X)=-4,\ h^{1,1}=2 ,\ h^{3,1}=0, \ h^{2,2}=7$.
\subsubsection*{Description} $X=\Bl_{\DP_5}X_5^4$.
\subsubsection*{Identification} $X$ can be rewritten as $\mZ(\of(1,1))\subset\PP^1\times X_5^4$. In this way, we simply apply \cref{lm:blowPPtX} and obtain that $X=\Bl_SX_5^4$, with $S=X_5^4\cap H_1\cap H_2$ and this is a $\DP_5$.\newline
Since $X=\Bl_{\DP_5}X_5^4$ is in particular rational.
\end{fano} \vspace{5mm}

\begin{fano}
\fanoid{$K_{464}$}
\label[mystyle]{F.2.56}
$X=\mZ(\mU^{\vee}_{\Gr(2,5)}\boxtimes\mQ_{\Gr(3,5)}\oplus\of(1,0)^{\oplus 2}\oplus\of(0,1)^{\oplus 2})\subset\Gr(2,5)\times\Gr(3,5)$
\subsubsection*{Invariants}  $h^0(-K)=39, \ (-K)^4=162,\ \chi(T_X)=-8, \ h^{1,1}=2,\ h^{3,1}=0, \ h^{2,2}=8$.
\subsubsection*{Description} $X=\Bl_{\DP_6}X_5^4$.
\subsubsection*{Identification} Using \crefpart{lm:blowFlagGr}{item_1} and \cref{lm:flag2.5} we rewrite $X$ as $\mZ(\of_{\mR}(1)^{\oplus 2})\subset\PP_{X_5^4}(\mQ_{\Gr(2,5)}(-1)_{|X_5^4})$. In this way, the natural projection to $X_5^4$ gives a birational map between the latter and $X$, indeed $X$ is a $\PP^2$-bundle over $X_5^4$ with generic fiber cut by two relative hyperplane sections. Note that applying \cref{thm:degloc}, \cref{thm:ddl,thm:en} we get that the fiber degenerates to a $\PP^1$ along a surface $S$ that can be described as $\mZ(\bigwedge^2\mQ_{\Gr(2,5)}(1,0)\oplus\of(0,1)^{\oplus 2})\subset\PP^1\times\Gr(2,5)$. $S$ is such that $e(S)=8$, $\mathcal{K}_{S}=\of(1,-1)_{|S}$, $(-K_S)^2=4$, $h^{1,0}=h^{2,0}=0$ and $h^0(S,K_S^{\otimes 2})=0$, using the classification, we get that $X$ is a $\DP_4$.\newline
Since $X=\Bl_{\DP_4}X_5^4$ it is rational.
\end{fano} \vspace{5mm}

\begin{fano}
\fanoid{$K_{350}$}
\label[mystyle]{F.2.57}
$X=\mZ(\mQ_{\Gr(2,4)}\boxtimes\mU^{\vee}_{\Gr(3,6)}\oplus\of(0,1)^{\oplus 3})\subset\Gr(2,4)\times\Gr(3,6)$
\subsubsection*{Invariants} $h^0(-K)=42, \ (-K)^4=177,\ \chi(T_X)=-12, \ h^{1,1}=2,\ h^{3,1}=0, \ h^{2,2}=12$.
\subsubsection*{Description} $X=\Bl_{\Bl_{9pts} \PP^2}\Gr(2,4)$.
\subsubsection*{Identification} We use the self-duality of both $\Gr(2,4)$ and $\Gr(3,6)$ to write $X=\mZ(\mU^{\vee}_{\Gr(2,4)}\boxtimes\mQ_{\Gr(3,6)}\oplus\of(0,1)^{\oplus 3})\subset\Gr(2,4)\times\Gr(3,6)$. Using \crefpart{flagCor}{cor:flag_2} we obtain $X=\mZ(\mU^{\vee}_{1}\oplus\of(0,1)^{\oplus 3})\subset\Fl(2,3,6)$, and by \cref{lm:flag2.5} we have $X=\mZ(\of_{\mR}(1)^{\oplus 3})\subset\PP_{\Gr(2,4)}(\mQ_{\Gr(2,4)}(-1)\oplus\of(-1)^{\oplus 2})$. This is a $\PP^3$-bundle over $\Gr(2,4)$ with the generic fiber cut with 3 relative-hyperplane sections, so the natural projection to $\Gr(2,4)$ gives a birational map between the latter and $X$. To understand where the fiber degenerates we use \cref{thm:degloc}, \cref{thm:ddl,thm:en} and obtain that the locus we are looking for is a surface $S$ that can be described as $\mZ(\mQ_{\Gr(2,4)}(1,0) \oplus\of(1,1)^{\oplus 2})\subset\PP^2\times\Gr(2,4)$, which has $e(S)=12$, $\mathcal{K}_{S}=\of(1,-1)_{|S}$, $\mathcal{K}_{S}^2=0$, $h^{1,0}=h^{2,0}=0$ and $h^0(S,K_S^{\otimes 2})=0$. Using the classification, we get that $S$ is the blow-up of $\PP^2$ in 9 points.\newline
Note that since $X=\Bl_S\Gr(2,4)$ is in particular rational.
\end{fano} \vspace{5mm}

\begin{fano}
\fanoid{$K_{361}$}
\label[mystyle]{F.2.58}
$X=\mZ(\mQ_{\PP^3}\boxtimes\mU^{\vee}_{\Gr(2,7)}\oplus\of(0,1)^{\oplus 2}\oplus\of(1,1))\subset\PP^3\times\Gr(2,7)$
\subsubsection*{Invariants}$h^0(-K)=33, \ (-K)^4=129,\ \chi(T_X)=-23, \ h^{1,1}=2,\ h^{3,1}=1, \ h^{2,2}=23$.
\subsubsection*{Description} $X$ is the Fano fourfold \red{R-62} in \cite[Table 7]{BFMT}.
\end{fano} \vspace{5mm}

\begin{fano}
\fanoid{$K_{364}$}
\label[mystyle]{F.2.59}
$X=\mZ(\of(2,0)\oplus\of(0,1)^{\oplus 3})\subset\PP^2\times\Gr(2,5)$
\subsubsection*{Invariants} $h^0(-K)=69, \ (-K)^4=320,\ \chi(T_X)=6, \ h^{1,1}=2,\ h^{3,1}=0, \ h^{2,2}=2$.
\subsubsection*{Description} $X$ is $Q_1\times Y$ with $Y$ the Fano threefold \red{1-15} in \cite[Table 1]{DFT}
\subsubsection*{Identification} The bundles cut the factors of the ambient space separately, hence $X$ is $Q_1\times Y$ with $Y=\mZ(\of(1)^{\oplus 3})\subset\Gr(2,5)$, hence the Fano threefold \red{1-15} in \cite[Table 1]{DFT}.
\end{fano} \vspace{5mm}

\begin{fano}
\fanoid{$K_{374}$}
\label[mystyle]{F.2.60}
$\mZ(\mQ_{\PP^6}\boxtimes\mU^{\vee}_{\Gr(2,9)}\oplus\mU^{\vee}_{\Gr(2,9)}(1,0)^{\oplus 2})\subset\PP^6\times\Gr(2,9)$
\subsubsection*{Invariants} $h^0(-K)=45, \ (-K)^4=193,\ \chi(T_X)=-14,\ h^{1,1}=2,\ h^{3,1}=0, \ h^{2,2}=15$.
\subsubsection*{Description} $X$ is a small resolution of $Y$, a fourfold with $e(Y)=17$, $deg(Y)=5$ and singular in four points.
\subsubsection*{Identification} $X$ is the zero locus of $\sigma\in (V_7\oplus V_7^{\vee}\oplus V_7^{\vee})\otimes V_9^{\vee}$. Once we project to $\PP^6$ we are looking for the planes $\Pi\in\Gr(2,9)$ such that fixed a line $l\in\PP^6$ the relation $\sigma(l,\Pi)=0$ holds. This, \cref{thm:degloc}, is equivalent to study the map \[\varphi:\mQ^{\vee}_{\PP^6}\oplus\of(-1)^{\oplus 2}\to\of_{\PP^6}^{\oplus 9}\] in particular $X$, for dimensional reason, is birational to the fourfold $Y=D_7(\varphi)$, for \cref{thm:en} we get that $e(Y)=17$ and $deg(Y)=5$. Moreover, since $D_6(\varphi)$ is not empty, we have that $Y$ is singular in $D_6(\varphi)$, which consists, by \cref{thm:ddl,thm:en}, of four points. Over those four points, the projection to $\PP^2$ is a $\PP^2$, hence $X$ is a small resolution of $Y$.
\end{fano} \vspace{5mm}

\begin{fano}
\fanoid{$K_{385}$}
\label[mystyle]{F.2.61}
$X=\mZ(\mQ_{\Gr(2,5)} \boxtimes \mU^{\vee}_{\Gr(2,6)}\oplus \of(1,0)^{\oplus 2} \oplus \of(0,1)^{\oplus 2}) \subset \Gr(2,5) \times \Gr(2,6)$
\subsubsection*{Invariants} $h^0(-K)=45, \ (-K)^4=193,\ \chi(T_X)=-8,\ h^{1,1}=2,\ h^{3,1}=0, \ h^{2,2}=9$.
\subsubsection*{Description} $X=\Bl_{\PP^2}X_6^{4}$.
\subsubsection*{Identification} Let $X'$ be $\mZ(\mQ_{\Gr(2,5)} \boxtimes \mU^{\vee}_{\Gr(2,6)})\subset\Gr(2,5)\times\Gr(2,6)$, this is by \crefpart{lm:blowFlagGr}{item_2} $\Bl_{\PP^4}\Gr(2,6)$. We consider the bundles $\of(1,0)^{\oplus 2}$ and $\of(0,1)^{\oplus 2}$, which correspond to four hyperplane sections cutting $\Gr(2,6)$ with the first two not intersecting the exceptional locus $\PP^4=\mZ(\mQ)\subset\Gr(2,6)$. In this way $X$ is obtained from $X'$ as $\Bl_{\PP^4\cap H_3\cap H_4}\Gr(2,6)\cap H_1\cap H_2\cap H_3\cap H_4$, which is $\Bl_{\PP^2}X_6^4$.\newline
Note that for the above description, $X$ is rational.
\end{fano} \vspace{5mm}

\begin{fano}
\fanoid{$K_{15}$}
\label[mystyle]{F.2.62}
$\mZ(\mQ_{\PP^4}\boxtimes\mU^{\vee}_{\Gr(2,7)}\oplus\mU^{\vee}_{\Gr(2,7)}(1,0))\subset\PP^4\times\Gr(2,7)$
\subsubsection*{Invariants} $h^0(-K)=90, \ (-K)^4=433,\ \chi(T_X)=13,\ h^{1,1}=2,\ h^{3,1}=0,  \ h^{2,2}=3$.
\subsubsection*{Description} $X=\Bl_{C}\PP^4$, with $C$ a rational curve.
\subsubsection*{Identification} The projection $\pi$ to $\PP^4$ is a birational map between the latter and $X$. To understand the exceptional locus of the map we apply the usual combination of \cref{thm:degloc}, \cref{thm:ddl,thm:en}, and we obtain the locus where the fiber of $\pi$ degenerates to a $\PP^2$ is a rational curve $C$. Computing its degree we get that it is a conic.

\end{fano} \vspace{5mm}

\begin{fano}
\fanoid{$K_{407}$}
\label[mystyle]{F.2.63}
$\mZ(\mQ_{\PP^5}\boxtimes\mU^{\vee}_{\Gr(2,8)}\oplus\mU_{\Gr(2,5)^{\vee}(1,0)}\oplus\of(1,1))\subset\PP^5\times\Gr(2,8)$
\subsubsection*{Invariants} $h^0(-K)=39, \ (-K)^4=161,\ \chi(T_X)=-21,\ h^{1,1}=2,\ h^{3,1}=1,  \ h^{2,2}=23$.
\subsubsection*{Description} $X$ is the Fano fourfold \red{C-5} in \cite[Table 4]{BFMT}
\end{fano} \vspace{5mm}

\begin{fano}
\fanoid{$K_{473}$}
\label[mystyle]{F.2.64}
$\mZ(\mQ_{\Gr(2,4)}\boxtimes\mU^{\vee}_{\Gr(2,5)}\oplus\mU^{\vee}_{\Gr(2,4)}(0,1))\subset\Gr(2,4)\times\Gr(2,5)$
\subsubsection*{Invariants} $h^0(-K)=36, \ (-K)^4=146,\ \chi(T_X)=-15, \ h^{1,1}=2, \ h^{3,1}=0, \ h^{2,2}=14$.
\subsubsection*{Description} $X=\Bl_{\PP^2}Y$ with $Y$ the Fano fourfold \cref{F.1.8}.
\subsubsection*{Identification} Let us first consider $X'=\mZ(\mQ_{\Gr(2,4)}\boxtimes\mU^{\vee}_{\Gr(2,5)})\subset\Gr(2,4)\times\Gr(2,5)$, using \crefpart{lm:blowFlagGr}{item_2}, we obtain that $X'=\Bl_{\PP^3}\Gr(2,5)$. For the remaining section, notice that cutting $X'$ with the zero locus of a section of$\mU^{\vee}_{\Gr(2,4)}(0,1)$ is equivalent to cut $\Gr(2,5)$ with $\mZ(\mU^{\vee}_{\Gr(2,5)}(1))$, and $\PP^3=\mZ(\mQ)\subset\Gr(2,5)$ with a hyperplane section, since $\mZ(\mU^{\vee}_{\Gr(2,5)})$ contains $\PP^3$. In this way $X=\Bl_{\PP^2}Y$ with $Y=\mZ(\mU^{\vee}(1))\subset\Gr(2,5)$ which is \cref{F.1.8}.
\end{fano} \vspace{5mm}

\begin{fano}
\fanoid{$K_{439}$}
\label[mystyle]{F.2.65}
$X=\mZ(\mQ_{\Gr(2,6)}(1,0)\oplus\of(0,1)^{\oplus 4}) \subset \PP^4 \times \Gr(2,6)$
\subsubsection*{Invariants} $h^0(-K)=35, \ (-K)^4=141,\ \chi(T_X)=-14, \ h^{1,1}=2, \ h^{3,1}=0, \ h^{2,2}=13$.
\subsubsection*{Description} $X=\Bl_{\DP_5}X_6^4$.
\subsubsection*{Identification} The projection $\pi$ to $\Gr(2,6)$ gives a birational map between $X$ and $X_6^4$.
To understand the exceptional locus of $\pi$ let us note that $X$ is the zero locus of $\sigma\in V_5^{\vee}\otimes V_6$ and $\omega_1,\omega_2,\omega_3,\omega_4\in\bigwedge^2 V_6^{\vee}$. 
For now, let us study $X'=\mZ(\sigma)$. Once we project to $\Gr(2,6)$ this is equivalent to find the lines $l\in\PP^4$ such that, fixed a plane $\Pi\in\Gr(2,6)$, the relation $\sigma(l,\Pi)=0$ holds.
Which means, \cref{thm:degloc}, that we can instead study the map \[\varphi:\mQ^{\vee}_{\Gr(2,6)|X_6^4}\to\of_{X_6^4}^{\oplus 5}\] So, the exceptional locus of $\pi$ coincides to the locus where the map $\varphi$ restricted to $X_6^4$ becomes a rank-3 map, i.e., $D_1(\varphi)$. Using \cref{thm:ddl,thm:en} we have that  $D_1(\varphi_{X_6^4})$ is a surface $S$ with $e(S)=7$, $K_S=\of_{\Gr(2,6)}(-1)_{|S}$, $(-K_S)^2=5$, $h^{1,0}=h^{2,0}=0$ and $h^0(S,K_S^{\otimes 2})=0$, hence using the classification we have that $S$ is $\DP_5$.\newline
Note that $X$ is birational to $X_6^4$, hence it is rational.
\end{fano} \vspace{5mm}

\begin{fano}
\fanoid{$K_{717}$}
\label[mystyle]{F.2.66}
$X=\mZ(\of(0,1) \oplus \of(0,2)  \oplus \of(1,1)) \subset \PP^1 \times \Gr(2,5)$
\subsubsection*{Invariants} $h^0(-K)=17, \ (-K)^4=50,\ \chi(T_X)=-38, \ h^{1,1}=2, \ h^{3,1}=2, \ h^{2,2}=42$.
\subsubsection*{Description} $X$ is the Fano fourfold \red{A-69} \cite[Table 8]{BFMT}.
\end{fano} \vspace{5mm}

\begin{fano}
\fanoid{$K_{474}$}
\label[mystyle]{F.2.67}
$X=\mZ(\mQ_{\PP^2}(0,1)\oplus\of(0,1)^{\oplus 2})\subset\PP^2\times\Gr(2,5)$
\subsubsection*{Invariants} $h^0(-K)=40, \ (-K)^4=165,\ \chi(T_X)=-7, \ h^{1,1}=2,\ h^{2,1}=1,\ h^{3,1}=0, \ h^{2,2}=4$.
\subsubsection*{Description} $X=\Bl_EX_5^4$, with $E$ a quintic elliptic curve.
\subsubsection*{Identification} Note that $X$ can be rewritten as $\mZ(\mQ_{\PP^2}(0,1))\subset\PP^2\times X_5^4$. Using \cref{lm:blowPPtX} $X=\Bl_C X_5^4$, with $C=\mZ(\of(1)^{\oplus 3})\subset X_5^4$, which is an elliptic curve $E$ using the adjunction formula and the classification of curves.\newline
Since $X$ is birational to $X_5^4$, it is in particular rational.
\end{fano} \vspace{5mm}

\begin{fano}
\fanoid{$K_{483}$}
\label[mystyle]{F.2.68}
$\mZ(\mQ_{\Gr(2,4)}\boxtimes\mU_{\Gr(2,6)}^{\vee}\oplus\of(1,0)\oplus\of(0,1)^{\oplus 3})\subset\Gr(2,4)\times\Gr(2,6)$
\subsubsection*{Invariants} $h^0(-K)=36, \ (-K)^4=146,\ \chi(T_X)=-11, \ h^{1,1}=2, \ h^{3,1}=0, \ h^{2,2}=10$.
\subsubsection*{Description} $X=\Bl_{C}X_6^4$ with $C$ a rational curve.
\subsubsection*{Identification} Let $X'=\mZ(\mQ_{\Gr(2,4)}\boxtimes\mU_{\Gr(2,6)}^{\vee})\subset\Gr(2,4)\times\Gr(2,6)$, then by \crefpart{lm:blowFlagGr}{item_3} we have that, outside a point $p$, $X'=\Bl_{\tilde{D}}\Gr(2,6)$ and $\tilde{D}=\mZ(\mQ_{\PP^3}\boxtimes\mU^{\vee}_{\Gr(2,6)})\subset\PP^3\times\Gr(2,6)$. Now, the remaining sections cut $\Gr(2,6)$ as 4 hyperplanes sections, while $\tilde{D}$ is cut only by 3 of them, since $\mZ(\of(1,0))\supset \tilde{D}$. In this way the final picture will be $X=\Bl_{\tilde{D}\cap\mZ(\of(0,1)^{\oplus 3})}X_6^4$ with $C=\tilde{D}\cap\mZ(\of(0,1)^{\oplus 3})\subset\PP^3\times\Gr(2,6)$ a rational curve such that $\mathcal{K}_C=\of(-2,0)_{|C}$, hence $Q_1$.\newline
Since $X$ is birational to $X_6^4$, it is rational.

\end{fano} \vspace{5mm}

\begin{fano}
\fanoid{$K_{529}$}
\label[mystyle]{F.2.69}
$X=\mZ(\mQ_{\Gr(2,5)}(1,0) \oplus \mU^{\vee}_{\Gr(2,5)}(0,1)) \subset \PP^3 \times \Gr(2,5)$
\subsubsection*{Invariants} $h^0(-K)=29, \ (-K)^4=110,\ \chi(T_X)=-19, \ h^{1,1}=2, \ h^{3,1}=0,\ h^{2,2}=17$.
\subsubsection*{Description} $X=\Bl_{\DP_5}Y$, with $Y$ the Fano fourfold \cref{F.1.8}
\subsubsection*{Identification} The projection $\pi$ to $\Gr(2,5)$ gives a birational map between $X$ and $Y=\mZ(\mU^{\vee}_{\Gr(2,5)}(1))\subset\Gr(2,5)$, which is \cref{F.1.8}. Now to understand where the fiber of the projection degenerates we use the usual argument and the combination of \cref{thm:degloc}, \cref{thm:ddl,thm:en} to get that the exceptional locus can be described as $S=\mZ(\mU^{\vee}(1))\subset\Gr(2,4)$ which is a $\DP_5$. 
\end{fano} \vspace{5mm}

\begin{fano}
\fanoid{$K_{714}$}
\label[mystyle]{F.2.70}
$X=\mZ(\of(2,0)\oplus\of(0,2)\oplus\of(0,1)^{\oplus 2})\subset\PP^2\times\Gr(2,5)$
\subsubsection*{Invariants} $h^0(-K)=24, \ (-K)^4=80,\ \chi(T_X)=-19,\ h^{1,1}=2,\ h^{2,1}=10,\ h^{3,1}=0,  \ h^{2,2}=2$.
\subsubsection*{Description} $X=Q_1\times Y$ with $Y$ the Fano threefold \red{1-5} in \cite[Table 1]{DFT}.
\subsubsection*{Identification} Since the bundles cut the two factors separately we have $X=Q_1\times Y$, with $Y=\mZ(\of(1)^{\oplus 2}\oplus\of(2))$, which is the Fano threefold \red{1-5} in \cite[Table 1]{DFT}. 
\end{fano} \vspace{5mm}

\begin{fano}
\fanoid{$K_{426}$}
\label[mystyle]{F.2.71} 
$X=\mZ(\mU^{\vee}_{\Gr(2,4)}\boxtimes\mQ_{\Gr(3,5)}\oplus\mU^{\vee}_{\Gr(2,4)}(0,1))\subset\Gr(2,4)\times\Gr(3,5)$
\subsubsection*{Invariants} $h^0(-K)=36, \ (-K)^4=147,\ \chi(T_X)=-13,\ h^{1,1}=2,\ h^{3,1}=0,  \ h^{2,2}=13$.
\subsubsection*{Description} $X=\Bl_{\Bl_{10pts}}\Gr(2,4)$.
\subsubsection*{Identification} Let us apply \crefpart{flagCor}{cor:flag_1} and \cref{lm:flag2.5}. In this way, we can rewrite $X$ as $\mZ(\mU^{\vee}_{\Gr(2,4)}\boxtimes\of_{\mR}(1))\subset\PP_{\Gr(2,4)}(\mQ_{\Gr(2,4)}(-1)\oplus\of_{\Gr(2,4)}(-1))$, hence we have a $\PP^2$-bundle whose generic fibers are cut by two relative hyperplane sections, so the projection map to $\Gr(2,4)$ gives the birationality between the latter and $X$. To understand where the fiber has different dimensions we can apply \cref{thm:degloc} and study where the map \[\varphi:\mU_{\Gr(2,4)}\to\mQ_{\Gr(2,4)}(-1)\oplus\of_{\Gr(2,4)}(-1)\] degenerates to a rank one map, i.e., $D_1(\varphi)$. By applying \cref{thm:ddl,thm:en}, we get that $D_1(\varphi)$ is a surface $S$ which can be described as $\mZ(\mQ_{\Gr(2,4)}(1,0)^{\oplus 2}\oplus\of(1,1))\subset\PP^3\times\Gr(2,4)$. In particular, we compute $e(S)=13$, $K_S=\of(1,-1)_{S}$, $(-K_S)^2=-1$, $h^{1,0}=h^{2,0}=0$ and $h^0(S,K_S^{\otimes 2})=0$, and see that $S$ is rational, so by classification it is $\Bl_{10pts}\PP^2$.\newline
By description $X$ is rational.
\end{fano} \vspace{5mm}

\begin{fano}
\fanoid{$K_{662}$}
\label[mystyle]{F.2.72}
$\mZ(\mU^{\vee}(1,0)\oplus\of(0,1)\oplus\of(0,2))\subset\PP^2\times\Gr(2,5)$
\subsubsection*{Invariants} $h^0(-K)=22, \ (-K)^4=74,\ \chi(T_X)=-30,\ h^{1,1}=2,\ h^{3,1}=1,  \ h^{2,2}=30$.
\subsubsection*{Description} $X$ is the Fano fourfold \red{GM-18} in \cite[Table 5]{BFMT}.
\end{fano} \vspace{5mm}

\begin{fano}
\fanoid{$K_{671}$}
\label[mystyle]{F.2.73}
$X=\mZ(\mQ_{\PP^2}(0,1)\oplus\mQ_{\PP^2}(0,2))\subset\PP^2\times\PP^6$
\subsubsection*{Invariants} $h^0(-K)=18, \ (-K)^4=55,\ \chi(T_X)=-40,\ h^{1,1}=2,\ h^{3,1}=3,  \ h^{2,2}=47$.
\subsubsection*{Description} $X$ is a small resolution of  $Y$, a fourfold with $e(Y)=51$ and $deg(Y)=7$ singular in 8 points.
\subsubsection*{Identification} $X$ is the zero locus of $\sigma\in V_3\otimes(V_7^{\vee}\oplus\Sym^2V_7^{\vee})$. Let $\pi$ be the projection to $\PP^6$, then studying $\pi$ is equivalent to studying the lines $l\in\PP^2$ such that, fixed a line $\lambda\in\PP^6$, the relation $\sigma(l,\lambda)=0$ holds. Since the relation is generically empty, due to dimension reasons, we get that the image of $\pi$ coincides with the locus where 
\[\varphi:\of_{\PP^6}(-1)\oplus\of_{\PP^6}(-2)\to\of_{\PP^6}^{\oplus 3}\] 
degenerates to a rank-1 map, hence $D_1(\varphi)$. Using \cref{thm:ddl,thm:en} we obtain that the image of $\pi$ is a  fourfold $Y$ with $e(Y)=51$ and $deg(Y)=7$. Moreover, the generic fiber on $Y$ is a point hence the projection gives a birational map between $X$ and $Y$. Since $D_2(\varphi)\subset D_1(\varphi)$ is non-empty, we have that $Y$ is singular along $D_2(\varphi)$, which is, again applying \cref{thm:ddl,thm:en}, a set of 8 points. Hence $X$ is a small resolution of $Y$ with $\PP^2$'s on the singular points.\\
\begin{rmk}
    Note that the fourfold can be also directed described as the complete intersection between $\Bl_{Q_2\cap Q_2'\cap Q_2''}\PP^6\cap\Bl_{\PP^3}\PP^6$.
\end{rmk}

\end{fano} \vspace{5mm}

\begin{fano}
\fanoid{$K_{562}$}
\label[mystyle]{F.2.74}
$X=\mZ(\mQ_{\PP^2}\boxtimes\mU^{\vee}_{\Gr(2,6)}\oplus\mU^{\vee}_{\Gr(2,6)}(0,1))\subset\PP^2\times\Gr(2,6)$
\subsubsection*{Invariants} $h^0(-K)=27, \ (-K)^4=99,\ \chi(T_X)=-25,\ h^{1,1}=2,\ h^{3,1}=1,  \ h^{2,2}=25$
\subsubsection*{Description} $X$ is the Fano fourfold \red{GM-20} in \cite[Table 5]{BFMT}.
\end{fano} \vspace{5mm}

\section{Fano Varieties with Picard Rank 3}\label{sec:pic3}

\renewcommand\thefano{3--\arabic{fano}}
\setcounter{fano}{0}

\begin{fano}
\fanoid{$K_{503}$}
\label[mystyle]{F.3.1}
$X=\mZ(\mQ_{\Gr(2,4)}(0,1,0)\oplus\of(0,1,1)\oplus\of(2,0,0)\oplus\of(0,0,1))\subset\PP^2\times\PP^3\times\Gr(2,4)$
\subsubsection*{Invariants} $h^0(-K)=45, \ (-K)^4=192,\ \chi(T_X)=-2,\ h^{1,1}=3,\ h^{2,1}=1,\ h^{3,1}=0,  \ h^{2,2}=4$.
\subsubsection*{Description} $X$ is $Q_1\times Y$ with $Y$ the Fano threefold \red{2-17}  in \cite[Table 1]{DFT}.
\subsubsection*{Identification} Note that applying \crefpart{lm:blowFlagGr}{item_1} we can rewrite $X$ as $\mZ(\of(2,0,0)\oplus\of(0,1,1)\oplus\of(0,0,1))\subset\PP^2\times\Fl(1,2,4)$. And we immediately obtain $X=Q_1\times Y$, with $Y=\mZ(\of(1,1)\oplus\of(0,1))\subset\Fl(1,2,4)$, which is the Fano threefold \red{2-17}  in \cite[Table 1]{DFT}.
\end{fano} \vspace{5mm}

\begin{fano}
\fanoid{$K_{20}$}
\label[mystyle]{F.3.2}
$X=\mZ(\mQ_{\Gr(2,4)}(1,0,0)\oplus\mQ_{\Gr(2,4)}(0,1,0))\subset\PP^2_1\times\PP^2_2\times\Gr(2,4)$
\subsubsection*{Invariants} $h^0(-K)=76, \ (-K)^4=358,\ \chi(T_X)=9,\ h^{1,1}=3,\ h^{3,1}=0,  \ h^{2,2}=5$.
\subsubsection*{Description} $X=\Bl_{\Bl_{2pts}\PP^2}\Bl_{\PP^2}\Gr(2,4)$.
\subsubsection*{Identification} Let us first consider the projection $\pi_1$ on $\PP_2^2\times\Gr(2,4)$. This gives a birational map between $X$ and $Y=\mZ(\mQ_{\Gr(2,4)}(1,0))\subset\PP_2^2\times\Gr(2,4)$. Using \cref{thm:degloc,thm:ddl} we get that over a surface $S$ the fiber of $\pi_1$ becomes a $\PP^1$, while using \cref{thm:en} we have that $S$ which can be described as $\mZ(\of(1,1))\subset\PP^1\times\PP^2$, i.e., $\Bl_{pt}\PP^2$. Now we can study the projection $\pi_2$ from $Y$ to $\Gr(2,4)$. This is a birational map between $Y$ and $\Gr(2,4)$, using the previous combination of results again, we get that over a surface $S$ the fiber of $\pi_2$ is a $\PP^1$, in particular, with the same argument we have that $S$ is which can be described as $\PP^2$.\newline
The final picture is $X=\Bl_{\Bl_{pt}\PP^2}\Bl_{\PP^2}\Gr(2,4)$, which gives also the rationality.
\end{fano} \vspace{5mm}

\begin{fano}
\fanoid{$K_{521}$}
\label[mystyle]{F.3.3}
$X=\mZ(\of(1,0,1)\oplus\of(0,1,1)\oplus\mQ_{\Gr(2,4)}(0,1,0))\subset\PP^1\times\PP^3\times\Gr(2,4)$
\subsubsection*{Invariants} $h^0(-K)=34, \ (-K)^4=136,\ \chi(T_X)=-12,\ h^{1,1}=3,\ h^{3,1}=0,  \ h^{2,2}=14$.
\subsubsection*{Description} $X=\Bl_{\DP_3}Y$, with $Y$ the Fano fourfold \cref{F.2.36}.
\subsubsection*{Identification} Using \cref{lm:blowPPtX} we have that $X$ is $\Bl_{S}Y$ with $Y=\mZ(\of(1,1)\oplus\mQ_{\Gr(2,4)}(1,0))$, which is the \cref{F.2.36} and $S=\mZ(\of(0,1)^{\oplus 2})\subset Y$. Now, $S$ is such that $e(S)=9$, $\mathcal{K}_{S}=\of(-1,0)_{|S}$, $(-K_S)^2=3$, $h^{1,0}=h^{2,0}=0$ and $h^0(S,2\mathcal{K}_{S})=0$ and is a $\DP_3$.\newline
Hence $X=\Bl_{\DP_3}Y$, which also gives us the rationality of this Fano.
\end{fano} \vspace{5mm}

\begin{fano}
\fanoid{$K_{524}$}
\label[mystyle]{F.3.4}
$X=\mZ(\mQ_{\Gr(2,4)}(0,1,0)\oplus\of(0,0,2)\oplus\of(1,1,0)\subset\PP^1\times\PP^3\times\Gr(2,4)$
\subsubsection*{Invariants} $h^0(-K)=34, \ (-K)^4=136,\ \chi(T_X)=-8,\ h^{1,1}=3,\ h^{2,1}=2,\ h^{3,1}=0,  \ h^{2,2}=8$
\subsubsection*{Description} $X=\Bl_{\DP_4}Y$ with $Y$ the \cref{F.2.48}.
\subsubsection*{Identification} Using \cref{lm:blowPPtX} we have that $X$ is $\Bl_{S}Y$ with $Y=\mZ(\of(0,2)\oplus\mQ_{\Gr(2,4)}(1,0))$, which \cref{F.2.48} and $S=\mZ(\of(1,0)^{\oplus 2})\subset Y$. Now $S$ is such that $e(S)=8$, $\mathcal{K}_{S}=\of(0,-1)_{|S}$, $(-K_S)^2=4$, $h^{1,0}=h^{2,0}=0$ and $h^0(S,2\mathcal{K}_{S})=0$ hence by classification is a $\DP_4$. 
\end{fano} \vspace{5mm}

\begin{fano}
\fanoid{$K_{21}$}
\label[mystyle]{F.3.5}
$X=\mZ(\mQ_{\PP^4}\boxtimes\mU^{\vee}_{\Gr(2,7)}\oplus\mQ_{\Gr(2,7)}(1,0,0)\oplus\mU^{\vee}_{\Gr(2,7)}(0,1,0))\subset\PP^5\times\PP^4\times\Gr(2,7)$
\subsubsection*{Invariants} $h^0(-K)=74, \ (-K)^4=347,\ \chi(T_X)=8,\ h^{1,1}=3,\ h^{3,1}=0,  \ h^{2,2}=5$
\subsubsection*{Description} $X=\Bl_{Q_2}Y$, with $Y$ the fourfold \cref{F.2.62}.
\subsubsection*{Identification} Recall that $Y=\mZ(\mQ_{\PP^4}\boxtimes\mU^{\vee}_{\Gr(2,7)}\oplus\mU^{\vee}_{\Gr(2,7)}(1,0))\subset\PP^4\times\Gr(2,7)$ is the fourfold \cref{F.2.62}, 
hence $X=\mZ(\of(1,0,0)\oplus\mQ_{\Gr(2,7)}(1,0,0))\subset\PP^6\times Y$. Now applying \cref{thm:seq} we can rewrite $X$ as $\mZ(\of_{\mR}(1))\subset\PP_Y(\mU_{\Gr(2,7)|Y})$, hence it is birational to $Y$ via the natural projection $\pi$ on the base of the projective bundle. To understand where the fiber of $\pi$ has greater dimension we can use \cref{thm:ddl,thm:en}, and this gives a surface $S$ where the fiber is a $\PP^1$. Moreover $S$ can be described as $\mZ(\mQ_{\PP^4}\boxtimes\mU^{\vee}_{\Gr(2,6)}\oplus\mU^{\vee}_{\Gr(2,6)}(1,0))\subset\PP^4\times\Gr(2,6)$ with $e(S)=4$, $K_S=\of(-2,0)_{|S}$, $(-K_S)^2=8$, $h^{1,0}=h^{2,0}=0$ and $h^0(S,2\mathcal{K}_{S})=0$ hence by classification $S$ is a $\DP_8$, and since the degree is two it is a $Q_2$.\newline
Since $Y$ is rational, also $X$ is such.
\end{fano} \vspace{5mm}

\begin{fano}
\fanoid{$K_{526}$}
\label[mystyle]{F.3.6}
$X=\mZ(\mQ_{\PP^2_1}(0,1,1)\oplus\of(1,0,1))\subset\PP^2_1\times\PP^2_2\times\PP^3$
\subsubsection*{Invariants} $h^0(-K)=30, \ (-K)^4=116,\ \chi(T_X)=-15,\ h^{1,1}=3,\ h^{3,1}=0,  \ h^{2,2}=17$
\subsubsection*{Description} $X=\Bl_{13pts}\PP^2$.
\subsubsection*{Identification} Note that $X$ is birational to $\PP^2\times\PP^2$ via The projection $\pi$ to $\PP^2\times\PP^2$. To understand what is the exceptional locus we have to consider the first degeneracy locus of the map
\[
    \varphi:V_4\to\mQ_{\PP^2_1}(0,1)\oplus\of(1,0)
\]
Using \cref{thm:degloc}, \cref{thm:ddl,thm:en} we obtain that $D_2(\varphi)$ is $\Bl_{13pts}\PP^2$.
\end{fano} \vspace{5mm}

\begin{fano}
\fanoid{$K_{555}$}
\label[mystyle]{F.3.7}
$X=\mZ(\mQ_{\Gr(2,4)}(0,1,0)\oplus\of(0,0,1)\oplus\of(1,1,1))\subset\PP^1\times\PP^3\times\Gr(2,4)$
\subsubsection*{Invariants}  $h^0(-K)=31, \ (-K)^4=120,\ \chi(T_X)=-18,\ h^{1,1}=3,\ h^{3,1}=1,  \ h^{2,2}=22$
\subsubsection*{Description} $X$ is the Fano fourfold \red{K3-48} in \cite[Table 6]{BFMT}.
\end{fano} \vspace{5mm}

\begin{fano}
\fanoid{$K_{623}$}
\label[mystyle]{F.3.8}
$X=\mZ(\of(0,0,1)^{\oplus 2}\oplus\of(1,0,1)\oplus\of(0,1,1))\subset\PP^1\times\PP^1\times\Gr(2,5)$
\subsubsection*{Invariants}  $h^0(-K)=28, \ (-K)^4=105,\ \chi(T_X)=-16,\ h^{1,1}=3,\ h^{3,1}=0,  \ h^{2,2}=17$
\subsubsection*{Description} $X=\Bl_{\Bl_{9pts}\PP^2}Y$ with $Y$ the fourfold \cref{F.2.55}.
\subsubsection*{Identification} Recall that $Y=\mZ(\of(1,1))\subset\PP^1\times X_5^4$ is the fourfold \cref{F.2.55}, hence $X=\mZ(\of(1,0,1))\subset\PP^1\times Y$. We can apply \cref{lm:blowPPtX} and obtain $X=\Bl_{S}Y$, with $S$ the surface $\mZ(\of(0,1)^{\oplus 2})\subset Y$. In particular $e(S)=12$, $K_S=\of(-1,0)_{|S}$, $(-K_S)^2=0$, $h^{1,0}=h^{2,0}=0$ and $h^0(S,K_S^{\otimes 2})=0$, hence $S$ is rational so is $\Bl_{9pts}\PP^2$.
\end{fano} \vspace{5mm}

\begin{fano}
\fanoid{$K_{294}$}
\label[mystyle]{F.3.9}
$X=\mZ(\mU^{\vee}_{\Gr(2,5)}(1,0,0)\oplus\mU^{\vee}_{\Gr(2,5)}\boxtimes\mQ_{\Gr(2,4)}\oplus\of(0,1,0)\oplus\of(0,0,1))\subset\PP^2\times\Gr(2,5)\times\Gr(2,4)$
\subsubsection*{Invariants} $h^0(-K)=54, \ (-K)^4=241,\ \chi(T_X)=1,\ h^{1,1}=3,\ h^{3,1}=0,  \ h^{2,2}=6$
\subsubsection*{Description} $X=\Bl_{\DP_7}Y$, with $Y$ the fourfold \cref{F.2.28}
\subsubsection*{Identification} Recall that $Y=\mZ(\mU^{\vee}_{\Gr(2,5)}\boxtimes\mQ_{\Gr(2,4)}\oplus\of(1,0)\oplus\of(0,1))\subset\Gr(2,5)\Gr(2,4)$ is the fourfold \cref{F.2.28}, hence $X=\mZ(\mU_{\Gr(2,5)}^{\vee}(1,0,0))\subset\PP^2\times\Gr(2,5)$. Note that The projection $\pi$ to $Y$ gives a birational map between the latter and $X$ and using \cref{thm:degloc} we get that the fiber of $\pi$ degenerates where the map \[\varphi:\mU_{\Gr(2,5)|Y}\to\of_Y^{\oplus 3}\] degenerates to a rank-1 map, hence on $D_1(\varphi)$. By \cref{thm:ddl,thm:en} we obtain that $D_1(\varphi)$ is a surface $S$ that can be described as $\mZ(\mQ_{\Gr(2,5)|Y}(1,0,0))\subset\PP^1\times Y$. In particular $e(S)=5$, $K_S=\of(1,-1,-1)_{|S}$, $(-K_S)^2=7$, $h^{1,0}=h^{2,0}=0$ and $h^0(S,K_S^{\otimes 2})=0$, and we get that $S=\DP_7$.\newline
Since $X=\Bl_{\DP_7}Y$ we have the rationality of this Fano.
\end{fano} \vspace{5mm}

\begin{fano}
\fanoid{$K_{637}$}
\label[mystyle]{F.3.10}
$X=\mZ(\of(0,0,1)^{\oplus 3}\oplus\of(1,1,1))\subset\PP^1\times\PP^1\times\Gr(2,5)$
\subsubsection*{Invariants} $h^0(-K)=27, \ (-K)^4=100,\ \chi(T_X)=-18,\ h^{1,1}=3,\ h^{3,1}=1,  \ h^{2,2}=22$
\subsubsection*{Description} $X$ is the Fano fourfold \red{K3-38} in \cite[Table 6]{BFMT}.
\end{fano} \vspace{5mm}

\begin{fano}
\fanoid{$K_{158}$}
\label[mystyle]{F.3.11}
$X=\mZ(\mQ_{\Gr(2,5)}\boxtimes\mU^{\vee}_{\Gr(2,6)}\oplus\mU^{\vee}_{\Gr(2,5)}\boxtimes\mU^{\vee}_{\Gr(2,6)})\subset\Gr(2,5)\times\Gr(2,6)$
\subsubsection*{Invariants} $h^0(-K)=60, \ (-K)^4=274,\ \chi(T_X)=2,\ h^{1,1}=3,\ h^{3,1}=0,  \ h^{2,2}=8$
\subsubsection*{Description} $X$ is the small contraction of $Y$, a fourfold singular in 4 points with $e(Y)=12$ and $deg(Y)=10$. 
\subsubsection*{Identification} $X$ is the zero locus of $\sigma\in V_6^{\vee}\otimes(V_5\oplus V_5^{\vee})$, hence, if we project to $\Gr(2,5)$ we can describe $X$ as the planes $\Pi_1\in\Gr(2,6)$ such that, fixed a plane $\Pi_2\in\Gr(2,5)$, the relation $\sigma(\Pi_1,\Pi_2)=0$ holds. In this way, by \cref{thm:degloc}, we have that $X$ is birational to the locus where the map \[\varphi:\mQ^{\vee}_{\Gr(2,5)}\oplus\mU_{\Gr(2,5)}\to\of_{\Gr(2,5)}^{\oplus 6}\] degenerates to a rank-4 map, hence $X$ is birational to $Y=D_1(\varphi)$. Using \cref{thm:ddl,thm:en} we have that $Y$ is a fourfold with $e(Y)=12$ and $deg(Y)=10$. Moreover, since there are points in $Y$ where the map $\varphi$ degenerates to a rank-3 map, we have that $Y$ is singular along $D_2(\varphi)$, which is, again by \cref{thm:ddl,thm:en}, a set of 4 points. Over these 4 points, the fiber of the projection is a $\PP^2$, hence $X$ is a small resolution of $Y$ on its singular locus.\\
\end{fano} \vspace{5mm}

\begin{fano}
\fanoid{$K_{398}$}
\label[mystyle]{F.3.12}
$X=\mZ(\mQ_{\PP^2}\boxtimes\mU_{\Gr(2,7)}^{\vee}\oplus\of(0,1)\oplus\Sym^2\mU^{\vee}_{\Gr(2,7)})\subset\PP^2\times\Gr(2,7)$
\subsubsection*{Invariants}  $h^0(-K)=39, \ (-K)^4=164,\ \chi(T_X)=-7,\ h^{1,1}=3,\ h^{3,1}=0,  \ h^{2,2}=12$.
\subsubsection*{Description} $X$ is a small resolution of $Y$, a fourfold singular in 4 points with $e(Y)=16$ and $deg(Y)=36$.
\subsubsection*{Identification} $X$ is the zero locus of $\sigma\in V_3\otimes V_7^{\vee}$ and $\alpha\in\bigwedge^{2}V_7^{\vee}\oplus\Sym^2V_7^{\vee}$. We can first consider $X'=\mZ(\sigma)$ and then restrict ourselves on $X'\cap\mZ(\alpha)$. If we project to $\Gr(2,7)$ then the fiber of the projection are the lines $l\in\PP^2$ such that, fixed a plane $\Pi\in\Gr(2,7)$, the relation $\sigma(l,\Pi)=0$ holds. By \cref{thm:en}, to have a non-empty fiber we have to restrict on planes $\Pi$ in the locus where \[\varphi:\mU_{\Gr(2,7)}\to\of_{\Gr(2,7)}^{\oplus 3}\] degenerates to a rank-1 map, but since $X=X'\cap\mZ(\alpha)$, then we can restrict $\varphi$ on $\mZ(\alpha)$, and get that $X$ is birational to $Y=D_1(\varphi_{|\mZ(\alpha)})$. Using \cref{thm:ddl,thm:en} we have that $Y$ is a fourfold with $e(Y)=16$ and $deg(Y)=36$. Moreover, since there are points in $Y$ where the map $\varphi$ degenerates to a rank-0 map, we have that $Y$ is singular along $D_0(\varphi_{\mZ(\alpha)})$, which is, again by \cref{thm:ddl,thm:en}, a set of 4 points. Over these 4 points, the fiber of the projection is a $\PP^2$, hence $X$ is a small resolution of $Y$.\\
The projection $\pi_1$ to $\PP^2$ is a $Q_2$ fibration.

\end{fano} \vspace{5mm}

\begin{fano}
\fanoid{$K_{431}$}
\label[mystyle]{F.3.13}
$X=\mZ(\mR_1(1,0,0)\oplus\of(0,1,1))\subset\PP^2\times\Fl(1,3,4)$
\subsubsection*{Invariants}  $h^0(-K)=36, \ (-K)^4=148,\ \chi(T_X)=-8,\ h^{1,1}=3,\ h^{3,1}=0,  \ h^{2,2}=12$.
\subsubsection*{Description} $X=\Bl_{\DP_4}Y$ with $Y$ the Fano fourfold obtained as a linear section in $\Fl(1,3,4)$.
\subsubsection*{Identification} Note that The projection $\pi$ to $\Fl(1,3,4)$ gives a birational map between $X$ and $Y=\mZ(\of(1,1))\subset\Fl(1,3,4)$, since the generic fiber is a point. Recall that by \crefpart{lm:blowFlagGr}{item_1} and by dualizing $\Gr(3,4)$ we get that $Y=\mZ(\of(1,1)^{\oplus 2})\subset\PP^3\times\PP^3$, which, by \cite[Lemma 1. 1]{FMMR} is a $\PP^1$-bundle over $\PP^3$, whose fiber jumps to a $\PP^2$ over four points. This fourfold can be also found as \red{$MW_7^4$} in \cite{CGKS}. Now, to study the exceptional locus $\Delta$ of the projection $\pi$, we argue as usual and apply \cref{thm:degloc}, \cref{thm:ddl,thm:en}. In this way, we get that $\Delta$ is the surface $S$ that can be described as $\mZ(\mQ_{\Gr(2,6)}(1,0)\oplus\of(0,1)\oplus\Sym^2\mU^{\vee}_{\Gr(2,6)})\subset\PP^2\times\Gr(2,6)$. In particular, $e(S)=8$, $K_S=\of(1,-1)_{|S}$, $(-K_S)^2=4$, $h^{1,0}=h^{2,0}=0$ and $h^0(S,K_S^{\otimes 2})=0$, hence by classification $S=\DP_4$. This gives us $X=\Bl_{\DP_4}Y$.\\
Since $Y$ is rational, $X$ is as such.\\
The original model of this Fano was $\mZ(\mU^{\vee}_{\Gr(2,6)}(1,0)\oplus\of(0,1)\oplus\Sym^2\mU^{\vee}_{\Gr(2,6)})\subset\PP^2\times\Gr(2,6)$, but we use the identification between $\Fl(1,3,4)$ and $\mZ(\Sym^2\mU^{\vee})\subset\Gr(2,6)$ and get the above description.

\end{fano} \vspace{5mm}

\begin{fano}
\fanoid{$K_{76}$}
\label[mystyle]{F.3.14}
$X=\mZ(\mQ_{\PP^4}\boxtimes\mU_{\Gr(2,7)}\oplus\mU_{\Gr(2,7)}^{\vee}\boxtimes\mU^{\vee}_{\Gr(2,4)}\oplus\mQ_{\Gr(2,4)}(1,0,0))\subset\PP^4\times\Gr(2,7)\times\Gr(2,4)$
\subsubsection*{Invariants} $h^0(-K)=72, \ (-K)^4=337,\ \chi(T_X)=8,\ h^{1,1}=3,\ h^{3,1}=0,  \ h^{2,2}=5$.
\subsubsection*{Description} $X$ is a small resolution of $Y$, a fourfold singular in a point with $e(Y)=12$.
\subsubsection*{Identification} Note that, by \crefpart{flagCor}{cor:flag_1} we can rewrite $X$ as $\mZ(\mU^{\vee}_{\Gr(2,7)}\boxtimes\mQ_{\mR}\oplus\mU^{\vee}_{\Gr(2,7)}\boxtimes\mU_{\Gr(2,4)}^{\vee})\subset\Gr(2,7)\times\PP_{\Gr(2,4)}(\mU_{\Gr(2,4)}\oplus\of_{\Gr(2,4)})$. In this way, by \cref{thm:degloc}, $X$ is birational to the first degeneracy locus of the map \[\varphi:p^*\mU_{\Gr(2,4)}\oplus\mQ^{\vee}_{\mR}\to\of_{\mR}^{\oplus 7}\] with $p$ the natural map associated to $\PP_{\Gr(2,4)}(\mU_{\Gr(2,4)}\oplus\of_{\Gr(2,4)})$, restricted to $Z=\mZ(\mQ_{\Gr(2,4)}\boxtimes\of_{\mR}(1))$. Hence $X$ is birational to $Y=D_3(\varphi_{|Z})$. Using \cref{thm:degloc}, \cref{thm:ddl,thm:en}, and we get that $Y$ is a fourfold with $e(12)$. Moreover, since $D_2(\varphi_{|Z})=\{pt\}$ we have that $Y$ is singular in a point. $X$ is a small resolution of $Y$ with a $\PP^2$. Note that the projection to $\PP^4$ gives a birational map to the latter, hence it is rational. However, even if $X$ is birational to $\PP^4$, we cannot recover it as simply a iterated blow-up of $\PP^4$. 
\end{fano} \vspace{5mm}

\begin{fano}
\fanoid{$K_{116}$}
\label[mystyle]{F.3.15}
$X=\mZ(\mQ_{\Gr(2,4)}(0,0,1)\oplus\mQ_{\Gr(2,4)}\boxtimes\mU^{\vee}_{\Gr(2,5)}\oplus\of(0,1,0)^{\oplus 2})\subset\Gr(2,4)\times\Gr(2,5)\times\PP^2$
\subsubsection*{Invariants}  $h^0(-K)=72, \ (-K)^4=337,\ \chi(T_X)=1,\ h^{1,1}=3,\ h^{3,1}=0,  \ h^{2,2}=5$.
\subsubsection*{Description} $X=\Bl_{\DP_7}Y$ with $Y$ the fourfold \cref{F.2.11}.
\subsubsection*{Identification} Recall that $Y=\mZ(\mQ_{\Gr(2,4)}\boxtimes\mU^{\vee}_{\Gr(2,5)}\oplus\of(0,1)^{\oplus 2})\subset\Gr(2,4)\times\Gr(2,5)$ is the fourfold \cref{F.2.11}, hence $X=\mZ(\mQ_{\Gr(2,4)|Y}(1,0))\subset\PP^2\times Y$. In particular $X$ is birational to $Y$ via The projection $\pi$ to the latter. Note that the invariants of the exceptional locus of $\pi$, $\Delta$, can be computed using \cref{thm:degloc}, \cref{thm:ddl,thm:en}. In this way we get that $\Delta$ is a surface $S$ that can be described as $\mZ(\mU^{\vee}_{\Gr(2,5)}(1,0)\oplus\of(0,1)^{\oplus 2})\subset\PP^2\times\Gr(2,5)$, with $e(S)=5$, $\mathcal{K}_{S}=\of(-1,-2)_{S}$, $(-K_S)^2=7$, $h^{1,0}=h^{2,0}=0$, hence we have that $S=\DP_7$.\newline
Since $X=\Bl_{\DP_7}Y$ and $Y$ is rational, so it is $X$.
\end{fano} \vspace{5mm}

\begin{fano}
\fanoid{$K_{125}$}
\label[mystyle]{F.3.16}
$X=\mZ(\mQ_{\Gr(2,4)}(1,0,0)\oplus\mQ_{\Gr(2,4)}(0,1,0)^{\oplus 2})\subset\PP^2\times\PP^4\times\Gr(2,4)$
\subsubsection*{Invariants} $h^0(-K)=48, \ (-K)^4=211,\ \chi(T_X)=0,\ h^{1,1}=3,\ h^{3,1}=0,  \ h^{2,2}=7$.
\subsubsection*{Description} $X=\Bl_{\DP_6}Y$, with $Y$ the fourfold \cref{F.2.21}
\subsubsection*{Identification} Let us consider The projection $\pi$ to $\PP^4\times\Gr(2,4)$, then we get that $X$ is birational to $Y=\mZ(\mQ_{\Gr(2,4)}(1,0)^{\oplus 2})\subset\PP^4\times\Gr(2,4)$, which is the fourfold \cref{F.2.21}. To understand where the fiber degenerates to a $\PP^1$ we apply \cref{thm:degloc} and we compute the invariants of the exceptional locus $\Delta$ using \cref{thm:ddl,thm:en}. In particular, $\Delta$ is a surface $S$ such that $e(S)=6$, $(-K_S)^2=6$, $h^{1,0}=h^{2,0}=0$ and $h^0(S,K_S^{\otimes 2})=0$, hence $S$ is a $\DP_6$.\newline
Since $X=\Bl_{\DP_6}Y$ with $Y$ rational, then $X$ is as such. 
\end{fano} \vspace{5mm}

\begin{fano}
\fanoid{$K_{48}$}
\label[mystyle]{F.3.17}
$X=\mZ(\mQ_{\PP^2}\boxtimes\mU^{\vee}_{\Gr(2,11)}\oplus(\mQ_{\PP^4}\boxtimes\mU^{\vee}_{\Gr(2,11)})^{\oplus 2})\subset\PP^2\times\PP^4\times\Gr(2,11)$
\subsubsection*{Invariants}  $h^0(-K)=90, \ (-K)^4=433,\ \chi(T_X)=14,\ h^{1,1}=3,\ h^{3,1}=0,  \ h^{2,2}=5$.
\subsubsection*{Description} $X$ is a small resolution of $Y$, a fourfold singular in a point with $e(Y)=12$.
\subsubsection*{Identification}  Let us first consider $\mZ((\mQ_{\PP^4}\boxtimes\mU^{\vee}_{\Gr(2,11)})^{\oplus 2})\subset\PP^4\times\Gr(2,11)$. Now we can rewrite everything as \[Z=\mZ(\mU^{\vee}_{\mR}\boxtimes(\mQ_{\PP^4}^{\oplus 2}))\subset
\bbGr_{\PP^4}(2,\of_{\PP^4}^{\oplus 5}\oplus\of_{\PP^4}^{\oplus 5}\oplus\of_{\PP^4})\]\\
using \cref{thm:seq} backwards we obtain that\\ \[Z=\bbGr_{\PP^4}(2,\of_{\PP^4}(-1)^{\oplus 2}\oplus \of_{\PP^4})\] hence \[X=\mZ(\mQ_{\PP^2}\boxtimes\mU^{\vee}_{\mR})\subset\PP^2\times\bbGr_{\PP^4}(2,\of_{\PP^4}(-1)^{\oplus 2}\oplus\of_{\PP^4}).\]\\ 
We can twist $\of(-1)^{\oplus 2}\oplus\of$ with $\of(1)$, and we obtain\\ \[X=\mZ(\mQ_{\PP^2}\boxtimes\mU^{\vee}_{\mR}(1,0))\subset\PP^2\times\bbGr_{\PP^4}(2,\of_{\PP^4}^{\oplus 2}\oplus\of_{\PP^4}(1)).\]\\ If we dualize the Grassmann bundle we get:\\ \[X=\mZ(\mQ_{\PP^2}\boxtimes\mQ_{\mR}(1,0))\subset\PP^2\times\PP_{\PP^4}(\of(-1)\oplus\of^{\oplus 2})\]\\ Iterating two times \cref{lm:flag2.5} we have the equality
\\ \[X=\mZ(\mQ_{\PP^2}\boxtimes\mQ_{\mR}(1,0)\oplus\mQ_{\Gr(3,7)}^{\oplus 2})\subset\PP^2\times\Gr(3,7)\]\\
and using Borel--Bott--Weil Theorem we finally get
\\ \[\mZ(\mQ_{\PP^4}\boxtimes\mR_1\oplus\mQ_2^{\oplus 2})\subset\PP^2\times\Fl(1,3,7)\]\\
Now, by \cref{thm:degloc}, we can easily describe $X$ as the restriction on $Y'=\mZ(\mQ_2^{\oplus 2})\subset\Fl(1,3,7)$ of the map \[\phi:\mR_1^{\vee}\to\of_{\Fl(1,3,7)}^{\oplus 3}\] which gives us $Y=D_{1}(\phi_{|Y'})$ a fourfold with $e(Y)=12$, and since $D_0(\phi_{|Y'})=\{1 pt\}$, we have that $Y$ is also singular. Now we have the full picture of $X$ which is a small resolution of $Y$.

\end{fano} \vspace{5mm}

\begin{fano}
\fanoid{$K_{188}$}
\label[mystyle]{F.3.18}
$X=\mZ(\of(1,0,1)\oplus\mQ_{\Gr(2,4)}(0,1,0)\oplus\of(0,0,1))\subset\PP^1\times\PP^3\times\Gr(2,4)$
\subsubsection*{Invariants} $h^0(-K)=60, \ (-K)^4=272,\ \chi(T_X)=4,\ h^{1,1}=3,\ h^{3,1}=0,  \ h^{2,2}=4$.
\subsubsection*{Description} $X=\Bl_{Q_2}Y$ with $Y$ the fourfold \cref{F.2.43}.
\subsubsection*{Identification} Note that $Y=\mZ(\mQ_{\Gr(2,4)}(1,0)\oplus\of(0,1))\subset\PP^3\times\Gr(2,4)$ is the fourfold \cref{F.2.43}, hence $X=\mZ(\of(1,0,1))\subset\PP^1\times Y$. We can apply \cref{lm:blowPPtX} and obtain $X=\Bl_{S}Y$, with $S=\mZ(\of(0,1)^{\oplus 2})\subset Y$, which is also $\PP_{Q_3}(\mU_{\Gr(2,4)|C})$, $S=\PP_{Q_3\cap H\cap H'}(\mU_{\Gr(2,4)})$ which is a $\DP_8$. Since $S$ is birational to $\PP^1\times\PP^1$ then it is $Q_2$.\\
Since $Y$ is rational then also $X$ is as such.
\end{fano} \vspace{5mm}

\begin{fano}
\fanoid{$K_{190}$}
\label[mystyle]{F.3.19}
$X=\mZ(\mQ_{\PP^2}\boxtimes\mU^{\vee}_{\Gr(2,6)}\oplus\mQ_{\Gr(2,6)}(1,0,0)\oplus\of(0,0,1)^{\oplus 2})\subset\PP^4\times\PP^2\times\Gr(2,6)$
\subsubsection*{Invariants}  $h^0(-K)=57, \ (-K)^4=256,\ \chi(T_X)=1,\ h^{1,1}=3,\ h^{3,1}=0,  \ h^{2,2}=6$.
\subsubsection*{Description} $X=\Bl_{\Bl_{pt}\PP^2}Y$ with $Y$ the fourfold \cref{F.2.51}.
\subsubsection*{Identification} Recall that $Y=\mZ(\mQ_{\PP^2}\boxtimes\mU^{\vee}_{\Gr(2,6)}\oplus\of(0,1)^{\oplus 2})\subset\PP^2\times\Gr(2,6)$ is the fourfold \cref{F.2.51}, hence $X=\mZ(\mQ_{\Gr(2,6)}(1,0,0))\subset\PP^4\times Y$. The projection $\pi$ to $Y$ gives a birational map between the latter and $X$, and the invariant of the exceptional locus can be computed applying \cref{thm:degloc}, \cref{thm:ddl,thm:en}. In this way, we obtain that $\pi$ has fiber a $\PP^1$ over a surface $S$ which can be described as $\mZ(\mQ_{\PP^2}\boxtimes\mU^{\vee}_{\Gr(2,6)}\oplus\of(0,1)^{\oplus 2})\subset\PP^2\times\Gr(2,5)$. Note that $e(S)=4$, $\mathcal{K}_{S}=\of(-1,-1)_{|S}$ and $(-K_S)^2=8$, hence $S$ is a $\DP_8$. Moreover $S$ is birational to $\PP^2$, hence it is a $\Bl_{pt}\PP^2$. 
\end{fano} \vspace{5mm}

\begin{fano}
\fanoid{$K_{191}$}
\label[mystyle]{F.3.20}
$X=\mZ(\mQ_{\PP^2_1}\boxtimes\mU_{\Gr(2,7)}^{\vee}\oplus\mQ_{\PP^2_2}\boxtimes\mU_{\Gr(2,7)}^{\vee}\oplus\of(0,0,1)^{\oplus 2})\subset\PP^2_1\times\PP^2_2\times\Gr(2,7)$
\subsubsection*{Invariants} $h^0(-K)=54, \ (-K)^4=241,\ \chi(T_X)=-2,\ h^{1,1}=3,\ h^{3,1}=0,  \ h^{2,2}=9$.
\subsubsection*{Description} $X$ is a small resolution of $Y$, a fourfold singular in 2 points with $e(Y)=15$ and $deg(Y)=19$.
\subsubsection*{Identification} We first consider $\mZ(\mQ_{\PP^2_1}\boxtimes\mU^{\vee}_{\Gr(2,7)}\oplus\of(0,1)^{\oplus 2})\subset\PP^2\times\Gr(2,7)$. Using \cref{thm:degloc}, this is a sixfold $Y'$ obtained as $D_1(\varphi_{|Z}')$, with \[\varphi':\mU_{\Gr(2,7)}\to\of_{\Gr(2,7)}^{\oplus 3}\] and $Z=\mZ(\of(1)^{\oplus 2})\subset\Gr(2,7)$. Using \cref{thm:ddl,thm:en} we note that $S=D_0(\varphi_{|Z}')$ is a surface, so $Y'$ is singular along $S$. Now we can also consider the other $\PP^2$, and apply the same argument on $\varphi:\mU_{\Gr(2,7)}\to\of_{\Gr(2,7)}^{\oplus 3}$ restricted to $Y'$. Again, using \cref{thm:ddl,thm:en} we obtain $Y=D_1(\phi_{|Y'})$, which is a singular fourfold with and $e(Y)=15$, $deg(Y)=19$. $Y$ is singular along $D_0(\phi_{|Y'})=\{pt\}$ and along $Y\cap S=\{pt\}$, and generically this 2 points are different. Over these points the fiber of the projection to $\Gr(2,7)$ is a $\PP^2$, hence $X$ is a small resolution of $Y$. 
\end{fano} \vspace{5mm}

\begin{fano}
\fanoid{$K_{173}$}
\label[mystyle]{F.3.21}
$X=\mZ(\mQ_{\PP^2}\boxtimes\mU^{\vee}_{\Gr(2,5)}\oplus\mU^{\vee}_{\Gr(2,6)}\boxtimes\mQ_{\Gr(2,5)}\oplus\of(0,1,0)^{\oplus 2})\subset\PP^2\times\Gr(2,6)\times\Gr(2,5)$
\subsubsection*{Invariants} $h^0(-K)=65, \ (-K)^4=298,\ \chi(T_X)=4,\ h^{1,1}=3,\ h^{3,1}=0,  \ h^{2,2}=5$.
\subsubsection*{Description} $X=\Bl_{\PP^2}Y$, with $Y$ the fourfold \cref{F.2.51}. 
\subsubsection*{Identification} Let us first consider $X'=\mZ(\mU^{\vee}_{\Gr(2,6)}\boxtimes\mQ_{\Gr(2,5)}\oplus\of(1,0)^{\oplus 2})\subset\Gr(2,6)\times\Gr(2,5)$, using 
\crefpart{lm:blowFlagGr}{item_2}
and the fact that the 2 hyperplane sections cut $\Gr(2,6)$ generically, we get that $X'=\Bl_{\PP^2}X_6^6$, with $\PP^2=\mZ(\mQ\oplus\of(1)^{\oplus 2})\subset\Gr(2,6)$. Hence $X$ is $\mZ(\mQ_{\PP^2}\boxtimes\mU^{\vee}_{\Gr(2,5)})\subset\PP^2\times X'$, the section $\mU^{\vee}_{\Gr(2,5)}$ in $X'$ cuts $X_6^6$ and contains $\PP^2$, hence $X=\Bl_{\PP^2}Y$, with $Y=\mZ(\mQ_{\PP^2}\boxtimes\mU^{\vee}_{\Gr(2,6)})\subset\PP^2\times X_6^6$, and this is the fourfold \cref{F.2.51}. 
\end{fano} \vspace{5mm}

\begin{fano}
\fanoid{$K_{178}$}
\label[mystyle]{F.3.22}
$X=\mZ(\mQ_{\Gr(2,4)}(0,1,0)\oplus\of(0,0,1)\oplus\of(1,1,0))\subset\PP^1\times\PP^3\times\Gr(2,4)$
\subsubsection*{Invariants}  $h^0(-K)=66, \ (-K)^4=304,\ \chi(T_X)=6,\ h^{1,1}=3,\ h^{3,1}=0,  \ h^{2,2}=4$.
\subsubsection*{Description} $X=\Bl_{Q_2}Y$, with $Y$ the fourfold \cref{F.2.43}
\subsubsection*{Identification} Recall that $Y=\mZ((\mQ_{\Gr(2,4)}(1,0)\oplus\of(0,1))\subset\PP^3\times\Gr(2,4)$ is the fourfold \cref{F.2.43}, hence $X=\mZ(\of(1,1,0))\subset\PP^1\times Y$, and applying \cref{lm:blowPPtX} we get that $X=\Bl_SY$, with $S=\mZ(\of(1,0)^{\oplus 2})\subset Y$ and this is $Q_1\times\PP^1$, so $Q_2$.\\
Since $Y$ is rational, also $X$ is as such.
\end{fano} \vspace{5mm}

\begin{fano}
\fanoid{$K_{211}$}
\label[mystyle]{F.3.23}
$X=\mZ(\mQ_{\Gr(2,5)}(1,0,0)\oplus\mQ_{\Gr(2,5)}(0,1,0)\oplus\of(0,0,1)^{\oplus 2})\subset\PP^3\times\PP^3\times\Gr(2,5)$
\subsubsection*{Invariants} $h^0(-K)=54, \ (-K)^4=241,\ \chi(T_X)=0,\ h^{1,1}=3,\ h^{3,1}=0,  \ h^{2,2}=7$.
\subsubsection*{Description} $X=\Bl_{\DP_7}Y$ with $Y$ the fourfold \cref{F.2.29}.
\subsubsection*{Identification} Note that $Y=\mZ(\mQ_{\Gr(2,5)}(1,0)\oplus\of(0,1)^{\oplus 2})\subset\PP^3\times\Gr(2,5)$ is the fourfold \cref{F.2.29}, hence $X=\mZ(\mQ_{\Gr(2,5)}(1,0,0))\subset\PP^3\times Y$. This way, $X$ is birational to $Y$ via the projection, $\pi$, to the latter. To understand what is the exceptional locus $\Delta$ of $\pi$ we use \cref{thm:degloc}, \cref{thm:ddl,thm:en} and we get that $\Delta$ is a surface $S$ that can be described as $\mZ(\mQ_{\Gr(2,4)}(1,0)\oplus\of(0,1)^{\oplus 2})\subset\PP^3\times\Gr(2,4)$. Note that $e(S)=5$, $K_S=\of(-2,-2)_{|S}$, $(-K_S)^2=7$, $h^{1,0}=h^{2,0}=0$ and $h^0(S,K_S^{\otimes 2})=0$ hence, by classification, $S$ is a $\DP_7$.\newline
Moreover, since $X=\Bl_{\DP_7}Y$, and $Y$ is rational, we have the rationality of $X$. 

\end{fano} \vspace{5mm}

\begin{fano}
\fanoid{$K_{212}$}
\label[mystyle]{F.3.24}
$X=\mZ(\mQ_{\PP^2}\boxtimes\mU^{\vee}_{\Gr(2,5)}\oplus\mQ_{\Gr(2,5)}(1,0,0)\oplus\mU^{\vee}_{\Gr(2,5)}(1,0,0))\subset\PP^5\times\PP^2\times\Gr(2,5)$
\subsubsection*{Invariants} $h^0(-K)=53, \ (-K)^4=235,\ \chi(T_X)=-1,\ h^{1,1}=3,\ h^{3,1}=0,  \ h^{2,2}=7$.
\subsubsection*{Description} $X=\Bl_{\DP_6}Y$, with $Y$ the Fano \red{6} in \cite[Table 3]{kalashnikov}
\subsubsection*{Identification} 
See \cref{ex:mixQuiv}.

\end{fano} \vspace{5mm}

\begin{fano}
\fanoid{$K_{202}$}
\label[mystyle]{F.3.25}
$X=\mZ(\mQ_{\Gr(2,4)}\boxtimes\mU^{\vee}_{\Gr(2,5)}\oplus\mQ_{\Gr(2,5)}(1,0,0)\oplus\of(0,1,0)\oplus\of(0,0,1))\subset\PP^3\times\Gr(2,4)\times\Gr(2,5)$
\subsubsection*{Invariants} $h^0(-K)=59, \ (-K)^4=267,\ \chi(T_X)=3,\ h^{1,1}=3,\ h^{3,1}=0,  \ h^{2,2}=5$.
\subsubsection*{Description} $X=\Bl_{Q_2}Y$ with $Y$ the fourfold \cref{F.2.28}.
\subsubsection*{Identification} Recall that $Y=\mZ(\mQ_{\Gr(2,4)}\boxtimes\mU^{\vee}_{\Gr(2,5)}\oplus\of(1,0)\oplus\of(0,1))\subset\Gr(2,4)\times\Gr(2,5)$ is the fourfold \cref{F.2.28}, hence $X=\mZ(\mQ_{\Gr(2,5)}(1,0))\subset\PP^3\times Y$, so it is birational to $Y$ via The projection $\pi$ to the latter. Using \cref{thm:degloc}, \cref{thm:ddl,thm:en} we can compute some invariants of the exceptional locus of $\pi$, which is a surface $S$ that can be described as $\mZ(\mU^{\vee}_{\Gr(2,5)})\subset Y$. In particular $e(S)=4$, $K_S=\of(-1,-1)_{|S}$, $(-K_S)^2=8$ and $h^0(S,K_S^{\otimes 2})=0$, so $S$ is a $\DP_8$. Moreover, we have that $S$ is birational to $\PP^1\times\PP^1$, hence it is $Q_2$.\newline
Since $X$ is birational to $Y$, it is rational. 
\end{fano} \vspace{5mm} 

\begin{fano}
\fanoid{$K_{232}$}
\label[mystyle]{F.3.26}
$X=\mZ(\mQ_{\PP^3}(0,1,0)\oplus\mQ_{\Gr(2,4)}(1,0,0)\oplus\mU^{\vee}_{\Gr(2,4)}(0,1,0)\oplus\of(0,0,1))\subset\PP^3\times\PP^5\times\Gr(2,4)$
\subsubsection*{Invariants} $h^0(-K)=52, \ (-K)^4=230,\ \chi(T_X)=0,\ h^{1,1}=3,\ h^{3,1}=0,  \ h^{2,2}=6$.
\subsubsection*{Description} $X=\Bl_{\DP_6}Y$ with $Y$ the fourfold \cref{F.2.43}
\subsubsection*{Identification} Recall that $Y=\mZ(\mQ_{\Gr(2,4)}(1,0)\oplus\of(0,1))\subset\PP^3\times\Gr(2,4)$ is the fourfold \cref{F.2.43}, hence, if we write $Y=\mZ(\of(0,1))\subset\Fl(1,2,4)$, then $X=\mZ(\mU_2(1,0,0)\oplus\mQ_1(1,0,0))\subset\PP^5\times Y$, and in particular The projection $\pi$ to $Y$ gives the birationality between the latter and $X$. To understand where the fiber of $\pi$ degenerates to a $\PP^1$ we use \cref{thm:degloc}, \cref{thm:ddl,thm:en}, in this way we obtain that the exceptional locus of $\pi$ is a surface $S$ that can be described as $\mZ(\mQ_2(1,0,0)\oplus\mU^{\vee}_1(1,0,0))\subset\PP^1\times Y$. Since $e(S)=6$, $K_S=\of(1,-1,-1)_{|S}$, $(-K_S)^2=6$ and $h^0(S,K_S^{\otimes 2})=0$. Hence $S$ is a $\DP_6$ and $X=\Bl_{\DP_6}Y$, which gives also its rationality.\\

\end{fano} \vspace{5mm}

\begin{fano}
\fanoid{$K_{231}$}
\label[mystyle]{F.3.27}
$X=\mZ(\mQ_{\PP^4}\boxtimes\mU^{\vee}_{\Gr(2,7)}\oplus\mU^{\vee}_{\Gr(2,7)}(0,1,0)\oplus\of(1,0,1))\subset \PP^1\times\PP^4\times\Gr(2,7)$
\subsubsection*{Invariants} $h^0(-K)=51, \ (-K)^4=225,\ \chi(T_X)=-3,\ h^{1,1}=3,\ h^{3,1}=0,  \ h^{2,2}=9$.
\subsubsection*{Description} $X=\Bl_{\DP_4}Y$, with $Y$ the fourfold \cref{F.2.62}.
\subsubsection*{Identification} Recall that $Y=\mZ(\mQ_{\PP^4}\boxtimes\mU^{\vee}_{\Gr(2,7)}\oplus\mU^{\vee}_{\Gr(2,7)}(1,0))\subset\PP^4\times\Gr(2,7)$ is the fourfold \cref{F.2.62}, hence $X=\mZ(\of(1,0,1))\subset\PP^1\times Y$. Now we can apply \cref{lm:blowPPtX} and obtain $X=\Bl_SY$, with $S=\mZ(\of(0,1)^{\oplus 2})\subset Y$. Since $e(S)=8$, $K_S=\of(1,-1,-1)_{|S}$, $(-K_S)^2=4$ so $S$ is a $\DP_4$.\newline
$X$ is birational to $Y$, so it is rational. 
\end{fano} \vspace{5mm}

\begin{fano}
\fanoid{$K_{235}$}
\label[mystyle]{F.3.28}
$X=\mZ(\mQ_{\PP^2}(0,1,0)\oplus\mQ_{\PP^2}(0,0,1)\oplus\mQ_{\Gr(2,4)}(0,1,0))\subset \PP^2\times\PP^4\times\Gr(2,4)$
\subsubsection*{Invariants} $h^0(-K)=46, \ (-K)^4=199,\ \chi(T_X)=-3,\ h^{1,1}=3,\ h^{3,1}=0,  \ h^{2,2}=8$.
\subsubsection*{Description} $X=\Bl_{\DP_6}\Bl_{Q_3\cap Q_3'\cap Q_3''}\Gr(2,4)$.
\subsubsection*{Identification}
We consider the projection $\pi_{13}$ to $\PP^2\times\Gr(2,4)$ we get that $\pi_{13}(X)$ is $Y=\mZ(\mQ_{\PP^2}(0,1))\subset\PP^2\times\Gr(2,4)$, which is, by \cref{lm:blowPPtX}, $\Bl_{Q_3\cap Q_3'\cap Q_3''}\Gr(2,4)$. In particular, since the generic fiber of $\pi_{13}$ over its image is one point, we have that $X$ is birational to $Y$. To understand the exceptional locus of $\pi_{13}$ we apply \cref{thm:degloc} and we study the first degeneracy locus of
\[
\varphi: \mQ_{\PP^2|Y}^{\vee}\oplus\mQ_{\Gr(2,4)}^{\vee}\to \of_Y^{\oplus 5}.
\]
Using \cref{thm:ddl,thm:en} we get that $D_3(\varphi)$ is a $\DP_6$. Hence $X=\Bl_{\DP_6}\Bl_{Q_3\cap Q_3'\cap Q_3''}\Gr(2,4)$, which gives also its rationality.
\end{fano} \vspace{5mm}

\begin{fano}
\fanoid{$K_{226}$}
\label[mystyle]{F.3.29}
$X=\mZ(\mQ_{\PP^2}\boxtimes\mU^{\vee}_{\Gr(2,5)}\oplus\mU_{\Gr(2,5)}(0,1,0)^{\oplus 2})\subset \PP^2\times\PP^4\times\Gr(2,5)$
\subsubsection*{Invariants} $h^0(-K)=51, \ (-K)^4=225,\ \chi(T_X)=-4,\ h^{1,1}=3,\ h^{3,1}=0,  \ h^{2,2}=10$.
\subsubsection*{Description} $X=\Bl_{\DP_3}Y$, with $Y$ the Fano fourfold \red{6} in \cite[Table 3]{kalashnikov}
\subsubsection*{Identification} As we have argued in \cref{F.3.24} $Y=\mZ(\mQ_{\PP^2}\boxtimes\mU^{\vee}_{\Gr(2,5)})\subset\PP^2\times\Gr(2,5)$ is the Fano fourfold \red{6} in \cite[Table 3]{kalashnikov}, so $X=\mZ(\mU^{\vee}_{\Gr(2,5)}(1,0,0)^{\oplus 2})\subset\PP^4\times Y$. The projection to $Y$ gives a birational map between the latter and $X$. Using \cref{thm:degloc}, \cref{thm:ddl,thm:en} we get that the fiber of $\pi$ degenerates to a $\PP^1$ on a surface $S$ that can be described as $\mZ(\mQ_{\Gr(2,5)}(1,0,0)^{\oplus 2})\subset\PP^4\times Y$. Note that $e(S)=9$, $K_S=\of(1,-1,-1)_{|S}$, $(-K_S)^2=3$ and $h^0(S,K_S^{\otimes 2})=0$, hence by classification $S$ is a $\DP_3$. This concludes this Fano, obtaining $X=\Bl_{\DP_3}Y$, which gives also the rationality of $X$.
\end{fano} \vspace{5mm}

\begin{fano}
\fanoid{$K_{224}$}
\label[mystyle]{F.3.30}
$X=\mZ(\mQ_{\Gr(2,4)}\boxtimes\mU^{\vee}_{\Gr(2,5)}\oplus\mU^{\vee}_{\Gr(2,4)}(1,0,0)\oplus\of(0,0,1)^{\oplus 2})\subset\PP^2\times\Gr(2,4)\times\Gr(2,5)$
\subsubsection*{Invariants} $h^0(-K)=56, \ (-K)^4=251,\ \chi(T_X)=1,\ h^{1,1}=3,\ h^{3,1}=0,  \ h^{2,2}=6$.
\subsubsection*{Description} $X=\Bl_{\Bl_{pt}\PP^2}Y$ with $Y$ the fourfold \cref{F.2.11}
\subsubsection*{Identification} Recall that $Y=\mZ((\mQ_{\Gr(2,4)}\boxtimes\mU^{\vee}_{\Gr(2,5)}\oplus\of(0,1)^{\oplus 2})\subset\Gr(2,4)\times\Gr(2,5)$ is the fourfold \cref{F.2.11}, and since $X=\mZ(\mU^{\vee}_{\Gr(2,4)}(1,0,0))\subset\PP^2\times Y$, is birational to $Y$ via The projection $\pi$ to the latter. We use \cref{thm:degloc}, \cref{thm:ddl,thm:en} to compute some invariants of the exceptional locus of $\pi$, in particular we get that it is a surface $S$ with $e(S)=4$, $K_S=\of(-1,-1)_{|S}$, $(-K_S)^2=8$, hence is a $\DP_8$. Since $S$ is birational to $\PP^2$, then it is $\Bl_{pt}\PP^2$.
\end{fano} \vspace{5mm}

\begin{fano}
\fanoid{$K_{242}$}
\label[mystyle]{F.3.31}
$X=\mZ(\mQ_{\PP^3}^{\vee}(1,1,0)\oplus\mQ_{\PP^4}(0,0,1)\oplus\of(1,0,1))\subset\PP^3\times\PP^4\times\PP^5$
\subsubsection*{Invariants}  $h^0(-K)=51, \ (-K)^4=224,\ \chi(T_X)=-3,\ h^{1,1}=3,\ h^{3,1}=0,  \ h^{2,2}=8$.
\subsubsection*{Description} $X=\Bl_{\DP_4}Y$, with $Y$ the projectivization of the null correlation bundles over $\PP^3$.
\subsubsection*{Identification} Let us note that $Y=\mZ(\mQ_{\PP^3}^{\vee}(1,1))\subset\PP^3\times\PP^4$ is a Fano fourfold with $\rho(Y)=2$, $\iota(Y)=2$ and $vol(Y)=384$, and, in particular, if we denote $\mE^{\vee}$ the null correlation bundle on $\PP^3$, we have that $Y=\PP_{\PP^3}(\mE)$, as described in \cite{SW}. This fourfold can be also found as $MW_{11}^{4}$ in \cite{CGKS}. Now we can rewrite $X$ as 
$\mZ(\of_{\mR}(1))\subset\PP_Y(\of_{\PP^4}(-1)\oplus\of_{\PP^4})$, which is birational to $Y$ via the natural projection $\pi$. To understand where the fiber degenerates we twist everything with $\of_{\PP^3}(-1)$, getting $X=\PP_Y(\of_{\PP^3\times\PP^4}(-1,-1)\oplus\of_{\PP^3\times\PP^4}(-1,0))$, and we apply \cref{thm:degloc}, \cref{thm:ddl,thm:en} to compute some invariants of the degeneracy locus. In this way, we have that the exceptional locus is a surface $S$ that can be described as $\mZ(\of(1,0)\oplus\of(1,1))\subset Y$ with $e(S)=8$, $K_S=\of(0,-1)_{|S}$, $(-K_S)^2=4$ and $h^0(S,K_S^{\otimes 2})=0$, hence by classification $S$ is a $\DP_4$.\\ At the end $X=\Bl_{\DP_4}Y$, and since $Y$ is rational, so it is $X$.

\end{fano} \vspace{5mm}

\begin{fano}
\fanoid{$K_{244}$}
\label[mystyle]{F.3.32}
$X=\mZ(\mQ_{\PP^3}(0,1,0)\oplus\mQ_{\Gr(2,4)}(1,0,0)\oplus\of(0,1,1)\oplus\of(0,0,1))\subset\PP^3\times\PP^4\times\Gr(2,4)$
\subsubsection*{Invariants} $h^0(-K)=47, \ (-K)^4=203,\ \chi(T_X)=-5,\ h^{1,1}=3,\ h^{3,1}=0,  \ h^{2,2}=9$.
\subsubsection*{Description} $X=\Bl_{\DP_3}Y$,with $Y$ the fourfold \cref{F.2.43}.
\subsubsection*{Identification} Let us recall that $Y=\mZ(\mQ_{\Gr(2,4)}(1,0)\oplus\of(0,1))\subset\PP^3\times\Gr(2,4)$ is the fourfold \cref{F.2.43}, hence $X=\mZ(\mQ_{\PP^3}(1,0,0)\oplus\of(1,0,1))\subset\PP^4\times Y$, which is birational to $Y$ via The projection $\pi$ to the latter. We argue as usual to compute the exceptional locus $\Delta$ of $\pi$, and using \cref{thm:degloc}, \cref{thm:ddl,thm:en} we obtain that $\Delta$ is a surface $S$ that can be described as $\mZ(\of_{\mR}(1)^{\oplus 5})\subset\PP_{Y} (\mQ^{\vee}_{\PP^3|Y}\oplus\of_{\Gr(2,4)}(-1)_{|Y})$, hence, by  $K_S=\of(-1,-1,2)_{|S}$, and by classification $S$ is a $\DP_3$.\\
Since $Y$ is rational, so is $X$ as such.

\end{fano} \vspace{5mm}

\begin{fano}
\fanoid{$K_{450}$}
\label[mystyle]{F.3.33}
$X=\mZ(\mQ_{\Gr(2,4)}(0,1,0)^{\oplus 2}\oplus\of(0,0,1))\subset\PP^1\times\PP^4\times\Gr(2,4)$
\subsubsection*{Invariants} $h^0(-K)=51, \ (-K)^4=224,\ \chi(T_X)=1,\ h^{1,1}=3,\ h^{3,1}=0,  \ h^{2,2}=4$.
\subsubsection*{Description} $X=\PP^1\times Y$, with $Y$ the Fano threefold \red{2-21} in \cite[Table 1]{DFT}
\subsubsection*{Identification} Since the bundles cut separately $\PP^1$ and $\PP^4\times\Gr(2,4)$, we get that $X=\PP^1\times Y$ with $Y=\mZ(\mQ_{\Gr(2,4)}(1,0)^{\oplus 4}\oplus\of(0,1))\subset\PP^4\times\Gr(2,5)$, which is the Fano threefold \red{2-21} in \cite[Table 1]{DFT}.\\ Moreover, since $X$ is product of rational manifolds, it is rational.
\end{fano} \vspace{5mm}

\begin{fano}
\fanoid{$K_{145}$}
\label[mystyle]{F.3.34}
$X=\mZ(\mQ_{\PP^4}\boxtimes\mU^{\vee}_{\Gr(2,7)}\oplus\mQ_{\Gr(2,4)}\boxtimes\mU^{\vee}_{\Gr(2,7)}\oplus\mQ_{\Gr(2,4)}(1,0,0))\subset\PP^4\times\Gr(2,7)\times\Gr(2,4)$
\subsubsection*{Invariants}  $h^0(-K)=75, \ (-K)^4=353,\ \chi(T_X)=8,\ h^{1,1}=3,\ h^{3,1}=0,  \ h^{2,2}=6$.
\subsubsection*{Description} $X$ is the a small resolution of $Y$ a fourfold with $e(Y)=13$ singular in a point. 
\subsubsection*{Identification} Recall that by \crefpart{flagCor}{cor:flag_2} $\mZ(\mQ_{\Gr(2,4)}(1,0))\subset\PP^4\times\Gr(2,4)$ is $\mZ(\mQ_2)\subset\Fl(1,3,5)$, in particular, $X=\mZ(\mU^{\vee}_{\Gr(2,7)}\boxtimes\mQ_1\oplus\mU^{\vee}_{\Gr(2,7)}\boxtimes\mQ_2\oplus\mQ_2)\subset\Gr(2,7)\times\Fl(1,3,5)$. Using \cref{thm:degloc} is clear that $X$ is birational to $Y=D_5(\varphi)$, with \[\varphi:\mQ_{1|Z}^{\vee}\oplus\mQ_{2|Z}^{\vee}\to\of_{Z}^{\oplus 7}\] and $Z=\mZ(\mQ_2)\subset\Fl(1,3,5)$. Moreover, applying \cref{thm:ddl,thm:en}, we get that $Y$ is a fourfold with $e(Y)=13$, and since $D_4(\varphi_{|Z})=\{pt\}$, we can conclude that $Y$ is singular in a point and $X$ is a small resolution of $Y$ along its singularity.
\end{fano} \vspace{5mm}

\begin{fano}
\fanoid{$K_{295}$}
\label[mystyle]{F.3.35}
$X=\mZ(\mQ_{\PP^2}\boxtimes\mU^{\vee}_{\Gr(2,5)}\oplus\mQ_{\Gr(2,5)}(1,0,0)^{\oplus 2})\subset\PP^6\times\PP^2\times\Gr(2,5)$
\subsubsection*{Invariants} $h^0(-K)=55, \ (-K)^4=244,\ \chi(T_X)=0,\ h^{1,1}=3,\ h^{3,1}=0,  \ h^{2,2}=5$.
\subsubsection*{Description} $X=\Bl_{\DP_8}Y$, with $Y$ the Fano \red{6} in \cite[Table 3]{kalashnikov}.
\subsubsection*{Identification} As argued in \cref{F.3.24}, $Y=\mZ(\mQ_{\PP^2}\boxtimes\mU^{\vee}_{\Gr(2,5)})\subset\PP^2\times\Gr(2,5)$ is the Fano \red{6} in \cite[Table 3]{kalashnikov}, hence $X=\mZ(\mQ_{\Gr(2,5)}(1,0,0)^{\oplus 2})\subset\PP^6\times Y$, so the projection $\pi$ to $Y$ gives the birationality between the latter and $X$. Moreover, using \cref{thm:degloc}, \cref{thm:ddl,thm:en} we obtain that the fiber of $\pi$ degenerates to a $\PP^1$ along a surface $S$ that can be described as $\mZ(\mU^{\vee}_{\Gr(2,5)}(1,0,0)^{\oplus 2})\subset\PP^2\times Y$, such that $e(S)$, $K_S=\of(1,-1,-1)_{|S}$, $(-K_S)^2=8$ and $h^0(S,K_S^{\otimes 2})=0$, hence $S$ is a  $\DP_8$. We get that the projection on the second $\PP^2$, is a birational map via a single blow-up. Thus $S$ is $\Bl_{pt}\PP^2$.\\
Since $Y$ is rational, also $X$ is as such.
\end{fano} \vspace{5mm}

\begin{fano}
\fanoid{$K_{320}$}
\label[mystyle]{F.3.36}
$X=\mZ(\mQ_{\PP^2}\boxtimes\mU^{\vee}_{\Gr(2,5)}\oplus\mU^{\vee}_{\Gr(2,5)}(0,1,0)\oplus\of(0,1,1))\subset\PP^2\times\PP^3\times\Gr(2,5)$
\subsubsection*{Invariants} $h^0(-K)=45, \ (-K)^4=193,\ \chi(T_X)=-8,\ h^{1,1}=3,\ h^{3,1}=0,  \ h^{2,2}=12$.
\subsubsection*{Description} $X=\Bl_{\DP_1}Y$, with $Y$ the Fano \red{6} in \cite[Table 3]{kalashnikov}.
\subsubsection*{Identification} As argued in \cref{F.3.24}, $Y=\mZ(\mQ_{\PP^2}\boxtimes\mU^{\vee}_{\Gr(2,5)})\subset\PP^2\times\Gr(2,5)$ is the Fano \red{6} in \cite[Table 3]{kalashnikov}, hence $X=\mZ(\mU^{\vee}_{\Gr(2,5)}(1,0,0)\oplus\of(1,0,1))\subset\PP^3\times Y$, so The projection $\pi$ to $Y$ gives the birationality between the latter and $X$. To understand where the fiber $\pi$ is a $\PP^1$ we use \cref{thm:degloc}, and we study $D_2(\varphi)$, with \[\varphi:\mU_{\Gr(2,5)|Y}\oplus\of_{Y}(-1)\to\of_{Y}^{\oplus 4}\] Using \cref{thm:ddl,thm:en} we get that $D_2(\varphi_{|Y})$ is a surface $S$, which can be described as $\mZ(\of_{\mR}(1)^{\oplus 4})\subset\PP_Y(\mU_{\Gr(2,5)|Y}\oplus\of_{\Gr(2,5)}(-1)_{|Y})$, with $e(S)=11$ and $K_S=(\of_{\mR}(1)\otimes\pi^*\of_Y(-1,-1))_{|S}$, hence by classification it is a $\DP_1$.
\end{fano} \vspace{5mm}

\begin{fano}
\fanoid{$K_{321}$}
\label[mystyle]{F.3.37}
$X=\mZ(\mQ_{\PP^2}\boxtimes\mU^{\vee}_{\Gr(2,5)}\oplus\mQ_{\Gr(2,5)}(0,1,0)\oplus\of(0,1,1))\subset\PP^2\times\PP^4\times\Gr(2,5)$
\subsubsection*{Invariants} $h^0(-K)=47, \ (-K)^4=202,\ \chi(T_X)=-7,\ h^{1,1}=3,\ h^{3,1}=0,  \ h^{2,2}=10$.
\subsubsection*{Description} $X=\Bl_{\DP_3}Y$, with $Y$ the Fano \red{6} in \cite[Table 3]{kalashnikov}. 
\subsubsection*{Identification} As argued in \cref{F.3.24}, $Y=\mZ(\mQ_{\PP^2}\boxtimes\mU^{\vee}_{\Gr(2,5)})\subset\PP^2\times\Gr(2,5)$ is the Fano \red{6} in \cite[Table 3]{kalashnikov}, hence $X=\mZ(\mQ_{\Gr(2,5)}(1,0,0)\oplus\of(1,0,1))\subset\PP^4\times Y$, so The projection $\pi$ to $Y$ gives the birationality between the latter and $X$. To understand where the fiber $\pi$ is a $\PP^1$ we use \cref{thm:degloc} and we study $D_3(\varphi)$, with \[\varphi:\mQ^{\vee}_{\Gr(2,5)|Y}\oplus\of_{\Gr(2,5)|Y}(-1)\to\of_{Y}^{\oplus 5}.\] Using \cref{thm:ddl,thm:en} we get that $D_3(\varphi_{|Y})$ is a surface $S$ that can be described as $\mZ(\of_{\mR}(1)^{\oplus 5})\subset\PP_Y(\mQ^{\vee}_{\Gr(2,5)|Y}\oplus\of_{\Gr(2,5)}(-1)_{|Y})$ with $e(S)=9$ and $K_S=\of(-1,-2,1)_{|S}$, hence a $\DP_3$.\\
Since $X$ is birational to $Y$ we have that is rational.
\end{fano} \vspace{5mm}

\begin{fano}
\fanoid{$K_{322}$}
\label[mystyle]{F.3.38}
$X=\mZ(\mQ_{\PP^2_1}(0,1,1))\subset\PP^2_1\times\PP^2_2\times\PP^2_3$
\subsubsection*{Invariants} $h^0(-K)=46, \ (-K)^4=198,\ \chi(T_X)=-2,\ h^{1,1}=3,\ h^{2,1}=1,\ h^{3,1}=0,  \ h^{2,2}=5$.
\subsubsection*{Description} $X=\Bl_E\PP^2_2\times\PP^2_3$ with $E$ a sextic elliptic curve.
\subsubsection*{Identification} We apply \cref{lm:blowPPtX} and obtain $X=\Bl_{C}\PP^2\times\PP^2$, with $C=\mZ(\of(1,1)^{\oplus 3})$, hence $C$ is a curve, with $\mathcal{K}_C=\of_C$, hence by classification is an elliptic curve $E$ of degree six.\newline
This Fano is rational since it is birational to $\PP^2\times\PP^2$.
\end{fano} \vspace{5mm}

\begin{fano}
\fanoid{$K_{341}$}
\label[mystyle]{F.3.39}
$X=\mZ(\mQ_{\Gr(2,4)}(0,1,0)\oplus\of(0,1,1)\oplus\mQ_{\Gr(2,4)}(1,0,0))\subset\PP^2\times\PP^3\times\Gr(2,4)$
\subsubsection*{Invariants} $h^0(-K)=42, \ (-K)^4=178,\ \chi(T_X)=-7,\ h^{1,1}=3,\ h^{3,1}=0,  \ h^{2,2}=11$.
\subsubsection*{Description} $X=\Bl_{\DP_6}Y$, with $Y$ the fourfold \cref{F.2.36}
\subsubsection*{Identification} Recall that $Y=\mZ(\mQ_{\Gr(2,4)}(0,1,0)\oplus\of(0,1,1))\subset\PP^3\times\Gr(2,4)$ is the fourfold \cref{F.2.36}, hence $X=\mZ(\mQ_{\Gr(2,4)}(1,0,0))\subset\PP^2\times Y$, which gives the birationality between $X$ and $Y$ via The projection $\pi$ to the latter. Using \cref{thm:degloc}, \cref{thm:ddl,thm:en} we have that the fiber of $\pi$ degenerates to a $\PP^1$ on a surface $S$ that can be described as $\mZ(\mU^{\vee}_{\Gr(2,4)})\subset Y$, which is $\mZ(\of(1,1)^{\oplus 2})\subset\PP^2\times\PP^2$ and so it is a $\DP_6$.\newline
Note that since $X=\Bl_{\DP_6}Y$ and $Y$ is rational, so it is $X$.
\end{fano} \vspace{5mm}

\begin{fano}
\fanoid{$K_{342}$}
\label[mystyle]{F.3.40}
$X=\mZ(\mQ_{\Gr(2,4)}(1,0,0)\oplus\mQ_{\Gr(2,4)}(0,1,0)\oplus\of(1,1,0)\oplus\of(0,0,1))\subset\PP^3_1\times\PP^3_2\times\Gr(2,4)$
\subsubsection*{Invariants} $h^0(-K)=44, \ (-K)^4=188,\ \chi(T_X)=-4,\ h^{1,1}=3,\ h^{3,1}=0,  \ h^{2,2}=8$.
\subsubsection*{Description} $X=\Bl_{\DP_4}Y$, with $Y$ the fourfold \cref{F.2.43}
\subsubsection*{Identification} Recall that $Y=\mZ(\mQ_{\Gr(2,4)}(0,1,0)\oplus\of(0,0,1))\subset\PP^3_2\times\Gr(2,4)$ is the fourfold \cref{F.2.43}, hence $X=\mZ(\mQ_{\Gr(2,4)}(1,0,0)\oplus\of(1,1,0))\subset\PP^3_1\times Y$, which gives the birationality between $X$ and $Y$ via The projection $\pi$ to the latter. Using \cref{thm:degloc}, \cref{thm:ddl,thm:en} we have that the fiber of $\pi$ degenerates to a $\PP^1$ on a surface $S$ that can be described as $\mZ(\mU^{\vee}_{\Gr(2,4)}(1,0,0)\oplus\mQ_{\PP^3_2}(1,0,0))\subset\PP^3\times Y$. Now, $e(S)=8$, $K_S=\of(1,1,-1)_{|S}$, $(-K_S)^2=4$ and $h^0(S,K_S^{\otimes 2})=0$, hence $S$ is a $\DP_4$.
\end{fano} \vspace{5mm}

\begin{fano}
\fanoid{$K_{343}$}
\label[mystyle]{F.3.41}
$X=\mZ(\mQ_{\PP^2}(0,1,0)\oplus\mQ_{\Gr(2,4)}(0,1,0)\oplus\of(1,0,1)\oplus\of(0,0,1))\subset\PP^2\times\PP^4\times\Gr(2,4)$
\subsubsection*{Invariants} $h^0(-K)=43, \ (-K)^4=183,\ \chi(T_X)=-5,\ h^{1,1}=3,\ h^{3,1}=0,  \ h^{2,2}=9$.
\subsubsection*{Description} $X=\Bl_{Q_2}Y$, with $Y$ the fourfold \cref{F.2.35}. 
\subsubsection*{Identification} Let us first start with the fivefold $X'=\mZ(\mQ_{\PP^2}(0,1,0)\oplus\mQ_{\Gr(2,4)}(0,1,0)\oplus\of(0,0,1))\subset\PP^2\times\PP^4\times\Gr(2,4)$, then, by \crefpart{flagCor}{cor:flag_2}, $X'=\mZ(\mQ_{\PP^2}(0,1,0)\oplus\of(0,0,1)\oplus\mQ_2)\subset\PP^2\times\Fl(1,3,5)$. Using \cref{lm:blowPPtX} we have that $X'=\Bl_SY'$, with $Y'=\mZ(\mQ_2\oplus\of(0,1))\subset\Fl(1,3,5)$, which, by \cref{lm:flag2.5}, can be written as $\PP_{Q_3}(\mU_{\Gr(2,4)}\oplus\of_{\Gr(2,4)})$. Note that $S=\mZ(\of_{\mR}(1)^{\oplus 3})\subset\PP_{Q_3}(\mU_{\Gr(2,4)}\oplus\of_{\Gr(2,4)})$, and, by \cref{thm:seq} can be rewritten also as $\mZ(\mQ_{\Gr(2,4)}(1,0)\oplus\of(0,1)\oplus\of(1,0)^{\oplus 3})\subset\PP^4\times\Gr(2,4)$. Thus $S=\mZ(\mQ_{\Gr(2,4)|Q_3}(1,0))\subset\PP^1\times Q_3$, and if we project on $\PP^1$ we get that the generic fiber is $\PP^1$. Moreover, $e(S)=4$, $K_S=\of(0,-2)_{|S}$, $(-K_S)^2=8$ and $h^0(S,K_S^{\otimes 2})=0$, hence it is $Q_2$. We rewrite $X$ as $\mZ(\of_{Y'}(1)\boxtimes\of_{\mR}(1))\Bl_{Q_2} Y'$. The bundle $\of_{Y'}(1)\boxtimes\of_{\mR}(1)$ fixes $S$, hence $X=\Bl_SY$, with $Y=\mZ(of_{Y'}(1)\boxtimes\of_{\mR}(1))\subset Y'$, which is the fourfold \cref{F.2.35}.\\
In particular, $Y$ is rational, so $X$ is as such.
\end{fano} \vspace{5mm}

\begin{fano}
\fanoid{$K_{346}$}
\label[mystyle]{F.3.42}
$X=\mZ(\mQ_{\PP^2_1}\boxtimes\mQ_{\PP^2_2}\boxtimes\of_{\PP^4}(1))\subset\PP^2_1\times\PP^2_2\times\PP^4$
\subsubsection*{Invariants} $h^0(-K)=36, \ (-K)^4=150,\ \chi(T_X)=-4,\ h^{1,1}=3,\ h^{3,1}=0,  \ h^{2,2}=10$.
\subsubsection*{Description}  $X=\Bl_{\DP_3}\PP^2\times\PP^2$.
\subsubsection*{Identification} Let us call $Y=\PP_1^2\times\PP^2_2$. If we project everything on $Y$, we have, by \cref{thm:degloc}, that such projection is a birational map which degenerates whenever the map \[\varphi:\mQ^{\vee}_{\PP^2_1}\boxtimes\mQ^{\vee}_{\PP^2_2}\to\of_Y^{\oplus 5}\] degenerates to a rank-3 map. Using \cref{thm:ddl,thm:en} we get that $D_3(\varphi)$ is a surface $S$ that can be described as $\mZ(\mQ^{\vee}_{\PP^2_2}(1,0,1)\oplus\of(0,1,1)^{\oplus 3})\subset\PP^2\times\PP^2\times\PP^3$. In particular $e(S)=6$ and $K_S=\of(-1,-1,1)_{S}$, $(-K_S)^{2}=3$, $h^0(S,K_S^{\otimes 2})=0$ and $h^0(S,(-K_S)^{\otimes 2})=10$. Thus by classification, $S$ is a $\DP_3$.\newline
Since $X$ is birational to $\PP^4$ via the projection to the latter, then it is rational by definition.

\end{fano} \vspace{5mm}

\begin{fano}
\fanoid{$K_{265}$}
\label[mystyle]{F.3.43}
$X=\mZ(\mQ_{\PP^2}\boxtimes\mU^{\vee}_{\Gr(2,6)}\oplus\mQ_{\Gr(2,4)}\boxtimes\mU^{\vee}_{\Gr(2,6)}\oplus\of(0,1,0)\oplus\of(0,0,1))\subset\PP^2\times\Gr(2,4)\times\Gr(2,6)$
\subsubsection*{Invariants} $h^0(-K)=60, \ (-K)^4=273,\ \chi(T_X)=3,\ h^{1,1}=3,\ h^{3,1}=0,  \ h^{2,2}=6$.
\subsubsection*{Description} $\pi:X\to\Gr(2,4)\times\Gr(2,6)$ is a small resolution of $Y$ a fourfold with $e(Y)=13$ singular in a point.
\subsubsection*{Identification} Let us consider $Z=\mZ(\mQ_{\Gr(2,4)}\boxtimes\mU^{\vee}_{\Gr(2,6)}\oplus\of(1,0)\oplus\of(0,1))\subset\Gr(2,6)\times\Gr(2,4)$. By \crefpart{flagCor}{cor:flag_2}, $Z$ is also  $\mZ(\mQ_2^{\oplus 2}\oplus\of(1,0)\oplus\of(0,1))\subset\Fl(2,4,6)$. We rewrite $X$ as $\mZ(\mQ_{\PP^2}\boxtimes\mU^{\vee}_{1})\subset\PP^2\times Z$. Using \cref{thm:degloc}, we get that $X$ is birational to $D_1(\varphi)$, with $\varphi:\mU_{1|Z}\to\of_{\Fl(2,4,6)|Z}^{\oplus 3}$. Using \cref{thm:ddl,thm:en}, $D_1(\varphi)$ corresponds to a fourfold $Y$ with $e(Y)=13$, and since $D_2(\phi)=\{pt\}$ we have that $Y$ is singular in a  point. In particular, since the projection $\pi$ to $\Gr(2,4)\times\Gr(2,6)$ degenerates to a $\PP^2$ on the singular point of $Y$, $X$ is a small resolution of $Y$.\\
\end{fano} \vspace{5mm}

\begin{fano}
\fanoid{$K_{372}$}
\label[mystyle]{F.3.44}
$X=\mZ(\mQ_{\Gr(2,4)}\boxtimes\mU^{\vee}_{\Gr(2,5)}\oplus\of(0,1,0)\oplus\of(0,0,1)^{\oplus 2})\subset\PP^1\times\Gr(2,4)\times\Gr(2,5)$
\subsubsection*{Invariants} $h^0(-K)=60, \ (-K)^4=272,\ \chi(T_X)=4,\ h^{1,1}=3,\ h^{3,1}=0,  \ h^{2,2}=4$. 
\subsubsection*{Description} $X$ is $\PP^1\times Y$, with $Y$ the Fano threefold \red{2-26} in \cite[Table 1]{DFT}.
\subsubsection*{Identification} The bundles cut the factors $\PP^1$ and $\Gr(2,4)\times\Gr(2,5)$ separately, hence $X=\PP^1\times Y$, with $Y=\mZ(\mQ_{\Gr(2,4)}\boxtimes\mU^{\vee}_{\Gr(2,5)}\oplus\of(1,0)\oplus\of(0,1)^{\oplus 2})\subset\Gr(2,4)\times\Gr(2,5)$, which is the Fano threefold \red{2-26} in \cite[Table 1]{DFT}.\\
Since it is a product of rational manifolds, $X$ is rational.
\end{fano} \vspace{5mm}

\begin{fano}
\fanoid{$K_{388}$}
\label[mystyle]{F.3.45}
$X=\mZ(\mQ_{\Gr(2,5)}(0,1,0)\oplus\of(0,0,1)^{\oplus 3})\subset\PP^1\times\PP^3\times\Gr(2,5)$
\subsubsection*{Invariants} $h^0(-K)=54, \ (-K)^4=240,\ \chi(T_X)=2,\ h^{1,1}=3,\ h^{3,1}=0,  \ h^{2,2}=4$.
\subsubsection*{Description} $X$ is $\PP^1\times Y$, with with $Y$ the Fano threefold \red{2-22} in \cite[Table 1]{DFT}.
\subsubsection*{Identification} The bundles cut the factors $\PP^1$ and $\Gr(2,4)\times\Gr(2,5)$ separately, hence $X=\PP^1\times Y$, with $Y=\mZ(\mQ_{\Gr(2,5)}(1,0)\oplus\of(0,1)^{\oplus 3})\subset\PP^3\times\Gr(2,5)$, which is the Fano threefold \red{2-22} in \cite[Table 1]{DFT}.\\
Since it is a product of rational manifolds, $X$ is rational.
\end{fano} \vspace{5mm}

\begin{fano}
\fanoid{$K_{421}$}
\label[mystyle]{F.3.46}
$X=\mZ(\mQ_{\Gr(2,5)}(0,1,0)\oplus\of(1,1,0)\oplus\of(0,0,1)^{\oplus 3})\subset\PP^1\times\PP^4\times\Gr(2,5)$
\subsubsection*{Invariants} $h^0(-K)=43, \ (-K)^4=183,\ \chi(T_X)=-3,\ h^{1,1}=3,\ h^{3,1}=0,  \ h^{2,2}=7$.
\subsubsection*{Description} $X=\Bl_{\DP_5}Y$, with $Y$ the fourfold \cref{F.2.27}
\subsubsection*{Identification} Recall that $Y=\mZ(\mQ_{\Gr(2,5)}(1,0)\oplus\of(0,1)^{\oplus 3})\subset\PP^4\times\Gr(2,5)$ is the fourfold \cref{F.2.27}, hence $X=\mZ(\of(1,1,0))\subset\PP^1\times Y$. We apply \cref{lm:blowPPtX} and obtain that $X=\Bl_{S}Y$, with $S=\mZ(\of(0,1)^{\oplus 3}\oplus\of(1,0)^{\oplus 2})\subset\Fl(1,2,5)$. Note that $e(S)=7$, $K_S=\of(0,-1)_{|S}$, $(-K_S)^2=5$ and $h^0(S,K_S^{\otimes 2})=0$, hence $S$ is a $\DP_5$.\\
Since $Y$ is rational, also $X$ is as such.
\end{fano} \vspace{5mm}

\begin{fano}
\fanoid{$K_{422}$}
\label[mystyle]{F.3.47}
$X=\mZ(\mQ_{\Gr(2,4)}(0,1,0)\oplus\of(0,1,1)\oplus\mU^{\vee}_{\Gr(2,4)}(1,0,0))\subset\PP^2\times\PP^3\times\Gr(2,4)$
\subsubsection*{Invariants} $h^0(-K)=41, \ (-K)^4=172,\ \chi(T_X)=-7,\ h^{1,1}=3,\ h^{3,1}=0,  \ h^{2,2}=10$.
\subsubsection*{Description} $X=\Bl_{\DP_7}Y$, with $Y$ the fourfold \cref{F.2.36}.
\subsubsection*{Identification} Recall that $Y=\mZ((\mQ_{\Gr(2,4)}(1,0)\oplus\of(1,1))\subset\PP^3\times\Gr(2,4)$ is the fourfold \cref{F.2.36}, hence $X=\mZ(\mU^{\vee}_{\Gr(2,4)}(1,0,0))\subset\PP^2\times Y$, so it is birational to $Y$ via the projection to the latter. Note that, using \cref{thm:ddl,thm:en} we can say that the exceptional locus of $\pi$ is a surface $S$ that can be described as $\mZ(\mQ_{\PP^2}(0,1)\oplus\of(1,1))\subset\PP^2\times\PP^3$. In particular $e(S)=5$, $K_S=\of(-1,-1)_{|S}$, $(-K_S)^2=7$ and $h^0(S,K_S^{\otimes 2})=0$, hence $S$ is a $\DP_7$.\\
Since $Y$ is rational, so $X$ is as such.
\end{fano} \vspace{5mm}

\begin{fano}
\fanoid{$K_{17}$}
\label[mystyle]{F.3.48}
$X=\mZ(\mQ_{\PP^2_1}\boxtimes\mU^{\vee}_{\Gr(2,5)}\oplus\mU^{\vee}_{\Gr(2,5)}(0,1,0))\subset\PP_1^2\times\PP_2^2\times\Gr(2,5)$
\subsubsection*{Invariants}  $h^0(-K)=85, \ (-K)^4=406,\ \chi(T_X)=12,\ h^{1,1}=3,\ h^{3,1}=0,  \ h^{2,2}=5$.
\subsubsection*{Description} $X=\Bl_{Q_2}Y$, with $Y$ the Fano fourfold \red{6} in \cite[Table 3]{kalashnikov}.
\subsubsection*{Identification} Recall that $Y=\mZ(\mQ_{\PP^2_1}\boxtimes\mU^{\vee}_{\Gr(2,5)})\subset\PP^2_1\times\Gr(2,5)$ is the Fano fourfold \red{6} in \cite[Table 3]{kalashnikov}, as proved in \cref{F.2.24}. Hence $X=\mZ(\mU^{\vee}_{\Gr(2,5)}(1,0,0))\subset\PP^2\times Y$, and it is birational to $Y$ via The projection $\pi$ to the latter. Using \cref{thm:ddl,thm:en} we have that the fiber of $\pi$ degenerates to a $\PP^1$ along a surface $S$ that can be described as $\mZ(\mQ_{\Gr(2,5)}(1,0,0))\subset\PP^1\times Y$. Moreover, $e(S)=4$, $K_S=\of(1,-1,-2)_{|S}$, $(-K_S)^2=8$ and $h^0(S,K_S^{\otimes 2})=0$, then, by classification, $S$ is a $\DP_8$. Moreover, if we project to $\PP^1$ we get as generic fiber $Y$ cut with a section of $\mQ_{\Gr(2,5)}$, which is a $\PP^1$. Thus $S$ is birational to $\PP^1\times\PP^1$, which means that is $Q_2$.\\
In the end $X=\Bl_{Q_2}Y$, and due to the rationality of $Y$, we have that also $X$ is as such.
\end{fano} \vspace{5mm}
\

\begin{fano}
\fanoid{$K_{472}$}
\label[mystyle]{F.3.49}
$X=\mZ(\mQ_{\Gr(2,4)}(0,1,0)^{\oplus 2}\oplus\of(1,1,0)\oplus\of(0,0,1))\subset\PP^1\times\PP^5\times\Gr(2,4)$
\subsubsection*{Invariants} $h^0(-K)=39, \ (-K)^4=162,\ \chi(T_X)=-7,\ h^{1,1}=3,\ h^{3,1}=0,  \ h^{2,2}=10$.
\subsubsection*{Description} $X=\Bl_{\DP_2}\PP^1\times Q_3$
\subsubsection*{Identification} Note that applying \cref{thm:seq} on $Y=\mZ(\mQ_{\Gr(2,4)}(1,0)^{\oplus 2}\oplus\of(0,1))\subset\PP^5\times\Gr(2,4)$ we get that it is $\mZ(\of_{\mR}(1)^{\oplus 2})\subset\PP_{Q_3}(\mU_{\Gr(2,4)|Q_3}^{\oplus 2})$, hence $X=\mZ(\of(1,1,0))\subset\PP^1\times Y$. Using \cref{lm:blowPPtX} we get that $X=\Bl_SY$, with $S=\mZ(\of(1,0)^{\oplus 2})\subset Y$. Since $e(S)=8$, $K_S=\of(0,-1)_{|S}$, $(-K_S)^2=4$ and $h^0(S,K_S^{\otimes 2})=0$, then, by classification, $S$ is a $\DP_4$. Note that $Y$ is a $\PP^1$-bundle over $Q_3$ and by \cref{thm:ddl,thm:en}, the fiber of the projection to the base degenerates to a $\PP^2$ over 2 points.\\
If we consider the projection $\pi_{13}$ on $\PP^1\times Q_3$, we have a birational map between the latter and $X$. Using \cref{thm:degloc} we can understand the exceptional locus of $\pi_{13}$ studying the first degeneracy locus of the morphism:
\[
\varphi:\of_{\PP^2}(-1)\oplus\mQ^{vee}_{\Gr_{2,4}|Q_3}\to \of_{\PP^2\times Q_3}^{\oplus 4}.
\]
Using \cref{thm:ddl,thm:en} we get that $D_2(\varphi)$ is a $\DP_2$. Hence $X=\Bl_{\DP_2}\PP^1\times Q_3$.\\
Since $X$ is birational to a product of two rational manifolds, it is rational.

\end{fano} \vspace{5mm}

\begin{fano}
\fanoid{$K_{512}$}
\label[mystyle]{F.3.50}
$X=\mZ(\mQ_{\Gr(2,4)}(0,1,0)\oplus\of(1,0,1)\oplus\of(0,2,0)\oplus\of(0,0,1))\subset\PP^1\times\PP^4\times\Gr(2,4)$
\subsubsection*{Invariants} $h^0(-K)=37, \ (-K)^4=152,\ \chi(T_X)=-5,\ h^{1,1}=3,\ h^{3,1}=0,  \ h^{2,2}=8$.
\subsubsection*{Description} $X$ is the blow-up of the base of the conic bundle \cref{F.2.47} along a $Q_1$.
\subsubsection*{Identification}  Let us first consider $Y=\mZ(\mQ_{\Gr(2,4)}(1,0)\oplus\of(2,0)\oplus\of(0,1))\subset\PP^4\times\Gr(2,4)$, which is the Fano \cref{F.2.47}. Thus $X=\mZ(\of(1,0,1))\subset\PP^1\times Y$. Using \cref{lm:blowPPtX} we get $X=\Bl_{C}Y$ with $C=Q_3\cap H\cap H'=Q_1$. In this way, since the fiber of the conic bundle is not involved in the blow-up, we have $X=\PP_{\Bl_{Q_1}Q_3}(\mU_{\Gr(2,4)|Q_3}\oplus\of_{Q_3})$.
\end{fano} \vspace{5mm}

\begin{fano}
\fanoid{$K_{516}$}
\label[mystyle]{F.3.51}
$X=\mZ(\mU^{\vee}_{\Gr(2,5)}(0,1,0)\oplus\of(1,0,1)\oplus\of(0,0,1)^{\oplus 2})\subset\PP^1\times\PP^2\times\Gr(2,5)$
\subsubsection*{Invariants} $h^0(-K)=35, \ (-K)^4=141,\ \chi(T_X)=-10,\ h^{1,1}=3,\ h^{3,1}=0, \ h^{2,2}=12$.
\subsubsection*{Description} $X=\Bl_{\DP_2}Y$, with $Y$ the fourfold \cref{F.2.46}.
\subsubsection*{Identification} Recall that $Y=\mZ(\mU^{\vee}_{\Gr(2,5)}(1,0)\oplus\of(0,1)^{\oplus 2})\subset\PP^2\times\Gr(2,5)$ is the fourfold \cref{F.2.46}, hence $X=\mZ(\of(1,0,1))\subset\PP^1\times Y$. Now using \cref{lm:blowPPtX} we get that $X=\Bl_SY$, with $S=\mZ(\of(0,1)^{\oplus 2})\subset Y$. Since $e(S)=10$, $K_S=\of(-1,0)_{|S}$, $(-K_S)^2=2$ and $h^0(S,K_S^{\otimes 2})=0$, by classification, $S$ is a $\DP_2$.
\newline
Note that $X$ is rational because it is birational to $Y$. 
\end{fano} \vspace{5mm}

\begin{fano}
\fanoid{$K_{519}$}
\label[mystyle]{F.3.52}
$X=\mZ(\mQ_{\Gr(2,4)}(0,1,0)\oplus\of(0,0,1)\oplus\of(0,1,1)\oplus\of(1,1,0))\subset\PP^1\times\PP^4\times\Gr(2,4)$
\subsubsection*{Invariants} $h^0(-K)=35, \ (-K)^4=141,\ \chi(T_X)=-11,\ h^{1,1}=3,\ h^{3,1}=0,  \ h^{2,2}=13$.
\subsubsection*{Description} $X=\Bl_{\DP_4}Y$, with $Y$ the fourfold \cref{F.2.35}.
\subsubsection*{Identification} Recall that $Y=\mZ(\mQ_{\Gr(2,4)}(1,0)\oplus\of(0,1)\oplus\of(1,1))\subset\PP^4\times\Gr(2,4)$ is the fourfold \cref{F.2.35}, hence $X=\mZ(\of(1,0,1))\subset\PP^1\times Y$. Now using \cref{lm:blowPPtX} we get that $X=\Bl_SY$, with $S=\mZ(\of(1,0)^{\oplus 2})\subset Y$. Since $e(S)=8$, $K_S=\of(0,-1)_{|S}$, $(-K_S)^2=4$ and $h^0(S,K_S^{\otimes 2})=0$, by classification, $S$ is a $\DP_4$. \\
Since $Y$ is rational, so $X$ is as such.
\end{fano} \vspace{5mm}

\begin{fano}
\fanoid{$K_{593}$}
\label[mystyle]{F.3.53}
$X=\mZ(\of(1,1,0)\oplus\of(1,0,1)\oplus\of(0,0,1)^{\oplus 3})\subset\PP^{2}\times\PP^1\times\Gr(2,5)$
\subsubsection*{Invariants} $h^0(-K)=33, \ (-K)^4=130,\ \chi(T_X)=-11,\ h^{1,1}=3,\ h^{3,1}=0,  \ h^{2,2}=12$.
\subsubsection*{Description}  $X=\Bl_{\DP_5}Y$ with $Y$ the Fano fourfold \cref{F.2.1}.
\subsubsection*{Identification}  Recall that $Y=\mZ(\of(1,1)\oplus\of(0,1)^{\oplus 3})\subset\PP^2\times\Gr(2,5)$ is the fourfold \cref{F.2.1}, hence $X=\mZ(\of(1,1,0))\subset\PP^1\times Y$. Now using \cref{lm:blowPPtX} we get that $X=\Bl_SY$, with $S=\mZ(\of(1,0)^{\oplus 2})\subset Y$. Since $e(S)=7$, $K_S=\of(0,-1)_{|S}$, $(-K_S)^2=5$ and $h^0(S,K_S^{\otimes 2})=0$, so $S$ is a $\DP_5$.\newline
Note that $X$ is rational because it is birational to $Y$.
\end{fano} \vspace{5mm}

\begin{fano}
\fanoid{$K_{591}$}
\label[mystyle]{F.3.54}
$X=\mZ(\of(0,1,1)\oplus\of(0,0,1)^{\oplus 3})\subset\PP^1_1\times\PP^1_2\times\Gr(2,5)$
\subsubsection*{Invariants} $h^0(-K)=39, \ (-K)^4=160,\ \chi(T_X)=-4,\ h^{1,1}=3, \ h^{2,1}=1, \ h^{3,1}=0,  \ h^{2,2}=4$
\subsubsection*{Description} $X=\PP^1\times Y$, with $Y$ the Fano threefold \red{2-14} in \cite[Table 1]{DFT} 
\subsubsection*{Identification} Since the bundles cut the factors $\PP^1_1$ and $\PP^1_2\times\Gr(2,5)$ separately, we have that $X=\PP^1\times Y$, with $Y=\mZ(\of(1,1)\oplus\of(0,1)^{\oplus 3})\subset\PP^1\times\Gr(2,5)$, which is the Fano threefold \red{2-14} in \cite[Table 1]{DFT}.\\
Note that $X$ is the product of two rational manifolds, hence it is rational.
\end{fano} \vspace{5mm}

\begin{fano}
\fanoid{$K_{668}$}
\label[mystyle]{F.3.55}
$X=\mZ(\of(1,1,0)\oplus\mQ_{\PP^2}(0,0,2))\subset\PP^1\times\PP^2\times\PP^4$
\subsubsection*{Invariants} $h^0(-K)=25, \ (-K)^4=89,\ \chi(T_X)=-14,\ h^{1,1}=3, \ h^{2,1}=5, \ h^{3,1}=0,  \ h^{2,2}=9$.
\subsubsection*{Description} $X=\Bl_{\DP_4}Y$, with $Y$ the fourfold \cref{F.2.22}
\subsubsection*{Identification}  Recall that $Y=\mZ(\mQ_{\PP^2}(0,2))\subset\PP^2\times\PP^4$ is the fourfold \cref{F.2.22}, hence $X=\mZ(\of(1,1,0))\subset\PP^1\times Y$. Now using \cref{lm:blowPPtX} we get that $X=\Bl_SY$, with $S=\mZ(\of(1,0)^{\oplus 2})\subset Y$. Since $e(S)=8$, $K_S=\of(0,-1)_{|S}$, $(-K_S)^2=4$ and $h^0(S,K_S^{\otimes 2})=0$, by classification, $S$ is a $\DP_4$.\newline
Note that $X$ is rational because it is birational to $Y$.
\end{fano} \vspace{5mm}

\begin{fano}
\fanoid{$K_{182}$}
\label[mystyle]{F.3.56}
$X=\mZ(\mQ_{\PP^4}\boxtimes\mU^{\vee}_{\Gr(2,7)}\oplus\mU_{\Gr(2,7)}^{\vee}(1,0,0)\oplus\mU_{\Gr(2,7)}^{\vee}(0,1,0))\subset\PP^2\times\PP^4\times\Gr(2,7)$
\subsubsection*{Invariants} $h^0(-K)=60, \ (-K)^4=273,\ \chi(T_X)=2,\ h^{1,1}=3, \ h^{3,1}=0,  \ h^{2,2}=7$.
\subsubsection*{Description} $X=\Bl_{\DP_6}Y$, with Y the fourfold \cref{F.2.62}
\subsubsection*{Identification} Recall that $Y=\mZ(\mQ_{\PP^4}\boxtimes\mU^{\vee}_{\Gr(2,7)}\oplus\mU_{\Gr(2,7)}^{\vee}(1,0))\subset\PP^4\times\Gr(2,7)$ is the fourfold \cref{F.2.62}, hence $X=\mZ(\mU_{\Gr(2,7)}^{\vee}(1,0,0))\subset\PP^2\times Y$, hence $X$ is birational to $Y$ via The projection $\pi$ to the latter. We use \cref{thm:ddl,thm:en} to get that the fiber of $\pi$ degenerates to a $\PP^1$ on a surface $S$ that can be described as $\mZ(\mQ_{\Gr(2,7)}(1,0,0))\subset\PP^3\times Y$. In particular since $e(S)=6$, $K_S=\of(1,-1,-1)_{|S}$, $(-K_S)^2=6$ and $h^0(S,-K_S)=7$, $h^0(S,K_S^{\otimes 2})=0$, hence $S$ is a $\DP_6$.\newline
Note that $X$ is rational because it is birational to $Y$.
\end{fano} \vspace{5mm}

\begin{fano}
\fanoid{$K_{282}$}
\label[mystyle]{F.3.57}
$X=\mZ(\mQ_{\PP^3}\boxtimes\mU^{\vee}_{\Gr(2,6)}\oplus\mU^{\vee}_{\Gr(2,6)}(1,0,0)\oplus\of(0,1,1))\subset\PP^2\times\PP^3\times\Gr(2,6)$
\subsubsection*{Invariants} $h^0(-K)=51, \ (-K)^4=225,\ \chi(T_X)=-5,\ h^{1,1}=3, \ h^{3,1}=0,  \ h^{2,2}=11$
\subsubsection*{Description} $X=\Bl_{\DP_5}Y$, with $Y$ the Fano fourfold \red{70} in \cite[Table 3]{kalashnikov}.
\subsubsection*{Identification} Note that $Y=\mZ(\mQ_{\PP^3}\boxtimes\mU^{\vee}_{\Gr(2,6)}\oplus\of(1,1))\subset\PP^3\times\Gr(2,6)$ equals $\mZ(\of(1,1))\subset\bbGr_{\PP^3}(2,\of(-1)\oplus\of^{\oplus 2})$ by \cref{thm:seq}. By duality and twisting with $\of_{\PP^3}(-1)$ we get $Y=\mZ(\of(1,1))\subset\PP_{\PP^3}(\of\oplus\of(-1)^{\oplus 2})$. This can be written as zero loci of homogeneous bundles on a quiver flag variety as the variety with adjacency matrix:
\[
\begin{bmatrix}
0 & 1 & 4 \\
0 & 0 & 0 \\
0 & 2 & 0 \\
\end{bmatrix}
\]
dimension vector $[1,1,1]$ and cut by the vector $(\square,\square)$, which is the Fano fourfold \red{70} in \cite[Table 3]{kalashnikov}. Now $X$ can be written as $\mZ(\mU^{\vee}_{\Gr(2,6)}(1,0,0))\subset\PP^2\times Y$, this gives the birationality between $X$ and $Y$ via the projection to the latter. Using \cref{thm:ddl,thm:en} we obtain that the fibers of $\pi$ degenerate to a $\PP^1$ along a surface $S$ that can be described as $\mZ(\mQ_{\Gr(2,6)}(1,0,0))\subset\PP^2\times Y$. Moreover $e(S)=7$, $\mathcal{K}_{S}=\of(1,-1,-1)_{|S}$, $(-K_S)^2=5$ and $h^0(S,2\mathcal{K}_{S})=0$, hence $S$ is a $\DP_5$.\\
Note that $Y$ is birational to a product of rational manifolds, hence it is rational, and also $X$ is as such.
\end{fano} \vspace{5mm}

\begin{fano}
\fanoid{$K_{303}$}
\label[mystyle]{F.3.58}
$X=\mZ(\mQ_{\PP^2_2}\boxtimes\mU^{\vee}_{\Gr(2,6)}\oplus\of(0,0,1)^{\oplus 2}\oplus\mU^{\vee}_{\Gr(2,6)}(1,0,0))\subset\PP^2_1\times\PP^2_2\times\Gr(2,6)$
\subsubsection*{Invariants} $h^0(-K)=48, \ (-K)^4=209,\ \chi(T_X)=-3,\ h^{1,1}=3, \ h^{3,1}=0,  \ h^{2,2}=8$.
\subsubsection*{Description} $X=\Bl_{\DP_6}Y$, with $Y$ the fourfold \cref{F.2.51}.
\subsubsection*{Identification} Let first notice that $Y=\mZ(\mQ_{\PP^2}\boxtimes\mU^{\vee}_{\Gr(2,6)}\oplus\of(0,1)^{\oplus 2})\subset\PP^2\times\Gr(2,6)$ is the Fano fourfold \cref{F.2.51}, hence $X=\mZ(\mU^{\vee}_{\Gr(2,6)})\subset\PP^2\times Y$. The projection to $Y$ induces a birational map between $X$ and $Y$ and by \cref{thm:ddl,thm:en} we obtain that the fiber degenerates to a $\PP^1$ over a surface $S$ that can be described as $\mZ(\mQ_{\Gr(2,6)}(1,0,0))\subset\PP^2\times Y$. Moreover $e(S)=6$, $K_S=\of(1,-1,-1)_{|S}$, $(-K_S)^2=6$ and $h^0(S,K_S^{\otimes 2})=0$, hence $S$ is a $\DP_6$.
\end{fano} \vspace{5mm}

\begin{fano}
\fanoid{$K_{460}$}
\label[mystyle]{F.3.59}
$X=\mZ(\mU^{\vee}_{\Gr(2,5)}(0,1,0)\oplus\of(2,0,0)\oplus\of(0,0,1)^{\oplus 3})\subset\PP^2_1\times\PP^2_2\times\Gr(2,5)$
\subsubsection*{Invariants} $h^0(-K)=48, \ (-K)^4=208,\ \chi(T_X)=0,\ h^{1,1}=3, \ h^{3,1}=0,  \ h^{2,2}=4$
\subsubsection*{Description} $X$ is the product $Q_1\times Y$, with $Y$ the Fano threefold \red{2-20} in \cite[Table 1]{DFT}.
\subsubsection*{Identification} Note that the bundles cut $\PP^2_1$ and $\PP^2_2\times\Gr(2,5)$ separately. Thus $X=Q_1\times Y$, with $Y=\mZ(\mU^{\vee}_{\Gr(2,5)}(1,0)\oplus\of(0,1)^{\oplus 3})\subset\PP^2_2\times\Gr(2,5)$, which is the Fano threefold \red{2-20} in \cite[Table 1]{DFT}.
\end{fano} \vspace{5mm}

\begin{fano}
\fanoid{$K_{318}$}
\label[mystyle]{F.3.60}
$X=\mZ(\mU^{\vee}_{\Gr(2,5)}(1,0,0)\oplus\mQ_{\Gr(2,5)}(0,1,0)\oplus\of(0,0,1)^{\oplus 2})\subset\PP^2\times\PP^3\times\Gr(2,5)$
\subsubsection*{Invariants} $h^0(-K)=48, \ (-K)^4=209,\ \chi(T_X)=-2,\ h^{1,1}=3, \ h^{3,1}=0,  \ h^{2,2}=7$.
\subsubsection*{Description} $X=\Bl_{\DP_7}Y$, with $Y$ the fourfold \cref{F.2.29}.
\subsubsection*{Identification} Recall that $Y=\mZ(\mQ_{\Gr(2,5)}(1,0)\oplus\of(0,1)^{\oplus 2})\subset\PP^3\times\Gr(2,5)$ is the fourfold \cref{F.2.29}, and $X=\mZ(\mU^{\vee}_{\Gr(2,5)}(1,0,0))\subset\PP^2\times Y$. In this way $X$ is birational to $Y$ via The projection $\pi$ to the latter, and, using \cref{thm:ddl,thm:en}, we get that the fibers of $\pi$ degenerate to a $\PP^1$ along a surface $S$ that can be described as $\mZ(\mQ_{\Gr(2,5)}(1,0,0))\subset\PP^1\times Y$. Now, $e(S)=5$, $K_S=\of(1,-1,-1)_{|S}$, $(-K_S)^2=7$ and $h^0(S,K_S^{\otimes 2})=0$, hence, by classification, $S$ is a $\DP_7$.\newline
In the end, since $X=\Bl_{\DP_7}Y$, it is rational because so it is $Y$.
\end{fano} \vspace{5mm}

\begin{fano}
\fanoid{$K_{326}$}
\label[mystyle]{F.3.61}
$X=\mZ(\mQ_{\Gr(2,4)}\boxtimes\mU^{\vee}_{\Gr(2,5)}\oplus\of(1,1,0)\oplus\of(0,0,1)^{\oplus 2})\subset\PP^1\times\Gr(2,4)\times\Gr(2,5)$
\subsubsection*{Invariants} $h^0(-K)=48, \ (-K)^4=209,\ \chi(T_X)=-4,\ h^{1,1}=3, \ h^{3,1}=0,  \ h^{2,2}=9$
\subsubsection*{Description} $X=\Bl_{\DP_5}Y$, with $Y$ the fourfold \cref{F.2.11}
\subsubsection*{Identification} Recall that $Y=\mZ(\mQ_{\Gr(2,4)}\boxtimes\mU^{\vee}_{\Gr(2,5)}\oplus\of(0,0,1)^{\oplus 2})\subset\Gr(2,4)\times\Gr(2,5)$ is the fourfold \cref{F.2.11}, and $X=\mZ(\of(1,1,0))\subset\PP^1\times Y$. Using \cref{lm:blowPPtX} we have that $X=\Bl_SY$, with $S=\mZ(\of(1,0)^{\oplus 2})\subset Y$. Note that $e(S)=7$, $K_S=\of(0,-1)_{|S}$, $(-K_S)^2=5$, $h^0(S,K_S^{\otimes 2})=0$, hence $S$ is a $\DP_5$.\newline
Note that $X$ is rational because it is birational to $Y$.
\end{fano} \vspace{5mm}

\begin{fano}
\fanoid{$K_{327}$}
\label[mystyle]{F.3.62}
$X=\mZ(\mQ_{\Gr(2,4)}(0,1,0)\oplus\mU^{\vee}_{\Gr(2,4)}(0,1,0)\oplus\mU^{\vee}_{\Gr(2,4)}(1,0,0))\subset\PP^2\times\PP^4\times\Gr(2,4)$
\subsubsection*{Invariants} $h^0(-K)=47, \ (-K)^4=204,\ \chi(T_X)=-2,\ h^{1,1}=3, \ h^{3,1}=0,  \ h^{2,2}=7$.
\subsubsection*{Description} $X=\Bl_{\DP_7}Y$, with $Y$ the fourfold \cref{F.2.34}
\subsubsection*{Identification} Recall that $Y=\mZ(\mQ_{\Gr(2,4)}(1,0)\oplus\mU^{\vee}_{\Gr(2,4)}(1,0))\subset\PP^4\times\Gr(2,4)$ is the fourfold \cref{F.2.34}, and $X=\mZ(\mU^{\vee}_{\Gr(2,4)}(1,0,0))\subset\PP^2\times Y$. In this way $X$ is birational to $Y$ via The projection $\pi$ to the latter, and, using \cref{thm:ddl,thm:en}, we get that the fibers of $\pi$ degenerate to a $\PP^1$ along a surface $S$ that can be described as $\mZ(\mQ_{\Gr(2,4)})\subset\times Y$. Since $e(S)=5$, $K_S=\of(-1,-1)_{|S}$, $(-K_S)^{2}$, hence $S$ is a $\DP_7$ by classification.\\
Note that $X$ is rational because $Y$ is as such.
\end{fano} \vspace{5mm}

\begin{fano}
\fanoid{$K_{339}$}
\label[mystyle]{F.3.63}
$X=\mZ(\mQ_{\Gr(2,4)}(1,0,0)^{\oplus 2}\oplus\mU_{\Gr(2,4)}^{\vee}(0,1,0))\subset\PP^4\times\PP^2\times\Gr(2,4)$
\subsubsection*{Invariants} $h^0(-K)=46, \ (-K)^4=199,\ \chi(T_X)=0,\ h^{1,1}=3, \ h^{3,1}=0,  \ h^{2,2}=5$.
\subsubsection*{Description} $X=\Bl_{\Bl_{pt}\PP^2}Y$, with $Y$ the fourfold \cref{F.2.21}.
\subsubsection*{Identification} Note that $Y=\mZ(\mQ_{\Gr(2,4)}(1,0)^{\oplus 2})\subset\PP^4\times\Gr(2,4)$ is \cref{F.2.34}, then $X=\mZ(\mU^{\vee}_{\Gr(2,4)}(1,0,0))\subset\PP^2\times Y$. In this way $X$ is birational to $Y$ via The projection $\pi$ to the latter. Using \cref{thm:degloc}, \cref{thm:ddl,thm:en}, we get that the fibers of $\pi$ degenerate to a $\PP^1$ along a surface $S$ that can be described as $\mZ(\mQ_{\Gr(2,4)})\subset Y$. Now $\mZ(\mQ_{\Gr(2,4)})$ is equal to $\mZ(\mQ_{\PP^2}(1,0)^{\oplus 2})\subset\PP^4\times\PP^2$, applying \cref{thm:seq} becomes $\mZ(\of_{\mR}(1))\subset\PP_{\PP^2}(\of(-1)^{\oplus 2})$. Twisting $\of(-1)^{\oplus 2}$ with $\of(1)$ we have that $S$ can be described as $\mZ(\of(1,1))\subset\PP^1\times\PP^2$, and by \cref{lm:blowPPtX} we have that $S$ is $\Bl_{pt}\PP^2$.
\newline
Since $X$ is birational to $Y$, it is rational.
\end{fano} \vspace{5mm}

\begin{fano}
\fanoid{$K_{344}$}
\label[mystyle]{F.3.64}
$X=\mZ(\of(1,0,1)\oplus\mQ_{\PP^2}(0,1,0)\oplus\mU^{\vee}_{\Gr(2,4)}(1,0,0))\subset\PP^3\times\Gr(2,4)\times\PP^2$
\subsubsection*{Invariants} $h^0(-K)=42, \ (-K)^4=,178\ \chi(T_X)=-5,\ h^{1,1}=3, \ h^{3,1}=0,  \ h^{2,2}=9$.
\subsubsection*{Description} $X=\Bl_{\DP_5}\Bl_{Q_1}\Gr(2,4)$
\subsubsection*{Identification} Let first consider $Y=\mZ(\mQ_{\PP^2}(0,1))\subset\PP^2\times\Gr(2,4)$, then by \cref{lm:blowPPtX} this is $\Bl_{Q_1}\Gr(2,4)$. In this way $X=\mZ(\mU^{\vee}_{\Gr(2,4)}(1,0)\oplus\of(1,1))\subset\PP^3\times Y$, so it is birational to $Y$ via The projection $\pi$ to the latter. Using \cref{thm:ddl,thm:en} we obtain that the fibers of $\pi$ degenerate to a $\PP^1$ over a surface $S$ that can be described as $\mZ(\mQ_{\PP^2}(1,0,0)\oplus\mQ_{\Gr(2,4)}(1,0,0))\subset\PP^2_1\times Y$. Since $e(S)=7$, $K_S=\of(1,-1,-1)_{|S}$, $(-K_S)^2=5$ and $h^0(S,K_S^{\otimes 2})=0$ then $S$ is a $\DP_5$.\newline
Note that $X$ is rational because it is birational to $\Gr(2,4)$.
\end{fano} \vspace{5mm}

\begin{fano}
\fanoid{$K_{413}$}
\label[mystyle]{F.3.65}
$X=\mZ(\mU^{\vee}_{\Gr(2,4)}(1,0,0)\oplus\mQ_{\Gr(2,4)}(0,1,0)\oplus\of(0,0,1)\oplus\of(0,2,0))\subset\PP^2\times\PP^4\times\Gr(2,4)$
\subsubsection*{Invariants} $h^0(-K)=44, \ (-K)^4=188,\ \chi(T_X)=-2,\ h^{1,1}=3, \ h^{3,1}=0,  \ h^{2,2}=6$.
\subsubsection*{Description} $X=\Bl_{\DP_6}Y$, with $Y$ the fourfold \cref{F.2.47}.
\subsubsection*{Identification} Recall that $Y=\mZ(\mQ_{\Gr(2,4)}(1,0)\oplus\of(0,1)\oplus\of(2,0))\subset\PP^4\times\Gr(2,4)$ is the fourfold \cref{F.2.34}, and $X=\mZ(\mU^{\vee}_{\Gr(2,4)}(1,0,0))\subset\PP^2\times Y$. In this way $X$ is birational to $Y$ via the projection $\pi$ to the latter. By \cref{thm:degloc}, \cref{thm:ddl,thm:en}, we get that the fibers of $\pi$ jump to a $\PP^1$ along a surface $S$ that can be described as $\mZ(\mQ_{\Gr(2,4)})\subset Y$. Since $e(S)=6$, $K_S=\of(-1,-1)_{|S}$, $(-K_S)^2=4$ and $h^0(S,K_S^{\otimes 2})=0$ then $S$ is a $\DP_4$.
\end{fano} \vspace{5mm}

\begin{fano}
\fanoid{$K_{423}$}
\label[mystyle]{F.3.66}
$X=\mZ(\mQ_{\Gr(2,5)}(0,1,0)\oplus\of(1,0,1)\oplus\of(0,0,1)^{\oplus 2})\subset\PP^1\times\PP^3\times\Gr(2,5)$
\subsubsection*{Invariants} $h^0(-K)=40, \ (-K)^4=167,\ \chi(T_X)=-8,\ h^{1,1}=3, \ h^{3,1}=0,  \ h^{2,2}=11$.
\subsubsection*{Description} $X=\Bl_{\DP_3}Y$, with $Y$ the fourfold \cref{F.2.29}.
\subsubsection*{Identification} Recall that $Y=\mZ(\mQ_{\Gr(2,5)}(1,0)\oplus\of(0,1)^{\oplus 2})\subset\PP^3\times\Gr(2,5)$ is the fourfold \cref{F.2.29}, and $X=\mZ(\of(1,0,1))\subset\PP^1\times Y$. Now we can apply \cref{lm:blowPPtX} and obtain that $X=\Bl_{S}Y$, with $S=\mZ(\of(0,1)^{\oplus 2})\subset Y$. Note that $e(S)=9$, $K_S=\of(-1,0)_{|S}$, $(-K_S)^2=3$ and $h^0(S,K_S^{\otimes 2})=0$, hence $S$ is a $\DP_3$.\newline
Since $X=\Bl_{\DP_3}Y$, and $Y$ is rational, so it is $X$.
\end{fano} \vspace{5mm}

\begin{fano}
\fanoid{$K_{424}$}
\label[mystyle]{F.3.67}
$X=\mZ(\of(1,0,1)\oplus\mQ_{\PP^2}\boxtimes\mU^{\vee}_{\Gr(2,6)}\oplus\of(0,0,0)^{\oplus 2})\subset\PP^1\times\PP^2\times\Gr(2,6)$
\subsubsection*{Invariants} $h^0(-K)=39, \ (-K)^4=161,\ \chi(T_X)=-10,\ h^{1,1}=3, \ h^{3,1}=0,  \ h^{2,2}=12$.
\subsubsection*{Description} $\pi:X\to\PP^2\times\Gr(2,6)$ is the map associated to $\Bl_{\DP_2}Y$, with $Y$ the fourfold \cref{F.2.61}
\subsubsection*{Identification} Recall that $Y=\mZ(\mQ_{\PP^2}\boxtimes\mU^{\vee}_{\Gr(2,6)}\oplus\of(0,1)^{\oplus 2})\subset\PP^2\times\Gr(2,6)$ is the fourfold \cref{F.2.61}, and $X=\mZ(\of(1,0,1))\subset\PP^1\times Y$. Now we can apply \cref{lm:blowPPtX} and obtain that $X=\Bl_{S}Y$, with $S=\mZ(\of(0,1)^{\oplus 2})\subset Y$. Note that $e(S)=10$, $K_S=\of(-1,0)_{|S}$, $(-K_S)^2=2$ and $h^0(S,K_S^{\otimes 2})=0$, hence $S$ is a $\DP_2$.\newline
Since $X=\Bl_{\DP_2}Y$, and $Y$ is rational, so it is $X$.
\end{fano} \vspace{5mm}

\begin{fano}
\fanoid{$K_{428}$}
\label[mystyle]{F.3.68}
$X=\mZ(\of(1,0,1)\oplus\mQ_{\Gr(2,4)}(0,1,0)^{\oplus 2})\subset\PP^1\times\PP^4\times\Gr(2,4)$
\subsubsection*{Invariants} $h^0(-K)=39, \ (-K)^4=163,\ \chi(T_X)=-5,\ h^{1,1}=3, \ h^{3,1}=0,  \ h^{2,2}=9$.
\subsubsection*{Description} $\pi:X\to\Gr(2,4)$ is the map associated to $\Bl_{\DP_4}Y$ with $Y$ the Fano fourfold \cref{F.2.21}
\subsubsection*{Identification} Recall that $Y=\mZ(\mQ_{\Gr(2,4)}(1,0)^{\oplus 2})\subset\PP^2\times\Gr(2,6)$ is the Fano fourfold \cref{F.2.21}, and $X=\mZ(\of(1,0,1))\subset\PP^1\times Y$. Now we can apply \cref{lm:blowPPtX} and obtain that $X=\Bl_{S}Y$, with $S=\mZ(\of(0,1)^{\oplus 2})\subset Y$. Note that $e(S)=8$, $K_S=\of(-1,0)_{|S}$, $(-K_S)^2=4$ and $h^0(S,K_S^{\otimes 2})=0$, hence, by classification $S$ is a $\DP_2$.\newline
Since $X=\Bl_{\DP_4}Y$, and $Y$ is rational, so it is $X$.
\end{fano} \vspace{5mm}

\begin{fano}
\fanoid{$K_{430}$}
\label[mystyle]{F.3.69}
$X=\mZ(\mQ_{\Gr(2,4)}(0,1,0)\oplus\of(1,0,1)\oplus\of(1,1,0)\oplus\of(0,0,1))\subset\PP^2\times\PP^3\Gr(2,4)$
\subsubsection*{Invariants} $h^0(-K)=39, \ (-K)^4=162,\ \chi(T_X)=-7,\ h^{1,1}=3, \ h^{3,1}=0,  \ h^{2,2}=10$.
\subsubsection*{Description} $\pi:X\to\PP^3\times\Gr(2,4)$ is the map associated to $\Bl_{Q_2}Y$, with $Y$ the fourfold \cref{F.2.43}
\subsubsection*{Identification} Recall that $Y=\mZ(\mQ_{\Gr(2,4)}(1,0)\oplus\of(0,1))\subset\PP^3\times\Gr(2,4)$ is the fourfold \cref{F.2.43}, and $X=\mZ(\of(1,0,1))\subset\PP^1\times Y$. Now we can apply \cref{lm:blowPPtX} and obtain that $X=\Bl_{S}Y$, with $S=\mZ(\of(0,1)^{\oplus 2})\subset Y$. Note that if we project $S$ on $\Gr(2,4)$ we have that this map gives a birationality between $S$ and $Q_2$, and since $e(S)=4$, $\mathcal{K}_{S}=\of(-2,0)_{|S}$, $(-K_S)^2=8$ and $h^0(S,2K_{S})=0$, then $S$ is a $Q_2$.\\
Note that $Y$ is rational so $X$ is as such.
\end{fano} \vspace{5mm}

\begin{fano}
\fanoid{$K_{438}$}
\label[mystyle]{F.3.70}
$X=\mZ(\mQ_{\PP^2_1}(0,0,1)\oplus\mQ_{\PP^2_1}(0,1,1))\subset\PP^2_1\times\PP^2_2\times\PP^4$
\subsubsection*{Invariants} $h^0(-K)=33, \ (-K)^4=131,\ \chi(T_X)=-15,\ h^{1,1}=3, \ h^{3,1}=0,  \ h^{2,2}=17$.
\subsubsection*{Description} $X=\Bl_{\Bl_{13pts}\PP^2}\PP^2\times\PP^2$.
\subsubsection*{Identification} Note that $X$ is birational to $\PP^2_1\times\PP^2_2$ via The projection $\pi$ to the latter. To understand where the fiber of $\pi$ degenerates to a $\PP^1$ we argue as usual and, using \cref{thm:ddl,thm:en}, we obtain that the exceptional locus of $\pi$ is a surface $S$ that can be described as $\mZ(\mQ_{\PP^2_1}(1,0,0)^{\oplus 3}\oplus\of(1,0,1)\oplus\of(1,-1,1))\subset\PP^6\times\PP^2_1\times\PP^2_2$. Moreover since $e(S)=16$, $K_S=\of(1,-1,-1)_{|S}$, $(-K_S)^2=-4$ and $h^0(S,K_S^{\otimes 2})=0$, then $S$ is rational. Thus $S=\Bl_{13pts}\PP^2$ and the final picture of $X$ is $\Bl_{\Bl_{13pts}\PP^2}\PP^2\times\PP^2$. \newline
Note that since $X$ is birational to $\PP^2\times\PP^2$, then it is rational.
\end{fano} \vspace{5mm}

\begin{fano}
\fanoid{$K_{478}$}
\label[mystyle]{F.3.71}
$X=\mZ(\mU^{\vee}_{\Gr(2,5)}(0,1,0)\oplus\of(0,0,1)^{\oplus 3}\oplus\of(1,1,0))\subset\PP^1\times\PP^3\times\Gr(2,5)$
\subsubsection*{Invariants} $h^0(-K)=39, \ (-K)^4=162,\ \chi(T_X)=-5,\ h^{1,1}=3, \ h^{3,1}=0,  \ h^{2,2}=8$.
\subsubsection*{Description} $X=\Bl_{\DP_5}Y$, with $Y$ the fourfold \cref{F.2.45}
\subsubsection*{Identification} Recall that $Y=\mZ(\mU^{\vee}_{\Gr(2,5)}(1,0)\oplus\of(0,1)^{\oplus 3})\subset\PP^3\times\Gr(2,5)$ is the fourfold \cref{F.2.45}, and $X=\mZ(\of(1,1,0))\subset\PP^1\times Y$. Now we can apply \cref{lm:blowPPtX} and obtain that $X=\Bl_{S}Y$, with $S=\mZ(\of(1,0)^{\oplus 2})\subset Y$. Note that $e(S)=7$, $K_S=\of(1,-1)_{|S}$, $(-K_S)^2=5$ and $h^0(S,K_S^{\otimes 2})=0$, hence, by classification $S$ is a $\DP_5$.
\end{fano} \vspace{5mm}

\begin{fano}
\fanoid{$K_{154}$}
\label[mystyle]{F.3.72}
$X=\mZ(\mQ_{\Gr(2,4)}(1,0,0)\oplus\mU^{\vee}_{\Gr(2,4)}(0,1,0)\oplus\of(0,0,1))\subset\PP^2\times\PP^3\times\Gr(2,4)$
\subsubsection*{Invariants} $h^0(-K)=69, \ (-K)^4=320,\ \chi(T_X)=7,\ h^{1,1}=3, \ h^{3,1}=0,  \ h^{2,2}=4$.
\subsubsection*{Description} $X=\Bl_{\Bl_{pt}\PP^2}Y$, with $Y$ the fourfold \cref{F.2.43}.
\subsubsection*{Identification} Recall that $Y=\mZ(\mU^{\vee}_{\Gr(2,4)}(1,0)\oplus\of(0,1))\subset\PP^3\times\Gr(2,4)$ is the fourfold \cref{F.2.43}, and $X=\mZ(\mQ_{\Gr(2,4)}(1,0,0))\subset\PP^2\times Y$. In particular $X$ is birational to $Y$ via The projection $\pi$ to the latter, which, using \cref{thm:ddl,thm:en}, has exceptional locus a surface $S$, which can be described as $\mZ(\mQ_{\Gr(2,4)})\subset Y$. Note that $\mZ(\mQ_{\Gr(2,4)})\subset Y$ is $\mZ(\mQ_2\oplus\of(0,1))\subset\Fl(1,2,4)$, hence by \cref{lm:flag2.5,thm:seq}, is $\mZ(\of(1,1))\subset\PP^1\times\PP^2$, which is a $\Bl_{pt}\PP^2$. In the end $X=\Bl_{\Bl_{pt}\PP^2}Y$.\\
Since $Y$ is rational, so $X$ is as such.
\end{fano} \vspace{5mm}

\begin{fano}
\fanoid{$K_{404}$}
\label[mystyle]{F.3.73}
$X=\mZ(\mQ_{\Gr(2,4)}\boxtimes\mU^{\vee}_{\Gr(2,5)}\oplus\of(0,1,0)\oplus\of(0,0,1)\oplus\of(1,0,1))\subset\PP^1\times\Gr(2,4)\times\Gr(2,5)$
\subsubsection*{Invariants} $h^0(-K)=45, \ (-K)^4=193,\ \chi(T_X)=-5,\ h^{1,1}=, \ h^{3,1}=0,  \ h^{2,2}=9$.
\subsubsection*{Description} $X=\Bl_{\DP_4}Y$, with $Y$ the fourfold \cref{F.2.28}
\subsubsection*{Identification} Recall that $Y=\mZ(\mQ_{\Gr(2,4)}\boxtimes\mU^{\vee}_{\Gr(2,5)}\oplus\of(1,0)\oplus\of(0,1))\subset\Gr(2,4)\times\Gr(2,5)$ is the fourfold \cref{F.2.28}, and $X=\mZ(\of(1,0,1))\subset\PP^1\times Y$. Now we can apply \cref{lm:blowPPtX} and obtain that $X=\Bl_{S}Y$, with $S=\mZ(\of(0,1)^{\oplus 2})\subset Y$. Note that $e(S)=8$, $K_S=\of(-1,0)_{|S}$, $(-K_S)^2=4$ and $h^0(S,K_S^{\otimes 2})=0$, hence $S$ is a $\DP_4$.\newline
Since $X=\Bl_{\DP_4}Y$, and $Y$ is rational, so it is $X$.
\end{fano} \vspace{5mm}

\begin{fano}
\fanoid{$K_{406}$}
\label[mystyle]{F.3.74}
$X=\mZ(\mU^{\vee}_{\Gr(2,5)}(1,0,0)\oplus\mU^{\vee}_{\Gr(2,5)}(0,1,0)\oplus\of(0,0,1)^{\oplus 2})\subset\PP_1^2\times\PP_2^2\times\Gr(2,5)$
\subsubsection*{Invariants} $h^0(-K)=43, \ (-K)^4=183,\ \chi(T_X)=-4,\ h^{1,1}=3, \ h^{3,1}=0,  \ h^{2,2}=8$.
\subsubsection*{Description} $X=\Bl_{\DP_6}Y$, with $Y$ the fourfold \cref{F.2.46}
\subsubsection*{Identification}  Recall that $Y=\mZ(\mU^{\vee}_{\Gr(2,5)}(1,0)\oplus\of(0,1)^{\oplus 2})\subset\PP^2\times\Gr(2,5)$ is the fourfold \cref{F.2.46}, and $X=\mZ(\mU^{\vee}_{\Gr(2,5)}(1,0,0))\subset\PP^2\times Y$. In particular $X$ is birational to $Y$ via The projection $\pi$ to the latter, which, using \cref{thm:ddl,thm:en}, has exceptional locus a surface $S$, which can be described as $\mZ(\mQ_{\Gr(2,5)}(1,0,0))\subset\PP^1\times Y$. Note that $e(S)=6$, $K_S=\of(1,-1,-1)_{|S}$, $(-K_S)^2=6$ and $h^0(S,K_S^{\otimes 2})=0$, hence $S$ is a $\DP_6$.\newline
Since $X=\Bl_{\DP_6}Y$, and $Y$ is rational, so it is $X$.
\end{fano} \vspace{5mm}

\section{Fano Varieties with Picard Rank 4}\label{sec:pic4}
\renewcommand\thefano{4--\arabic{fano}}
\setcounter{fano}{0}

\begin{fano}
\fanoid{$K_{280}$}
\label[mystyle]{F.4.1}
$X=\mZ(\mQ_{\Gr(2,4)}\boxtimes\mU^{\vee}_{\Gr(2,6)}\oplus\of(1,0)\oplus\Sym^2\mU^{\vee}_{\Gr(2,6)})\subset\Gr(2,4)\times\Gr(2,6)$
\subsubsection*{Invariants}  $h^0(-K)=51, \ (-K)^4=228,\ \chi(T_X)=2,\ h^{1,1}=4,\ h^{3,1}=0,  \ h^{2,2}=10$
\subsubsection*{Description}  $\pi:X\to\Gr(2,6)$ is the map associated to $\Bl_{\PP^1\bigsqcup\PP^1}Y$ with $Y$ the Fano fourfold obtained as a linear section in $\Fl(1,3,4)$.
\subsubsection*{Identification} Let $T$ be $\mZ(\mQ_{\Gr(2,4)}\boxtimes\mU^{\vee}_{\Gr(2,6)})\subset\Gr(2,4)\times\Gr(2,6)$. We apply \crefpart{lm:blowFlagGr}{item_3} and we obtain that, outside a point, $T=\Bl_{Z'}\Gr(2,6)$, with $Z'=\mZ(\mQ_{\PP^3}\boxtimes\mU^{\vee}_{\Gr(2,6)})\subset\PP^3\times\Gr(2,6)$. Note that $X=\mZ(\mZ(\of_{\Gr(2,4)}(1)\oplus\Sym^2\mU^{\vee}_{\Gr(2,6)})\subset T$, hence the sections $\of_{\Gr(2,4)}(1)$ and $\Sym^2\mU^{\vee}_{\Gr(2,6)}$ cut both $Z'$ and $\Gr(2,6)$. In this way $X=\Bl_{Z}Y$, with $Z=\mZ(\of_{\Gr(2,6)}(1)\oplus\Sym^2\mU^{\vee}_{\Gr(2,6)})\subset Z'$ and $Y=\mZ(\Sym^2\mU^{\vee}_{\Gr(2,6)}\oplus\of_{\Gr(2,6)}(1))\subset\Gr(2,6)$. Note that $\mZ(\Sym^2\mU_{\Gr(2,6)}^{\vee})\subset\Gr(2,6)$ is $\Fl(1,3,4)$. Thus $Y=\mZ(\of(1,1))\subset\Fl(1,3,4)$, already described in \cref{F.3.13}, and by applying \crefpart{lm:blowFlagGr}{item_1} and dualizing, we get $Y=\mZ(\of(1,1)^{\oplus 2})\subset\PP^3\times\PP^3$, which is exactly the fourfold described in \cref{F.3.23}. $Z$ is simply the disjoint union of 2 $\PP^1s$, hence $X=\Bl_{\PP^1\bigsqcup\PP^1}Y$.

\end{fano} \vspace{5mm}

\begin{fano}
\fanoid{$K_{26}$}
\label[mystyle]{F.4.2}
$X=\mZ(\mQ_{\Gr(2,4)}(1,0,0,0)\oplus\mQ_{\Gr(2,4)}(0,1,0,0)\oplus\mQ_{\Gr(2,4)}(0,0,1,0))\subset\PP^2\times\PP^2\times\PP^2\times\Gr(2,4)$
\subsubsection*{Invariants} $h^0(-K)=63, \ (-K)^4=290,\ \chi(T_X)=6,\ h^{1,1}=4,\ h^{3,1}=0,  \ h^{2,2}=8$.
\subsubsection*{Description} $X=\Bl_{\DP_7}Y$, with $Y$ the fourfold \cref{F.3.2}
\subsubsection*{Identification} Recall that $Y=\mZ(\mQ_{\Gr(2,4)}(1,0,0)\oplus\mQ_{\Gr(2,4)}(0,1,0))\subset\PP^2\times\PP^2\times\Gr(2,4)$ is the fourfold \cref{F.3.2}, so $X=\mZ(\mQ_{\Gr(2,4)}(1,0,0,0))\subset\PP^2\times Y$, and in particular $X$ is birational to $Y$ via the projection to the latter. To understand where the fiber of $\pi$ degenerates to a $\PP^1$ we use \cref{thm:ddl,thm:en} and obtain that the exceptional locus of $\pi$ is a surface $S$ that can be described as $\mZ(\mU^{\vee}_{\Gr(2,4)})\subset Y$. Note that $\mZ(\mU^{\vee}_{\Gr(2,4)})\subset Y$ is $\mZ(\of(1,0,1)\oplus\of(0,1,1))\subset\PP^1\times\PP^1\times\PP^2$, hence a $\DP_7$.\newline
Since $X$ is $\Bl_{\DP_7}Y$, then it is rational, because $Y$ is as such.
\end{fano} \vspace{5mm}

\begin{fano}
\fanoid{$K_{73}$}
\label[mystyle]{F.4.3}
$X=\mZ(\mU^{\vee}_{\Gr(2,4)}(1,0,0,0)\oplus\mQ_{\PP^2_2}\boxtimes\mU^{\vee}_{\Gr(2,5)}\oplus\mQ_{\Gr(2,4)}\boxtimes\mU^{\vee}_{\Gr(2,5)})\subset\PP^2_1\times\PP^2_2\times\Gr(2,4)\times\Gr(2,5))$
\subsubsection*{Invariants}  $h^0(-K)=78, \ (-K)^4=369,\ \chi(T_X)=11,\ h^{1,1}=4,\ h^{3,1}=0,  \ h^{2,2}=7$.
\subsubsection*{Description} $X=\Bl_{Q_2}\Bl_{\PP^1} Y$, with $Y$ the Fano fourfold \red{6} in \cite[Table 3]{kalashnikov}.
\subsubsection*{Identification} Using the same argument as in \cref{F.3.24}, we get that $Y=\mZ(\mQ_{\PP^2_2}\boxtimes\mU^{\vee}_{\Gr(2,5)})\subset\PP^2_2\times\Gr(2,5)$ is the Fano fourfold \red{6} in \cite[Table 3]{kalashnikov}. Moreover, if we call $Z$ the fourfold $\mZ(\mQ_{\Gr(2,4)}\boxtimes\mU^{\vee}_{\Gr(2,5)})\subset\Gr(2,4)\times Y$ we can apply \crefpart{lm:blowFlagGr}{item_2} to obtain $Z=\Bl_{C}Y$, with $C=\mZ(\mQ_{\Gr(2,5)})\subset Y$. In particular $e(C)=2$, $h^{1,0}=0$, hence, by classification, it is a rational curve. Finally $X=\mZ(\mU^{\vee}_{\Gr(2,4)}(1,0,0,0))\subset\PP^2\times Z$, and the projection $\pi'$ on $Z$, gives a birational map between the latter and $X$. Note that, using \cref{thm:ddl,thm:en}, we have that the fiber of $\pi'$ degenerates to a $\PP^1$ along a surface $S$, which can be described as $\mZ(\mQ_{\Gr(2,4)})\subset Z$. Note that $e(S)=4$, $K_S=\of(-1,-1,-1)_{|S}$, $(-K_S)^2=8$, hence $S$ is a $\DP_8$. If we project $S$ to $\PP^2\times\PP^2$ we have, by \cref{thm:degloc}, that $S$ is birational to $\PP^1\times\PP^1$, so it is a $Q_2$\\
Since $Y$ is rational, so $X$ is as such.
\end{fano} \vspace{5mm}

\begin{fano}
\fanoid{$K_{144}$}
\label[mystyle]{F.4.4}
$X=\mZ(\mQ_{\Gr(2,4)}(1,0,0,0)\oplus\mQ_{\PP^2_2}\boxtimes\mU^{\vee}_{\Gr(2,5)}\oplus\mQ_{\Gr(2,4)}\boxtimes\mU^{\vee}_{\Gr(2,5)})\subset\PP^2_1\times\PP^2_2\times\Gr(2,4)\times\Gr(2,5))$
\subsubsection*{Invariants} $h^0(-K)=78, \ (-K)^4=369,\ \chi(T_X)=11,\ h^{1,1}=4,\ h^{3,1}=0,  \ h^{2,2}=7$.
\subsubsection*{Description}  $X=\Bl_{\Bl_{pt}\PP^2}\Bl_{\PP^1} Y$, with $Y$ the Fano fourfold \red{6} in \cite[Table 3]{kalashnikov}.
\subsubsection*{Identification} The description is almost the same as before, except for the fact that $S=\mZ(\mU^{\Gr(2,4)})\subset Z$. Recall that if we project $S$ on $\PP^2_2$ we get that the generic fiber is a point, hence $S=\Bl_{p}\PP^2$.\\
As the previous one, also this Fano fourfold is rational.
\end{fano} \vspace{5mm}

\begin{fano}
\fanoid{$K_{256}$}
\label[mystyle]{F.4.5}
$X=\mZ(\mQ_{\PP^2_1}\boxtimes\mU^{\vee}_{\Gr(2,7)}\oplus \mQ_{\PP^2_2}\boxtimes\mU^{\vee}_{\Gr(2,7)}\oplus \mQ_{\PP^2_3}\boxtimes\mU^{\vee}_{\Gr(2,7)})\subset\PP^2_1\times\PP^2_2\times\PP^2_3\times\Gr(2,7)$
\subsubsection*{Invariants}  $h^0(-K)=81, \ (-K)^4=385,\ \chi(T_X)=12,\ h^{1,1}=4,\ h^{3,1}=0,  \ h^{2,2}=7$.
\subsubsection*{Description} $X$ is a small resolution of $Y$, 4 fold with $e(Y)=16$, $deg(Y)=13$ and singular in a point.
\subsubsection*{Identification} Let us first consider 
$Z=\mZ(\mQ_{\PP^2_2}\boxtimes\mU^{\vee}_{\Gr(2,7)}\oplus \mQ_{\PP^2_3}\boxtimes\mU^{\vee}_{\Gr(2,7)})\subset\PP^2_2\times\PP^2_3\times\Gr(2,7)$, since $\PP^2\times\PP^2$ can be also seen as $\mZ(\mQ_{\Gr(2,6)}^{\vee}(1))\subset\Gr(2,6)$ we can rewrite everything as 
$\mZ(\mQ_{\Gr(2,6)}\boxtimes\mU^{\vee}_{\Gr(2,7)}\oplus\mQ^{\vee}_{\Gr(2,6)}(1))\subset\Gr(2,6)\times\Gr(2,7)$, thus, by \crefpart{flagCor}{cor:flag_2}, $Z$ can be rewritten as $\mZ(\mQ_2\oplus\mQ^{\vee}_2(1))\subset\Fl(2,3,7)$, hence $X=\mZ(\mQ_{\PP^2_1}\boxtimes\mU_1^{\vee})\subset\PP^2_1\times Z$. The fiber of The projection $\pi$ to $Z$ is generically empty, hence, by \cref{thm:degloc}, $X$ is birational to the fourfold $Y=D_1(\varphi)$, with \[\varphi:\mU_{1|Z}\to \of_{\Fl(2,3,7)|Z}^{\oplus 3}.\] Using \cref{thm:ddl,thm:en}, $Y$ is a fourfold with $e(Y)=16$ and $deg(Y)=13$, moreover, since $D_2(\varphi_{|Z})$ consists in a point, then $Y$ is singular there, and the projection to $Z$ is the contraction of a $\PP^2$, or equivalently $X$ is a small resolution of $Y$. If we consider the projection $\pi_{123}$ to $\PP^2_1\times\PP^2_2\times\PP^2_3$ we get, by \cref{thm:degloc}, that the fiber is a point along the $D_5(\varphi)$, with
\[
\varphi:\mQ_{\PP^2_1}^{\vee}\oplus\mQ_{\PP^2_2}^{\vee}\oplus\mQ_{\PP^2_3}^{\vee}\to V_7^{\vee}\otimes\of_{\PP^2_1\times\PP^2_2\times\PP^2_3}.
\]
By \cref{thm:ddl,thm:en} we get that $D_5(\varphi)$ is fourfold $Y'$ with $e(Y')=16$. Moreover, again by \cref{thm:ddl,thm:en}, the fiber of $\pi_{123}$ jumps to a $\PP^2$ over $D_4(\varphi)=\{p\}$. Thus $X$ is a small resolution of $Y'$.
\end{fano} \vspace{5mm}

\begin{fano}
\fanoid{$K_{78}$}
\label[mystyle]{F.4.6}
$X=\mZ(\mQ_{\PP^2_2}\boxtimes\mU^{\vee}_{\Gr(2,5)}\oplus\mQ_{\Gr(2,5)}(1,0,0,0)\oplus\mU^{\vee}_{\Gr(2,5)}(0,1,0,0))\subset\PP^3\times\PP^2_1\times\PP^2_2\times\Gr(2,5)$
\subsubsection*{Invariants}  $h^0(-K)=71, \ (-K)^4=331,\ \chi(T_X)=8,\ h^{1,1}=4, \ h^{3,1}=0,  \ h^{2,2}=7$.
\subsubsection*{Description} $X=\Bl_{\Bl_{pt}\PP^2}Y$, with $Y$ the fourfold \cref{F.3.48}.
\subsubsection*{Identification} Let $Y$ be $\mZ(\mQ_{\PP^2_2}\boxtimes\mU^{\vee}_{\Gr(2,5)}\oplus\mU^{\vee}_{\Gr(2,5)}(1,0,0))\subset\PP^2_1\times\PP^2_2\times\Gr(2,5)$. Note that $Y$ is the fourfold \cref{F.3.48}. Hence $X=\mZ(\mQ_{\Gr(2,5)}(1,0,0,0))\subset\PP^3\times Y$, so it is birational to $Y$ via the projection to the latter. Using \cref{thm:ddl,thm:en} the fiber of $\pi$ degenerates to a $\PP^1$ along a surface $S$ that can be described as $\mZ(\mU^{\vee}_{\Gr(2,5)})\subset Y$. Moreover, $e(S)=4$, $K_S=\of(-1,-1,-1)_{|S}$, $(-K_S)^2=8$ then $S$ is a $\DP_8$. If we project to $\PP^2_2$ we get that the generic fiber is one point. Thus $S$ is birational to $\PP^2$, so it is $\Bl_{pt}\PP^2$.
\end{fano} \vspace{5mm}

\begin{fano}
\fanoid{$K_{104}$}
\label[mystyle]{F.4.7}
$X=\mZ(\mQ_{\PP^2_3}\boxtimes\mU^{\vee}_{\Gr(2,5)}\oplus\mU^{\vee}_{\Gr(2,5)}(1,0,0)\oplus\mU^{\vee}_{\Gr(2,5)}(0,1,0))\subset\PP^2_1\times\PP^2_2\times\PP^2_3\times\Gr(2,5)$
\subsubsection*{Invariants} $h^0(-K)=66, \ (-K)^4=305,\ \chi(T_X)=6,\ h^{1,1}=4, \ h^{3,1}=0,  \ h^{2,2}=8$.
\subsubsection*{Description}  $X=\Bl_{\DP_7}Y$,  with $Y$ the fourfold \cref{F.3.48}.
\subsubsection*{Identification} Note that the argument is the exact same of the \cref{F.4.6}, but the exceptional locus is a surface $S$ that can be described as $\mZ(\mQ_{\Gr(2,5)}(1,0,0,0))\subset\PP^1\times Y$. This gives $e(S)=4$, $K_S=\of(1,-1,-1,-1)_{|S}$, $(-K_S)^2=7$ and $h^0(S,K_S^{\otimes 2})=0$, then $S$ is a $\DP_7$.
\end{fano} \vspace{5mm}

\begin{fano}
\fanoid{$K_{201}$}
\label[mystyle]{F.4.8}
$X=\mZ(\mQ_{\Gr(2,4)}(1,0,0,0)\oplus\mQ_{\Gr(2,4)}(0,1,0,0)\oplus\mU^{\vee}_{\Gr(2,4)}(0,0,1,0))\subset\PP^2_1\times\PP^2_2\times\PP^2_3\times\Gr(2,4)$
\subsubsection*{Invariants} $h^0(-K)=61, \ (-K)^4=278,\ \chi(T_X)=6,\ h^{1,1}=4, \ h^{3,1}=0,  \ h^{2,2}=6$.
\subsubsection*{Description} $X=\Bl_{\PP^2}$, with $Y$ the fourfold \cref{F.3.2}.
\subsubsection*{Identification} Recall that $Y=\mZ((\mQ_{\Gr(2,4)}(1,0,0)\oplus\mQ_{\Gr(2,4)}(0,1,0))\subset\PP^2_1\times\PP^2_2\times\Gr(2,4)$ is the fourfold \cref{F.3.2}, hence $X=\mZ(\mU^{\vee}_{\Gr(2,4)}(1,0,0,0))\subset\PP^2\times Y$, so it is birational to $Y$ via The projection $\pi$ to the latter. Using \cref{thm:ddl,thm:en} we get that the fiber of $\pi$ degenerates to a $\PP^1$ over a surface $S$ that can be described as $\mZ(\mQ_{\Gr(2,4)})\subset Y$. Moreover, $e(S)=3$, $K_S=\of(-1,-1,-1)_{|S}$, $(-K_S)^2=9$, then, $S$ is $\PP^2$.\newline
Note that since $Y$ is rational, so it is $X$.
\end{fano} \vspace{5mm}

\begin{fano}
\fanoid{$K_{204}$}
\label[mystyle]{F.4.9}
$X=\mZ(\mQ_{\PP^2_2}\boxtimes\mU_{\Gr(2,5)}^{\vee}\oplus\mU^{\vee}_{\Gr(2,5)}(0,1,0,0)\oplus\of(1,0,0,1))\subset\PP^1\times\PP^2_1\times\PP^2_2\times\Gr(2,5)$
\subsubsection*{Invariants} $h^0(-K)=57, \ (-K)^4=257,\ \chi(T_X)=2,\ h^{1,1}=4, \ h^{3,1}=0,  \ h^{2,2}=9$.
\subsubsection*{Description} $X=\Bl_{\DP_6}Y$, with $Y$ the fourfold \cref{F.3.48}.
\subsubsection*{Identification} Recall that $Y=\mZ((\mQ_{\PP^2_2}\boxtimes\mU_{\Gr(2,5)}^{\vee}\oplus\mU^{\vee}_{\Gr(2,5)}(1,0,0))\subset\PP^2_1\times\PP^2_2\times\Gr(2,5)$ is, by the same argument in \cref{F.4.6}, the Fano fourfold \red{17} in \cite[Table 3]{kalashnikov}, hence $X=\mZ(\of(1,0,0,1))\subset\PP^1\times Y$. Using \cref{lm:blowPPtX} $X=\Bl_{S}Y$, with $S=\mZ(\of(0,0,1)^{\oplus 2})\subset Y$, in particular $e(S)=6$, $K_S=\of(-1,-1,0)_{|S}$, $(-K_S)^2=6$ and $h^0(S,K_S^{\otimes 2})=0$, so $S$ is a $\DP_6$.
\end{fano} \vspace{5mm}

\begin{fano}
\fanoid{$K_{227}$}
\label[mystyle]{F.4.10}
$X=\mZ(\mQ_{\Gr(2,4)}(0,0,1,0)\oplus\mQ_{\Gr(2,4)}(0,1,0,0)\oplus\of(1,0,0,1))\subset\PP^1\times\PP^2_1\times\PP^2_2\times\Gr(2,4)$
\subsubsection*{Invariants} $h^0(-K)=53, \ (-K)^4=236,\ \chi(T_X)=1,\ h^{1,1}=4, \ h^{3,1}=0,  \ h^{2,2}=9$.
\subsubsection*{Description} $X=\Bl_{\DP_6}$, with $Y$ the fourfold \cref{F.3.2}.
\subsubsection*{Identification} Recall that $Y=\mZ((\mQ_{\Gr(2,4)}(1,0,0)\oplus\mQ_{\Gr(2,4)}(0,1,0))\subset\PP^2_1\times\PP^2_2\times\Gr(2,4)$ is the fourfold \cref{F.3.2}, hence $X=\mZ(\of(1,0,0,1))\subset\PP^1\times Y$. Using \cref{lm:blowPPtX} $X=\Bl_{S}Y$, with $S=\mZ(\of(0,0,1)^{\oplus 2})\subset Y$, in particular $e(S)=6$, $K_S=\of(-1,-1,0)_{|S}$, $(-K_S)^2=6$ and $h^0(S,K_S^{\otimes 2})=0$, so $S$ is a $\DP_4$.\newline
Note that since $Y$ is rational, so it is $X$.
\end{fano} \vspace{5mm}

\begin{fano}
\fanoid{$K_{297}$}
\label[mystyle]{F.4.11}
$X=\mZ(\mQ_{\Gr(2,4)}(0,0,1,0)\oplus\mQ_{\Gr(2,4)}(0,1,0,0)\oplus\of(1,0,1,0)\oplus\of(0,0,0,1))\subset\PP^1\times\PP^2\times\PP^3\times\Gr(2,4)$
\subsubsection*{Invariants} $h^0(-K)=55, \ (-K)^4=246,\ \chi(T_X)=3,\ h^{1,1}=4, \ h^{3,1}=0,  \ h^{2,2}=7$.
\subsubsection*{Description} $X=\Bl_{\DP_7}Y$, with $Y$ the fourfold \cref{F.3.72}. 
\subsubsection*{Identification} Recall that $Y=\mZ(\mQ_{\Gr(2,4)}(0,1,0)\oplus\mQ_{\Gr(2,4)}(1,0,0)\oplus\of(0,0,1))\subset\PP^2\times\PP^3\times\Gr(2,4)$ is the fourfold \cref{F.3.2}, hence $X=\mZ(\of(1,0,1,0))\subset\PP^1\times Y$.  Using \cref{lm:blowPPtX} $X=\Bl_{S}Y$, with $S=\mZ(\of(0,1,0)^{\oplus 2})\subset Y$, in particular $e(S)=5$, $K_S=\of(-1,0,-1)_{|S}$, $(-K_S)^2=7$ and $h^0(S,K_S^{\otimes 2})=0$, so $S$ is a $\DP_7$.
\end{fano} \vspace{5mm}

\begin{fano}
\fanoid{$K_{403}$}
\label[mystyle]{F.4.12}
$X=\mZ(\mQ_{\Gr(2,4)}(0,0,1,0)\oplus\of(1,0,1,0)\oplus\of(0,1,0,1)\oplus\of(0,0,0,1))\subset\PP^1_1\times\PP^1_2\times\PP^3\times\Gr(2,4)$
\subsubsection*{Invariants} $h^0(-K)=47, \ (-K)^4=204,\ \chi(T_X)=0,\ h^{1,1}=4, \ h^{3,1}=0,  \ h^{2,2}=8$.
\subsubsection*{Description} $X=\Bl_{\DP_6}Y$, with $Y$ the fourfold \cref{F.3.18}.
\subsubsection*{Identification} Recall that $Y=\mZ(\mQ_{\Gr(2,4)}(0,1,0)\oplus\of(1,0,1)\oplus\of(0,0,1))\subset\PP^1_2\times\PP^3\times\Gr(2,4)$ is the fourfold \cref{F.3.2}, hence $X=\mZ(\of(1,0,1,0))\subset\PP^1\times Y$. Using \cref{lm:blowPPtX} $X=\Bl_{S}Y$, with $S=\mZ(\of(0,1,0)^{\oplus 2})\subset Y$, in particular $e(S)=6$, $K_S=\of(-1,0,-1)_{|S}$, $(-K_S)^2=6$ and $h^0(S,K_S^{\otimes 2})=0$, so, by classification, $S$ is a $\DP_6$.
\end{fano} \vspace{5mm}

\begin{fano}
\fanoid{$K_{482}$}
\label[mystyle]{F.4.13}
$X=\mZ(\mQ_{\PP^2_1}(0,0,1,1)\oplus\of(1,1,0,0))\subset\PP^1\times\PP^2_1\times\PP^2_2\times\PP^2_3$
\subsubsection*{Invariants} $h^0(-K)=39, \ (-K)^4=162,\ \chi(T_X)=-4,\ h^{1,1}=4,\ h^{2,1}=1, \ h^{3,1}=0,  \ h^{2,2}=9$
\subsubsection*{Description} $X=\Bl_{\DP_6}Y$, with $Y$ the fourfold \cref{F.3.38}.
\subsubsection*{Identification} Recall that $Y=\mZ(\mQ_{\PP^2_1}(0,1,1))\subset\PP^2\times\PP^2_2\times\PP2_3$ is the Fano fourfold \cref{F.3.38}, hence $X=\mZ(\of(1,1,0,0))\subset\PP^1\times Y$. Using \cref{lm:blowPPtX} we get that $X=\Bl_{S}Y$, with $S=\mZ(\of(1,0,0)^{\oplus 2})\subset Y$. Note that $\mZ(\of(1,0,0)^{\oplus 2})\subset Y$ is $\mZ(\of(1,1))\subset\PP^2\times\PP^2$, hence it is a $\DP_6$.
\end{fano} \vspace{5mm}


\section{Summary Table}\label{sec:tab}

In the table, we adopt the following notation:
\begin{itemize}
    \item $K_{n}$ is the family of Fano fourfolds in the $n$-th entry of \cite[Table 3]{kalashnikov};
    \item  $X_n^d$ are the $d$-folds obtained as complete intersections of linear sections in Grassmannians $\Gr(2,n)$;
    \item Str.\ p.\ bdl.\ on $T$, with $T$ a threefold, is a fourfold obtained as a generic $\PP^1$-bundle, whose fiber jumps to a projective space of higher dimension along a certain number of points;
    \item Conic bdl.\ on $T$, is a conic bundle over a threefold $T$;
    \item $F_{\rho-n}$ is the Fano threefold $\rho-n$ in \cite{fanography};
    \item $Q_n$  is the $n$-th dimensional quadrics;
    \item $\DP_n$ is a Del Pezzo surface of volume $n$;
    \item $E$ is an elliptic curve;
    \item $\mathcal{E}$ is an elliptic fibration of dimension two;
    \item Small Res., is a Fano fourfold obtained as a small resolution of a singular fourfold.
\end{itemize}

\begin{landscape}
    
\begin{centering}
\begin{scriptsize}
\setlength\tabcolsep{4pt}
\begin{longtable}{cccccccccccc}

\caption{Fano fourfolds.}\label{tab:4folds}\\
\toprule
X&
Id&
$h^0(-K)$&
$(-K)^4$&
$h^{1,1}$&
$h^{2,1}$& 
$h^{3,1}$&
$ h^{2,2}$&
Amb. & 
$\mathcal{F}$ & 
Desc.&
Rat.\\
\cmidrule(lr){1-1}\cmidrule(lr){2-2}\cmidrule(lr){3-3} \cmidrule(lr){4-4} \cmidrule(lr){5-5} \cmidrule(lr){6-6} \cmidrule(lr){7-7}\cmidrule(lr){8-8}\cmidrule(lr){9-9} \cmidrule(lr){10-10} \cmidrule(lr){11-11} \cmidrule(lr){12-12} 
\endfirsthead
\multicolumn{5}{l}{\vspace{-0.25em}\scriptsize\emph{\tablename\ \thetable{} continued from previous page}}\\
\toprule
X&
Id&
$h^0(-K)$&
$(-K)^4$&
$h^{1,1}$&
$h^{2,1}$& 
$h^{3,1}$&
$ h^{2,2}$&
Amb. & 
$\mathcal{F}$ & 
Desc. &
Rat.\\
\cmidrule(lr){1-1}\cmidrule(lr){2-2}\cmidrule(lr){3-3} \cmidrule(lr){4-4} \cmidrule(lr){5-5} \cmidrule(lr){6-6} \cmidrule(lr){7-7}\cmidrule(lr){8-8}\cmidrule(lr){9-9} \cmidrule(lr){10-10} \cmidrule(lr){11-11} \cmidrule(lr){12-12}
\endhead
\multicolumn{5}{r}{\scriptsize\emph{Continued on next page}}\\
\endfoot
\bottomrule
\endlastfoot

\cref{F.1.1}& $K_{742}$ & 9 & 15 & 1 & 0 & 41 & 232 &\ $\Gr(2,5)$\ & $\of(1)\oplus\of(3)$ &\cite[\red{b1}]{kuchle} & ?\\

\rowcolor{lavender}
\cref{F.1.2}& $K_{740}$ & 10 & 20 & 1 & 0 & 20 & 132 &\ $\Gr(2,5)$\ & $\of(2)^{\oplus 2}$ &\cite[\red{b2}]{kuchle} & ?\\

\cref{F.1.3}& $K_{715}$ & 15 & 42 & 1 & 0 & 6 & 57 &\ $\Gr(2,6)$\ & $\mQ(1)$ &\cite[\red{b3}]{kuchle} & ?\\

\rowcolor{lavender}
\cref{F.1.4}& $K_{730}$ & 13 & 33  & 1 & 0 & 8 & 70 &\ $\Gr(2,6)$\ & $\mU^{\vee}(1)\of(1)^{\oplus 2}$ &\cite[\red{b5}]{kuchle} & ?\\

\cref{F.1.5}& $K_{734}$ & 12 & 28 & 1 & 0 & 15 & 106 &\ $\Gr(2,6)$\ & $\of(1)^{\oplus 3}\oplus\of(2)$ &\cite[\red{b6}]{kuchle} & ?\\ 

\rowcolor{lavender}
\cref{F.1.6}& $K_{24}$ & 85 & 405 & 1 & 0 & 0 & 2 &\ $\Gr(2,5)$\ & $\of(1)^{\oplus 2}$ & $X_5^4$ & +\\

\cref{F.1.7}& $K_{365}$ & 51 & 224 & 1 & 0 & 0 & 8 &\ $\Gr(2,6)$\ & $\of(1)^{\oplus 4}$ & $X_6^4$ & +\\

\rowcolor{lavender}
\cref{F.1.8}& $K_{433}$ & 45 & 192 & 1 & 0 & 0 & 12 &\ $\Gr(2,5)$\ & $\mU^{\vee}(1)$ &\cite[\red{$V_{12}^4$}]{CGKS}& ?\\

\cref{F.1.9}& $K_{496}$ & 39 & 160 & 1 & 0 & 1 & 22 &\ $\Gr(2,5)$\ & $\of(1)\oplus\of(2)$ &\cite[\red{$X_{10}$}]{BFMT} & ?\\

\rowcolor{lavender}
\cref{F.2.1}& $K_{508}$ & 39 & 160 & 2 & 0 & 0 & 7 & $\PP^2\times\Gr(2,5)$ & $\of(1,1)\oplus\of(0,1)^{\oplus 3}$ & Str.\ p.\ bdl.\ on $X_5^3$  &+\\

\cref{F.2.2}& $K_{509}$ & 33 & 130 & 2 & 0 & 1 & 23 & $\Gr(2,4)\times\Gr(2,5)$ & $\mQ_{\Gr(2,4)}\boxtimes\mU^{\vee}_{\Gr(2,5)}\oplus\of(0,1)\oplus\of(1,1)$ &\cite[\red{GM-22}]{BFMT} &+\\

\rowcolor{lavender}
\cref{F.2.3}& $K_{517}$ & 33 & 131 & 2 & 0 & 0 & 12 & $\PP^5\times\Gr(2,5)$ & $\mQ_{\Gr(2,5)}(1,0)\oplus\mU^{\vee}_{\Gr(2,5)}(1,0)\oplus\of(0,1)^{\oplus 2}$ & $\Bl_{\Bl_{9pts}\PP^2}X_5^4$ &+\\

\cref{F.2.4}& $K_{527}$ & 30 & 114 & 2 & 0 & 0 & 17 & $\PP^2\times\Gr(2,7)$ & $\mQ_{\PP^2}\boxtimes\mU^{\vee}_{\Gr(2,7)}\oplus\of(0,1)^{\oplus 4}$ & Small res. &+\\

\rowcolor{lavender}
\cref{F.2.5}& $K_{558}$ & 30 & 114 & 2 & 0 & 1 & 24 & $\Gr(2,4)\times\Gr(2,5$ & $\mQ_{\Gr(2,4)}\boxtimes\mU^{\vee}_{\Gr(2,5)}\oplus\of(1,0)\oplus\of(0,2)$ &\cite[\red{GM-21}]{BFMT}&?\\

\cref{F.2.6}& $K_{559}$ & 29 & 110 & 2 & 0 & 1 & 22 & $\PP^4\times\Gr(2,5)$ & $\mQ_{\Gr(2,5)}(1,0)\oplus\of(0,1)^{\oplus 2}\oplus\of(1,1)$ &\cite[\red{K3-32}]{BFMT}&+\\

\rowcolor{lavender}
\cref{F.2.7}& $K_{566}$ & 26 & 94 & 2 & 0 & 1 & 28 & $\PP^3\times\Gr(2,5)$ & $\mQ_{\Gr(2,5)}(1,0)\oplus\of(0,1)\oplus\of(0,2)$ &\cite[\red{GM-20}]{BFMT}&?\\

\cref{F.2.8}& $K_{547}$ & 30 & 116 & 2 & 0 & 0 & 15 & $\Gr(2,4)\times\PP^6$ & $\mQ_{\Gr(2,4)}(0,1)\oplus\mU^{\vee}_{\Gr(2,4)}(0,1)^{\oplus 2}$ & $\Bl_{\Bl_{12pts}\PP^2}\Gr(2,4)$&+\\

\rowcolor{lavender}
\cref{F.2.9}& $K_{549}$ & 32 & 126 & 2 & 0 & 0 & 10 & $\PP^5\times\Gr(2,5)$ & $\mQ_{\Gr(2,5)}(1,0)\oplus\of(0,1)^{\oplus 3}\oplus\of(2,0)$ & Conic bdl.\ on $X_5^3$ &+\\

\cref{F.2.10}& $K_{552}$ & 30 & 116 & 2 & 0 & 0 & 13 & $\PP^4\times\Gr(2,5)$ & $\of(0,1)^{\oplus 2}\oplus\mU^{\vee}_{\Gr(2,5)}(1,0)^{\oplus 2}$ & $\Bl_{\Bl_{10pts}\PP^2}X_5^4$&+\\

\rowcolor{lavender}
\cref{F.2.11}& $K_{25}$ & 70 & 325 & 2 & 0 & 0 & 4 & $\Gr(2,4) \times \Gr(2,5)$ & $\mQ_{\Gr(2,4)} \boxtimes \mU^{\vee}_{\Gr(2,5)} \oplus \of(0,1)^{\oplus 2}$ & $\Bl_{\PP^1}X_5^4$&+\\

\cref{F.2.12}& $K_{554}$ & 24 & 90 & 2 & 0 & 0 & 12 & $\PP^3 \times \Gr(2,4)$ & $\of(1)\boxtimes Sym^2\mU^{\vee}_{\Gr(2,4)}$ & \cite{Kuz19}&+\\

\rowcolor{lavender}
\cref{F.2.13}& $K_{582}$ & 30 & 116 & 2 & 0 & 0 & 10 & $\PP^4\times\Gr(3,5)$ & $\mQ_{\Gr(3,5)}(1,0)\oplus \of(0,1)^{\oplus 3}\oplus \of(2,0)$ & Double conic bdl.&+\\

\cref{F.2.14}& $K_{583}$ & 28 & 106 & 2 & 0 & 0 & 15 & $\Gr_1(2,4) \times \Gr_2(2,4)$ & $\mU^{\vee}_{\Gr_1(2,4)}(0,1)\oplus\mU^{\vee}_{\Gr_2(2,4)}(1,0)$ & Small res. &+\\

\rowcolor{lavender}
\cref{F.2.15}& $K_{589}$ & 24 & 85 & 2 & 2 & 0 & 22 & $\PP^2 \times \Gr(2,5)$ & $\mQ_{\PP^2}(0,1)^{\oplus 2}$ & Small res. &+\\

\cref{F.2.16}& $K_{579}$ & 27 & 101 & 2 & 0 & 1 & 23 & $\PP^5\times\Gr(2,4)$ & $\mQ_{\Gr(2,4)}(1,0)^{\oplus 2}\oplus\of(1,1)$ &\cite[\red{C-9}]{BFMT}&+\\

\rowcolor{lavender}
\cref{F.2.17}& $K_{597}$ & 27 & 101 & 2 & 0 & 1 & 24 & $\PP^5\times\Gr(2,4)$ & $\mQ_{\Gr(2,4)}(1,0)\oplus\mU^{\vee}_{\Gr(2,4)}(1,0)\oplus\of(1,1)$ &\cite[\red{K3-33}]{BFMT}&+\\

\cref{F.2.18}& $K_{600}$ & 27 & 100 & 2 & 0 & 0 & 18 & $\PP^6 \times \Gr(2,4)$ & $\mU^{\vee}_{\Gr(2,4)}(1,0)\oplus\mQ_{\Gr(2,4)}(1,0)\oplus\of(0,1) \oplus\of(2,0)$ & $\Bl_{\Bl_{9pts}\PP^2}Q_5\cap Q_5'$&?\\

\rowcolor{lavender}
\cref{F.2.19}& $K_{603}$ & 27 & 100 & 2 & 0 & 0 & 16 & $\PP^2 \times \Gr(2,6)$ & $\mU^{\vee}_{\Gr(2,6)}(1,0)\oplus\of(0,1)^{\oplus 4}$ & $\Bl_{\DP_2}X_6^4$&+\\

\cref{F.2.20}& $K_{604}$ & 26 & 95 & 2 & 0 & 1 & 23 & $\PP^3\times\Gr(3,5)$ & $\mQ_{\Gr(3,5)}(1,0)\oplus\of(1,1)\oplus\of(0,1)^{\oplus 2}$ &\cite[\red{K3-34}]{BFMT}&+\\

\rowcolor{lavender}
\cref{F.2.21}& $K_{29}$ & 60 & 273 & 2 & 0 & 0 & 3 & $\PP^4\times\Gr(2,4)$ & $\mQ_{\Gr(2,4)}(1,0)^{\oplus 2}$ & $\Bl_{\PP^2}\Gr(2,4)$&+\\

\cref{F.2.22}& $K_{612}$ & 30 & 113 & 2 & 5 & 0 & 3 & $\PP^2\times\PP^4$ & $\mQ(0,2)$ &$\Bl_{Q_2\cap Q_2'\cap Q_2''}\PP^4$&+\\

\rowcolor{lavender}
\cref{F.2.23}& $K_{614}$ & 24 & 86 & 2 & 0 & 1 & 26 & $\PP^4\times\PP^5$ & $\of(1,1)\oplus\bigwedge^3\mQ_{\PP^4}(0,1)$ &\cite[\red{C-7}]{BFMT}&+\\

\cref{F.2.24}& $K_{616}$ & 26 & 96 & 2 & 2 & 0 & 10 & $\PP^4 \times \Gr(2,4)$ & $\mQ_{\Gr(2,4)}(1,0)\oplus\of(0,2)\oplus\of(2,0)$ & Conic bdl.\ on $\mZ(\of(2))\subset\Gr(2,4)$ &?\\

\rowcolor{lavender}
\cref{F.2.25}& $K_{617}$ & 25 & 90 & 2 & 0 & 0 & 19 & $\PP^2 \times \Gr(2,5)$ & $\mU^{\vee}_{\Gr(2,5)}(0,1) \oplus \mU^{\vee}_{\Gr(2,5)}(1,0)$ & $\Bl_{\DP_3}K_{433}$&+\\

\cref{F.2.26}& $K_{32}$ & 51 & 225 & 2 & 1 & 0 & 3 & $\PP^4_1\times\PP^4_2$ & $\mQ_{\PP^4_1}^{\vee}(1,1)$ & $\Bl_{S}\PP^4$&+\\

\rowcolor{lavender}
\cref{F.2.27}& $K_{109}$ & 60 & 272 & 2 & 0 & 0 & 2 & $\PP^4\times\Gr(2,5)$ & $\mQ_{\Gr(2,5)}(1,0)\oplus\of(0,1)^{\oplus 3}$ & $\Bl_{\nu_2(\PP^2)}\PP^4$&+\\

\cref{F.2.28}& $K_{101}$ & 75 & 352 & 2 & 0 & 0 & 3 & $\Gr(2,4) \times \Gr(2,5)$ & $\mQ_{\Gr(2,4)} \boxtimes \mU^{\vee}_{\Gr(2,5)} \oplus \of(0,1) \oplus \of(1,0)$ & $\Bl_{\PP^2}X_5^4$&+\\

\rowcolor{lavender}
\cref{F.2.29}& $K_{102}$ & 69 & 320 & 2 & 0 & 0 & 4 & $\PP^3 \times \Gr(2,5)$ & $\mQ_{\Gr(2,5)}(1,0) \oplus \of(0,1)^{\oplus 2}$ & $\Bl_{Q_2}X_5^4$&+\\

\cref{F.2.30}& $K_{625}$ & 24 & 85 & 2 & 0 & 2 & 32 & $\PP^4 \times \Gr(2,4)$ & $\of(1,1)^{\oplus 2} \oplus \mU^{\vee}_{\Gr(2,4)}(1,0)$ & $\Bl_{\mathcal{E}}\Gr(2,4)$&+\\

\rowcolor{lavender}
\cref{F.2.31}& $K_{626}$ & 25 & 90 & 2 & 0 & 1 & 24 & $\PP^5\times\Gr(2,4)$ & $\mQ_{\Gr(2,4)}(1,0)\oplus\of(0,1)\oplus\of(1,1)\oplus\of(2,0)$ & \cite[\red{K3-35}]{BFMT}&+\\

\cref{F.2.32}& $K_{628}$ & 23 & 80 & 2 & 0 & 1 & 28 & $\PP^3\times\PP^6$ & $\bigwedge^2\mQ_{\PP^3}(0,1)\oplus\of(1,1)\oplus\of(0,2)$ & \cite[\red{K3-31}]{BFMT}&+\\

\rowcolor{lavender}
\cref{F.2.33}& $K_{629}$ & 23 & 80 & 2 & 0 & 1 & 27 & $\PP^2\times\Gr(2,5)$ & $\mQ_{\PP^2}(0,1)\oplus\of(1,1)\oplus\of(0,1)$ &\cite[\red{GM-19}]{BFMT}&+\\

\cref{F.2.34}& $K_{115}$ & 60 & 272 & 2 & 0 & 0 & 4 & $\PP^4 \times \Gr(2,4)$ & $\mU^{\vee}_{\Gr(2,4)}(1,0)\oplus\mQ_{\Gr(2,4)}(1,0)$ & $\Bl_{Q_2}\Gr(2,4)$&+\\

\rowcolor{lavender}
\cref{F.2.35}& $K_{126}$ & 51 & 224 & 2 & 0 & 0 & 7 & $\PP^4\times\Gr(2,4)$ & $\mQ_{\Gr(2,4)}(1,0)\oplus\of(1,1)\oplus\of(0,1)$ & Str.\ p.\ bdl.\ on $Q_3$&+\\

\cref{F.2.36}& $K_{124}$ & 54 & 240 & 2 & 0 & 0 & 7 & $\PP^3 \times \Gr(2,4)$ & $\mU^{\vee}_{\Gr(2,4)}(1,0)\oplus\of(1,1)$ & $\Bl_{\DP_5}\Gr(2,4)$&+\\

\rowcolor{lavender}
\cref{F.2.37}& $K_{642}$ & 24 & 82 & 2 & 0 & 3 & 42 & $\PP^2\times\Gr(2,6)$ & $\mQ_{\PP^2}\boxtimes\mU^{\vee}_{\Gr(2,6)}\oplus\of(0,1)\oplus\of(0,2)$ & Small res. &?\\

\cref{F.2.38}& $K_{653}$ & 22 & 75 & 2 & 0 & 4 & 32 & $\PP^2 \times \Gr(2,5)$ & $\of(0,1)^{\oplus 2} \oplus \of(1,1)^{\oplus 2}$ & $\Bl_{\mathcal{E}}X_5^4$&+\\

\rowcolor{lavender}
\cref{F.2.39}& $K_{663}$ & 30 & 112 & 2 & 5 & 0 & 2 & $\PP^2 \times \Gr(2,5)$ & $\of(2,0)\oplus\mQ_{\Gr(2,5)}(0,1)$ & $Q_1\times F_{1-7}$&?\\

\cref{F.2.40}& $K_{664}$ & 21 & 69 & 2 & 0 & 5 & 52 & $\PP^4\times\Gr(2,4)$ & $\mQ_{\Gr(2,4)}(1,0)\oplus\of(0,1)\oplus(2,1)$ & Conic bdl.\ on $Q_3$ &+\\

\rowcolor{lavender}
\cref{F.2.41}& $K_{665}$ & 20 & 64 & 2 & 0 & 6 & 59 & $\PP^3 \times \Gr(2,4)$ & $\mQ_{\Gr(2,4)}(1,0) \oplus 
\of(1,2)$ & $\Bl_{S}\Gr(2,4)$&+\\

\cref{F.2.42}& $K_{669}$ & 21 & 70 & 2 & 0 & 3 & 37 & $\PP^2 \times \Gr(2,5)$ & $\of(2,1) \oplus \of(0,1)^{\oplus 3}$ & Conic bdl.\ on $X_5^3$ &+\\

\rowcolor{lavender}
\cref{F.2.43}& $K_{136}$ & 81 & 384 & 2 & 0 & 0 & 2 & $\PP^3\times\Gr(2,4)$ & $\mU^{\vee}_{\Gr(2,4)}(1,0)\oplus\of(0,1)$ & $\PP_{Q_3}(\mU_{\Gr(2,4)|Q_3})$&+\\

\cref{F.2.44}& $K_{723}$ & 16 & 45 & 2 & 0 & 8 & 72 & $\PP^1 \times \Gr(2,5)$ & $\of(0,1)^{\oplus 2} \oplus \of(1,2)$ & $\Bl_{Q_3\cap Q_3'}X_5^4$&+\\

\rowcolor{lavender}
\cref{F.2.45}& $K_{219}$ & 51 & 244 & 2 & 0 & 0 & 3 & $\PP^3\times\Gr(2,5)$ & $\mU^{\vee}_{\Gr(2,5)}(1,0)\oplus\of(0,1)^{\oplus 3}$ & Conic bdl.\ on $\PP^3$ &?\\

\cref{F.2.46}& $K_{195}$ & 63 & 288 & 2 & 0 & 0 & 4 & $\PP^2\times\Gr(2,5)$ & $\mU_{\Gr(2,5)}^{\vee}(1,0)\oplus\of(0,1)^{\oplus 2}$ & $\Bl_{Q_2}X_5^4$&+\\

\rowcolor{lavender}
\cref{F.2.47}& $K_{230}$ & 54 & 240 & 2 & 0 & 0 & 2 & $\PP^4\times\Gr(2,4)$ & $\mQ_{\Gr(2,4)}(1,0)\oplus\of(0,1)\oplus\of(2,0)$ & Conic bdl.\ on $Q_3$ &?\\

\cref{F.2.48}& $K_{238}$ & 45 & 192 & 2 & 2 & 0 & 2 & $\PP^3\times\Gr(2,4)$ & $\mU_{\Gr(2,4)}^{\vee}(1,0)\oplus\of(0,2)$ & $\PP_{F_{1-4}}(\mU_{\Gr(2,4)|F_{1-4}})$&+\\

\rowcolor{lavender}
\cref{F.2.49}& $K_{679}$ & 18 & 56 & 2 & 0 & 4 & 47 & $\PP^2 \times \Gr(2,4)$ & $\mU_{\Gr(2,4)}^{\vee}(1,1)$ & $\Bl_{S}\Gr(2,4)$&+\\

\cref{F.2.50}& $K_{253}$ & 42 & 177 & 2 & 0 & 0 & 11 & $\PP^4\times\Gr(2,8)$ & $\mQ_{\PP^4}\boxtimes\mU^{\vee}_{\Gr(2,8)}\oplus\mU^{\vee}_{\Gr(2,8)}(1,0)\oplus\of(0,1)^{\oplus 2}$ & Small res. &+\\

\rowcolor{lavender}
\cref{F.2.51}& $K_{142}$ & 75 & 352 & 2 & 0 & 0 & 4 & $\PP^2\times\Gr(2,6)$ & $\mQ_{\PP^2}\boxtimes\mU_{\Gr(2,6)}^{\vee}\oplus\of(0,1)^{\oplus 2}$ & Small res. &?\\

\cref{F.2.52}& $K_{686}$ & 27 & 96 & 2 & 7 & 0 & 2 & $\PP^2 \times \Gr(2,5)$ & $\mU^{\vee}_{\Gr(2,5)}(0,1)\oplus\of(0,1)\oplus\of(2,0)$ & $Q_1\times F_{1-6}$&+\\

\rowcolor{lavender}
\cref{F.2.53}& $K_{689}$ & 17 & 51 & 2 & 0 & 2 & 41 & $\PP^2\times\PP^5$ & $\mQ_{\PP^2}(0,2)\oplus\of(1,1)$ &\cite[\red{A-67}]{BFMT} &?\\

\cref{F.2.54}& $K_{699}$ & 19 & 60 & 2 & 0 & 1 & 32 & $\PP^1\times\Gr(2,5)$ & $\mU^{\vee}_{\Gr(2,5)}(0,1)\oplus\of(1,1)$ &\cite[\red{K3-24}]{BFMT}&+\\

\rowcolor{lavender}
\cref{F.2.55}& $K_{288}$ & 54 & 240 & 2 & 0 & 0 & 7 & $\PP^1\times\Gr(2,5)$ & $\of(1,1)\oplus(0,1)^{\oplus 2}$ & $\Bl_{\DP_5}X_5^4$&+\\

\cref{F.2.56}& $K_{464}$ & 39 & 162 & 2 & 0 & 0 & 8 & $\Gr(2,5)\times\Gr(3,5)$ & $\mU^{\vee}_{\Gr(2,5)}\boxtimes\mQ_{\Gr(3,5)}\oplus\of(1,0)^{\oplus 2}\oplus\of(0,1)^{\oplus 2}$ & $\Bl_{\DP_6}X_5^4$&+\\

\rowcolor{lavender}
\cref{F.2.57}& $K_{350}$ & 42 & 177 & 2 & 0 & 0 & 12 & $\Gr(2,4)\times\Gr(3,6)$ & $\mQ_{\Gr(2,4)}\boxtimes\mU^{\vee}_{\Gr(3,6)}\oplus\of(0,1)^{\oplus 3}$ & $\Bl_{\Bl_{9pts} \PP^2}\Gr(2,4)$&+\\

\cref{F.2.58}& $K_{361}$ & 33 & 129 & 2 & 0 & 1 & 23 & $\PP^3\times\Gr(2,7)$ & $\mQ_{\PP^3}\boxtimes\mU^{\vee}_{\Gr(2,7)}\oplus\of(0,1)^{\oplus 2}\oplus\of(1,1)$ & \cite[\red{R-62}]{BFMT}&?\\

\rowcolor{lavender}
\cref{F.2.59}& $K_{364}$ & 69 & 320 & 2 & 0 & 0 & 2 & $\PP^2\times\Gr(2,5)$ & $\of(2,0)\oplus\of(0,1)^{\oplus 3}$ & $Q_1\times F_{1-15}$&+\\

\cref{F.2.60}& $K_{374}$ & 45 & 193 & 2 & 0 & 0 & 15 & $\PP^6\times\Gr(2,9)$ & $\mQ_{\PP^6}\boxtimes\mU^{\vee}_{\Gr(2,9)}\oplus\mU^{\vee}_{\Gr(2,9)}(1,0)^{\oplus 2}$ & Small res. &?\\

\rowcolor{lavender}
\cref{F.2.61}& $K_{385}$ & 45 & 193 & 2 & 0 & 0 & 9 & $\Gr(2,5) \times \Gr(2,6)$ & $mQ_{\Gr(2,5)} \boxtimes \mU^{\vee}_{\Gr(2,6)}\oplus \of(1,0)^{\oplus 2} \oplus \of(0,1)^{\oplus 2}$ & $\Bl_{\PP^2}X_6^{4}$&+\\

\cref{F.2.62}& $K_{15}$ & 90 & 433 & 2 & 0 & 0 & 3 & $\PP^4\times\Gr(2,7)$ & $\mQ_{\PP^4}\boxtimes\mU^{\vee}_{\Gr(2,7)}\oplus\mU^{\vee}_{\Gr(2,7)}(1,0)$ & $\Bl_{Q_1}\PP^4$&+\\

\rowcolor{lavender}
\cref{F.2.63}& $K_{407}$ & 39 & 161 & 2 & 0 & 1 & 23 & $\PP^5\times\Gr(2,8)$ & $\mQ_{\PP^5}\boxtimes\mU^{\vee}_{\Gr(2,8)}\oplus\mU_{\Gr(2,5)^{\vee}(1,0)}\oplus\of(1,1)$ &\cite[\red{C-5}]{BFMT}&?\\

\cref{F.2.64}& $K_{473}$ & 36 & 146 & 2 & 0 & 0 & 14 & $\Gr(2,4)\times\Gr(2,5)$ & $\mQ_{\Gr(2,4)}\boxtimes\mU^{\vee}_{\Gr(2,5)}\oplus\mU^{\vee}_{\Gr(2,4)}(0,1)$ & $\Bl_{\PP^2}K_{433}$&?\\

\rowcolor{lavender}
\cref{F.2.65}& $K_{439}$ & 35 & 141 & 2 & 0 & 0 & 13 & $\PP^4 \times \Gr(2,6)$ & $\mQ_{\Gr(2,6)}(1,0)\oplus\of(0,1)^{\oplus 4}$ & $\Bl_{\DP_5}X_6^4$&+\\

\cref{F.2.66}& $K_{717}$ & 17 & 50 & 2 & 0 & 2 & 42 & $\PP^1 \times \Gr(2,5)$ & $\of(0,1) \oplus \of(0,2)  \oplus \of(1,1)$ &\cite[\red{A-69}]{BFMT}&?\\

\rowcolor{lavender}
\cref{F.2.67}& $K_{474}$ & 40 & 165 & 2 & 1 & 0 & 4 & $\PP^2\times\Gr(2,5)$ & $\mQ_{\PP^2}(0,1)\oplus\of(0,1)^{\oplus 2}$ & $\Bl_EX_5^4$&+\\

\cref{F.2.68}& $K_{483}$ & 36 & 146 & 2 & 0 & 0 & 10 & $\Gr(2,4)\times\Gr(2,6)$ & $\mQ_{\Gr(2,4)}\boxtimes\mU_{\Gr(2,6)}^{\vee}\oplus\of(1,0)\oplus\of(0,1)^{\oplus 3}$ & $\Bl_{Q_1}X_6^4$&+\\

\rowcolor{lavender}
\cref{F.2.69}& $K_{529}$ & 29 & 110 & 2 & 0 & 0 & 17 & $\PP^3 \times \Gr(2,5)$ & $\mQ_{\Gr(2,5)}(1,0) \oplus \mU^{\vee}_{\Gr(2,5)}(0,1)$ & $\Bl_{\DP_5}K_{433}$&?\\

\cref{F.2.70}& $K_{714}$ & 24 & 80 & 2 & 10 & 0 & 2 & $\PP^2\times\Gr(2,5)$ & $\of(2,0)\oplus\of(0,2)\oplus\of(0,1)^{\oplus 2}$ & $Q_1\times F_{1-5}$&?\\

\rowcolor{lavender}
\cref{F.2.71}& $K_{426}$ & 36 & 147 & 2 & 0 & 0 & 13 & $\Gr(2,4)\times\Gr(3,5)$ & $\mU^{\vee}_{\Gr(2,4)}\boxtimes\mQ_{\Gr(3,5)}\oplus\mU^{\vee}_{\Gr(2,4)}(0,1)$ & $\Bl_{\Bl_{10pts}}\Gr(2,4)$&+\\

\cref{F.2.72}& $K_{662}$ & 22 & 74 & 2 & 0 & 1 & 30 & $\PP^2\times\Gr(2,5)$ & $\mU^{\vee}(1,0)\oplus\of(0,1)\oplus\of(0,2)$ &\cite[\red{GM-18}]{BFMT}&?\\

\rowcolor{lavender}
\cref{F.2.73}& $K_{671}$ & 18 & 55 & 2 & 0 & 3 & 47 & $\PP^2\times\PP^6$ & $\mQ_{\PP^2}(0,1)\oplus\mQ_{\PP^2}(0,2)$ & Small res. &?\\

\cref{F.2.74}& $K_{562}$ & 27 & 99 & 2 & 0 & 1 & 25 & $\PP^2\times\Gr(2,6)$ & $\mQ_{\PP^2}\boxtimes\mU^{\vee}_{\Gr(2,6)}\oplus\mU^{\vee}_{\Gr(2,6)}(0,1)$ &\cite[\red{GM-20}]{BFMT}&?\\

\rowcolor{lavender}
\cref{F.3.1}& $K_{503}$ & 45 & 192 & 3 & 1 & 0 & 4 & $\PP^2\times\PP^3\times\Gr(2,4)$ & $\mQ_{\Gr(2,4)}(0,1,0)\oplus\of(0,1,1)\oplus\of(2,0,0)\oplus\of(0,0,1)$ & $Q_1\times F_{2-17}$&+\\

\cref{F.3.2}& $K_{20}$ & 76 & 358 & 3 & 0 & 0 & 5 & $\PP^2_1\times\PP^2_2\times\Gr(2,4)$ & $\mQ_{\Gr(2,4)}(1,0,0)\oplus\mQ_{\Gr(2,4)}(0,1,0)$ & $\Bl_{\DP_8}\Bl_{\PP^2}\Gr(2,4)$&+\\

\rowcolor{lavender}
\cref{F.3.3}& $K_{521}$ & 34 & 136 & 3 & 0 & 0 & 14 & $\PP^1\times\PP^3\times\Gr(2,4)$ & $\of(1,0,1)\oplus\of(0,1,1)\oplus\mQ_{\Gr(2,4)}(0,1,0)$ & $\Bl_{\DP_3}K_{124}$&+\\

\cref{F.3.4}& $K_{524}$ & 34 & 136 & 3 & 2 & 0 & 8 & $\PP^1\times\PP^3\times\Gr(2,4)$ & $\mQ_{\Gr(2,4)}(0,1,0)\oplus\of(0,0,2)\oplus\of(1,1,0)$ & $\Bl_{\DP_4}K_{238}$&+\\

\rowcolor{lavender}
\cref{F.3.5}& $K_{21}$ & 74 & 347 & 3 & 0 & 0 & 5 & $\PP^5\times\PP^4\times\Gr(2,7)$ & $\mQ_{\PP^4}\boxtimes\mU^{\vee}_{\Gr(2,7)}\oplus\mQ_{\Gr(2,7)}(1,0,0)\oplus\mU^{\vee}_{\Gr(2,7)}(0,1,0)$ & $\Bl_SK_{15}$&+\\

\cref{F.3.6}& $K_{526}$ & 30 & 116 & 3 & 0 & 0 & 17 & $\PP^2_1\times\PP^2_2\times\PP^3$ & $\mQ_{\PP^2_1}(0,1,1)\oplus\of(1,0,1)$ & $\Bl_{\Bl_{13pts}\PP^2}\PP^2\times\PP^2$ &+\\

\rowcolor{lavender}
\cref{F.3.7}& $K_{555}$ & 31 & 120 & 3 & 0 & 1 & 22 & $\PP^1\times\PP^3\times\Gr(2,4)$ & $\mQ_{\Gr(2,4)}(0,1,0)\oplus\of(0,0,1)\oplus\of(1,1,1)$ & \cite[\red{K3-48}]{BFMT}&+\\

\cref{F.3.8}& $K_{623}$ & 28 & 105 & 3 & 0 & 0 & 17 & $\PP^1\times\PP^1\times\Gr(2,5)$ & $\of(0,0,1)^{\oplus 2}\oplus\of(1,0,1)\oplus\of(0,1,1)$ & $\Bl_{\Bl_{9pts}\PP^2}K_{288}$&+\\

\rowcolor{lavender}
\cref{F.3.9}& $K_{294}$ & 54 & 241 & 3 & 0 & 0 & 6 & $\PP^2\times\Gr(2,5)\Gr(2,4)$ & $\mU^{\vee}_{\Gr(2,5)}(1,0,0)\oplus\mU^{\vee}_{\Gr(2,5)}\boxtimes\mQ_{\Gr(2,4)}\oplus\of(0,1,0)\oplus\of(0,0,1)$ & $\Bl_{\DP_7}K_{101}$&+\\

\cref{F.3.10}& $K_{637}$ & 27 & 100 & 3 & 0 & 1 & 22 & $\PP^1\times\PP^1\times\Gr(2,5)$ & $\of(0,0,1)^{\oplus 3}\oplus\of(1,1,1)$ &\cite[\red{K3-38}]{BFMT}&+\\

\rowcolor{lavender}
\cref{F.3.11}& $K_{158}$ & 60 & 274 & 3 & 0 & 0 & 8 & $\Gr(2,5)\times\Gr(2,6)$ & $\mQ_{\Gr(2,5)}\boxtimes\mU^{\vee}_{\Gr(2,6)}\oplus\mU^{\vee}_{\Gr(2,5)}\boxtimes\mU^{\vee}_{\Gr(2,6)}$ & Small res. &?\\

\cref{F.3.12}& $K_{398}$ & 39 & 164 & 3 & 0 & 0 & 12 & $\PP^2\times\Gr(2,7)$ & $\mQ_{\PP^2}\boxtimes\mU_{\Gr(2,7)}^{\vee}\oplus\of(0,1)\oplus\Sym^2\mU^{\vee}_{\Gr(2,7)}$ & Small res.&?\\

\rowcolor{lavender}
\cref{F.3.13}& $K_{431}$ & 36 & 148 & 3 & 0 & 0 & 12 & $\PP^2\times\Fl(1,3,4)$ & $\mR_1(1,0,0)\oplus\of(0,1,1)$ & $\Bl_{\DP_4}Y$&+\\

\cref{F.3.14}& $K_{76}$ & 72 & 337 & 3 & 0 & 0 & 5 & $\PP^4\times\Gr(2,7)\times\Gr(2,4)$ & $\mQ_{\PP^4}\boxtimes\mU_{\Gr(2,7)}\oplus\mU_{\Gr(2,7)}^{\vee}\boxtimes\mU^{\vee}_{\Gr(2,4)}\oplus\mQ_{\Gr(2,4)}(1,0,0)$ & Small res.&+\\

\rowcolor{lavender}
\cref{F.3.15}& $K_{116}$ & 57 & 257 & 3 & 0 & 0 & 7 & $\Gr(2,4)\times\Gr(2,5)\times\PP^2$ & $\mQ_{\Gr(2,4)}(0,0,1)\oplus\mQ_{\Gr(2,4)}\boxtimes\mU^{\vee}_{\Gr(2,5)}\oplus\of(0,1,0)^{\oplus 2}$ & $\Bl_{\DP_7}K_{25}$&+\\

\cref{F.3.16}& $K_{125}$ & 48 & 211 & 3 & 0 & 0 & 7 & $\PP^2\times\PP^4\times\Gr(2,4)$ & $\mQ_{\Gr(2,4)}(1,0,0)\oplus\mQ_{\Gr(2,4)}(0,1,0)^{\oplus 2}$ & $\Bl_{\DP_6}K_{29}$&+\\

\rowcolor{lavender}
\cref{F.3.17}& $K_{48}$ & 90 & 433 & 3 & 0 & 0 & 5 & $\PP^2\times\PP^4\times\Gr(2,11)$ & $\mQ_{\PP^2}\boxtimes\mU^{\vee}_{\Gr(2,11)}\oplus(\mQ_{\PP^4}\boxtimes\mU^{\vee}_{\Gr(2,11)})^{\oplus 2}$ & Small res. &?\\

\cref{F.3.18}& $K_{188}$ & 60 & 272 & 3 & 0 & 0 & 4 & $\PP^1\times\PP^3\times\Gr(2,4)$ & $\of(1,0,1)\oplus\mQ_{\Gr(2,4)}(0,1,0)\oplus\of(0,0,1)$ & $\Bl_{Q_1}K_{136}$&+\\

\rowcolor{lavender}
\cref{F.3.19}& $K_{190}$ & 57 & 256 & 3 & 0 & 0 & 6 & $\PP^4\times\PP^2\times\Gr(2,6)$ & $\mQ_{\PP^2}\boxtimes\mU^{\vee}_{\Gr(2,6)}\oplus\mQ_{\Gr(2,6)}(1,0,0)\oplus\of(0,0,1)^{\oplus 2}$ & $\Bl_{\Bl_{pt}\PP^2}K_{142}$&?\\

\cref{F.3.20}& $K_{191}$ & 54 & 241 & 3 & 0 & 0 & 9 & $\PP^2_1\times\PP^2_2\times\Gr(2,7)$ & $\mQ_{\PP^2_1}\boxtimes\mU_{\Gr(2,7)}^{\vee}\oplus\mQ_{\PP^2_2}\boxtimes\mU_{\Gr(2,7)}^{\vee}\oplus\of(0,0,1)^{\oplus 2}$ & Small res. &?\\

\rowcolor{lavender}
\cref{F.3.21}& $K_{173}$ & 65 & 298 & 3 & 0 & 0 & 5 & $\PP^2\times\Gr(2,6)\times\Gr(2,5)$ & $\mQ_{\PP^2}\boxtimes\mU^{\vee}_{\Gr(2,5)}\oplus\mU^{\vee}_{\Gr(2,6)}\boxtimes\mQ_{\Gr(2,5)}\oplus\of(0,1,0)^{\oplus 2}$ & $\Bl_{\PP^2}K_{142}$&?\\

\cref{F.3.22}& $K_{178}$ & 66 & 304 & 3 & 0 & 0 & 4 & $\PP^1\times\PP^3\times\Gr(2,4)$ & $\mQ_{\Gr(2,4)}(0,1,0)\oplus\of(0,0,1)\oplus\of(1,1,0)$ &  $\Bl_{Q_2}K_{136}$&+\\

\rowcolor{lavender}
\cref{F.3.23}& $K_{211}$ & 54 & 241 & 3 & 0 & 0 & 7 & $\PP^3\times\PP^3\times\Gr(2,5)$ & $\mQ_{\Gr(2,5)}(1,0,0)\oplus\mQ_{\Gr(2,5)}(0,1,0)\oplus\of(0,0,1)^{\oplus 2}$ & $\Bl_{\DP_7}K_{102}$&+\\

\cref{F.3.24}& $K_{212}$ & 53 & 235 & 3 & 0 & 0 & 7 & $\PP^5\times\PP^2\times\Gr(2,5)$ & $\mQ_{\PP^2}\boxtimes\mU^{\vee}_{\Gr(2,5)}\oplus\mQ_{\Gr(2,5)}(1,0,0)\oplus\mU^{\vee}_{\Gr(2,5)}(1,0,0)$ & $\Bl_{\DP_6}K_6$&+\\

\rowcolor{lavender}
\cref{F.3.25}& $K_{202}$ & 59 & 267 & 3 & 0 & 0 & 5 & $\PP^3\times\Gr(2,4)\times\Gr(2,5)$ & $\mQ_{\Gr(2,4)}\boxtimes\mU^{\vee}_{\Gr(2,5)}\oplus\mQ_{\Gr(2,5)}(1,0,0)\oplus\of(0,1,0)\oplus\of(0,0,1)$ & $\Bl_{Q_2}K_{101}$&+\\

\cref{F.3.26}& $K_{232}$ & 52 & 230 & 3 & 0 & 0 & 6 & $\PP^3\times\PP^5\times\Gr(2,4)$ & $\mQ_{\PP^3}(0,1,0)\oplus\mQ_{\Gr(2,4)}(1,0,0)\oplus\mU^{\vee}_{\Gr(2,4)}(0,1,0)\oplus\of(0,0,1)$ & $\Bl_{\DP_6}K_{136}$&+\\

\rowcolor{lavender}
\cref{F.3.27}& $K_{231}$ & 51 & 225 & 3 & 0 & 0 & 9 & $\PP^1\times\PP^4\times\Gr(2,7)$ & $\mQ_{\PP^4}\boxtimes\mU^{\vee}_{\Gr(2,7)}\oplus\mU^{\vee}_{\Gr(2,7)}(0,1,0)\oplus\of(1,0,1)$ & $\Bl_{\DP_4}K_{15}$&+\\

\cref{F.3.28}& $K_{235}$ & 46 & 199 & 3 & 0 & 0 & 8 & $\PP^2\times\PP^4\times\Gr(2,4)$ & $\mQ_{\PP^2}(0,1,0)\oplus\mQ_{\PP^2}(0,0,1)\oplus\mQ_{\Gr(2,4)}(0,1,0)$ & Small res. &+\\

\rowcolor{lavender}
\cref{F.3.29}& $K_{226}$ & 51 & 225 & 3 & 0 & 0 & 10 & $\PP^2\times\PP^4\times\Gr(2,5)$ & $\mQ_{\PP^2}\boxtimes\mU^{\vee}_{\Gr(2,5)}\oplus\mU_{\Gr(2,5)}(0,1,0)^{\oplus 2}$ & $\Bl_{\DP_3}K_6$&+\\

\cref{F.3.30}& $K_{224}$ & 56 & 251 & 3 & 0 & 0 & 6 & $\PP^2\times\Gr(2,4)\times\Gr(2,5)$ & $\mQ_{\Gr(2,4)}\boxtimes\mU^{\vee}_{\Gr(2,5)}\oplus\mU^{\vee}_{\Gr(2,4)}(1,0,0)\oplus\of(0,0,1)^{\oplus 2}$ & $\Bl_{\Bl_{pt}\PP^2}K_{25}$&+\\

\rowcolor{lavender}
\cref{F.3.31}& $K_{242}$ & 51 & 224 & 3 & 0 & 0 & 8 & $\PP^3\times\PP^4\times\PP^5$ & $\mQ_{\PP^3}^{\vee}(1,1,0)\oplus\mQ_{\PP^4}(0,0,1)\oplus\of(1,0,1)$ & $\Bl_{\DP_4}Y$ &+\\

\cref{F.3.32}& $K_{244}$ & 47 & 203 & 3 & 0 & 0 & 9 & $\PP^3\times\PP^4\times\Gr(2,4)$ & $\mQ_{\PP^3}(0,1,0)\oplus\mQ_{\Gr(2,4)}(1,0,0)\oplus\of(0,1,1)\oplus\of(0,0,1)$ & $\Bl_{\DP_3}K_{136}$&+\\

\rowcolor{lavender}
\cref{F.3.33}& $K_{450}$ & 51 & 224 & 3 & 0 & 0 & 4 & $\PP^1\times\PP^4\times\Gr(2,4)$ & $\mQ_{\Gr(2,4)}(0,1,0)^{\oplus 2}\oplus\of(0,0,1)$ & $\PP^1\times F_{2-21}$&+\\

\cref{F.3.34}& $K_{145}$ & 75 & 353 & 3 & 0 & 0 & 6 & $\PP^4\times\Gr(2,7)\times\Gr(2,4)$ & $\mQ_{\PP^4}\boxtimes\mU^{\vee}_{\Gr(2,7)}\oplus\mQ_{\Gr(2,4)}\boxtimes\mU^{\vee}_{\Gr(2,7)}\oplus\mQ_{\Gr(2,4)}(1,0,0)$ & Small res.&?\\

\rowcolor{lavender}
\cref{F.3.35}& $K_{295}$ & 55 & 244 & 3 & 0 & 0 & 5 & $\PP^6\times\PP^2\times\Gr(2,5)$ & $\mQ_{\PP^2}\boxtimes\mU^{\vee}_{\Gr(2,5)}\oplus\mQ_{\Gr(2,5)}(1,0,0)^{\oplus 2}$ & $\Bl_{\DP_8}K_6$&+\\

\cref{F.3.36}& $K_{320}$ & 45 & 193 & 3 & 0 & 0 & 12 & $\PP^2\times\PP^3\times\Gr(2,5)$ & $\mQ_{\PP^2}\boxtimes\mU^{\vee}_{\Gr(2,5)}\oplus\mU^{\vee}_{\Gr(2,5)}(0,1,0)\oplus\of(0,1,1)$ & $\Bl_{\DP_1}K_6$&+\\

\rowcolor{lavender}
\cref{F.3.37}& $K_{321}$ & 47 & 202 & 3 & 0 & 0 & 10 & $\PP^2\times\PP^4\times\Gr(2,5)$ & $\mQ_{\PP^2}\boxtimes\mU^{\vee}_{\Gr(2,5)}\oplus\mQ_{\Gr(2,5)}(0,1,0)\oplus\of(0,1,1)$ & $\Bl_{\DP_3}K_6$&+\\

\cref{F.3.38}& $K_{322}$ & 46 & 198 & 3 & 1 & 0 & 5 & $\PP^2_1\times\PP^2_2\times\PP^2_3$ & $\mQ_{\PP^2_1}(0,1,1)$ & $\Bl_E\PP^2_2\times\PP^2_3$&+\\

\rowcolor{lavender}
\cref{F.3.39}& $K_{341}$ & 42 & 178 & 3 & 0 & 0 & 11 & $\PP^2\times\PP^3\times\Gr(2,4)$ & $\mQ_{\Gr(2,4)}(0,1,0)\oplus\of(0,1,1)\oplus\mQ_{\Gr(2,4)}(1,0,0)$ & $\Bl_{\DP_6}K_{124}$&+\\

\cref{F.3.40}& $K_{342}$ & 44 & 188 & 3 & 0 & 0 & 8 & $\PP^3_1\times\PP^3_2\times\Gr(2,4)$ & $\mQ_{\Gr(2,4)}(1,0,0)\oplus\mQ_{\Gr(2,4)}(0,1,0)\oplus\of(1,1,0)\oplus\of(0,0,1)$ & $\Bl_{\DP_4}K_{136}$&+\\

\rowcolor{lavender}
\cref{F.3.41}& $K_{343}$ & 43 & 183 & 3 & 0 & 0 & 9 & $\PP^2\times\PP^4\times\Gr(2,4)$ & $\mQ_{\PP^2}(0,1,0)\oplus\mQ_{\Gr(2,4)}(0,1,0)\oplus\of(1,0,1)\oplus\of(0,0,1)$ & $\Bl_{Q_2}K_{126}$&+\\

\cref{F.3.42}& $K_{346}$ & 36 & 150 & 3 & 0 & 0 & 10 & $\PP^2_1\times\PP^2_2\times\PP^4$ & $\mQ_{\PP^2_1}\boxtimes\mQ_{\PP^2_2}\boxtimes\of_{\PP^4}(1)$ & $\Bl_{\DP_3}\PP^2\times\PP^2$&+\\

\rowcolor{lavender}
\cref{F.3.43}& $K_{265}$ & 60 & 273 & 3 & 0 & 0 & 6 & $\PP^2\times\Gr(2,4)\times\Gr(2,6)$ & $\mQ_{\PP^2}\boxtimes\mU^{\vee}_{\Gr(2,6)}\oplus\mQ_{\Gr(2,4)}\boxtimes\mU^{\vee}_{\Gr(2,6)}\oplus\of(0,1,0)\oplus\of(0,0,1)$ & Small res. &?\\

\cref{F.3.44}& $K_{372}$ & 60 & 272 & 3 & 0 & 0 & 4 & $\PP^1\times\Gr(2,4)\times\Gr(2,5)$ & $\mQ_{\Gr(2,4)}\boxtimes\mU^{\vee}_{\Gr(2,5)}\oplus\of(0,1,0)\oplus\of(0,0,1)^{\oplus 2}$ & $\PP^1\times F_{2-26}$&+\\

\rowcolor{lavender}
\cref{F.3.45}& $K_{388}$ & 54 & 240 & 3 & 0 & 0 & 4 & $\PP^1\times\PP^3\times\Gr(2,5)$ & $\mQ_{\Gr(2,5)}(0,1,0)\oplus\of(0,0,1)^{\oplus 3}$ & $\PP^1\times F_{2-22}$&+\\

\cref{F.3.46}& $K_{421}$ & 43 & 183 & 3 & 0 & 0 & 7 & $\PP^1\times\PP^4\times\Gr(2,5)$ & $\mQ_{\Gr(2,5)}(0,1,0)\oplus\of(1,1,0)\oplus\of(0,0,1)^{\oplus 3}$ & $\Bl_{\DP_5}\Bl_{\nu_2(\PP^2)}\PP^4$&+\\

\rowcolor{lavender}
\cref{F.3.47}& $K_{422}$ & 41 & 172 & 3 & 0 & 0 & 10 & $\PP^2\times\PP^3\times\Gr(2,4)$ & $\mQ_{\Gr(2,4)}(0,1,0)\oplus\of(0,1,1)\oplus\mQ_{\Gr(2,4)}(1,0,0)$ & $\Bl_{\DP_7}K_{124}$&+\\

\cref{F.3.48}& $K_{17}$ & 85 & 406 & 3 & 0 & 0 & 5 & $\PP_1^2\times\PP_2^2\times\Gr(2,5)$ & $\mQ_{\PP^2_1}\boxtimes\mU^{\vee}_{\Gr(2,5)}\oplus\mU^{\vee}_{\Gr(2,5)}(0,1,0)$ & $\Bl_{\DP_8}K_6$&+\\

\rowcolor{lavender}
\cref{F.3.49}& $K_{472}$ & 39 & 162 & 3 & 0 & 0 & 10 & $\PP^1\times\PP^5\times\Gr(2,4)$ & $\mQ_{\Gr(2,4)}(0,1,0)^{\oplus 2}\oplus\of(1,1,0)\oplus\of(0,0,1)$ & $\Bl_{\DP_2}\PP^1\times Q_3$&+\\

\cref{F.3.50}& $K_{512}$ & 37 & 152 & 3 & 0 & 0 & 8 & $\PP^1\times\PP^4\times\Gr(2,4)$ & $\mQ_{\Gr(2,4)}(0,1,0)\oplus\of(1,0,1)\oplus\of(0,2,0)\oplus\of(0,0,1)$ & $\PP_{\Bl_{Q_1}Q_3}(\mU_{\Gr(2,4)|Q_3}\oplus\of_{Q_3})$ &?\\

\rowcolor{lavender}
\cref{F.3.51}& $K_{516}$ & 35 & 141 & 3 & 0 & 0 & 12 & $\PP^1\times\PP^2\times\Gr(2,5)$ & $\mU^{\vee}_{\Gr(2,5)}(0,1,0)\oplus\of(1,0,1)\oplus\of(0,0,1)^{\oplus 2}$ & $\Bl_{\DP_2}K_{195}$&+\\

\cref{F.3.52}& $K_{519}$ & 35 & 141 & 3 & 0 & 0 & 13 & $\PP^1\times\PP^4\times\Gr(2,4)$ & $\mQ_{\Gr(2,4)}(0,1,0)\oplus\of(0,0,1)\oplus\of(0,1,1)\oplus\of(1,1,0)$ & $\Bl_{\DP_4}K_{126}$&+\\

\rowcolor{lavender}
\cref{F.3.53}& $K_{593}$ & 33 & 130 & 3 & 0 & 0 & 12 & $\PP^{2}\times\PP^1\times\Gr(2,5)$ & $\of(1,1,0)\oplus\of(1,0,1)\oplus\of(0,0,1)^{\oplus 3}$ & $\Bl_{\DP_5}K_{508}$&+\\

\cref{F.3.54}& $K_{591}$ & 39 & 160 & 3 & 1 & 0 & 4 & $\PP^1_1\times\PP^1_2\times\Gr(2,5)$ & $\of(0,1,1)\oplus\of(0,0,1)^{\oplus 3}$ & $\PP^1\times F_{2-14}$&+\\

\rowcolor{lavender}
\cref{F.3.55}& $K_{668}$ & 25 & 89 & 3 & 5 & 0 & 9 & $\PP^1\times\PP^2\times\PP^4$ & $\of(1,1,0)\oplus\mQ_{\PP^2}(0,0,2)$ & $\Bl_{\DP_4}K_{612}$&+\\

\cref{F.3.56}& $K_{182}$ & 60 & 273 & 3 & 0 & 0 & 7 & $\PP^2\times\PP^4\times\Gr(2,7)$ & $\mQ_{\PP^4}\boxtimes\mU^{\vee}_{\Gr(2,7)}\oplus\mU_{\Gr(2,7)}^{\vee}(1,0,0)\oplus\mU_{\Gr(2,7)}^{\vee}(0,1,0)$ & $\Bl_{\DP_6}K_{15}$&+\\

\rowcolor{lavender}
\cref{F.3.57}& $K_{282}$ & 51 & 225 & 3 & 0 & 0 & 11 & $\PP^2\times\PP^3\times\Gr(2,6)$ & $\mQ_{\PP^3}\boxtimes\mU^{\vee}_{\Gr(2,6)}\oplus\mU^{\vee}_{\Gr(2,6)}(1,0,0)\oplus\of(0,1,1)$ & $\Bl_{\DP_5}K_{70}$&+\\

\cref{F.3.58}& $K_{303}$ & 48 & 209 & 3 & 0 & 0 & 8 & $\PP^2_1\times\PP^2_2\times\Gr(2,6)$ & $\mQ_{\PP^2_2}\boxtimes\mU^{\vee}_{\Gr(2,6)}\oplus\of(0,0,1)^{\oplus 2}\oplus\mU^{\vee}_{\Gr(2,6)}(1,0,0)$ & $\Bl_{\DP_6}K_{142}$&?\\

\rowcolor{lavender}
\cref{F.3.59}& $K_{460}$ & 48 & 209 & 3 & 0 & 0 & 8 & $\PP^2_1\times\PP^2_2\times\Gr(2,5)$ & $\mU^{\vee}_{\Gr(2,5)}(0,1,0)\oplus\of(2,0,0)\oplus\of(0,0,1)^{\oplus 3}$ & $Q_1\times F_{2-20}$&+\\

\cref{F.3.60}& $K_{318}$ & 48 & 209 & 3 & 0 & 0 & 7 & $\PP^2\times\PP^3\times\Gr(2,5)$ & $\mU^{\vee}_{\Gr(2,5)}(1,0,0)\oplus\mQ_{\Gr(2,5)}(0,1,0)\oplus\of(0,0,1)^{\oplus 2}$ & $\Bl_{\DP_7}K_{102}$&+\\

\rowcolor{lavender}
\cref{F.3.61}& $K_{326}$ & 48 & 209 & 3 & 0 & 0 & 9 & $\PP^1\times\Gr(2,4)\times\Gr(2,5)$ & $\mQ_{\Gr(2,4)}\boxtimes\mU^{\vee}_{\Gr(2,5)}\oplus\of(1,1,0)\oplus\of(0,0,1)^{\oplus 2}$ & $\Bl_{\DP_5}K_{25}$&+\\

\cref{F.3.62}& $K_{327}$ & 47 & 204 & 3 & 0 & 0 & 7 & $\PP^2\times\PP^4\times\Gr(2,4)$ & $\mQ_{\Gr(2,4)}(0,1,0)\oplus\mU^{\vee}_{\Gr(2,4)}(0,1,0)\oplus\mU^{\vee}_{\Gr(2,4)}(1,0,0)$ & $\Bl_{\DP_7}K_{115}$&+\\

\rowcolor{lavender}
\cref{F.3.63}& $K_{339}$ & 46 & 199 & 3 & 0 & 0 & 5 & $\PP^4\times\PP^2\times\Gr(2,4)$ & $\mQ_{\Gr(2,4)}(1,0,0)^{\oplus 2}\oplus\mU_{\Gr(2,4)}^{\vee}(0,1,0)$ & $\Bl_{\DP_8}K_{29}$&+\\

\cref{F.3.64}& $K_{344}$ & 42 & 178 & 3 & 0 & 0 & 9 & $\PP^3\times\Gr(2,4)\times\PP^2$ & $\of(1,0,1)\oplus\mQ_{\PP^2}(0,1,0)\oplus\mU^{\vee}_{\Gr(2,4)}(1,0,0)$ & $\Bl_{\DP_5}\Bl_{Q_1}\Gr(2,4)$&+\\

\rowcolor{lavender}
\cref{F.3.65}& $K_{413}$ & 44 & 188 & 3 & 0 & 0 & 6 & $\PP^2\times\PP^4\times\Gr(2,4)$ & $\mU^{\vee}_{\Gr(2,4)}(1,0,0)\oplus\mQ_{\Gr(2,4)}(0,1,0)\oplus\of(0,0,1)\oplus\of(0,2,0)$ & $\Bl_{\DP_6}K_{230}$&?\\

\cref{F.3.66}& $K_{423}$ & 40 & 167 & 3 & 0 & 0 & 11 & $\PP^1\times\PP^3\times\Gr(2,5)$ & $\mQ_{\Gr(2,5)}(0,1,0)\oplus\of(1,0,1)\oplus\of(0,0,1)^{\oplus 2}$ & $\Bl_{\DP_3}K_{102}$&+\\

\rowcolor{lavender}
\cref{F.3.67}& $K_{424}$ & 39 & 161 & 3 & 0 & 0 & 12 & $\PP^1\times\PP^2\times\Gr(2,6)$ & $\of(1,0,1)\oplus\mQ_{\PP^2}\boxtimes\mU^{\vee}_{\Gr(2,6)}\oplus\of(0,0,1)^{\oplus 2}$ & $\Bl_{\DP_2}K_{385}$&+\\

\cref{F.3.68}& $K_{428}$ & 39 & 163 & 3 & 0 & 0 & 9 & $\PP^1\times\PP^4\times\Gr(2,4)$ & $\of(1,0,1)\oplus\mQ_{\Gr(2,4)}(0,1,0)^{\oplus 2}$ & $\Bl_{\DP_4}K_{29}$&+\\

\rowcolor{lavender}
\cref{F.3.69}& $K_{430}$ & 39 & 162 & 3 & 0 & 0 & 10 & $\PP^2\times\PP^3\times\Gr(2,4)$ & $\mQ_{\Gr(2,4)}(0,1,0)\oplus\of(1,0,1)\oplus\of(1,1,0)\oplus\of(0,0,1)$ & $\Bl_{\DP_8}K_{136}$&+\\

\cref{F.3.70}& $K_{438}$ & 33 & 131 & 3 & 0 & 0 & 17 & $\PP^2_1\times\PP^2_2\times\PP^4$ & $\mQ_{\PP^2_1}(0,0,1)\oplus\mQ_{\PP^2_1}(0,1,1)$ & $\Bl_{\Bl_{13pts}\PP^2}\PP^2\times\PP^2$&+\\

\rowcolor{lavender}
\cref{F.3.71}& $K_{478}$ & 39 & 162 & 3 & 0 & 0 & 8 & $\PP^1\times\PP^3\times\Gr(2,5)$ & $\mU^{\vee}_{\Gr(2,5)}(0,1,0)\oplus\of(0,0,1)^{\oplus 3}\oplus\of(1,1,0)$ & $\Bl_{\DP_5}K_{219}$&?\\

\cref{F.3.72}& $K_{154}$ & 69 & 320 & 3 & 0 & 0 & 4 & $\PP^2\times\PP^3\times\Gr(2,4)$ & $\mQ_{\Gr(2,4)}(1,0,0)\oplus\mU^{\vee}_{\Gr(2,4)}(0,1,0)\oplus\of(0,0,1)$ & $\Bl_{\DP_8}K_{136}$&+\\

\rowcolor{lavender}
\cref{F.3.73}& $K_{404}$ & 45 & 193 & 3 & 0 & 0 & 9 & $\PP^1\times\Gr(2,4)\times\Gr(2,5)$ & $\mQ_{\Gr(2,4)}\boxtimes\mU^{\vee}_{\Gr(2,5)}\oplus\of(0,1,0)\oplus\of(0,0,1)\oplus\of(1,0,1)$ & $\Bl_{\DP_4}K_{101}$&+\\

\cref{F.3.74}& $K_{406}$ & 43 & 183 & 3 & 0 & 0 & 8 & $\PP_1^2\times\PP_2^2\times\Gr(2,5)$ & $\mU^{\vee}_{\Gr(2,5)}(1,0,0)\oplus\mU^{\vee}_{\Gr(2,5)}(0,1,0)\oplus\of(0,0,1)^{\oplus 2}$ & $\Bl_{\DP_6}K_{195}$&+\\

\rowcolor{lavender}
\cref{F.4.1}& $K_{280}$ & 51 & 228 & 4 & 0 & 0 & 10 & $\Gr(2,4)\times\Gr(2,6)$ & $\mQ_{\Gr(2,4)}\boxtimes\mU^{\vee}_{\Gr(2,6)}\oplus\of(1,0)\oplus\Sym^2\mU^{\vee}_{\Gr(2,6)}$ & $\Bl_{\PP^1\bigsqcup\PP^1}Y$&?\\

\cref{F.4.2}& $K_{26}$ & 63 & 290 & 4 & 0 & 0 & 8 & $\PP^2\times\PP^2\times\PP^2\times\Gr(2,4)$ & $\mQ_{\Gr(2,4)}(1,0,0,0)\oplus\mQ_{\Gr(2,4)}(0,1,0,0)\oplus\mQ_{\Gr(2,4)}(0,0,1,0)$ & $\Bl_{\DP_7}K_{20}$&+\\

\rowcolor{lavender}
\cref{F.4.3}& $K_{73}$ & 78 & 369 & 4 & 0 & 0 & 7 & $\PP^2_1\times\PP^2_2\times\Gr(2,4)\times\Gr(2,5)$ & $\mU^{\vee}_{\Gr(2,4)}(1,0,0,0)\oplus\mQ_{\PP^2_2}\boxtimes\mU^{\vee}_{\Gr(2,5)}\oplus\mQ_{\Gr(2,4)}\boxtimes\mU^{\vee}_{\Gr(2,5)}$ & $\Bl_{\DP_8}\Bl_{\PP^1} K_6$&+\\

\cref{F.4.4}& $K_{144}$ & 78 & 369 & 4 & 0 & 0 & 7 & $\PP^2_1\times\PP^2_2\times\Gr(2,4)\times\Gr(2,5)$ & $\mQ_{\Gr(2,4)}(1,0,0,0)\oplus\mQ_{\PP^2_2}\boxtimes\mU^{\vee}_{\Gr(2,5)}\oplus\mQ_{\Gr(2,4)}\boxtimes\mU^{\vee}_{\Gr(2,5)}$ & $\Bl_{Q_2}\Bl_{\PP^1} K_6$&+\\
\rowcolor{lavender}
\cref{F.4.5}& $K_{256}$ & 81 & 385 & 4 & 0 & 0 & 7 & $\PP^2_1\times\PP^2_2\times\PP^2_3\times\Gr(2,7)$ & $\mQ_{\PP^2_1}\boxtimes\mU^{\vee}_{\Gr(2,7)}\oplus \mQ_{\PP^2_2}\boxtimes\mU^{\vee}_{\Gr(2,7)}\oplus \mQ_{\PP^2_3}\boxtimes\mU^{\vee}_{\Gr(2,7)}$ & Small res.&?\\

\cref{F.4.6}& $K_{78}$ & 71 & 331 & 4 & 0 & 0 & 7 & $\PP^3\times\PP^2_1\times\PP^2_2\times\Gr(2,5)$ & $\mQ_{\PP^2_2}\boxtimes\mU^{\vee}_{\Gr(2,5)}\oplus\mQ_{\Gr(2,5)}(1,0,0,0)\oplus\mU^{\vee}_{\Gr(2,5)}(0,1,0,0)$ & $\Bl_{\DP_8}K_{17}$&+\\

\rowcolor{lavender}
\cref{F.4.7}& $K_{104}$ & 66 & 305 & 4 & 0 & 0 & 8 & $\PP^2_1\times\PP^2_2\times\PP^2_3\times\Gr(2,5)$ & $\mQ_{\PP^2_3}\boxtimes\mU^{\vee}_{\Gr(2,5)}\oplus\mU^{\vee}_{\Gr(2,5)}(1,0,0)\oplus\mU^{\vee}_{\Gr(2,5)}(0,1,0)$ & $\Bl_{\DP_7}K_{17}$&+\\

\cref{F.4.8}& $K_{201}$ & 61 & 278 & 4 & 0 & 0 & 6 & $\PP^2_1\times\PP^2_2\times\PP^2_3\times\Gr(2,4)$ & $\mQ_{\Gr(2,4)}(1,0,0,0)\oplus\mQ_{\Gr(2,4)}(0,1,0,0)\oplus\mU^{\vee}_{\Gr(2,4)}(0,0,1,0)$ & $\Bl_{\PP^2}K_{20}$&+\\

\rowcolor{lavender}
\cref{F.4.9}& $K_{204}$ & 57 & 257 & 4 & 0 & 0 & 9 & $\PP^1\times\PP^2_1\times\PP^2_2\times\Gr(2,5)$ & $\mQ_{\PP^2_2}\boxtimes\mU_{\Gr(2,5)}^{\vee}\oplus\mU^{\vee}_{\Gr(2,5)}(0,1,0,0)\oplus\of(1,0,0,1)$ & $\Bl_{\DP_6}K_{17}$&+\\

\cref{F.4.10}& $K_{227}$ & 53 & 236 & 4 & 0 & 0 & 9 & $\PP^1\times\PP^2_1\times\PP^2_2\times\Gr(2,4)$ & $\mQ_{\Gr(2,4)}(0,0,1,0)\oplus\mQ_{\Gr(2,4)}(0,1,0,0)\oplus\of(1,0,0,1)$ & $\Bl_{\DP_6}K_{20}$&+\\

\rowcolor{lavender}
\cref{F.4.11}& $K_{297}$ & 55 & 246 & 4 & 0 & 0 & 7 & $\PP^1\times\PP^2\times\PP^3\times\Gr(2,4)$ & $\mQ_{\Gr(2,4)}(0,0,1,0)\oplus\mQ_{\Gr(2,4)}(0,1,0,0)\oplus\of(1,0,1,0)\oplus\of(0,0,0,1)$ & $\Bl_{\DP_7}K_{154}$&+\\

\cref{F.4.12}& $K_{403}$ & 47 & 204 & 4 & 0 & 0 & 8 & $\PP^1_1\times\PP^1_2\times\PP^3\times\Gr(2,4)$ & $\mQ_{\Gr(2,4)}(0,0,1,0)\oplus\of(1,0,1,0)\oplus\of(0,1,0,1)\oplus\of(0,0,0,1)$ & $\Bl_{\DP_6}K_{188}$&+\\

\rowcolor{lavender}
\cref{F.4.13}& $K_{482}$ & 39 & 162 & 4 & 1 & 0 & 9 & $\PP^1\times\PP^2_1\times\PP^2_2\times\PP^2_3$ & $\mQ_{\PP^2_1}(0,0,1,1)\oplus\of(1,1,0,0)$ & $\Bl_{\DP_6}K_{322}$&+\\

\end{longtable}

\end{scriptsize}
\end{centering}
\end{landscape}

\appendix
\renewcommand{\thesection}{A} 
\section{From zero loci of quiver flag varieties to zero loci of products of flag manifolds}
\label[app]{sec:ZlqToZlg}
\begin{center} \small\textsc{E.\ Kalashnikov, F.\ Tufo} \end{center}

\vspace{1ex}
In this appendix, we describe the method to translate quiver flag zero loci into zero loci of generic global sections of irreducible globally generated homogeneous vector bundles in the product of Grassmannians.\\

Let us briefly recall what a quiver flag variety, for more details we refer to \cite{CRAW11}.\\
To define such a variety we have to fix some notation.
\begin{itemize}
   \item Let $Q$ be a finite, acyclic, with only one source, namely 0, quiver. 
   \item Let $Q_0=\{0,1,2,...,\rho\}$ be its set of vertices and $Q_1=\{a_{i,j}|i,j\in Q_0\}$ its set of arrows from the vertex $i$ to the vertex $j$. 
    be the tail and head map such that $t(a_{i,j})=i$ and $h(a_{i,j})=j$. 
    \item Let us call $r=[1,r_1,...,r_{\rho}]\in\mathbb{Z}^{\rho+1}$ the dimension vector.
    \item Let $W$ be a representations of the quiver $Q$, i.e.,\ a collection $((W_i)_{i\in Q_0},(\omega_a)_{a\in Q_1})$, with $W_i$ a $\mathbb{C}$-vector space of dimension $r_i$ and $\omega_{a}$ a linear map between $W_{t(a)}$ and $W_{h(a)}$. 
    \item Let $\theta\in \mathbb{Q}^{\rho+1}$ be the  special weight $[-\sum_{i=1}^{\rho}r_i,1,...,1]$. Let us define $\theta(W)$ as $\sum_{i\in Q_0}\theta_ir_i$.
    \item We say that a representation $W$ is $\theta$-semistable if $\theta(W)=0$ and for each subrepresentation $W'\subset W$, $\theta(W')\geq 0$. It is stable if it is semistable and $\theta(W')= 0$ if and only if $W'$ is $0$ or $W$.
\end{itemize}

Now we can finally define the quiver flag varieties.

\begin{definition}
    A quiver flag variety $\mathcal{M}_{\theta}(Q,r)$ is the moduli space of the $\theta$-stable representation of a quiver $Q$, with $Q$, $\theta$ and $r$ as above.
\end{definition}

The advantage of writing these manifolds in the product of Grassmannians is computational. Suppose we work with zero loci of generic global sections of irreducible globally generated homogeneous vector bundles in the product of Grassmannians. In that case, we have all the tools described in \cref{workingTool} hence we can compute their Hodge numbers and we can even give a biregular description.\\
In \cite{kalashnikov} they considered varieties obtained as zero loci of direct sums of tensor products between nef line bundles and Schur powers of the dual of the universal bundles that come along with the quiver flag varieties themselves, hence by \cite[Theorem 3.3]{CRAW11} they are towers of Grassmannians bundles cut by direct sums of relative tautological bundles. By iterating \cref{thm:seq} we can write those towers as products of Grassmannians cut by direct sums of box products between tautological and quotient bundles and Schur powers of tautological bundles, which is the same language used in \cite{BFMT}, or in \cite{DFT}.\\
Let us give a couple of examples of how it is possible to translate a quiver flag zero loci into the zero loci of a product of Grassmannians:

\begin{ex} We want to show that $K_1$ is $\PP^4$. Indeed $K_1$ is described in \cite[Table 3]{kalashnikov} as the datum of:
\[
    A=\begin{bmatrix}
        0 & 5 \\
        0 & 0
    \end{bmatrix},\ \ r=[1,1]
\]
with $A$ the adjacency matrix, and $r$ is the dimension vector. Since $A$ is a $2\times2$ matrix and $r$ is a 2-dimensional vector, we get that $K_1$ is a quiver flag variety obtained from a quiver with only 2 vertices. Now, the arrows between the two vertices of the quiver are given by $A$, in particular, the $a_{i,j}$ entry of the matrix gives the number of arrows from the i-th vertex to the j-th vertex, while the entry $r_i$ of the dimension vector gives the dimension of the i-th vector space associated to the i-th vertex in the representation of the quiver.\\
In this way, we can rewrite $K_1$ as the following representation of quiver:
\[
 \begin{tikzcd}
1 \arrow[r] \arrow[r, shift left=2] \arrow[r, shift right=2] \arrow[r, shift right=4] \arrow[r, shift left=4] & 1
\end{tikzcd},\ or\
\begin{tikzcd}
1 \arrow[r, "5"] & 1
\end{tikzcd}
\]
where the vertices are represented by the dimension of the associated vector space in the representation of the quiver and the numbers on the arrows indicate the number of the arrows of the same type.\\
Now, this means that the quiver $\mathbf{Q}$ associated with $K_1$ is the Kronecker quiver with 2 vertices and $5$ arrows from the first one to the second one, hence, given $r$ as dimension vector, its representation is $Rep(\mathbf{Q},r)=Mat(1\times5,\mathbb{C})$, and by \cite[Lemma 2.1 and Prop. 2.2]{CRAW11}, the quiver flag variety $K_1=\mathcal{M}_{\vartheta}(\mathbf{Q},r)$ is $\PP^4$.\\

\end{ex}

\begin{ex} $K_2$ is $\mZ(\of(1,1))\subset\PP^1\times\PP^4$. In this case, we have to use \cite[Theorem 3.3]{CRAW11} to obtain the previous equivalence. In \cite[Table 3]{kalashnikov} $K_2$ is described as the datum of:
\[
    A=\begin{bmatrix}
          0 & 0 & 5\\
          0 & 0 & 0\\
          0 & 3 & 0
    \end{bmatrix},\ \ r=[1,1,1],\ \ E=(\square,\blacksquare)\oplus(\square,\varnothing)
\]
with $A$ the adjacency matrix and $r$ the dimension vector associated to the quiver flag varieties $M=\mathcal{M}_{\vartheta}(\mathbf{Q},r)$ and $E$ the vector bundle on $M$ such that $K_2=\mZ(E)\subset M$, the white squares are the Young tableaux associated to the Schur power of the universal bundle while the black ones are their duals. The quiver $\mathcal{Q}$ has 3 vertices with the following representation:
\[
\begin{tikzcd}
1 \arrow[rr, "5", bend left=49] & 1 & 1 \arrow[l, "3", bend left]
\end{tikzcd}
\]
which can be rearranged by the permutation of the second and the third vertex as 
\[
\begin{tikzcd}
1 \arrow[r] \arrow[r, shift left=2] \arrow[r, shift right=2] \arrow[r, shift left=4] \arrow[r, shift right=4] & 1 \arrow[r] \arrow[r, shift left=2] \arrow[r, shift right=2] & 1
\end{tikzcd}
\]
In this way, we can rewrite $K_2$ as the datum of:
\[
    A=\begin{bmatrix}
        0 & 5 & 0\\
        0 & 0 & 3\\
        0 & 0 & 0
    \end{bmatrix},\ \ r=[1,1,1],\ \ E=(\blacksquare,\square)\oplus(\varnothing,\square)
\]
Now if we denote $\mathbf{Q}(i)$ the quiver obtained from $\mathbf{Q}$ looking only up to the i-th vertex, and $r(i)$ the dimension vector of the vertices in $\mathbf{Q}(i)$, then we can define $Y_i=\mathcal{M}_{\vartheta}(\mathbf{Q}(i),r(i))$. In our case we get $Y_2=\PP^4$ for the previous example, and $Y_3=M$, but we can apply \cite[Theorem 3.3]{CRAW11} and obtain $Y_3=\PP_{\PP^4}(\of_{\PP^4}(-1)^{\oplus 3})$. Note that $E$ is clearly $\of_{\PP^4}(-1)\boxtimes\of_{\mR}(1)\oplus\of_{\mR}(1)$, hence $K_2=\mZ(\of_{\PP^4}(-1)\boxtimes\of_{\mR}(1)\oplus\of_{\mR}(1))\subset\PP_{\PP^4}(\of_{\PP^4}(-1)^{\oplus 3})$, so if we twist $\of_{\PP^4}(-1)^{\oplus 3}$ by $\of_{\PP^4}(1)$ then $K_2=\mZ(\of(1,1))\subset\PP^4\times\PP^1$, which is $\Bl_{\PP^2}\PP^4$.
\end{ex}

\begin{ex} $K_{109}$ is $\mZ(\mQ_{\Gr(2,5)}(0,1)\oplus\of(1,0)^{\oplus 3})\subset\Gr(2,5)\times\PP^4$. In \cite[Table 3]{kalashnikov} $K_{109}$ is given by the followings:
\[
    A=\begin{bmatrix}
        0 & 0 & 5\\
        0 & 0 & 0\\
        0 & 1 & 0
    \end{bmatrix},\ \ r=[1,1,2],\ \ E=(\varnothing,\bigwedge^2\square)^{\oplus 3}
\]
Now, we can rearrange and obtain:
\[
    A=\begin{bmatrix}
        0 & 5 & 0\\
        0 & 0 & 1\\
        0 & 0 & 0
    \end{bmatrix},\ \ r=[1,2,1],\ \ E=(\bigwedge^2\square,\varnothing)^{\oplus 3}
\]  
in this way, we have a quiver $\mathbf{Q}$ with 3 vertices, whose representation is 
\[
\begin{tikzcd}
1 \arrow[r] \arrow[r, shift left=2] \arrow[r, shift right=2] \arrow[r, shift left=4] \arrow[r, shift right=4] & 2 \arrow[r] & 1
\end{tikzcd}
\]
Now if we consider $Y_2$, this is, by \cite[Lemma 2.1 and Prop. 2.2]{CRAW11}, $\Gr(2,5)$, while, using \cite[Theorem 3.3]{CRAW11}, $Y_3=\PP_{\Gr(2,5)}(\mU_{\Gr(2,5)})$. Note that $(\bigwedge^2\square,\varnothing)$ can be translated in $\of_{\Gr(2,5)}(1)=\bigwedge^2\mU^{\vee}_{\Gr(2,5)}$, hence $K_{109}=\mZ(\of_{\Gr(2,5)}(1)^{\oplus 3})\subset\PP_{\Gr(2,5)}(\mU_{\Gr(2,5)})$, then, by applying \cref{thm:seq}, we obtain $K_{109}=\mZ(\of(1,0)^{\oplus 3}\oplus\mQ_{\Gr(2,5)}(0,1))\subset\Gr(2,5)\times\PP^4$.
\end{ex}

\begin{ex} $K_{127}$ is $\mZ(\mQ_{\PP^4}(0,1)\oplus\of(3,0))\subset\PP^4\times\PP^5$. Note that $K_{127}$ is the datum of:
\[
    A=\begin{bmatrix}
        0 & 1 & 5\\
        0 & 0 & 0\\
        0 & 1 & 0
    \end{bmatrix},\ \ r=[1,1,1],\ \ E=(\varnothing,\Sym^3\square)
\]
If we permute the last two vertices, then we can obtain $K_{127}$ also from the triple
\[
    A=\begin{bmatrix}
        0 & 5 & 1\\
        0 & 0 & 1\\
        0 & 0 & 0
    \end{bmatrix},\ \ r=[1,1,1],\ \ E=(\Sym^3\square,\varnothing)
\]
hence, again, we have a 3-vertices quiver $\mathbf{Q}$ that whose representation is:
\[
\begin{tikzcd}
1 \arrow[r] \arrow[r, shift left=2] \arrow[r, shift right=2] \arrow[r, shift left=4] \arrow[r, shift right=4] \arrow[rd, shift right=3] & 1 \arrow[d] \\
                                                                                                                                        & 1          
\end{tikzcd}
\]
Note that we can divide again $\mathbf{Q}$ into $\mathbf{Q}(2)$ and $\mathbf{Q}(3)$, and it is clear that $Y_2$ is $\PP^4$, but $Y_3$ is harder. Note that the vertical arrow gives a projective bundle of 1 copy of the tautological bundle of $\PP^4$, while the transverse one gives a copy of the tautological bundle over $Y_1=\Spec\mathbb{C}$, which is $\of_{Y_1}$, hence for \cite[Theorem 3.3]{CRAW11}, we have $M=Y_3=\PP_{\PP^4}(\of_{\PP^4}\oplus\of_{\PP^4}(-1))$. Now $E$ is simply $\of_{\PP^4}(3)$, so $K_{127}=\mZ(\of_{\PP^4}(3))\subset\PP_{\PP^4}(\of\oplus\of(-1))$, and by applying \cref{thm:seq}, we get $K_{127}=\mZ(\of(3,0)\oplus\mQ_{\PP^4}(0,1))\subset\PP^4\times\PP^5$.
\end{ex}

In particular, a useful trick to make quick computations is the following combination of the methods described in the previous examples.
\begin{alg}\label[alg]{QuivToGrass}
Given the datum $(A,r,E)$ as above, fix the source, i.e., the unique vertex which is always the tail of arrows and never the head of the arrows that involve it:
\begin{itemize}
\item if a vertex is a head only of $n_i$ arrows coming from the source, and its space associated has dimension $r_i$, then the zero loci of the product of Grassmannians corresponding to the subquiver obtained from the source and that vertex is $\Gr(r_i,n_i)$;
\item if a vertex is the head of $n_a$ arrows coming from the source and $n_b$ arrows coming from another vertex of the previous type, and its space associated has dimension $r_j$, then the zero locus of product of Grassmannians corresponding to the subquiver obtained from the vertices involved is $\mZ((\mQ_{\Gr(r_i,n_i)}\boxtimes\mU^{\vee}_{\Gr(r_j,n_i\times n_b+n_a)})^{\oplus n_b})\subset\Gr(r_i,n_i)\times\Gr(r_j,n_i\times n_b+n_a)$;
\item the bundle $E$ is rewritten in terms of duals of the corresponding Schur powers of tautological vector bundles defined on the Grassmannians.
\end{itemize}
\end{alg}

\frenchspacing

\newcommand{\etalchar}[1]{$^{#1}$}


\begin{thebibliography}{FMMR23}

\bibitem[Bat99]{batyrev1999classification}
V.~V. Batyrev.
\newblock On the classification of toric {F}ano 4-folds.
\newblock {\em Journal of Mathematical Sciences}, 94:1021--1050, 1999.

\bibitem[Bel]{fanography}
P.~Belmans.
\newblock Fanography.
\newblock An online database available at \url{www.fanography.info}.

\bibitem[BFK{\etalchar{+}}24]{bernardara2024even}
M.~Bernardara, E.~Fatighenti, G.~Kapustka, M.~Kapustka, L.~Manivel,
  G.~Mongardi, and F.~Tanturri.
\newblock Even nodal surfaces of {K}3 type.
\newblock {\em Preprint arXiv:2402.08528}, 2024.

\bibitem[BFMT21]{BFMT}
M.~Bernardara, E.~Fatighenti, L.~Manivel, and F.~Tanturri.
\newblock Fano fourfolds of {K}3 type.
\newblock {\em arXiv:2111.13030}, 2021.

\bibitem[BKM24]{bayer2024mukai}
A.~Bayer, A.~Kuznetsov, and E.~Macr{\`\i}.
\newblock Mukai bundles on {F}ano threefolds.
\newblock {\em Preprint arXiv:2402.07154}, 2024.

\bibitem[Cas23]{casagrande2023lefschetz}
C.~Casagrande.
\newblock The {L}efschetz defect of {F}ano varieties.
\newblock {\em Rendiconti del Circolo Matematico di Palermo Series 2},
  72(6):3061--3075, 2023.

\bibitem[CCG{\etalchar{+}}13]{coates2mirror}
T.~Coates, A.~Corti, S.~Galkin, V.~Golyshev, and A.~KasprzyK.
\newblock Mirror symmetry and {F}ano manifolds.
\newblock In {\em European {C}ongress of {M}athematics}, pages 285--300. Eur.
  Math. Soc., Z\"{u}rich, 2013.

\bibitem[CCGK16]{corti}
T.~Coates, A.~Corti, S.~Galkin, and A.~KasprzyK.
\newblock Quantum periods for 3-dimensional {F}ano manifolds.
\newblock {\em Geom. Topol.}, 20(1):103--256, 2016.

\bibitem[CGKS20]{CGKS}
T.~Coates, S.~Galkin, A.~Kasprzyk, and A.~Strangeway.
\newblock Quantum periods for certain four-dimensional {F}ano manifolds.
\newblock {\em Exp. Math.}, 29(2):183--221, 2020.

\bibitem[CKP17]{coates2017laurent}
T.~Coates, A.~Kasprzyk, and T.~Prince.
\newblock Laurent inversion.
\newblock {\em Preprint arXiv:1707.05842}, 2017.

\bibitem[Cra11]{CRAW11}
A.~Craw.
\newblock Quiver flag varieties and multigraded linear series.
\newblock {\em Duke Math. J.}, 156(3):469--500, 2011.

\bibitem[DBFT22]{DFT}
L.~De~Biase, E.~Fatighenti, and F.~Tanturri.
\newblock Fano 3-folds from homogeneous vector bundles over {G}rassmannians.
\newblock {\em Revista Matem{\'a}tica Complutense}, 35(3):649--710, 2022.

\bibitem[DP87]{DP}
P.~Del~Pezzo.
\newblock Sulle superfici dell'$n^{mo}$ ordine immerse nello spazio ad $n$
  dimensioni.
\newblock {\em Proceedings of the Royal Society A: Mathematical, Physical and
  Engineering Sciences}, 1:241--255, 1887.

\bibitem[Fan29]{fano}
G.~Fano.
\newblock Sulle variet{\'a} algebriche a tre dimensioni aventi tutti i generi
  nulli.
\newblock In {\em Atti del Congresso Internazionale dei Matematici: Bologna del
  3 al 10 de settembre di 1928}, pages 115--122. Zanichelli, Bologna, 1929.

\bibitem[FMMR23]{FMMR}
E.~Fatighenti, F.~Meazzini, G.~Mongardi, and A.~T. Ricolfi.
\newblock Hilbert squares of degeneracy loci.
\newblock {\em Rendiconti del Circolo Matematico di Palermo Series 2},
  72(6):3153--3183, 2023.

\bibitem[GS]{M2}
D.~R. Grayson and M.~E. Stillman.
\newblock Macaulay2, a software system for research in algebraic geometry.
\newblock Available at \url{http://www.math.uiuc.edu/Macaulay2/}.

\bibitem[GSS{\etalchar{+}}]{Schubert2}
D.~R. Grayson, M.~E. Stillman, S.~A. Str{\o}mme, D.~Eisenbud, and C.~Crissman.
\newblock Schubert2: characteristic classes for varieties without equations.
  {V}ersion~0.7.
\newblock A \emph{Macaulay2} package available at
  \url{https://github.com/Macaulay2/M2/tree/master/M2/Macaulay2/packages}.

\bibitem[IP99]{ag5}
V.~A. Iskovskikh and Y.~G. Prokhorov.
\newblock Fano varieties.
\newblock In {\em Algebraic geometry, {V}}, volume~47 of {\em Encyclopaedia
  Math. Sci.}, pages 1--247. Springer, Berlin, 1999.

\bibitem[Isk77]{isko77}
V.~A. Iskovskikh.
\newblock Fano 3-folds. {I}.
\newblock {\em Math. USSR, Izv.}, 11:485--527, 1977.

\bibitem[Isk78]{isko78}
V.~A. Iskovskikh.
\newblock Fano 3-folds. {II}.
\newblock {\em Math. USSR, Izv.}, 12:469--506, 1978.

\bibitem[Isk96]{isko}
V.~A. Iskovskikh.
\newblock Factorization of birational mappings of rational surfaces from the
  point of view of {M}ori theory.
\newblock {\em Rus. Math. Sur.}, 51(4):585–--652, 1996.

\bibitem[Kal19]{kalashnikov}
E.~Kalashnikov.
\newblock Four-dimensional {F}ano quiver flag zero loci.
\newblock {\em Proc. A.}, 475(2225):20180791, 23, 2019.

\bibitem[KMM92]{kmm}
J.~Koll{\'a}r, Y.~Miyaoka, and S.~Mori.
\newblock Rational connectedness and boundedness of {F}ano manifolds.
\newblock {\em Journal of Differential Geometry}, 36(3):765--779, 1992.

\bibitem[Kuz16]{kuznetsovKuchle}
A.~Kuznetsov.
\newblock K\"{u}chle fivefolds of type c5.
\newblock {\em Math. Z.}, 284(3-4):1245--1278, 2016.

\bibitem[Kuz19]{Kuz19}
A.~G. Kuznetsov.
\newblock Embedding derived categories of enriques surfaces in derived
  categories of fano varieties.
\newblock {\em Izvestiya: Mathematics}, 83(3):534–539, June 2019.

\bibitem[K\"u95]{kuchle}
O.~K\"{u}chle.
\newblock On {F}ano {$4$}-fold of index {$1$} and homogeneous vector bundles
  over {G}rassmannians.
\newblock {\em Math. Z.}, 218(4):563--575, 1995.

\bibitem[Lang98]{lang}
A.~Langer.
\newblock {F}ano {$4$}-folds with scroll structure
\newblock {\em Nagoya Mathematical Journal}, 150:135--176, 1998.

\bibitem[Man15]{manivel2015fano}
L.~Manivel.
\newblock On {F}ano manifolds of {P}icard number one.
\newblock {\em Mathematische Zeitschrift}, 281:1129--1135, 2015.

\bibitem[Man23]{manivel2023four}
L.~Manivel.
\newblock A four-dimensional cousin of the {S}egre cubic.
\newblock {\em Revista Matem{\'a}tica Iberoamericana}, 2023.

\bibitem[MM86]{morimukai}
S.~Mori and S.~Mukai.
\newblock Classification of {F}ano {$3$}-folds with {$B_2\geq 2$}. {I}.
\newblock In {\em Algebraic and topological theories ({K}inosaki, 1984)}, pages
  496--545. Kinokuniya, Tokyo, 1986.

\bibitem[MM03]{MME}
S.~Mori and S.~Mukai.
\newblock Erratum:"classification of {F}ano {$3$}-folds with {$B_2\geq 2$}.".
\newblock {\em Math. USSR, Izv.}, 110:407, 2003.

\bibitem[Muk89]{mukai}
S.~Mukai.
\newblock Biregular classification of {F}ano {$3$}-folds and {F}ano manifolds
  of coindex {$3$}.
\newblock {\em Proc. Nat. Acad. Sci. U.S.A.}, 86(9):3000--3002, 1989.

\bibitem[{\O}br07]{obro2007algorithm}
M.~{\O}bro.
\newblock An algorithm for the classification of smooth {F}ano polytopes.
\newblock {\em Preprint arXiv:0704.0049}, 2007.

\bibitem[Ott88]{ottaviani}
G.~Ottaviani.
\newblock Spinor bundles on quadrics.
\newblock {\em Trans. Amer. Math. Soc.}, 307(1):301--316, 1988.

\bibitem[Sat00]{sato2000toward}
H.~Sato.
\newblock Toward the classification of higher-dimensional toric {F}ano
  varieties.
\newblock {\em Tohoku Mathematical Journal, Second Series}, 52(3):383--413,
  2000.
  
\bibitem[Sec23]{secci}
S.A.~Secci.
\newblock (F)ano fourfolds having a prime divisor of
Picard number 1. 
\newblock {\em Advances in Geometry}, 23(2):267--280, 2023.

\bibitem[Str14]{strangeway2014quantum}
A.~Strangeway.
\newblock {\em Quantum reconstruction for {F}ano bundles}.
\newblock PhD thesis, Imperial College London, 2014.

\bibitem[SW90]{SW}
M.~Szurek and J.~Wisniewski.
\newblock {F}ano bundles over {P3} and {Q3}.
\newblock {\em Pacific journal of mathematics}, 141(1):197--208, 1990.

\bibitem[SW902]{SW2}
M.~Szurek and J.~Wisniewski.
\newblock On {F}ano manifolds, which are {${\bf P}^k$}.
\newblock {\em Nagoya Mathematical Journal}, 120:89--101, 1990.

\bibitem[Wey03]{WJ}
J.~Weyman.
\newblock {\em Cohomology of vector bundles and syzygies}, volume 149 of {\em
  Cambridge Tracts in Mathematics}.
\newblock Cambridge University Press, Cambridge, 2003.

\bibitem[Wi{\'s}91]{wisniewski1991fano}
J.~A. Wi{\'s}niewski.
\newblock On {F}ano manifolds of large index.
\newblock {\em Manuscripta mathematica}, 70(1):145--152, 1991.

\end{thebibliography}
\end{document}